\documentclass[11pt]{amsart}
\usepackage{graphicx}
\usepackage{amsmath,amsthm,amssymb,enumerate}
\usepackage{euscript,mathrsfs}
\usepackage{dsfont}
\usepackage[citecolor=blue,colorlinks=true]{hyperref}
\usepackage[left=3.2cm,right=3.2cm,top=4cm,bottom=4cm]{geometry}
\usepackage{color}
\catcode`\@=11 \@addtoreset{equation}{section}

\catcode`\@=12

\allowdisplaybreaks
\usepackage{amsfonts, color, MnSymbol}
\setlength{\topmargin}{0cm} \setlength{\oddsidemargin}{0cm}
\setlength{\evensidemargin}{0cm} \setlength{\textwidth}{16.5truecm}
\setlength{\textheight}{22truecm}

\newcommand{\be}{\begin{eqnarray}}
\newcommand{\ee}{\end{eqnarray}}
\newcommand{\ce}{\begin{eqnarray*}}
\newcommand{\de}{\end{eqnarray*}}

\def\bt{\begin{theorem}}
	\def\et{\end{theorem}}
\def\bl{\begin{lemma}}
	\def\el{\end{lemma}}
\def\br{\begin{remark}}
	\def\er{\end{remark}}
\def\bx{\begin{Examples}}
	\def\ex{\end{Examples}}
\def\bd{\begin{definition}}
	\def\ed{\end{definition}}
\def\bp{\begin{proposition}}
	\def\ep{\end{proposition}}
\def\bc{\begin{corollary}}
	\def\ec{\end{corollary}}

\newtheorem{thm}{Theorem}[section]
\newtheorem{cor}[thm]{Corollary}
\newtheorem{lem}[thm]{Lemma}
\newtheorem{prp}[thm]{Proposition}

\theoremstyle{definition}

\newtheorem{rem}[thm]{Remark}

\theoremstyle{definition}
\newtheorem{defn}{Definition}[section]

\definecolor{wco}{rgb}{0.5,0.2,0.3}
\definecolor{greenn}{rgb}{.13,0.54,.13}

\numberwithin{equation}{section} \theoremstyle{remark}

\DeclareMathOperator*{\esssup}{esssup}

\newcommand{\rmb}[1]{\textcolor{black}{#1}}

\def\R{\mathbb{R}}

\def\[{{\Big[}}
\def\]{{\Big]}}

\def\({{\Big(}}
\def\){{\Big)}}

\newfam\msbfam
\font\tenmsb=msbm10 \textfont\msbfam=\tenmsb \font\sevenmsb=msbm7
\scriptfont\msbfam=\sevenmsb \font\fivemsb=msbm5
\scriptscriptfont\msbfam=\fivemsb




\numberwithin{equation}{section}

\def\R{{\mathbb R}}

\def\bc{\begin{center}}
\def\ec{\end{center}}
\def\no{\noindent}

\def\div{\textrm{div}}
\def\Id{\textrm{Id}}

\def\cB{{\mathcal B}}
\def\cC{{\mathcal C}}

\def\mB{{\mathbb B}}

\def\mN{{\mathbb N}}

\def\mP{{\mathbb P}}

\def\mR{{\mathbb R}}

\def\mT{{\mathbb T}}

\def\mX{{\mathbb X}}
\def\mY{{\mathbb Y}}

\def\1{{\mathbf{1}}}

\begin{document}

\title{Non-uniqueness in law of stochastic 3D Navier--Stokes equations}

\author{Martina Hofmanov\'a}
\address[M. Hofmanov\'a]{Fakult\"at f\"ur Mathematik, Universit\"at Bielefeld, D-33501 Bielefeld, Germany}
\email{hofmanova@math.uni-bielefeld.de}

\author{Rongchan Zhu}
\address[R. Zhu]{Department of Mathematics, Beijing Institute of Technology, Beijing 100081, China; Fakult\"at f\"ur Mathematik, Universit\"at Bielefeld, D-33501 Bielefeld, Germany}
\email{zhurongchan@126.com}

\author{Xiangchan Zhu}
\address[X. Zhu]{ Academy of Mathematics and Systems Science,
Chinese Academy of Sciences, Beijing 100190, China; Fakult\"at f\"ur Mathematik, Universit\"at Bielefeld, D-33501 Bielefeld, Germany}
\email{zhuxiangchan@126.com}
\thanks{This project received funding from the European Research
	Council (ERC) under the European Union’s Horizon 2020 research and innovation
	programme (grant agreement no. 949981). The financial support by the DFG
	through the CRC 1283 “Taming uncertainty and profiting from randomness and
	low regularity in analysis, stochastics and their applications” are greatly acknowledged.
	R.Z. is grateful for the financial support of the NSFC (no. 11922103).
	X.Z. is grateful to the financial support in part by National Key R\&D Program
	of China (no. 2020YFA0712700) and the NSFC (nos. 11771037, 12090014, and
	11688101) and the support by key Lab of Random Complex Structures and Data
	Science, Youth Innovation Promotion Association (2020003), Chinese Academy
	of Science.}

\begin{abstract}
We consider the  stochastic Navier--Stokes equations in three dimensions and prove that the law of analytically weak solutions is not unique. In particular, we focus on three examples of a stochastic perturbation:  an additive, a linear multiplicative and a nonlinear noise of cylindrical type, all  driven by a Wiener process. In these settings, we develop a stochastic counterpart of the convex integration method  introduced recently by Buckmaster and Vicol. This permits to construct probabilistically strong and analytically weak solutions defined up to a suitable stopping time. In addition, these solutions fail the corresponding energy inequality at a prescribed time with a prescribed probability. Then we introduce a general probabilistic construction used to extend the convex integration solutions beyond the stopping time and in particular to the whole time interval $[0,\infty)$. Finally, we show that their law is distinct from the law of   solutions obtained by Galerkin approximation. In particular, non-uniqueness in law holds on an arbitrary time interval $[0,T]$, $T>0$.

\end{abstract}

\subjclass[2010]{60H15; 35R60; 35Q30}
\keywords{}

\date{\today}

\maketitle

\tableofcontents

\section{Introduction}

The fundamental problems in fluid dynamics remain largely open.
On the theoretical side, existence and smoothness of solutions to the three dimensional incompressible Navier--Stokes system  is one of the Millennium Prize Problems. An intimately related question is that of uniqueness of solutions. Intuitively, smooth solutions are unique whereas uniqueness for less regular solutions, such as weak solutions, is very challenging and  not even true for a number of models.

A revolutionary step was made through the method of convex integration by De Lellis and
Sz{\'e}kelyhidi~Jr.
\cite{DelSze2,DelSze3,DelSze13}. They were able to construct infinitely many weak solutions to the incompressible Euler system which dissipate energy and even satisfy various additional criteria such as a global or local energy inequality.
After this breakthrough, an avalanche of excitement and intriguing results followed, proving existence of solutions with often  rather pathological behavior. In particular, it is nowadays well understood that the compressible counterpart of the Euler system  is desperately ill-posed: even  certain smooth initial data give rise to infinitely many weak solutions satisfying an energy inequality, see Chiodaroli et al. \cite{ChKrMaSwI}. Very recently, the non-uniqueness of weak solutions to the incompressible Navier--Stokes equations was obtained by Buckmaster and Vicol \cite{BV19a}, see also Buckmaster, Colombo and Vicol \cite{BCV18}.

In view of these substantial  theoretical difficulties, it is natural to believe  that  a certain probabilistic description is  indispensable and may eventually help with the non-uniqueness issue.
 In particular, it is essential to
develop a suitable  probabilistic understanding of the deterministic
systems, in order to capture their chaotic and intrinsically random nature after the blow-up and  loss of uniqueness.
Moreover, there is evidence that a suitable stochastic perturbation may provide a regularizing effect on deterministically ill-posed problems, in particular those involving transport as shown, e.g., by Flandoli, Gubinelli and Priola \cite{FlaGubPri} and Flandoli and Luo \cite{FL19}. Also a linear multiplicative noise as treated in the present paper has a certain stabilizing effect on the three dimensional Navier--Stokes system, see R\"ockner, Zhu and Zhu \cite{RZZ14}.

On the other hand, an external stochastic forcing is often included in the system of governing equations, taking additional model uncertainties into account. Mathematically, this introduces new phenomena and raises basic questions of solvability of the system, i.e. existence and uniqueness of solutions, as well as their long time behavior. In particular, the question of uniqueness of the probability measures induced by solutions, the so-called uniqueness in law, has been a longstanding  open problem in the field.

\bigskip

In the present paper, we prove that non-uniqueness in law holds for the stochastic three dimensional Navier--Stokes system posed on a periodic domain in a class of analytically weak solutions. This system governs the time evolution of the velocity $u$ of a viscous incompressible fluid under stochastic perturbations. It reads as

\begin{equation}
\label{1}
\aligned
 d u-\nu\Delta u dt+\div(u\otimes u)dt+\nabla Pdt&=G(u)dB,
\\\div u&=0,
\endaligned
\end{equation}
where $G(u)dB$ represents a stochastic force acting on the fluid and $\nu>0$ is the kinematic viscosity.

We particularly focus on three examples of a stochastic forcing, namely,  an additive noise driven by a cylindrical Wiener process $B$ of trace class, i.e.,
\begin{equation}\label{eqad}
G(u)dB=G dB=\sum_{i=1}^\infty G^idB_i,\quad G^i=G^i(x),\quad {\mathrm{Tr}(GG^{*})<\infty,}
\end{equation}
and a linear multiplicative noise driven by a real-valued Wiener process $B_{1}$, i.e.,
\begin{equation}
\label{eqli}
 G(u)dB=udB_1,
\end{equation}
{and finally a nonlinear noise of cylindrical type
\begin{equation}\label{eq:nl}
G(u)dB=\left(\sum_{j=1}^mg_{ij}\big(\langle u,\varphi_1\rangle,\dots,\langle u,\varphi_{k_{ij}}\rangle\big)dB_j\right)_i, \quad g_{ij}\in C_b^3(\mT^3\times \mR^{k_{ij}};\mR),\ \varphi_{i}\in C^{\infty}(\mT^3),
\end{equation}
with  $B=(B_{j})$ being an $m$-dimensional Wiener process and $g_{\cdot j}$ is divergence free with respect to the spatial variable in $\mT^3$.}

In these three settings, we develop a stochastic counterpart of the convex integration method introduced by Buckmaster and Vicol \cite{BV19} and construct  analytically weak solutions with unexpected behavior defined up to suitable stopping times.
The striking feature of these solutions is that they are  probabilistically strong, i.e.,  adapted to the given Wiener process.
This  severely contradicts the general belief present within the SPDEs community, namely, that probabilistically strong
solutions and  uniqueness in law could help with the uniqueness problem for the Navier--Stokes system.

We say that uniqueness in law holds for a system of  SPDEs provided the probability law induced by the solutions is uniquely determined. On the other hand, we say that pathwise uniqueness holds true if two solutions coincide almost surely. There are explicit examples of stochastic differential equations (SDEs), where pathwise
uniqueness does not hold but uniqueness in law is valid.
Pathwise uniqueness for the stochastic Navier--Stokes system essentially poses the same
difficulties as uniqueness in the deterministic setting.
As a
consequence, there has been a clear hope that showing uniqueness in law for the Navier--Stokes system might be easier than proving pathwise uniqueness.
Furthermore,   Yamada--Watanabe--Engelbert's theorem
 states that, for a certain class of SDEs, pathwise uniqueness is equivalent to uniqueness in law and existence of a probabilistically strong solution,  see Kurtz \cite{K07}, Cherny \cite{C03}. This  suggests another possible way towards pathwise uniqueness, provided one could prove uniqueness in law.

Our  main result  proves the above hopes wrong, at least for a certain class of analytically weak solutions. However, the question of uniqueness of the so-called Leray solutions remains an outstanding open problem. In particular, we show that non-uniqueness in law for analytically weak solutions holds true on an arbitrary time interval $[0,T]$, $T>0$.  This trivially implies pathwise non-uniqueness. More precisely, we construct a deterministic divergence-free initial condition $u(0)\in L^{2}$ which gives rise to two solutions to the Navier--Stokes system \eqref{1} with distinct laws. One of the solutions is constructed by means of the convex integration method whereas the other one is a  solution obtained by a classical compactness argument  from a Galerkin approximation, see e.g. \cite{FG95}.

We note that the solutions obtained by Galerkin approximation are clearly more physical as they correspond to Leray solutions in the deterministic setting and satisfy the energy inequality. However, these solutions are not probabilistically strong as the adaptedness with respect to the given noise is lost within the stochastic compactness method.
On the other hand, the convex integration permits to construct adapted solutions up to a stopping time but they behave in an unphysical way with respect to the energy inequality. Moreover, the spatial regularity is worse as we can only prove that they belong to $H^{\gamma}$ for a certain $\gamma>0$ small.

\subsection{Main results}

Even though the main result, i.e., non-uniqueness in law, is the same in the three considered settings \eqref{eqad}, \eqref{eqli} and \eqref{eq:nl}, the proofs are different. The additive noise case is easier and we present a direct construction of two  solutions with different laws. This is not possible in the case of a linear multiplicative noise where the proof becomes more involved. {The nonlinear case is  even more challenging and requires  tools from the theory of rough paths.}
For notational simplicity,   we  suppose from now on  that $\nu=1$.

\subsubsection{Additive noise}

Consider the stochastic Navier--Stokes system driven by an additive noise on $\mathbb{T}^3$, which reads as
\begin{equation}
\label{ns}
\aligned
 d u-\Delta u dt+\div(u\otimes u)dt+\nabla Pdt&=dB,
\\\div u&=0,
\endaligned
\end{equation}
where $B$ is a $GG^*$-Wiener process on a probability space $(\Omega, \mathcal{F}, \mathbf{P})$ and $G$ is a Hilbert--Schmidt operator from $L^2$ to $L^2$.
Let
$(\mathcal{F}_t)_{t\geq0}$ denote the normal filtration generated by $B$, that is, the canonical right continuous filtration augmented by all the $\mathbf{P}$-negligible events.

Our first result in this setting is the existence of a probabilistically strong solution which is defined up to a stopping time and which violates the corresponding energy inequality.

\begin{thm}\label{Main results1}
Suppose that {$\mathrm{Tr}(GG^*) <\infty$}. Let $T>0$, $K>1$ and $\kappa\in (0,1)$ be given. Then there  exist $\gamma\in (0,1)$ and a $\mathbf{P}$-a.s. strictly positive stopping time $\mathfrak{t}$ satisfying $\mathbf{P}(\mathfrak{t}\geq T)>\kappa$ such that the following holds true: There exists
an $(\mathcal{F}_{t})_{t\geq0}$-adapted process $u$ which belongs to  $C([0,\mathfrak{t}];H^{\gamma})$ $\mathbf{P}$-a.s.
and is an analytically weak solution to \eqref{ns} with $u(0)$ deterministic. In addition,
 \begin{equation}\label{eq:vv}
 \esssup_{\omega\in\Omega}\sup_{t\in[0,\mathfrak{t}]}\|u(t)\|_{H^{\gamma}}<\infty,
 \end{equation}
and
\begin{equation}\label{eq:env}
\|u(T)\|_{L^{2}}> K \|u(0)\|_{L^{2}}+K \big(T\,\mathrm{Tr}(GG^{*})\big)^{1/2}
\end{equation}
on the set $\{\mathfrak{t}\geq T\}$.
\end{thm}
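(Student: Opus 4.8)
The plan is to reduce the stochastic problem to a deterministic convex integration scheme by subtracting off the stochastic convolution. First I would let $z$ solve the linear stochastic Stokes system $dz - \Delta z\,dt + \nabla P_z\,dt = dB$, $\div z = 0$, $z(0)=0$, which is the Ornstein--Uhlenbeck-type process associated to \eqref{ns}. Standard estimates give that $z$ has $\mathbf{P}$-a.s. continuous trajectories in $H^{\gamma'}$ for some $\gamma' > \gamma$, and that $\sup_{t\in[0,T]}\|z(t)\|_{C_t H^{\gamma'}}$ has finite moments of every order. Writing $u = v + z$, the remainder $v$ solves, pathwise in $\omega$, the deterministic forced Navier--Stokes system
\begin{equation*}
\partial_t v - \Delta v + \div\big((v+z)\otimes(v+z)\big) + \nabla P_v = 0, \qquad \div v = 0,
\end{equation*}
with $v(0) = u(0)$ deterministic, to be chosen. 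This is now amenable to the Buckmaster--Vicol intermittent jet construction, treating $z$ as a given (random but fixed) perturbation of low regularity.

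Next I would run the iteration producing $(v_q, \mathring R_q)_{q\ge 0}$ solving the Navier--Stokes--Reynolds system with the $z$-terms absorbed into the stress, with the usual inductive bounds $\|v_q\|_{C_tL^2}\lesssim$ something controlled, $\|v_q\|_{C_tH^{\gamma}}$ summable, and $\|\mathring R_q\|_{L^1}\to 0$ geometrically. The new feature compared with the purely deterministic case is that all constants and the rate of convergence depend on the random quantity $N(\omega) := 1 + \sup_{t\in[0,T]}\|z(t)\|_{H^{\gamma'}}$. To keep everything adapted and to control this dependence I would introduce the stopping time
\begin{equation*}
\mathfrak{t} := \inf\{t\ge 0 : \|z(t)\|_{H^{\gamma'}} > L\} \wedge \inf\{t \ge 0 : \text{(another norm of $z$)} > L\} \wedge T,
\end{equation*}
for a deterministic threshold $L = L(T,K,\kappa)$ chosen large enough that $\mathbf{P}(\mathfrak t \ge T) > \kappa$; this uses the a.s. positivity of $\mathfrak t$ (since $z(0)=0$) together with the moment bounds on $z$ via Chebyshev, and the freedom to choose $L$ large. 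On $[0,\mathfrak t]$ all the random constants are bounded by their values at threshold $L$, so the convex integration estimates become uniform in $\omega$, yielding \eqref{eq:vv}; adaptedness of $u = v+z$ follows because each iterate is built by explicit (deterministic) operations from $z$ restricted to $[0,\mathfrak t]$ and from the deterministic datum $u(0)$, hence is $(\mathcal F_t)$-adapted. The limit $v = \lim_q v_q$ solves the forced equation exactly, so $u$ is an analytically weak solution of \eqref{ns} on $[0,\mathfrak t]$.

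Finally, to force the violation of the energy inequality \eqref{eq:env} at time $T$ on $\{\mathfrak t \ge T\}$, I would exploit the freedom in the convex integration scheme to prescribe the energy profile of $v_q$: the construction allows one to design $v$ so that $\|v(T)\|_{L^2}^2$ is as large as we like (this is precisely the mechanism used by Buckmaster--Vicol and in the deterministic non-uniqueness results), in particular larger than $\big(K\|u(0)\|_{L^2} + K(T\,\mathrm{Tr}(GG^*))^{1/2} + \|z(T)\|_{L^2}\big)^2$; since on $\{\mathfrak t\ge T\}$ we have $\|z(T)\|_{L^2}\le C L$ deterministically, and $u(T) = v(T) + z(T)$, the triangle inequality $\|u(T)\|_{L^2}\ge \|v(T)\|_{L^2} - \|z(T)\|_{L^2}$ gives \eqref{eq:env}. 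The main obstacle is the first step done carefully: making the convex integration iteration quantitatively robust against the rough additive perturbation $z$, i.e. tracking how the parameters $\lambda_q, \delta_q$ and the cutoffs must be chosen so that the presence of $z$ (which is only in $H^{\gamma'}$, not smooth) does not destroy the gain in the Reynolds stress at each step; this forces one either to mollify $z$ at scale depending on $q$ and control the mollification error, or to include $z$ additively in the principal part of the perturbation, and in either case to re-derive all the iterative estimates with the extra $z$-dependent terms, uniformly on the stopped interval.
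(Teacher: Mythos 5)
Your proposal follows the same overall strategy as the paper: subtract the stochastic convolution $z$ (solving the linear Stokes system with additive noise), run a Buckmaster--Vicol intermittent-jet convex integration scheme on the resulting random PDE for $v$, introduce a stopping time freezing the norms of $z$ below a deterministic threshold $L$ so that all constants become $\omega$-uniform, and observe that adaptedness is inherited since each iterate is an explicit functional of $z|_{[0,\mathfrak t]}$ and the deterministic data (with one-sided-in-time mollifiers). You have also correctly identified the central technical difficulty --- that $z$ is only H\"older/Sobolev regular, not $C^1$, so it must be mollified or Fourier-truncated at a $q$-dependent scale with the resulting commutator errors absorbed into the Reynolds stress --- and both remedies you propose are viable; the paper uses the Fourier truncation $z_q = \mathbb{P}_{\leq f(q)} z$.

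The one place you take a genuinely different route is the energy violation \eqref{eq:env}. You invoke the Buckmaster--Vicol freedom to prescribe an energy profile $e(t)$ and drive $\|v(T)\|_{L^2}^2$ up to any target. The paper does not prescribe an energy profile at all: instead it seeds the iteration with an explicit, exponentially growing $v_0(t,x) = \frac{L^2 e^{2Lt}}{(2\pi)^{3/2}}(\sin x_3,0,0)$, establishes the geometric increment bound $\|v_{q+1}-v_q\|_{C_tL^2}\le M_0(t)^{1/2}\delta_{q+1}^{1/2}$, and uses the reverse triangle inequality $\|v(T)\|_{L^2} \ge \|v_0(T)\|_{L^2} - \sum_q\|v_{q+1}-v_q\|_{L^2}$ together with $\|v-v_0\|_{L^2} < \tfrac12 M_0(T)^{1/2}$ to get a lower bound that grows like $e^{LT}$; taking $L$ large then beats the threshold. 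Your prescribed-energy route is faithful to the deterministic literature and would work, but requires inserting a trace term $\rho \sim e(t)-\|v_\ell\|^2_{L^2}$ and verifying it stays uniformly positive in the presence of the random $z$-corrections --- a bit more bookkeeping than the paper's ``start big, bound the increments'' argument. Two minor caveats on your sketch: (i) you wedge the stopping time with $T$ rather than $L$, which is fine for the statement but changes the parameter bookkeeping since the paper's $M_0(t)=L^4e^{4Lt}$ is calibrated against a cap at $L$; (ii) you write ``$v(0)=u(0)$ deterministic, to be chosen,'' but the initial value of $v$ is not freely prescribable --- it emerges from the iteration (each step perturbs $v_q(0)$ by $w_{q+1}(0)$), and the paper is careful to note this is an existence statement, not a Cauchy problem.
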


The proof of this result relies on a the convex integration method and the stopping time is employed in the construction in order to control the noise in various bounds. While this result readily implies non-uniqueness in law for solutions defined on the random time interval $[0,\mathfrak{t}]$, our main result is  more general: we prove non-uniqueness in law on an arbitrary time interval or more generally up to an arbitrary stopping time.

\begin{thm}\label{Main results2}
Suppose that {$\mathrm{Tr}(GG^*) <\infty$}. Then non-uniqueness in law holds for the Navier--Stokes system \eqref{ns} on $[0,\infty)$. Furthermore, for every given $T>0$, non-uniqueness in law holds for the Navier--Stokes system \eqref{ns} on $[0,T]$.

\end{thm}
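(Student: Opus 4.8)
The plan is to deduce Theorem \ref{Main results2} from Theorem \ref{Main results1} by combining the convex integration solution (which lives only on the stochastic interval $[0,\mathfrak{t}]$) with a \emph{gluing/extension procedure} that produces a global-in-time analytically weak solution, and then comparing it with a Galerkin--Leray solution starting from the same deterministic datum. First I would set up a notion of solution on the path space: fix the canonical space $\Omega_0 = C([0,\infty);H^{-N})$ (for suitable $N$) with the canonical process and filtration, and define a \emph{probabilistically weak / martingale solution} of \eqref{ns} started at time $s$ from a given datum, in the usual way via the requirement that a certain process is an $(\mathcal F_t)$-martingale with the prescribed quadratic variation $\mathrm{Tr}(GG^*)$. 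The key abstract tool is a measurable selection plus stability result: the set of martingale solutions is nonempty, and one has the strong Markov / disintegration property allowing one to reconstruct a solution after a stopping time by concatenating with solutions started from the stopped state. This is the standard Stroock--Varadhan machinery adapted to SPDEs (as in Flandoli--Romito, Goldys--R\"ockner--Zhang); I would invoke it to argue that the convex integration solution $u$ on $[0,\mathfrak t]$ can be extended: at time $\mathfrak t$ we are left with an $H^\gamma \subset L^2$ state, and from any $L^2$ datum there exists at least one global martingale solution (obtained, e.g., by the Galerkin compactness argument, cf.\ \cite{FG95}). Concatenating along $\mathfrak t$ yields a global analytically weak solution $\tilde u$ whose restriction to $[0,\mathfrak t]$ agrees in law with $u$.

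Next I would run the same construction but stop and extend before $T$: choosing the parameters in Theorem \ref{Main results1} with $\kappa$ close to $1$ and $\mathbf P(\mathfrak t\geq T)>\kappa$, the extended solution $\tilde u$ satisfies, on the event $\{\mathfrak t\geq T\}$ of probability $>\kappa$, the quantitative blow-up-of-energy estimate \eqref{eq:env}, namely $\|\tilde u(T)\|_{L^2} > K\|u(0)\|_{L^2} + K(T\,\mathrm{Tr}(GG^*))^{1/2}$. On the other hand, any Leray-type solution $v$ obtained from Galerkin approximation with the same deterministic initial condition $v(0)=u(0)$ satisfies the energy inequality $\mathbf E\|v(T)\|_{L^2}^2 \leq \|u(0)\|_{L^2}^2 + T\,\mathrm{Tr}(GG^*)$, hence by Chebyshev $\mathbf P\big(\|v(T)\|_{L^2} > K\|u(0)\|_{L^2}+K(T\,\mathrm{Tr}(GG^*))^{1/2}\big) \leq 1/K^2$. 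Choosing $K$ with $1/K^2 < \kappa$ shows the law of $\tilde u(T)$ and the law of $v(T)$ are different finite-dimensional (indeed one-time) marginals, so the path laws on $[0,T]$ differ; this is non-uniqueness in law on $[0,T]$ for every $T>0$, and letting the construction run on all of $[0,\infty)$ (or simply noting the $[0,T]$ statement for, say, $T=1$ already obstructs global uniqueness) gives the statement on $[0,\infty)$.

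The main obstacle, and the step requiring the most care, is the \emph{extension beyond the stopping time} while staying within the class of analytically weak solutions and while keeping everything adapted: one must verify that the convex integration process and the Galerkin solution can be pasted at the random time $\mathfrak t$ into a single object that is still a (probabilistically weak) solution of \eqref{ns} on $[0,\infty)$ --- this requires a measurable-selection argument to choose, $\mathbf P$-a.s.\ in $\omega$, a continuation from the state $u(\mathfrak t(\omega))$, together with a compatibility check that the Wiener process driving the continuation matches the increments of $B$ after $\mathfrak t$. One should also check the trace-class noise is handled correctly in the energy balance and that $H^\gamma$ convergence at $\mathfrak t$ is enough to serve as an $L^2$ initial datum for the second phase. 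Everything else --- the Chebyshev comparison, the choice of $K$ and $\kappa$, and the reduction of "distinct laws" to "distinct one-time marginals" --- is routine once the extension is in place.
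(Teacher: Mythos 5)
Your plan follows essentially the same route as the paper: (i) formulate martingale solutions on the canonical path space and prove existence/stability, (ii) represent the stopping time $\mathfrak t$ intrinsically on the path space so that the law of the convex integration solution becomes a martingale solution on $[0,\mathfrak t]$, (iii) concatenate at $\mathfrak t$ with a martingale solution chosen by measurable selection, (iv) compare the resulting global law with a Galerkin solution using the failure of the energy estimate. Both your Chebyshev argument on the one-time marginals and the paper's direct comparison of $E\|x(T)\|_{L^2}^2$ give the same conclusion, and your parameter choices are compatible (with $K=2$, $\kappa=1/2$ one already has $1/K^2<\kappa$).

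The one place where your write-up is genuinely too quick is the sentence attributing the concatenation to ``the standard Stroock--Varadhan machinery adapted to SPDEs.'' The paper explicitly points out that this machinery does \emph{not} apply here: the hitting-time $\mathfrak t$ is a stopping time only for the right-continuous filtration $(\mathcal B_t)_{t\geq 0}$, not for the raw canonical filtration $(\mathcal B^0_t)_{t\geq 0}$, and as a consequence the family $(Q_\omega)$ produced by measurable selection need not be a regular conditional probability distribution of the glued measure given $\mathcal B_{\mathfrak t}$. This is precisely why Theorems 6.1.2 and 1.2.10 of \cite{SV79}, which would ordinarily deliver the martingale property of the concatenation, cannot be invoked. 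The paper circumvents this by isolating the extra hypothesis \eqref{Q1} --- that $Q_\omega(\tau_L(\omega')=\tau_L(\omega))=1$ for $P$-a.e.\ $\omega$ --- and showing directly (Proposition~\ref{prop:2}) that \eqref{Q1} together with \eqref{qomega}--\eqref{qomega2} suffices for (M1)--(M3). Verifying \eqref{Q1} (Proposition~\ref{prp:ext2}) is itself nontrivial: one must express $\mathfrak t$ as a hitting time of a path-functional $Z^\omega$ defined without reference to any probability measure via \eqref{eq:M}--\eqref{eq:Z}, establish the identity $Z^u=z$ under the convex integration law, and then exploit the improved $C H^{1-\delta}\cap C^{1/2-\delta}_{\mathrm{loc}}L^2$ regularity of $Z$ under any martingale solution to rewrite the $(\mathcal B_t)$-stopping time as the $(\mathcal B^0_t)$-measurable time $\bar\tau_L$, which is what makes \eqref{Q1} checkable. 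Your ``compatibility of the Wiener process after $\mathfrak t$'' points in the right direction, but without \eqref{Q1} and the $\bar\tau_L$ argument the martingale property (M2) of the glued measure does not follow. If you incorporate this step, your proof is complete and coincides with the paper's.
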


In order to derive the result of  Theorem~\ref{Main results2} from Theorem~\ref{Main results1}, it is necessary to  extend the convex integration solutions to the whole time interval $[0,\infty)$. To this end, we present a general probabilistic construction which connects the law of solutions defined up to a stopping time to a law of a solution obtained by the classical compactness argument. The principle difficulty is to  allow for the concatenation of solutions at a random  time. Since  the stopping time $\mathfrak{t}$ is defined in terms of the solution $u$, we work with the notion of martingale solution which is defined as the law of a solution $u$. Consequently, we are able to obtain non-uniqueness in law, i.e., non-uniqueness of martingale solutions directly, as opposed to the case of a linear multiplicative noise.

\subsubsection{Linear multiplicative noise}

Consider the  stochastic Navier--Stokes equation driven by linear multiplicative noise on  $\mathbb{T}^3$, which reads as
\begin{equation}\label{ns1}
\aligned
 d u-\Delta u dt+\div(u\otimes u)dt+\nabla Pdt&=udB,
\\
\div u&=0,
\endaligned
\end{equation}
where $B$ is a real-valued Wiener process on a probability space $(\Omega, \mathcal{F}, \mathbf{P})$. Similarly to above, we denote by $(\mathcal{F}_t)_{t\geq0}$ the normal filtration generated by $B$.
The main results in this case are as follows.

\begin{thm}\label{Main results1 li}

Let $T>0$, $K>1$ and $\kappa\in (0,1)$ be given. Then there  exist  $\gamma\in (0,1)$ and a $\mathbf{P}$-a.s. strictly positive stopping time $\mathfrak{t}$ satisfying $\mathbf{P}(\mathfrak{t}\geq T)>\kappa$ and the following holds true: There exists an $(\mathcal{F}_{t})_{t\geq0}$-adapted process $u$ which belongs to  $C([0,\mathfrak{t}];H^{\gamma})$ $\mathbf{P}$-a.s. and is an  analytically weak solution to \eqref{ns1} with $u(0)$ deterministic. In addition,
 \begin{equation*}
 \esssup_{\omega\in\Omega}\sup_{t\in[0,\mathfrak{t}]}\|u(t)\|_{H^{\gamma}}<\infty,
 \end{equation*}
and
\begin{equation*}
\|u(T)\|_{L^{2}}> K e^{T/2} \|u(0)\|_{L^{2}}
\end{equation*}
on the set $\{\mathfrak{t}\geq T\}$.
\end{thm}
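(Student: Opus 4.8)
The plan is to remove the noise by an exponential transformation, run the convex integration scheme pathwise on the resulting random PDE, and then transform back. Since $B_{1}(0)=0$, set $v:=e^{-B_{1}}u$. An application of It\^o's formula (the martingale parts cancel, since $e^{-B_{1}}$ and $u$ carry $dB_{1}$ with coefficients $-e^{-B_{1}}$ and $u$, respectively) shows that $v$ solves the random partial differential equation
\begin{equation*}
\partial_{t}v-\Delta v+\tfrac12 v+e^{B_{1}}\div(v\otimes v)+\nabla\tilde P=0,\qquad \div v=0,
\end{equation*}
with $v(0)=u(0)$. For each fixed $\omega$ this is a deterministic Navier--Stokes type system whose only stochastic ingredient is the scalar, time-continuous, $(\mathcal F_{t})$-adapted coefficient $t\mapsto e^{B_{1}(t,\omega)}$ in front of the nonlinearity, together with the harmless lower order damping $\tfrac12 v$. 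It therefore suffices to construct an $(\mathcal F_{t})$-adapted $v\in C([0,\mathfrak t];H^{\gamma})$ solving this system with the required bounds; then $u:=e^{B_{1}}v$ is the desired analytically weak solution of \eqref{ns1}.

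Next I would build a sequence $(v_{q},\mathring R_{q})_{q\ge 0}$ solving the relaxed system
\begin{equation*}
\partial_{t}v_{q}-\Delta v_{q}+\tfrac12 v_{q}+e^{B_{1}}\div(v_{q}\otimes v_{q})+\nabla p_{q}=\div\mathring R_{q},\qquad \div v_{q}=0,
\end{equation*}
by adapting the intermittent jet scheme of Buckmaster--Vicol \cite{BV19}: at each step $v_{q+1}=v_{q}+w_{q+1}$, where the perturbation $w_{q+1}$ is a superposition of intermittent building blocks whose amplitudes are determined by $\mathring R_{q}$ and by a prescribed kinetic energy profile. Since $e^{B_{1}(t)}$ and $\mathring R_{q}$ are $(\mathcal F_{t})$-adapted, every $v_{q}$ is adapted; the extra factor $e^{B_{1}}$ only multiplies the nonlinearity, so it enters the Reynolds stress estimates solely through $\sup_{[0,\mathfrak t]}e^{B_{1}}$, while $\tfrac12 v_{q}$ is absorbed into the linear error. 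Arranging, exactly as in the deterministic convex integration construction already set up earlier in the paper for \eqref{ns} (which also fixes $v(0)=u(0)$ deterministic), that $v_{q}\to v$ in $C_{t}H^{\gamma}$ and $\mathring R_{q}\to 0$, one obtains $v$ together with $\|v(t)\|_{L^{2}}^{2}=e(t)$ for $t$ in a neighbourhood of $T$, where the smooth profile $e$ is at our disposal.

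Now define the stopping time $\mathfrak t=\mathfrak t_{L}:=\inf\{t\ge 0:\ |B_{1}(t)|\ge L\}\wedge L$. Since $B_{1}$ is continuous with $B_{1}(0)=0$, $\mathfrak t_{L}>0$ $\mathbf P$-a.s. and $\mathbf P(\mathfrak t_{L}\ge T)\to 1$ as $L\to\infty$, so we fix $L$ with $\mathbf P(\mathfrak t_{L}\ge T)>\kappa$. On $[0,\mathfrak t_{L}]$ we have $e^{-L}\le e^{B_{1}(t)}\le e^{L}$, hence all the bounds in the iteration become deterministic with constants depending only on $L$ and on the profile $e$; in particular $v\in C([0,\mathfrak t_{L}];H^{\gamma})$ and $\esssup_{\omega}\sup_{t\le\mathfrak t_{L}}\|v(t)\|_{H^{\gamma}}<\infty$, and the same holds for $u=e^{B_{1}}v$. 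Finally choose $e$ with $e(T)>K^{2}e^{T+2L}\|u(0)\|_{L^{2}}^{2}$. Then on $\{\mathfrak t_{L}\ge T\}$ one has $B_{1}(T)\ge -L$, so
\begin{equation*}
\|u(T)\|_{L^{2}}=e^{B_{1}(T)}\|v(T)\|_{L^{2}}=e^{B_{1}(T)}e(T)^{1/2}\ge e^{-L}e(T)^{1/2}>Ke^{T/2}\|u(0)\|_{L^{2}},
\end{equation*}
which is the claimed estimate, while $u(0)=v(0)$ is deterministic.

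The main obstacle is the random, time-dependent coefficient $e^{B_{1}(t)}$ multiplying the nonlinearity of the transformed equation: one must recheck the entire Buckmaster--Vicol estimate cascade (choice of amplitude functions, the time mollification, the oscillation and transport errors) so that every constant depends only on $\sup_{[0,\mathfrak t_{L}]}e^{B_{1}}$, i.e. on $L$, and is otherwise uniform in $\omega$, and one must choose the stopping time $\mathfrak t_{L}$ so that it simultaneously keeps this coefficient trapped in $[e^{-L},e^{L}]$ and keeps $\mathbf P(\mathfrak t_{L}\ge T)$ above $\kappa$. Once this careful bookkeeping is in place, the remainder is a routine stochastic transcription of the deterministic scheme.
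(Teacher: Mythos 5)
Your overall strategy matches the paper's: transform via $v=e^{-B_1}u$ (the paper writes $\theta=e^{B}$ and $v=\theta^{-1}u$), obtaining the random PDE $\partial_t v+\tfrac12 v-\Delta v+\theta\div(v\otimes v)+\theta^{-1}\nabla P=0$, then run an $(\mathcal F_t)$-adapted intermittent-jet convex integration on this system up to a stopping time controlling the coefficient $\theta$, and finally transform back and check the failure of the energy inequality on $\{\mathfrak t\geq T\}$. So in spirit you reproduce Section~\ref{s:1.3}.

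The genuine gap is in your choice of stopping time. You take $\mathfrak t_L=\inf\{t\geq 0:|B_1(t)|\geq L\}\wedge L$, which only gives the pointwise pinching $e^{-L}\leq\theta(t)\leq e^L$ on $[0,\mathfrak t_L]$. But the convex integration scheme requires a quantitative \emph{time-H\"older} control of $\theta$, not just a sup bound. Concretely, after time mollification one must estimate the commutator errors
\begin{equation*}
R_{\textrm{com}}=\theta_\ell(v_\ell\mathring\otimes v_\ell)-(\theta\,v_q\mathring\otimes v_q)*_x\phi_\ell*_t\varphi_\ell,\qquad
R_{\textrm{com}1}=(\theta_\ell-\theta)(v_{q+1}\mathring\otimes v_{q+1}),
\end{equation*}
and the crucial smallness $|\theta_\ell(t)-\theta(t)|\lesssim\ell^{1/2-2\delta}$ is obtained from $\|\theta\|_{C_t^{1/2-2\delta}}\lesssim m_L^2$, which in turn comes from the H\"older seminorm of $B$. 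With your stopping time this seminorm is \emph{not} controlled uniformly in $\omega$, so the mollification errors cannot be made to decay like $\delta_{q+2}$ in the iteration, and the Reynolds stress estimate at level $q+1$ breaks. This is exactly why the paper defines (see \eqref{stopping time li})
\begin{equation*}
T_L:=\inf\{t>0,\ |B(t)|\geq L^{1/4}\}\wedge \inf\{t>0,\ \|B\|_{C_t^{1/2-2\delta}}\geq L^{1/2}\}\wedge L,
\end{equation*}
so that on $[0,T_L]$ one has $\|\theta\|_{C_t^{1/2-2\delta}}+|\theta(t)|+|\theta^{-1}(t)|\leq m_L^2$. A smaller but related point: to keep the iterates $(\mathcal F_t)$-adapted one must use one-sided-in-time mollifiers (supported in $\mathbb R^+$); you allude to ``careful bookkeeping'' but this needs to be made explicit, since it is the reason adaptedness survives the mollification step. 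Your idea of prescribing an exact energy profile $\|v(t)\|_{L^2}^2=e(t)$ is a reasonable variant of what the paper does (the paper starts from a large explicit shear $v_0$ and proves a lower bound on $\|v(T)\|_{L^2}$ via the iterative increments), but this is a cosmetic difference; the stopping-time issue is the one that must be fixed.
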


\begin{thm}\label{Main results2 li}
Non-uniqueness in law holds for the Navier--Stokes system \eqref{ns1} on $[0,\infty)$. Furthermore, for every given $T>0$, non-uniqueness in law holds for the Navier--Stokes system \eqref{ns1} on $[0,T]$.
\end{thm}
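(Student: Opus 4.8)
The strategy mirrors the passage from Theorem~\ref{Main results1} to Theorem~\ref{Main results2} in the additive setting: starting from the convex integration solution of Theorem~\ref{Main results1 li}, which a priori lives only on the random interval $[0,\mathfrak{t}]$, one first extends it to a global-in-time solution on $[0,\infty)$ and then distinguishes its law from the law of a Leray-type solution by comparing the second moments of the $L^2$-norm at the fixed time $T$. The new feature relative to \eqref{ns} is the multiplicative structure of the noise; unlike the fully nonlinear noise \eqref{eq:nl}, however, it does not require rough paths and can be handled by the classical exponential transformation.

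First I would fix the solution concept. For \eqref{ns1} it is convenient to work with probabilistically weak (martingale) solutions, recorded as laws of the pair $(u,B_1)$ on the path space, and to pass to the random PDE solved by $v:=e^{-B_1}u$; Itô's formula gives, pathwise in $B_1$,
\begin{equation*}
\partial_t v-\Delta v+e^{B_1}\div(v\otimes v)+\nabla\tilde P=-\tfrac12\,v,\qquad \div v=0,
\end{equation*}
so that the construction underlying Theorem~\ref{Main results1 li} may be read as producing an $(\mathcal F_t)$-adapted, analytically weak solution $(u,B_1)$ of \eqref{ns1} on $[0,\mathfrak{t}]$ with deterministic $u(0)$, satisfying $\esssup_{\omega\in\Omega}\sup_{t\in[0,\mathfrak{t}]}\|u(t)\|_{H^\gamma}<\infty$ and $\|u(T)\|_{L^2}>Ke^{T/2}\|u(0)\|_{L^2}$ on $\{\mathfrak{t}\geq T\}$. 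In parallel, a standard Galerkin/stochastic compactness argument (see e.g.\ \cite{FG95}) provides, for every deterministic datum $y\in L^2$, at least one global martingale solution $u^{\mathrm{L}}$ of \eqref{ns1} starting from $y$ and satisfying the energy inequality; testing with $u^{\mathrm{L}}$, using the Itô correction for $u^{\mathrm{L}}dB_1$ and Gronwall's lemma yields $\bE\|u^{\mathrm{L}}(t)\|_{L^2}^2\leq e^{t}\|y\|_{L^2}^2$ for all $t\geq0$.

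The core step is the extension of the convex integration solution, which I would carry out by a concatenation at the stopping time $\mathfrak{t}$: one disintegrates the stopped law of $(u,B_1)$ over $\mathcal F_{\mathfrak{t}}$ and glues, from time $\mathfrak{t}$ onward, a measurably selected Leray solution of \eqref{ns1} started at $u(\mathfrak{t})$, thereby producing a martingale solution $(\tilde u,\tilde B_1)$ on $[0,\infty)$ that agrees with the convex integration solution on $[0,\mathfrak{t}]$. This gluing is the main obstacle, and it is here that the multiplicative noise costs extra work relative to \eqref{ns}: after the restart the equation is driven by the increments $\tilde B_1(\mathfrak{t}+\cdot)-\tilde B_1(\mathfrak{t})$, yet its drift and diffusion coefficients still involve the absolute Brownian value through the factor $e^{B_1}$, so one must carry the $\mathcal F_{\mathfrak{t}}$-measurable ``phase'' $B_1(\mathfrak{t})$ through the construction and verify that the concatenated driving process is a Wiener process for the enlarged filtration and that the (analytically weak, martingale) solution property is preserved across the junction. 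I would treat this using the transformed $v$-formulation together with a Kuratowski--Ryll-Nardzewski measurable selection applied to the set-valued map $y\mapsto$ \{Leray solutions of \eqref{ns1} from $y$\}, adapting the general probabilistic construction developed for \eqref{ns}.

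Finally, the conclusion is a moment comparison. Since the extension does not alter $\tilde u$ on $\{\mathfrak{t}\geq T\}$ up to time $T$, Theorem~\ref{Main results1 li} gives
\begin{equation*}
\bE\|\tilde u(T)\|_{L^2}^2\geq\bE\big[\1_{\{\mathfrak{t}\geq T\}}\|\tilde u(T)\|_{L^2}^2\big]>K^2e^{T}\|u(0)\|_{L^2}^2\,\mathbf{P}(\mathfrak{t}\geq T)>K^2\kappa\,e^{T}\|u(0)\|_{L^2}^2 .
\end{equation*}
Fixing at the outset $\kappa$ and $K$ with $K^2\kappa>1$ (for instance $\kappa=\tfrac12$, $K=2$) and choosing a nonzero deterministic initial condition $u(0)\in L^2$, this gives $\bE\|\tilde u(T)\|_{L^2}^2>e^{T}\|u(0)\|_{L^2}^2\geq\bE\|u^{\mathrm{L}}(T)\|_{L^2}^2$ for the Leray solution $u^{\mathrm{L}}$ emanating from the same $u(0)$. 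Since $t\mapsto\|\,\cdot\,(t)\|_{L^2}^2$ is a measurable functional on the path space, the laws of $\tilde u$ and $u^{\mathrm{L}}$ restricted to $[0,T]$ are distinct, which is non-uniqueness in law on $[0,T]$; the same two global martingale solutions then also have distinct laws on $[0,\infty)$.
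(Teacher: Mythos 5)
Your proposal is correct in substance and follows the same broad outline as the paper: pass to the joint law of $(u,B_1)$ because the stopping time $\tau_L$ is a functional of the noise rather than of the solution, extend the convex integration solution by concatenation with a measurably selected global solution restarted at the stopping time (Propositions~\ref{prop:1 1}--\ref{prp:ext2 1}), and then compare second moments at time $T$. There is one genuine difference in the concluding step. The paper, after producing two distinct probabilistically weak solutions (laws of the pair $(u,B_1)$ on $\bar\Omega$), invokes its infinite-dimensional generalization of Cherny's theorem (Theorem~\ref{cherny}, Appendix~\ref{s:C}) to pass from joint non-uniqueness in law to non-uniqueness of \emph{martingale} solutions (laws of $u$ alone). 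You instead note that the distinguishing quantity $\mathbf{E}\|u(T)\|_{L^2}^2$ is a functional of the $u$-marginal, so the $u$-marginals already differ; since the $u$-marginal of a probabilistically weak solution is a martingale solution in the sense of Definition~\ref{martingale solution} (verify (M2) by the tower property with respect to the sub-filtration generated by $x$), this already gives two distinct martingale solutions without Cherny's theorem. Both routes are valid; yours is the more elementary, the paper's is the more modular, and it also establishes a general transfer principle of independent interest.

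One inaccuracy worth correcting: you write that after the restart the equation's ``drift and diffusion coefficients still involve the absolute Brownian value through the factor $e^{B_1}$.'' The concatenation in the paper (Section~\ref{ss:gen1}, Section~\ref{s:1.4}) is carried out at the level of the SPDE \eqref{ns1} itself, whose drift $F(u)$ and diffusion $G(u)=u$ do \emph{not} contain $e^{B_1}$; the transformation $v=e^{-B_1}u$ is used only inside the convex integration construction of Theorem~\ref{Main results1 li} (Section~\ref{s:1.3}). The reason the ``phase'' $y(\tau_L)$ must be carried through the gluing is different: $\tau_L$ is a hitting time of pathwise norms of $B_1$, so one must track the noise trajectory, and the concatenated canonical noise process $y$ must be continuous through the junction. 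That is precisely why the paper works on the extended path space $\bar\Omega$ with probabilistically weak solutions, and why the restarted solution $R_{\tau_L(\omega),x(\tau_L(\omega),\omega),y(\tau_L(\omega),\omega)}$ carries $y(\tau_L(\omega),\omega)$ as part of its initial data. Once this is phrased at the level of \eqref{ns1}, there is no multiplicative factor to worry about in the coefficients.
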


Contrary to the additive noise setting, the stopping time $\mathfrak{t}$ in  the case of the linear multiplicative noise is a function of $B$ and not a function of the solution $u$. As a consequence, we are forced to work with the notion of a probabilistically weak solution which governs the joint law of $(u,B)$. We extend our method of concatenation of two solutions to connect the probabilistically weak solution obtained through Theorem~\ref{Main results1 li} to a probabilistically weak solution obtained by compactness. Accordingly, we first only deduce joint non-uniqueness in law, i.e., non-uniqueness of probabilistically weak solutions. Finally, we prove that joint non-uniqueness in law implies non-uniqueness in law, concluding the proof of Theorem~\ref{Main results2 li}. This relies on a generalization of the result of Cherny \cite{C03} to the infinite dimensional setting which is interesting in its own right, see Appendix \ref{s:C}.

\subsubsection{A nonlinear  noise}
We consider the following Navier--Stokes equations
\begin{equation}
\label{ns g}
\aligned
d u-\Delta u dt+\div(u\otimes u)dt+\nabla Pdt&=G(u)dB,
\\\div u&=0,
\endaligned
\end{equation}
with $G(u)$ defined via \eqref{eq:nl} and $B$ an $m$-dimensional Brownian motion defined on a probability space $(\Omega, \mathcal{F},\mathbf{P})$ and we denote by  $(\mathcal{F}_{t})_{t\geq0}$ its normal filtration. In this setting, we apply convex integration in order to establish the following results.

\begin{thm}\label{Main resultsg1}
	Let $T>0$, $K>1$ and $\kappa\in (0,1)$ be given. Then there  exist $\gamma\in (0,1)$ and a $\mathbf{P}$-a.s. strictly positive stopping time $\mathfrak{t}$ satisfying $\mathbf{P}(\mathfrak{t}\geq T)>\kappa$ such that the following holds true: There exists
	an $(\mathcal{F}_{t})_{t\geq0}$-adapted process $u$ which belongs to  $C([0,\mathfrak{t}];L^2)\cap L^2([0,\mathfrak{t}];H^\gamma)$ $\mathbf{P}$-a.s.
	and is an analytically weak solution to \eqref{ns g} with $u(0)$ deterministic. In addition, for $q\in \mN$
\begin{equation}\label{energy}
	\mathbf{E}\left[\sup_{r\in [0,t\wedge \mathfrak{t}]}\|u(r)\|_{L^{2}}^{2q}+\int_0^{t\wedge \mathfrak{t}}\|u(r)\|_{H^{\gamma}}^2dr\right]\leq C_{t,q},
	\end{equation}
	for some constant $C_{t,q}$ and
	\begin{equation}\label{eq:env g}
	\mathbf{E}\left[1_{\mathfrak t\geq T}\|u(T)\|_{L^{2}}^2\right]> K \|u(0)\|_{L^{2}}^2+KTC_{G},
	\end{equation}
	with
	$$
	C_{G}=(2\pi)^{3}\sum_{i=1}^{3}\sum_{j=1}^{m}\|g_{i j}\|^{2}_{C^{0}}.
	$$
	 \end{thm}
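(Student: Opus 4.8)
\textbf{Proof proposal for Theorem~\ref{Main resultsg1}.}

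The plan is to run a stochastic convex integration scheme at the level of the Navier--Stokes--Reynolds system, exactly as in the additive and linear multiplicative cases, but now carrying the nonlinear noise term through the iteration by means of a rough-path lift of $B$. First I would rewrite \eqref{ns g} in the Reynolds-stress formulation: construct a sequence $(u_q, \mathring R_q)_{q\geq 0}$ solving
\begin{equation*}
\partial_t u_q - \Delta u_q + \div(u_q\otimes u_q) + \nabla P_q = \div \mathring R_q + G(u_q)\dot B,\qquad \div u_q = 0,
\end{equation*}
with $\mathring R_q$ a symmetric traceless matrix field, and prescribe inductive bounds of the form $\|u_q\|_{C_t L^2}\lesssim$ something controlled, $\|u_q - u_{q-1}\|_{C_t L^2}\lesssim \delta_{q}^{1/2}$ and $\|\mathring R_q\|_{C_t L^1}\lesssim \delta_{q+1}$, where $\delta_q = \lambda_q^{-2\beta}$ for geometric frequencies $\lambda_q$ and a small parameter $\beta>0$. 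Because $G(u)\dot B$ is genuinely nonlinear in $u$ and multiplied by white noise, I would not try to solve the equation pathwise in the Itô sense inside the iteration; instead I would fix an enhanced Brownian path (the pair $(B, \int B\otimes \circ dB)$ in the Stratonovich lift), work $\omega$ by $\omega$ on the event where this lift has finite rough-path norm up to a stopping time, and use the Stratonovich form so that the chain rule applies. The stopping time $\mathfrak t$ is chosen so that the rough-path norm of the enhancement, together with a suitable Gaussian-type control on $\sup_{[0,t]}\|z_q\|$ for auxiliary processes, stays below a threshold $L$; the condition $\mathbf P(\mathfrak t\geq T)>\kappa$ then follows by taking $L$ large, as in Theorem~\ref{Main results1}.

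The core of the argument is the perturbation step $u_{q+1} = u_q + w_{q+1}$, where $w_{q+1}$ is built from intermittent jets / Mikado-type building blocks (as in Buckmaster--Vicol \cite{BV19}) with amplitudes $a_{(\xi)}$ depending on $\mathring R_q$ via the geometric lemma, so that the low-frequency part of $w_{q+1}\otimes w_{q+1}$ cancels $\mathring R_q$. The new Reynolds stress collects the usual linear, transport, oscillation, and corrector errors, plus two genuinely stochastic contributions: the commutator between the perturbation and the noise coefficient, $\div^{-1}\big(G(u_{q+1}) - G(u_q)\big)\dot B$, and the error created because $G$ is only $C^3_b$, not linear. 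Here the rough-path viewpoint is essential: the term $G(u_q)\dot B$ should be absorbed by subtracting a corrector $z_q$ solving a linear problem driven by the noise (an analogue of the Ornstein--Uhlenbeck/rough-linear equation), so that the convex integration is performed on $v_q := u_q - z_q$, which solves a random PDE with coefficients that are merely bounded and Hölder in time but contain no white noise. The price is that $z_q$ itself must be estimated in $C_t C^\varepsilon_x$ using the rough-path norm of the enhancement on $[0,\mathfrak t]$; the $C^3_b$ regularity of the $g_{ij}$ is exactly what is needed to differentiate $G(u_q)$ twice when expanding the rough integral, and the divergence-free assumption on $g_{\cdot j}$ guarantees $z_q$ stays divergence free so that the decomposition is consistent.

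Once the iteration converges, $u := \lim_q u_q$ in $C([0,\mathfrak t]; L^2)$ with $\mathring R_q \to 0$, so $u$ is an analytically weak solution of \eqref{ns g} on $[0,\mathfrak t]$; adaptedness is preserved because every step only uses $B$ up to the current time and the building blocks are deterministic. The moment bound \eqref{energy} follows from the geometric decay of $\|u_{q+1}-u_q\|_{C_tL^2}$ together with $L^p$-integrability in $\omega$ of the rough-path norm (Gaussian tails of the Brownian enhancement), which also lets one pass from the pathwise convex-integration bounds to expectations; the $\sup$-in-time and the $L^2_tH^\gamma$ part come from interpolating the $C_tL^2$ bound with a high-frequency $H^b$ bound on $u_q$. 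Finally, to arrange the energy violation \eqref{eq:env g}, I would, as in the additive case, prescribe the value $\|u_{q_0}(T)\|_{L^2}^2$ at the first stage $q_0$ to be so large (larger than $K\|u(0)\|_{L^2}^2 + KTC_G$ by a definite margin) and then make all later corrections $\|w_{q+1}\|_{C_tL^2}$ small enough that the target survives to the limit on $\{\mathfrak t\geq T\}$; taking expectations against $1_{\mathfrak t\geq T}$ and using $\mathbf P(\mathfrak t\geq T)>\kappa$ gives \eqref{eq:env g} after adjusting constants. The main obstacle I anticipate is controlling the stochastic commutator error in the Reynolds stress: one must show that replacing $G(u_q)$ by $G(u_{q+1})$ produces an error of size $o(\delta_{q+2})$ in $C_tL^1$ uniformly on $[0,\mathfrak t]$, which requires the rough-path estimates for products $a_{(\xi)}\,z_q$ and the Hölder-in-time bounds on $z_q$ to beat the frequency growth $\lambda_{q+1}$ — this is where the interplay between the convex-integration parameters ($\lambda_q$, $\delta_q$, the intermittency exponents) and the (fixed, but possibly large) rough-path norm $L$ has to be balanced carefully.
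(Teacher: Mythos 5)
Your high-level strategy — split $u_q = v_q + z_q$ with $z_q$ absorbing the stochastic integral, run convex integration on the random PDE for $v_q$, control $z_q$ pathwise by rough paths on the event where the enhanced Brownian path has controlled norm, and seed the iteration with a large deterministic $v_0$ to violate the energy inequality — is the right one and matches the paper's. But there are three genuine gaps.

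First, you propose working with the Stratonovich lift "so that the chain rule applies." The target equation~\eqref{ns g} is an It\^o SPDE, and the paper uses the It\^o rough-path lift $\mathbb{B}_{s,t}=\int_s^t B_{s,r}\otimes dB_r$ precisely so that the rough-path solution of the linear layer agrees $\mathbf{P}$-a.s. with the unique adapted It\^o solution (Appendix~D.1); this is what makes the constructed $u$ adapted and a genuine weak solution of the It\^o equation. If you insist on Stratonovich you must introduce and then propagate an It\^o--Stratonovich correction term $-\tfrac12\,DG(u)G(u)\,dt$ through the entire scheme, which your proposal does not do.

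Second, and more seriously, you do not address the time-regularity bottleneck. To make sense of the rough integral $\int P(t-s)G(v_{q}+z_{q+1})\,dB_s$ and to control differences $z_{q+1}-z_q$ pathwise, the iterates must be controlled rough paths and the increments $v_q - v_{q-1}$ must have H\"older regularity strictly above $1/2$ (complementary Young to Brownian motion). The paper achieves this with $\alpha_0=2/3+\kappa$ but only at the price of drastically lowering the spatial regularity (working in $B^{-5-\delta}_{1,1}$), deriving new bounds for the intermittent jets (Lemma~B.2) and changing $r_\perp$ from $\lambda_{q+1}^{-6/7}$ to $\lambda_{q+1}^{-27/28}$; see~\eqref{iteration g} and the discussion preceding Theorem~\ref{Main resultsg1}. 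With the standard choice of parameters and the standard $C^1_{t,x}$ bounds, the time regularity of $w^{(p)}_{q+1}$ is far too poor to close the rough-path estimate for $z_{q+1}-z_q$ — this is exactly why the paper introduces the new lemma and restricts to cylindrical noise. Your proposal as written would fail at this step.

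Third, your description of the corrector equation is circular: you want $z_q$ to "absorb $G(u_q)\dot B$," i.e.\ solve an equation involving $G(v_q+z_q)$, but $v_q$ is built \emph{after} $z_q$. The paper breaks the loop by shifting the index: $z_q$ solves $dz_q-\Delta z_q\,dt = G(v_{q-1}+z_q)\,dB$ (equation~\eqref{induction g'}), parametrized by the \emph{previous} velocity field. This mismatch must be accounted for in the Reynolds-stress errors $R_{\mathrm{com}1}$ (which measures $z_{q+1}-z_\ell$), and it is what forces the careful balance you gesture at but do not carry out. Relatedly, once the noise is entirely housed in the $z_q$-equation, the Reynolds-stress equation for $v_q$ contains no white noise at all, so the error $\mathcal{R}\big(G(u_{q+1})-G(u_q)\big)\dot B$ you list never appears; the genuine new errors are the commutators involving $z_{q+1}-z_\ell$.
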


\begin{thm}\label{Main resultsg2}
Non-uniqueness in law holds for the Navier--Stokes system \eqref{ns g} on $[0,\infty)$. Furthermore, for every given $T>0$, non-uniqueness in law holds for the Navier--Stokes system \eqref{ns g} on $[0,T]$.
	\end{thm}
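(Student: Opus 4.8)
The plan is to deduce Theorem~\ref{Main resultsg2} from Theorem~\ref{Main resultsg1} by the same two-step strategy that was indicated for the additive and linear multiplicative cases, but adapted to the nonlinear noise \eqref{eq:nl}. Since the coefficients $g_{ij}$ depend on the solution through finitely many functionals $\langle u,\varphi\rangle$, the stopping time $\mathfrak t$ from Theorem~\ref{Main resultsg1} is a function of $u$ only (not of $B$ separately), so it suffices to work with the notion of martingale solution, i.e.\ the law of $u$ on the path space $C([0,\infty);H^{-N})\cap L^2_{\mathrm{loc}}([0,\infty);L^2)$ for a suitable $N$. First I would set up the martingale problem carefully: because the noise is genuinely nonlinear, the martingale formulation involves both the drift part $\int_0^t(\Delta u-\div(u\otimes u)-\nabla P)\,ds$ tested against divergence-free $\psi$, and the quadratic variation $\int_0^t\sum_{i,j}\langle g_{ij}(\langle u,\varphi_\cdot\rangle),\psi_i\rangle^2\,ds$; one must check that the solution from Theorem~\ref{Main resultsg1}, restricted to $[0,\mathfrak t]$, solves this martingale problem up to time $\mathfrak t$.

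The heart of the argument is the concatenation/extension procedure. I would establish the two structural lemmas: (i) a restriction/stability statement, that if $P$ is a martingale solution up to a stopping time $\tau$ then its disintegration $P(\cdot\mid \mathcal B_\tau)$ at $\omega$ gives, for a.e.\ $\omega$, a martingale solution started from the value $u(\tau(\omega),\omega)$; and (ii) an existence statement, that for every divergence-free $L^2$ initial datum there is a martingale solution to \eqref{ns g} on $[0,\infty)$ satisfying the energy inequality, obtained by Galerkin/stochastic compactness (this is where the boundedness $g_{ij}\in C_b^3$ and the divergence-free condition on $g_{\cdot j}$ are used to close a priori estimates and pass to the limit in the nonlinear noise term). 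Combining these via a measurable selection of a martingale solution started from each endpoint value, one concatenates the convex integration solution on $[0,\mathfrak t]$ with a Galerkin solution on $[\mathfrak t,\infty)$, producing a martingale solution $P_1$ on $[0,\infty)$ with deterministic initial datum $u(0)$. By construction, on the event $\{\mathfrak t\ge T\}$, which has probability exceeding $\kappa$, the energy balance \eqref{eq:env g} is violated at time $T$: $\mE^{P_1}[1_{\mathfrak t\ge T}\|u(T)\|_{L^2}^2]>K\|u(0)\|_{L^2}^2+KTC_G$. On the other hand, a direct Galerkin solution $P_2$ started from the same $u(0)$ satisfies the energy inequality $\mE^{P_2}\|u(T)\|_{L^2}^2\le \|u(0)\|_{L^2}^2+TC_G$, and choosing $K$ large (and $\kappa$ not too small, as in Theorem~\ref{Main resultsg1}) forces $P_1\neq P_2$. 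The same reasoning on a truncated interval $[0,T]$, using that the construction can be carried out for arbitrary $T$, gives non-uniqueness in law on $[0,T]$ for every $T>0$.

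The step I expect to be the main obstacle is making the concatenation rigorous for the nonlinear noise. Two difficulties compound here. The first is measurability and adaptedness: the convex integration solution is probabilistically strong, but after taking its law one must re-select, in a jointly measurable way, a martingale solution started from the (random) terminal value $u(\mathfrak t)$, and then glue the two laws so that the resulting process solves the martingale problem across $\mathfrak t$; the gluing must respect the filtration $(\mathcal B_t)$ and the quadratic-variation structure, so the standard concatenation lemma for time-homogeneous martingale problems must be extended to this setting with a state-dependent noise coefficient. The second is that, unlike the additive case, one cannot simply subtract an Ornstein--Uhlenbeck process to reduce to a random PDE, so the a priori bounds \eqref{energy} and the energy inequality for the Galerkin solutions have to be obtained directly in the presence of the nonlinear martingale term; here the rough-path machinery alluded to in the introduction is not needed for the extension itself, but the $C_b^3$ regularity and the spatial divergence-free property of $g_{\cdot j}$ are essential to control $\div(u\otimes u)$ interactions and to identify the limit of $\sum_j g_{ij}(\langle u^n,\varphi_\cdot\rangle)\,dB_j$. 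Once these two points are settled, the comparison of energies is routine and yields the claimed non-uniqueness in law.
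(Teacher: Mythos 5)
Your proposal rests on the claim that, because the noise coefficients $g_{ij}$ are finite-dimensional functionals of the solution, the stopping time $\mathfrak t$ from Theorem~\ref{Main resultsg1} is a function of $u$ alone, so one may stay within the martingale-solution framework and glue directly as in the additive case. This is the key gap: it is false. Inspecting \eqref{stopping time g}, the stopping time $T_L=T_L^1\wedge T_L^2$ is built from three quantities, $\|B\|_{C_t^{1/2-2\delta}}$, $\|\mB\|_{1-4\delta,[0,t]}$ with $\mB_{s,t}=\int_s^t(B_r-B_s)\otimes dB_r$, and $\|z_0(t)\|_{L^2}$, where $z_0$ is the auxiliary process solving \eqref{eq:Z0} with the fixed $v_0$; none of these is a measurable functional of the trajectory $u$. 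Worse, even if you could recover $B$ from $u$ through the quadratic variation (which already fails because $G(x)$ need not be injective), the second-level object $\mB$ cannot be defined as a Borel functional of a single $\alpha$-H\"older path with $\alpha<1/2$ without invoking a probability measure --- this is precisely the obstruction singled out in the introduction. Consequently, the stopping time cannot be placed on the canonical space $\Omega_0$ of $u$-trajectories, Lemma~\ref{time} does not apply, and the condition \eqref{Q1} needed for the gluing in Proposition~\ref{prop:2} cannot even be formulated, let alone verified. Your proposed concatenation therefore has no well-defined ingredients.

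The paper's actual route is structurally different and is forced by the above. It enlarges the path space to $\widetilde\Omega$ carrying $(x,y,\mY,Z)$, introduces the notion of a generalized probabilistically weak solution (Definitions~\ref{ge weak solution}, \ref{ge weak solution 1}), proves existence and stability for it (Theorem~\ref{ge convergence 1}), runs the concatenation on $\widetilde\Omega$ where the stopping time \eqref{eq:tauL g1} \emph{is} canonical (Propositions~\ref{prop:g1 1}, \ref{prop:2 1 ge}, \ref{prop:ext g1}, \ref{prp:ext2 1 ge}), and thereby obtains two distinct generalized solutions from the same initial datum. Corollary~\ref{cor:1} converts this into joint non-uniqueness in law (non-uniqueness of probabilistically weak solutions), and only then does the Yamada--Watanabe/Cherny-type Theorem~\ref{cherny} downgrade to the claimed non-uniqueness of martingale solutions. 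Your proposal omits both the extended path space and the appeal to Theorem~\ref{cherny}; neither is dispensable here. The pieces you do have right --- the energy comparison between $P\otimes_{\tau_L}R$ and a Galerkin solution from \eqref{eq:env g}, and the need for a measurable selection at the gluing time --- are indeed part of the argument, but they only become available after the extended-path-space machinery is in place.
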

	
This nonlinear case presents further challenges which did not appear in the previous settings of additive and linear multiplicative noise. First of all, there is no obvious transformation of the SPDEs into a PDE with random coefficients. Consequently, it is necessary to employ rough path theory in order to obtain a pathwise control of the stochastic integral in the convex integration scheme. This is the reason why we restricted ourselves to the cylindrical noise of the form \eqref{eq:nl}. Nevertheless, more general noise could be considered provided the corresponding rough path estimate is valid.

 Using rough path theory to control the stochastic integral requires the so-called iterated integral of $B$ against $B$ to be included in the path space. Accordingly, the stopping time $\mathfrak{t}$ is a function of $(B,\int B\otimes dB)$. Since we have to  define the
corresponding stopping time on the canonical path space, the difficulty
lies in how to define the iterated stochastic integral on the path space without
the use of any probability measure. Indeed, due to the low time regularity of the
Wiener process, the stochastic integral cannot be defined by purely analytical tools
and probability theory is required in a nontrivial way. We overcome this issue by introducing a notion of  generalized probabilistically weak solution which takes this issue into account.

{We note that in order to apply  rough path theory it is essential that the intermittent jets possess sufficient time regularity, namely, we require a complementary Young regularity to the Brownian motion, i.e. $\alpha_{0}=2/3+\kappa$ for $\kappa>0$ small. To this end, it is necessary to lower the spatial regularity and we derive new bounds for the intermittent jets in Lemma B.2. They lead to the convergence of $v_{q}$ in $C^{\alpha_{0}}([0,\mathfrak{t}];B_{1,1}^{-5-\delta})$, see \eqref{iteration g}.  This is the reason for restricting to the case of a cylindrical noise, i.e. noise which smoothens in the spatial variable. Other cases of spatially smoothing noise can be treated similarly.}

\begin{rem}
Let us  emphasize that if we directly tried to apply convex integration method without using stopping time,  we would  have to take expectation to control the stochastic integral. As convex integration scheme is an iteration procedure, we would have to include  $L^p$-moment estimates for arbitrary $p$ which is typically achieved by the Burkholder--Davis--Gundy inequality. However, as the implicit constant here depends on  $p$,  the estimates  would blow up during the iteration scheme.
\end{rem}

{\begin{rem}
Our convex integration schemes in the case of additive and nonlinear noise could be understood as follows. In addition to the principal part of the perturbation $w_{q+1}^{(p)}$, the incompressibility corrector $w_{q+1}^{(c)}$ and the temporal corrector $w^{(t)}_{q+1}$ as appearing in the deterministic literature, we introduce a \emph{stochastic corrector} $w^{(s)}_{q+1}$. Its role is to add noise scale by scale as one proceeds through the iteration. More precisely, for the original equation for $u$,
we  construct iterations $u_{q}$ given by
$$
\begin{aligned}
u_{q+1}=v_{q+1}+z_{q+1}&=v_{\ell}+w^{(p)}_{q+1}+w^{(c)}_{q+1}+w^{(t)}_{q+1}+z_{q+1}\\
&=(v_{\ell}+z_{\ell})+w^{(p)}_{q+1}+w^{(c)}_{q+1}+w^{(t)}_{q+1}+(z_{q+1}-z_{\ell})\\
&=u_{\ell}+w^{(p)}_{q+1}+w^{(c)}_{q+1}+w^{(t)}_{q+1}+w^{(s)}_{q+1},
\end{aligned}
$$
where $w^{(s)}_{q+1}=z_{q+1}-z_{\ell}$ is the stochastic corrector. In the case of additive noise, we set
 $z_{q+1}=\mathbb{P}_{\leq f(q+1)}z$ (i.e. suitable truncation in Fourier space) with
$$
d z -\Delta z d t = G d B,
$$
whereas in the case of nonlinear noise we define
$$
d z_{q+1} -\Delta z_{q+1} d t = G(v_{q}+z_{q+1})d B.
$$
Due to the dependence on $v_{q-1}$,  $z_q$ diverges in $C^1$ but converges in $L^2$. When we need to control the $C^1$-norm of $z_q$ in the estimates of the Reynolds stress, we can always  use a small constant from $v_q$ to absorb it.
\\
\indent Finally, we note that due to its particular structure, the linear multiplicative noise case is different in this respect.
 Here, the perturbations  are additionally randomized multiplicatively  by $e^{B}$ in the following way
$$
u_{q+1}=e^{B}v_{\ell}+e^{B}w^{(p)}_{q+1}+e^{B}w^{(c)}_{q+1}+e^{B}w^{(t)}_{q+1}.
$$
\end{rem}}

\color{black}

\subsection{Further relevant literature}

Stochastic Navier--Stokes  equations driven by a  trace-class noise, have been the subject of interest of a large number of works. The reader is referred e.g. to \cite{FG95, HM06, Del13} and the reference therein. In the two dimensional case, existence and uniqueness of strong solutions was obtained if the noisy forcing term is white in time and colored in space. In the three dimensional case, existence of martingale solutions was proved  in \cite{FR08, DD03, GRZ09}. Furthermore,  ergodicity was proved  if the system is driven by non-degenerate trace-class noise, see \cite{DD03, FR08,R08}. Navier--Stokes equations driven by space-time white noise are also considered in \cite{DD02} and \cite{ZZ15} and the system is studied in the context of rough paths theory in \cite{HLN19,HLN19b}.
The linear multiplicative noise (\ref{eqli}) can be seen as a damping term: it is shown in \cite{RZZ14} that it prevents the system from exploding  with a large probability.  In a more recent work, Flandoli and Luo  \cite{FL19} proved that
one kind of transport noise  improves the vorticity blow-up in 3D Navier-Stokes equations
with large probability.
In \cite{BR17}, a global solution starting from small initial data was constructed for 3D Navier--Stokes equations in vorticity formulation driven by linear multiplicative noise. However, the solutions are not adapted to the filtration generated by the noise and the stochastic integral should be understood in a rough path sense (see \cite{RZZ19} and \cite{MR19} for more general noise).  By the methods in \cite{BR17, MR19}, adapted solutions up to a stopping time can also be obtained. However, we note that  existence of globally defined probabilistically strong solutions to the stochastic Navier--Stokes system without any stopping time remains a challenging open problem. Finally, we note that the convex integration has already been applied in a stochastic setting, namely, to the isentropic Euler system in \cite{BFH17} and to the full Euler system in \cite{CFF19}.

{
\subsection{Relevant literature update}
In the first version of the present paper uploaded to  arXiv we established non-uniqueness in law only in the case of spatially regular additive noise (namely, $\mathrm{Tr}((-\Delta)^{3/2+2\sigma}GG^{*})<\infty$)  and a  linear multiplicative noise. Our method was then applied to several other fluid models driven by these noises, see \cite{HZZ20,HZZ21,RS21,Ya20a,Ya20b,Ya21,Ya21b,Ya21c}. In particular, in \cite{HZZ20} we studied the question of well-posedness for stochastic Euler equations from various perspectives. In \cite{HZZ21} we proved existence and non-uniqueness of  global-in-time probabilistically strong and Markov solutions to the stochastic Navier--Stokes system. In the present version of the manuscript  we are for the first time able  to prove non-uniqueness in law for the Navier--Stokes system with  a nonlinear stochastic perturbation.}

\subsection{Organization of the paper} In Section \ref{s:not}, we collect the  notations used throughout the paper. Section \ref{s:nonuniqueI}
 and Section \ref{s:1.1} are devoted to the proof of our first main result Theorem \ref{Main results2}, the non-uniqueness in law for the case of an additive noise. First, in Section \ref{s:nonuniqueI} we introduce the notion of martingale solution and present a general method of extending martingale solutions defined up to a stopping time to the whole time interval $[0,\infty)$. This is then applied to solutions obtained through the convex integration technique and the non-uniqueness in law is shown in Section \ref{s:1.2}. The convex integration solutions are constructed in Section \ref{s:1.1}, which proves Theorem \ref{Main results1}. A~similar structure can be found in Section \ref{s:nonuniquII} and Section \ref{s:1.3} devoted to the setting of a linear multiplicative noise. This relies on the notion of probabilistically weak solution and a general concatenation procedure presented in Section \ref{ss:gen1}. Application to the convex integration solutions together with the proof of Theorem \ref{Main results2 li} can be found in Section \ref{s:1.4}. The convex integration in this setting is applied in Section \ref{s:1.3}, where Theorem \ref{Main results1 li} is established. In Section \ref{s:nonuniquIII} and Section \ref{s:88}, we prove the results for the nonlinear noise. In Appendix~\ref{ap:A}, we collect several auxiliary results concerning stability of martingale, probabilistically weak as well as generalized probabilistically weak solutions. In Appendix \ref{s:B},  the construction of intermittent jets needed for the convex integration is recalled. In Appendix~\ref{s:C}, we show that non-uniqueness in law implies joint non-uniqueness in law in a general infinite dimensional SPDE setting. Finally, Appendix \ref{s:D} is devoted to the rough path analysis required in the nonlinear setting.

\section*{Acknowledgments}
We would like to thank Michael R\"{o}ckner for helpful suggestions.

\section{Notations}
\label{s:not}

\subsection{Function spaces}

  Throughout the paper, we use the notation $a\lesssim b$ if there exists a constant $c>0$ such that $a\leq cb$, and we write $a\simeq b$ if $a\lesssim b$ and $b\lesssim a$. Given a Banach space $E$ with a norm $\|\cdot\|_E$ and $T>0$, we write $C_TE=C([0,T];E)$ for the space of continuous functions from $[0,T]$ to $E$, equipped with the supremum norm $\|f\|_{C_TE}=\sup_{t\in[0,T]}\|f(t)\|_{E}$. We also use $CE$ or $C([0,\infty);E)$ to denote the space of continuous functions from $[0,\infty)$ to $E$. For $\alpha\in(0,1)$ we  define $C^\alpha_TE$ as the space of $\alpha$-H\"{o}lder continuous functions from $[0,T]$ to $E$, endowed with the seminorm $\|f\|_{C^\alpha_TE}=\sup_{s,t\in[0,T],s\neq t}\frac{\|f(s)-f(t)\|_E}{|t-s|^\alpha}.$ Here we use $C_T^\alpha$ to denote the case when $E=\mathbb{R}$.  We  also use $C_\mathrm{loc}^\alpha E$ to denote the space of functions from $[0,\infty)$ to $E$ satisfying $f|_{[0,T]}\in C_T^\alpha E$ for all $T>0$. For $p\in [1,\infty]$ we write $L^p_TE=L^p([0,T];E)$ for the space of $L^p$-integrable functions from $[0,T]$ to $E$, equipped with the usual $L^p$-norm. We also use $L^p_{\mathrm{loc}}([0,\infty);E)$ to denote the space of functions $f$ from $[0,\infty)$ to $E$ satisfying $f|_{[0,T]}\in L^p_T E$ for all $ T>0$.
    We use $L^p$ to denote the set of  standard $L^p$-integrable functions from $\mathbb{T}^3$ to $\mathbb{R}^3$. For $s>0$, $p>1$ we set $W^{s,p}:=\{f\in L^p; \|(I-\Delta)^{\frac{s}{2}}f\|_{L^p}<\infty\}$ with the norm  $\|f\|_{W^{s,p}}=\|(I-\Delta)^{\frac{s}{2}}f\|_{L^p}$. Set $L^{2}_{\sigma}=\{u\in L^2; \div u=0\}$. For $s>0$, $H^s:=W^{s,2}\cap L^2_\sigma$. For $s<0$ define $H^s$ to be the dual space of $H^{-s}$. We also use the Besov space $B^\beta_{p,q}, \beta\in \mR,$  defined by the closure of  smooth functions with respect to the $B^{\beta}_{p,q}$-norm
    $$
    \|f\|_{B^\beta_{p,q}}:=\Big(\sum_{j\geq -1} 2^{\beta j q}\|\Delta_j f\|_{L^p}^q\Big)^{1/q},
    $$
    with $\Delta_j,$ $j\in\mN_{0}\cup\{-1\}$, being the usual Littlewood-Paley blocks.

$\|f\|_{C^N_{t,x}}=\sum_{0\leq n+|\alpha|\leq N}\|\partial_t^n D^\alpha f\|_{L^\infty_t L^\infty}$. For a Polish space $H$ we also use $\mathcal{B}(H)$ to denote the $\sigma$-algebra of Borel sets in $H$.

\subsection{Probabilistic elements}

Let  $\Omega_0:=C([0,\infty),H^{-3})\cap L_{\textrm{loc}}^2([0,\infty),L^2_\sigma)$  and let $\mathscr{P}(\Omega_0)$ denote the set of all probability measures on $(\Omega_0,\mathcal{B})$ with $\mathcal{B}$ being the Borel $\sigma$-algebra coming from the topology of locally uniform convergence on $\Omega_0$. Let  $x:\Omega_0\rightarrow H^{-3}$ denote the canonical process on $\Omega_{0}$ given by
$$x_t(\omega)=\omega(t).$$
Similarly, for $t\geq 0$ we define $\Omega_{t}:=C([t,\infty),H^{-3})\cap L_{\textrm{loc}}^2([t,\infty),L^2_\sigma)$ equipped with its Borel $\sigma$-algebra $\mathcal{B}^{t}$ which coincides with $\sigma\{ x(s),s\geq t\}$.
Finally,  we define the canonical filtration  $\mathcal{B}_t^0:=\sigma\{ x(s),s\leq t\}$, $t\geq0$, as well as its right continuous version $\mathcal{B}_t:=\cap_{s>t}\mathcal{B}^0_s$, $t\geq 0$. For given probability measure $P$ we use $E^P$ to denote the expectation under $P$.

For a Hilbert space $U$, let $L_2(U,L^2_\sigma)$ be the space all Hilbert--Schmidt operators from $U$ to $L^2_\sigma$ with the norm $\|\cdot\|_{L_2(U,L^2_\sigma)}$. Let $G: L^2_\sigma\rightarrow L_2(U,L^2_\sigma)$ be $\mathcal{B}(L^2_\sigma)/\mathcal{B}(L_2(U,L^2_\sigma))$ measurable. In the following, we assume
$$\|G(x)\|_{L_2(U,L_{\sigma}^2)}\leq C(1+\|x\|_{L^2}),$$
for every $x\in C^\infty(\mathbb{T}^{3})\cap L^2_\sigma$ and if in addition $y_n\rightarrow y$ in $L^2$ then
$$\lim_{n\rightarrow \infty}\|G(y_n)^*x-G(y)^*x\|_U=0,$$
 where the asterisk denotes the adjoint operator.

Suppose there is another  Hilbert space $U_1$ such that the embedding  $U\subset  U_1$ is Hilbert--Schmidt. Let  $\bar{\Omega}:=C([0,\infty);H^{-3}\times U_1)\cap L^2_{\mathrm{loc}}([0,\infty);L^2_\sigma\times U_1)$  and let $\mathscr{P}(\bar{\Omega})$ denote the set of all probability measures on $(\bar{\Omega},\bar{\mathcal{B}})$ with $\bar{\mathcal{B}}$   being the Borel $\sigma$-algebra coming from the topology of locally uniform convergence on $\bar\Omega$. Let  $(x,y):\bar{\Omega}\rightarrow H^{-3}\times U_{1}$ denote the canonical process on $\bar{\Omega}$ given by
$$(x_t(\omega),y_t(\omega))=\omega(t).$$
For $t\geq 0$ we define   $\sigma$-algebra $\bar{\mathcal{B}}^{t}=\sigma\{ (x(s),y(s)),s\geq t\}$.
Finally,  we define the canonical filtration  $\bar{\mathcal{B}}_t^0:=\sigma\{ (x(s),y(s)),s\leq t\}$, $t\geq0$, as well as its right continuous version $\bar{\mathcal{B}}_t:=\cap_{s>t}\bar{\mathcal{B}}^0_s$, $t\geq 0$.

\section{Non-uniqueness in law I: the case of an additive noise}
\label{s:nonuniqueI}

\subsection{Martingale solutions}
\label{s:martsol}

Let us begin with a definition of martingale solution on $[0,\infty)$. In what follows, we fix $\gamma\in (0,1)$.

\begin{defn}\label{martingale solution}
Let $s\geq 0$ and $x_{0}\in L^{2}_{\sigma}$. A probability measure $P\in \mathscr{P}(\Omega_0)$ is  a martingale solution to the Navier--Stokes system (\ref{1})  with the initial value $x_0 $ at time $s$ provided

\no(M1) $P(x(t)=x_0, 0\leq t\leq s)=1$,
 and for any $n\in\mathbb{N}$
$$P\left\{x\in \Omega_0: \int_0^n\|G(x(r))\|_{L_2(U;L^2_\sigma)}^2dr<+\infty\right\}=1.$$

\no(M2) For every $e_i\in C^\infty(\mathbb{T}^3)\cap L^2_\sigma$, and for $t\geq s$ the process
$$M_{t,s}^{i}:=\langle x(t)-x(s),e_i\rangle+\int^t_s\langle \div(x(r)\otimes x(r))-\Delta x(r),e_i\rangle dr$$
is a continuous square integrable $(\mathcal{B}_t)_{t\geq s}$-martingale under $P$ with the quadratic variation process
given by
$\int_s^t\|G(x(r))^*e_i\|_{U}^2dr$, where the asterisk denotes the adjoint operator.

\no (M3) For any $q\in \mathbb{N}$ there exists a positive real function $t\mapsto C_{t,q}$ such that  for all $t\geq s$
$$E^P\left(\sup_{r\in [0,t]}\|x(r)\|_{L^2}^{2q}+\int_{s}^t\|x(r)\|^2_{H^{\gamma}}dr\right)\leq C_{t,q}(\|x_0\|_{L^2}^{2q}+1),$$
where $E^P$ denotes the expectation under $P$.
\end{defn}

In particular,  we observe that in the context of Definition \ref{martingale solution} for additive noise case, i.e. $G$ independent of $x$, if $\{e_i\}_{i\in\mathbb{N}}$ is an orthonormal basis of $L^2_\sigma$ consisting of eigenvectors of $GG^*$ then $M_{t,s}:=\sum_{i\in\mathbb{N}}M_{t,s}^{i}e_i$ is a  $GG^*$-Wiener process starting from $s$ with respect to the filtration $(\mathcal{B}_t)_{t\geq s}$  under $P$.

Similarly, we may define martingale solutions up to a stopping time $\tau:\Omega_{0}\to[0,\infty]$. To this end, we define the space of trajectories stopped at the time $\tau$ by
$$
\Omega_{0,\tau}:=\{\omega(\cdot\wedge\tau(\omega));\omega\in \Omega_{0}\}.
$$
We note that due to the Borel measurability of $\tau$, the set $\Omega_{0,\tau}=\{\omega: x(t,\omega)=x(t\wedge \tau(\omega),\omega), \forall t\geq0\}$ is a Borel subset of $\Omega_{0}$ hence $\mathscr{P}(\Omega_{0,\tau})\subset \mathscr{P}(\Omega_{0})$.

\begin{defn}\label{def:martsol}
Let $s\geq 0$ and $x_{0}\in L^{2}_{\sigma}$. Let $\tau\geq s$ be a $(\mathcal{B}_{t})_{t\geq s}$-stopping time. A probability measure $P\in\mathscr{P}(\Omega_{0,\tau})$  is  a martingale solution to the Navier--Stokes system (\ref{1}) on $[s,\tau]$ with the initial value $x_0$ at time $s$ provided

\no(M1) $P(x(t)=x_0, 0\leq t\leq s)=1$,  and for any $n\in\mathbb{N}$
$$P\left\{x\in \Omega_0: \int_0^{n\wedge \tau}\|G(x(r))\|_{L_2(U;L_2^\sigma)}^2dr<+\infty\right\}=1.$$

\no(M2) For every $e_i\in C^\infty(\mathbb{T}^3)\cap L^2_\sigma$, and for $t\geq s$ the process
$$M_{t\wedge\tau,s}^{i}:=\langle x(t\wedge\tau)-x_{0},e_i\rangle+\int^{t\wedge\tau}_s\langle \div(x(r)\otimes x(r))-\Delta x(r),e_i\rangle dr$$
is a continuous square integrable $(\mathcal{B}_t)_{t\geq s}$-martingale under $P$ with the quadratic variation process
given by
$\int_s^{t\wedge \tau}\|G(x(r))^*e_i\|_U^2dr.$

\no (M3) For any $q\in \mathbb{N}$ there exists a positive real function $t\mapsto C_{t,q}$ such that  for all $t\geq s$
$$E^P\left(\sup_{r\in [0,t\wedge\tau]}\|x(r)\|_{L^2}^{2q}+\int_{s}^{t\wedge\tau}\|x(r)\|^2_{H^{\gamma}}dr\right)\leq C_{t,q}(\|x_0\|_{L^2}^{2q}+1),$$
where $E^P$ denotes the expectation under $P$.
\end{defn}

The following result provides the existence of martingale solutions as well as a stability of the set of all martingale solutions. A similar result can be found in \cite{FR08,GRZ09} but in the present paper we require in addition stability with respect to the initial time. For completeness, we include the proof in Appendix \ref{ap:A}.

\begin{thm}\label{convergence}
  For every $(s,x_0)\in [0,\infty)\times L_{\sigma}^2$, there exists  $P\in\mathscr{P}(\Omega_0)$ which is a martingale solution to the Navier--Stokes system \eqref{1} starting at time $s$ from the initial condition $x_0$  in the sense of Definition \ref{martingale solution}. The set of all such martingale solutions with the same $C_{t,q}$ in \emph{(M3)} of Definition  \ref{martingale solution} is denoted by $\mathscr{C}(s,x_0,C_{t,q})$.

   Let $(s_n,x_n)\rightarrow (s,x_0)$ in $[0,\infty)\times L_{\sigma}^2$ as $n\rightarrow\infty$  and let $P_n\in \mathscr{C}(s_n,x_n,C_{t,q})$. Then there exists a subsequence $n_k$ such that the sequence $\{P_{n_k}\}_{k\in\mathbb{N}}$  converges weakly to some $P\in\mathscr{C}(s,x_0,C_{t,q})$.
\end{thm}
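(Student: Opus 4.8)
The plan is to prove the two assertions of Theorem~\ref{convergence} — existence and sequential stability — together, since the standard route obtains existence precisely as a limit of approximations that fit into the same compactness framework.

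\textbf{Step 1: Galerkin approximation and uniform estimates.} First I would fix $(s,x_0)$ and set up a Galerkin (or spectral) approximation of \eqref{1}: project onto the span of the first $N$ eigenfunctions of the Stokes operator, solve the resulting finite-dimensional SDE, and obtain a solution $u^N$ on a stochastic basis. The key is to derive, via It\^o's formula applied to $\|u^N(t)\|_{L^2}^{2q}$ together with the Burkholder--Davis--Gundy inequality and the hypothesis $\|G(x)\|_{L_2(U,L^2_\sigma)}\le C(1+\|x\|_{L^2})$, bounds of the form
\[
\mathbf E\Big(\sup_{r\in[0,t]}\|u^N(r)\|_{L^2}^{2q}+\int_s^t\|u^N(r)\|_{H^\gamma}^2\,dr\Big)\le C_{t,q}(\|x_0\|_{L^2}^{2q}+1),
\]
uniform in $N$; here one uses that the viscous term controls $\int\|u^N\|_{H^1}^2$, hence a fortiori the $H^\gamma$ norm for $\gamma<1$, and parabolic interpolation to get the time regularity needed for tightness. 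A fractional-in-time estimate in a negative Sobolev space (controlling $u^N$ in $C^\beta_t H^{-3}$ using the equation and the martingale part) then yields, by Aubin--Lions / the Jakubowski--Skorokhod route, tightness of the laws $\{\mathrm{Law}(u^N)\}$ on $\Omega_0=C([0,\infty);H^{-3})\cap L^2_{\mathrm{loc}}([0,\infty);L^2_\sigma)$.

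\textbf{Step 2: Passage to the limit and the martingale property.} Along a subsequence the laws converge weakly to some $P\in\mathscr P(\Omega_0)$. Property (M1) is immediate from the initial conditions and Fatou; (M3) passes to the limit by lower semicontinuity of the relevant norms under the weak topology plus uniform integrability coming from the $2q$-moment bound with exponent one higher. The crux is (M2): one must show that under $P$ the process $M_{t,s}^i$ is a continuous square-integrable martingale with the prescribed quadratic variation. This is done by the classical martingale-problem argument: for a bounded continuous $\mathcal B_r$-measurable functional $\Phi$ and $s\le r\le t$, one shows $E^P[(M^i_{t,s}-M^i_{r,s})\Phi]=0$ and the analogous identity for $(M^i_{t,s})^2-\int_s^t\|G(x)^*e_i\|_U^2$, each obtained by taking limits of the corresponding identities for the Galerkin solutions; the continuity assumption $G(y_n)^*x\to G(y)^*x$ in $U$ when $y_n\to y$ in $L^2$ is exactly what makes the quadratic-variation term pass to the limit (one also needs that the nonlinear term $\div(x\otimes x)$ is continuous from $L^2_{\mathrm{loc}}(L^2_\sigma)$ into, say, $L^1_{\mathrm{loc}}(H^{-3})$, which holds on the path space since $\|x\otimes x\|_{L^1}\lesssim\|x\|_{L^2}^2$). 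The usual care is needed because these functionals are not bounded on $\Omega_0$; one truncates using the moment bounds and uniform integrability.

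\textbf{Step 3: Stability with respect to $(s_n,x_n)$.} For the second assertion, I would run the \emph{same} argument but starting from a sequence $P_n\in\mathscr C(s_n,x_n,C_{t,q})$: the defining properties (M1)--(M3) with the common constants $C_{t,q}$ give exactly the uniform moment and regularity bounds needed to repeat the tightness argument of Step~1 (note the initial times $s_n\to s$ only shift where the trajectory becomes nonconstant, which is harmless since $s_n$ converges; one checks $P$ satisfies (M1) at time $s$). Then weak convergence along a subsequence to some $P\in\mathscr P(\Omega_0)$, and Step~2's limiting procedure shows $P$ satisfies (M1)--(M3), i.e. $P\in\mathscr C(s,x_0,C_{t,q})$ — taking care that in (M2) the martingale identities are now tested against $\mathcal B_r$-functionals and the limit $s_n\to s$ enters only through the lower endpoint of the integrals, which is continuous.

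\textbf{Main obstacle.} The delicate point is the identification of the limit as a martingale solution, specifically handling the stochastic term: one must pass to the limit in the quadratic variation $\int_s^t\|G(x(r))^*e_i\|_U^2\,dr$ under merely the stated continuity of $G(\cdot)^*e_i$ along $L^2$-convergent sequences (not uniform continuity, not continuity in stronger topologies), combined with the fact that weak convergence on $\Omega_0$ only gives convergence in $C_tH^{-3}$ and weak-$L^2_tL^2_\sigma$. Reconciling these — getting genuine $L^2$-in-space convergence of the paths along a further subsequence a.e.\ in time so that $G(x(r))^*e_i$ converges — is what forces the Aubin--Lions-type compactness in Step~1 to be set up with enough strength (e.g.\ compact embedding $H^\gamma\hookrightarrow L^2_\sigma$ together with the time-regularity estimate), and is where most of the technical work in Appendix~\ref{ap:A} will go; the nonlinear term, by comparison, is routine once this compactness is in hand.
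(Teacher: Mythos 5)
Your proposal follows essentially the same route as the paper's Appendix~A proof: Galerkin approximation with uniform $L^{2q}$-moment and fractional-in-time $H^{-3}$ bounds to get tightness on $\Omega_0$, Skorokhod representation, and then identification of the limit as a martingale solution via the martingale-problem test against bounded continuous $\mathcal{B}_r$-measurable functionals, with the $L^2_{\mathrm{loc}}(L^2_\sigma)$-component of the path space providing the strong-in-space a.e.-in-time convergence needed to pass to the limit in $G(x(r))^*e_i$. The one point you treat somewhat informally — that the $H^\gamma$ integrability in (M3) is only on $[s_n,T]$ and yet one still gets compactness in $L^2(0,T;L^2_\sigma)$ — is exactly the content of the paper's Lemma~\ref{tightness}, which uses $x(\cdot)\equiv x_n$ on $[0,s_n]$ together with $(s_n,x_n)\to(s,x_0)$ to patch the small interval near $0$; your remark that this is ``harmless since $s_n$ converges'' is correct but is where the nontrivial bookkeeping sits.
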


For completeness, let us recall the definition of uniqueness in law.

\begin{defn}\label{def:uniquelaw}
We say that uniqueness in law holds for  \eqref{1} if martingale solutions starting from the same initial distribution are unique.
\end{defn}

Now, we have all in hand to proceed with the proof of our first main result, Theorem  \ref{Main results2}. On the one hand, by classical arguments as in Theorem \ref{convergence} we obtain existence of a martingale solution to (\ref{1}) which satisfies the corresponding energy inequality. On the other hand, for the case of an additive noise,  Theorem \ref{Main results1} provides  a stopping time $\mathfrak{t}$ such that  there exists an $(\mathcal{F}_{t})_{t\geq 0}$-adapted analytically  weak solution $u\in C([0,\mathfrak{t}];H^{\gamma})$ to (\ref{ns}), which violates the energy inequality. The main idea is to construct  a martingale solution which is defined on the full interval $[0,\infty)$ and preserves the properties of the adapted solution on $[0,\mathfrak{t}]$, that is, the energy inequality is not satisfied in this random time interval. To this end, the essential point is to make use of adaptedness of  solutions obtained through Theorem \ref{Main results1} and connect them to ordinary martingale solutions obtained by Theorem \ref{convergence}. The difficulty is that the connection has to happen at a random  time, which only turns out to be a stopping time with respect the right continuous filtration $(\mathcal{B}_{t})_{t\geq0}$. Consequently, the classical martingale theory of Stroock and Varadhan \cite{SV79} does not apply and  we are facing a number of measurability issues which have to be carefully treated.

\subsection{General construction for martingale solutions}
\label{ss:gen}

First, we present an auxiliary  result which is then used in order  to extend martingale solutions defined up a stopping time $\tau$ to the whole interval $[0,\infty)$. To this end, we denote by $\mathcal{B}_{\tau}$ the $\sigma$-field associated to the stopping time $\tau$. The results of this section apply to a general form of a noise in \eqref{1}, the restriction to an additive noise is only required in  Section \ref{s:1.2} below in order to apply the result of Theorem \ref{Main results1}.

\begin{prp}\label{prop:1}
Let $\tau$ be a bounded $(\mathcal{B}_{t})_{t\geq0}$-stopping time.
Then for every $\omega\in \Omega_{0}$ there exists $Q_{\omega}\in\mathscr{P}(\Omega_{0})$ such that for $\omega\in \{x(\tau)\in L^2_\sigma\}$
\begin{equation}\label{qomega}
Q_\omega\big(\omega'\in\Omega_{0}; x(t,\omega')=\omega(t) \textrm{ for } 0\leq t\leq \tau(\omega)\big)=1,
\end{equation}
and
\begin{equation}\label{qomega2}
Q_\omega(A)=R_{\tau(\omega),x(\tau(\omega),\omega)}(A)\qquad\text{for all}\  A\in \mathcal{B}^{\tau(\omega)}.
\end{equation}
where $R_{\tau(\omega),x(\tau(\omega),\omega)}\in\mathscr{P}(\Omega_0)$ is a martingale solution to the Navier--Stokes system \eqref{1} starting at time $\tau(\omega)$ from the initial condition $x(\tau(\omega),\omega)$. Furthermore, for every $B\in\mathcal{B}$ the mapping $\omega\mapsto Q_{\omega}(B)$ is $\mathcal{B}_{\tau}$-measurable.
\end{prp}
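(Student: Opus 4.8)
The plan is to construct the family $\{Q_\omega\}_{\omega\in\Omega_0}$ by a measurable selection of martingale solutions ``restarted'' at the random time $\tau(\omega)$, following the classical disintegration/concatenation scheme of Stroock--Varadhan but adapted to the right-continuous filtration $(\mathcal B_t)_{t\geq0}$. First I would record, using Theorem~\ref{convergence}, that for each pair $(s,y)\in[0,\infty)\times L^2_\sigma$ there is at least one martingale solution $R_{s,y}$; the content of that theorem (existence together with weak stability of $\mathscr C(s,y,C_{t,q})$ under $(s_n,y_n)\to(s,y)$) is exactly what is needed to run a measurable selection theorem (e.g.\ Kuratowski--Ryll-Nardzewski, or the Jankov--von Neumann theorem for analytic sets) and obtain a map $(s,y)\mapsto R_{s,y}$ that is Borel, or at least universally measurable, from $[0,\infty)\times L^2_\sigma$ into $\mathscr P(\Omega_0)$. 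Here one uses that the set-valued map $(s,y)\mapsto \mathscr C(s,y,C_{t,q})$ has closed values and a closed graph (again by Theorem~\ref{convergence}), so the image is measurable in the Effros sense.

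Next I would \emph{define} $Q_\omega$ by concatenation: on the (deterministic once $\omega$ is fixed) interval $[0,\tau(\omega)]$ force the trajectory to agree with $\omega$, and on $[\tau(\omega),\infty)$ paste in a trajectory distributed according to $R_{\tau(\omega),\,x(\tau(\omega),\omega)}$. Concretely, $Q_\omega$ is the image of $R_{\tau(\omega),x(\tau(\omega),\omega)}$ under the map $\Omega_{\tau(\omega)}\ni w\mapsto \omega\mathbf 1_{[0,\tau(\omega))}+w\mathbf 1_{[\tau(\omega),\infty)}\in\Omega_0$ (using that these trajectory spaces are built from $C([\cdot,\infty),H^{-3})\cap L^2_{\mathrm{loc}}$ so the concatenation lands in $\Omega_0$ when $x(\tau(\omega),\omega)\in L^2_\sigma$). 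This makes \eqref{qomega} and \eqref{qomega2} hold by construction: \eqref{qomega} is the requirement that the first piece equals $\omega$, and \eqref{qomega2} says that on the post-$\tau(\omega)$ $\sigma$-field $\mathcal B^{\tau(\omega)}$ the law is precisely $R_{\tau(\omega),x(\tau(\omega),\omega)}$. For $\omega\notin\{x(\tau)\in L^2_\sigma\}$ one simply sets $Q_\omega$ to be, say, $\delta_{x(\cdot\wedge\tau(\omega),\omega)}$ or any fixed measure, since the proposition only constrains $Q_\omega$ on that set.

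Finally, for the measurability claim I would verify that $\omega\mapsto Q_\omega(B)$ is $\mathcal B_\tau$-measurable for every $B\in\mathcal B$. This is where the right-continuity of the filtration and the fact that $\tau$ is only a $(\mathcal B_t)$-stopping time (not $(\mathcal B^0_t)$) require care: one must check that $\omega\mapsto\tau(\omega)$ and $\omega\mapsto x(\tau(\omega),\omega)$ are $\mathcal B_\tau$-measurable (the latter using, e.g., that $x$ has paths in $C([0,\infty),H^{-3})$ so the stopped evaluation is a measurable function of the stopped path, and that $x(\cdot\wedge\tau)$ is $\mathcal B_\tau$-measurable), compose with the (universally) measurable selection $(s,y)\mapsto R_{s,y}$, and then push through the concatenation map, whose dependence on $(\tau(\omega),\omega|_{[0,\tau(\omega)]})$ is $\mathcal B_\tau$-measurable. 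For $B$ in the generating $\pi$-system of finite-dimensional cylinder sets the map $\omega\mapsto Q_\omega(B)$ is then seen to be $\mathcal B_\tau$-measurable, and a monotone class / Dynkin argument extends this to all of $\mathcal B$.

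\textbf{Main obstacle.} I expect the principal difficulty to be precisely the measurability package around a stopping time for the \emph{right-continuous} filtration: ensuring that the selection $(s,y)\mapsto R_{s,y}$ can be chosen measurably (this is where Theorem~\ref{convergence}'s stability with respect to both $s$ \emph{and} $x_0$ is essential, and why the authors emphasize it), and that all the compositions remain $\mathcal B_\tau$-measurable rather than merely $\mathcal B$-measurable. The classical Stroock--Varadhan theory handles $(\mathcal B^0_t)$-stopping times; pushing it to $(\mathcal B_t)$ is the technical heart, and one typically circumvents it by approximating $\tau$ from above by countably-valued $(\mathcal B^0_t)$-stopping times $\tau_n\downarrow\tau$, constructing $Q^n_\omega$ for each, and passing to the limit using the weak stability of the solution set — though here, since $\tau$ is already given and fixed, a direct verification via the measurability of $\omega\mapsto(\tau(\omega),x(\tau(\omega),\omega))$ on $\mathcal B_\tau$ should suffice.
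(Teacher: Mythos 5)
Your proposal follows essentially the same approach as the paper: it invokes Theorem~\ref{convergence} to get existence and weak stability of the solution sets, obtains a Borel selection $(s,x_0)\mapsto R_{s,x_0}$, defines $Q_\omega$ by concatenating $\delta_\omega$ with $R_{\tau(\omega),x(\tau(\omega),\omega)}$ at time $\tau(\omega)$ (setting $Q_\omega=\delta_{x(\cdot\wedge\tau(\omega),\omega)}$ off $\{x(\tau)\in L^2_\sigma\}$), and verifies $\mathcal B_\tau$-measurability by testing on finite-dimensional cylinder sets and closing up via a monotone class argument. The only cosmetic difference is the selection reference: the paper uses compactness of $\mathscr C(s,x_0,C_{t,q})$ together with the Stroock--Varadhan lemmas on $\mathrm{Comp}(\mathscr P(\Omega_0))$ with the Hausdorff metric (\cite[Lemma~12.1.8, Theorem~12.1.10]{SV79}) rather than Kuratowski--Ryll-Nardzewski or Jankov--von~Neumann, but these accomplish the same thing; note that the paper's stability gives genuine \emph{compactness}, not merely closedness, which is what makes the SV version applicable directly. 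Your remark about approximating $\tau$ from above by countably-valued stopping times is not needed and the paper does not use it.
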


\begin{proof}
It is necessary to select in a measurable way from the set of all martingale solutions.
To this end, we observe that as a consequence of the stability with respect to the initial time and the initial condition in  Theorem \ref{convergence}, for every $(s,x_0)\in [0,\infty)\times L_{\sigma}^2$ the set $\mathscr{C}(s,x_{0}, C_{t,q})$ of all associated martingale solutions to \eqref{1} with the same $C_{t,q}$  is compact with respect to the weak convergence of probability measures. Let $\mathrm{Comp}(\mathscr{P}(\Omega_{0}))$ denote the space of all compact subsets of $\mathscr{P}(\Omega_{0})$ equipped with the Hausdorff metric. Using the stability from Theorem \ref{convergence}  together with \cite[Lemma 12.1.8]{SV79} we obtain that the map
$$[0,\infty)\times L^{2}_{\sigma}\to\mathrm{Comp}(\mathscr{P}(\Omega_{0})),\qquad (s,x_{0})\mapsto \mathscr{C}(s,x_{0}, C_{t,q}),$$
is Borel measurable. Accordingly, \cite[Theorem 12.1.10]{SV79} gives the existence of a measurable selection. More precisely, there exists a Borel measurable map
$$[0,\infty)\times L^{2}_{\sigma}\to\mathscr{P}(\Omega_{0}),\qquad(s,x_{0})\mapsto R_{s,x_{0}},$$
such that $R_{s,x_{0}}\in \mathscr{C}(s,x_{0}, C_{t,q})$ for all $(s,x_{0})\in[0,\infty)\times L^{2}_{\sigma}$.

As the next step, we recall that the canonical process $x$ on $\Omega_{0}$ is continuous in $H^{-3}$, hence $x:[0,\infty)\times \Omega_0\rightarrow H^{-3}$ is progressively measurable with respect to the canonical filtration $(\mathcal{B}^{0}_{t})_{t\geq 0}$ and consequently it is also progressively measurable  with respect to the right continuous filtration $(\mathcal{B}_{t})_{t\geq 0}$. In addition, $\tau$ is a stopping time with respect to the same filtration $(\mathcal{B}_{t})_{t\geq 0}$. Therefore, it follows from \cite[Lemma 1.2.4]{SV79} that both $\tau$ and $x(\tau(\cdot),\cdot)$ are $\mathcal{B}_{\tau}$-measurable. Furthermore, $L^2_\sigma\subset H^{-3}$ continuously and densely, by Kuratowski's measurability theorem we know $L^2_\sigma\in \mathcal{B}(H^{-3})$ and $\mathcal{B}(L^2_\sigma)=\mathcal{B}(H^{-3})\cap L^2_\sigma$, which implies that  $1_{\{x(\tau)\in L^2_\sigma\}}\in\mathcal{B}_{\tau}$.   Therefore,  $x(\tau(\cdot),\cdot)1_{\{x(\tau)\in L^2_\sigma\}}:\Omega_0\to L^2_\sigma$ is $\mathcal{B}_{\tau}$-measurable, where $\mathcal{B}_{\tau}$ is the $\sigma$-algebra associated to $\tau$. Combining this fact with the measurability of the selection $(s,x_{0})\mapsto R_{s,x_{0}}$ constructed above, we deduce that
\begin{equation}\label{eq:RR}
\Omega_{0}\to \mathscr{P}(\Omega_{0}),\qquad\omega\mapsto R_{\tau(\omega),x(\tau(\omega),\omega)1_{\{x(\tau(\omega),\omega)\in L^2_\sigma\}}}
\end{equation}
is $\mathcal{B}_{\tau}$-measurable as a composition of $\mathcal{B}_{\tau}$-measurable mappings. Recall that for every $\omega\in\Omega_{0}\cap \{x(\tau)\in L^2_\sigma\}$ this mapping gives a martingale solution starting at the deterministic time $\tau(\omega)$ from the deterministic initial condition $x(\tau(\omega),\omega)$. Hence, for $\omega\in\{x(\tau)\in L^2_\sigma\}$
$$
R_{\tau(\omega),x(\tau(\omega),\omega)}\big(\omega'\in\Omega_{0}; x(\tau(\omega),\omega')=x(\tau(\omega),\omega)\big)=1.
$$

Now, we apply \cite[Lemma 6.1.1]{SV79} and deduce that for every $\omega\in \Omega_{0}\cap \{x(\tau)\in L^2_\sigma\}$ there is a unique probability measure
\begin{equation}\label{q1}
  \delta_\omega\otimes_{\tau(\omega)}R_{\tau(\omega),x(\tau(\omega),\omega)}\in\mathscr{P}(\Omega_{0}),
\end{equation}
such that for every $\omega\in \Omega_{0}\cap \{x(\tau)\in L^2_\sigma\}$ \eqref{qomega} and \eqref{qomega2} hold.
This permits to concatenate, at the deterministic time $\tau(\omega)$, the Dirac mass $\delta_{\omega}$ with the martingale solution $R_{\tau(\omega),x(\tau(\omega),\omega)}$. Define
\begin{align*}
Q_\omega=\begin{cases}
\delta_\omega\otimes_{\tau(\omega)}R_{\tau(\omega),x(\tau(\omega),\omega)},&\omega \in\{x(\tau)\in L^2_{\sigma}\} ,\\
\delta_{x(\cdot\wedge \tau(\omega))}, &\textrm{otherwise}.
\end{cases}
\end{align*}

In order to show that the mapping $\omega\mapsto Q_{\omega}(B)$  is $\mathcal{B}_{\tau}$-measurable for every $B\in \mathcal{B}$, it is enough to consider sets of the form $A=\{x(t_{1})\in \Gamma_{1},\dots, x(t_{n})\in\Gamma_{n}\}$ where $n\in\mathbb{N}$, $0\leq t_{1}<\cdots< t_{n}$, and $\Gamma_{1},\dots,\Gamma_{n}\in\mathcal{B}(H^{-3})$. Then by the definition of $Q_{\omega}$, we have
\begin{align*}
\begin{aligned}
\delta_\omega\otimes_{\tau(\omega)}R_{\tau(\omega),x(\tau(\omega),\omega)}(A)&={\bf 1}_{[0,t_{1})}(\tau(\omega))R_{\tau(\omega),x(\tau(\omega),\omega)}(A)\\
&\quad+\sum_{k=1}^{n-1}{\bf 1}_{[t_{k},t_{k+1})}(\tau(\omega)){\bf 1}_{\Gamma_{1}}(x(t_{1},\omega))\cdots {\bf 1}_{\Gamma_{k}}(x(t_{k},\omega))\\
&\qquad\quad\times R_{\tau(\omega),x(\tau(\omega),\omega)}\big(x(t_{k+1})\in \Gamma_{k+1},\dots, x(t_{n})\in\Gamma_{n}\big)\\
&\quad+{\bf 1}_{[t_{n},\infty)}(\tau(\omega)){\bf 1}_{\Gamma_{1}}(x(t_{1},\omega))\cdots {\bf 1}_{\Gamma_{n}}(x(t_{n},\omega)).
\end{aligned}
\end{align*}
Here the right hand side multiplied by $1_{\{x(\tau)\in L^2_{\sigma}\}}$ is $\mathcal{B}_{\tau}$-measurable as a consequence of the $\mathcal{B}_{\tau}$-measurability of \eqref{eq:RR} and $\tau$. Moreover, $\delta_{x(\cdot\wedge \tau(\omega))}$  is ${\mathcal{B}}_{\tau}$-measurable as a consequence of the ${\mathcal{B}}_{\tau}$-measurability of $x(\tau\wedge \cdot)$.
Thus the final result follows from $\{x(\tau)\in L^2_{\sigma}\}$ is ${\mathcal{B}}_{\tau}$-measurable.
\end{proof}

\begin{rem}
If  $P$ as a martingale solution up to a stopping time  $\tau$,  our ultimate goal is to make use of Proposition \ref{prop:1}, in order to define a probability measure
\begin{equation*}
P\otimes_{\tau}R(\cdot):=\int_{\Omega_{0}}Q_{\omega} (\cdot)\,P(d\omega)
\end{equation*}
and show that it is a martingale solution on $[0,\infty)$ in the sense of Definition \ref{martingale solution} which coincides with $P$ up to the time $\tau$. However,
  due to the fact that $\tau$ is only a stopping time with respect to the right continuous filtration $(\mathcal{B}_{t})_{t\geq0}$, \eqref{qomega} does not suffice to show that  $(Q_{\omega})_{\omega\in \Omega_{0}}$ is a  conditional probability distribution of $ P\otimes_{\tau}R$ given $\mathcal{B}_{\tau}$. More precisely, we cannot prove that
 for every $A\in\mathcal{B}_{\tau}$ and $B\in\mathcal{B}$
$$
P\otimes_{\tau}R(A\cap B)=\int_{A} Q_{\omega}(B)P(d\omega).
$$
This is the reason why the corresponding results of  \cite{SV79}, namely Theorem 6.1.2 and in particular Theorem 1.2.10 leading to the desired  martingale property (M2), cannot be applied. It will be seen below in Proposition \ref{prop:2} that an additional condition on $Q_{\omega}$, i.e., \eqref{Q1}, is necessary in order to guarantee (M1), (M2) and (M3).
To conclude this remark, we note that certain measurability of the mapping $\omega\mapsto Q_{\omega}(B)$ is only needed to define the integral in \eqref{eq:PR}. Since we do not show that $(Q_{\omega})_{\omega\in \Omega_{0}}$ is a  conditional probability distribution, the $\mathcal{B}_{\tau}$-measurability from Proposition \ref{prop:1} is actually not used in the sequel.
\end{rem}

\begin{prp}\label{prop:2}
 Let $x_{0}\in L^{2}_{\sigma}$.
 Let $P$ be a martingale solution to the Navier--Stokes system \eqref{1} on $[0,\tau]$ starting at the time $0$ from the initial condition $x_{0}$. In addition to the assumptions of Proposition \ref{prop:1}, suppose that there exists a Borel  set $\mathcal{N}\subset\Omega_{0,\tau}$ such that $P(\mathcal{N})=0$ and for every $\omega\in \mathcal{N}^{c}$ it holds
\begin{equation}\label{Q1}
\aligned
&Q_\omega\big(\omega'\in\Omega_{0}; \tau(\omega')=
\tau(\omega)\big)=1.
\endaligned\end{equation}
Then the  probability measure $ P\otimes_{\tau}R\in \mathscr{P}(\Omega_{0})$ defined by
\begin{equation}\label{eq:PR}
P\otimes_{\tau}R(\cdot):=\int_{\Omega_{0}}Q_{\omega} (\cdot)\,P(d\omega)
\end{equation}
satisfies $P\otimes_{\tau}R= P$  on the $\sigma$-algebra $\sigma(x(t\wedge \tau),t\geq0)$ and
it is a martingale solution to the Navier--Stokes system \eqref{1} on $[0,\infty)$ with initial condition $x_{0}$.
\end{prp}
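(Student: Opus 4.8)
The plan is to verify the three defining properties (M1), (M2), (M3) of a martingale solution on $[0,\infty)$ for the measure $P\otimes_\tau R$, as well as the identity $P\otimes_\tau R = P$ on $\sigma(x(t\wedge\tau),t\geq 0)$, by carefully exploiting the representation \eqref{eq:PR} together with the structural properties \eqref{qomega}, \eqref{qomega2} and the crucial extra assumption \eqref{Q1}. First I would establish the agreement of $P\otimes_\tau R$ with $P$ on the stopped $\sigma$-algebra: for $\omega\in\mathcal{N}^c$ with $x(\tau(\omega),\omega)\in L^2_\sigma$, property \eqref{qomega} says $Q_\omega$ is supported on paths agreeing with $\omega$ up to time $\tau(\omega)$, and \eqref{Q1} guarantees that along $Q_\omega$-a.e.\ path the stopping time still equals $\tau(\omega)$; hence $Q_\omega(x(\cdot\wedge\tau)=\omega(\cdot\wedge\tau))=1$, and integrating against $P$ over $\mathcal{N}^c$ (using $P(\mathcal{N})=0$ and that $P$ is supported on $\Omega_{0,\tau}$) yields $\int_{\Omega_0} f(x(\cdot\wedge\tau))\,dQ_\omega = f(\omega(\cdot\wedge\tau))$ for bounded measurable $f$, so that $P\otimes_\tau R$ and $P$ induce the same law of the stopped canonical process. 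On the set $\{x(\tau)\notin L^2_\sigma\}$ (which is $P$-null since $P$ is a martingale solution up to $\tau$ with values in $L^2_\sigma$, or at worst handled by the $\delta_{x(\cdot\wedge\tau(\omega))}$ branch which also agrees with $\omega$ up to $\tau$) this is immediate.

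Next I would check (M1) and the moment bound (M3). For (M1), $P(x(t)=x_0,0\leq t\leq s)=1$ with $s=0$ is trivial, and the integrability $\int_0^n\|G(x(r))\|^2\,dr<\infty$ $P\otimes_\tau R$-a.s.\ splits as $\int_0^{\tau}$ plus $\int_\tau^n$: the first is finite $P$-a.s.\ by (M1) for $P$ on $[0,\tau]$ and transfers via the stopped-path agreement just proved, while the second is finite $Q_\omega$-a.s.\ because $Q_\omega$ restricted to $\mathcal{B}^{\tau(\omega)}$ is the martingale solution $R_{\tau(\omega),x(\tau(\omega),\omega)}$, which satisfies (M1) from its initial time $\tau(\omega)$. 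For (M3) I would similarly split the supremum and the time integral at $\tau$: on $[0,\tau]$ one uses the bound for $P$ together with the stopped-path identity, and on $[\tau,t]$ one uses (M3) for $R_{\tau(\omega),x(\tau(\omega),\omega)}$ with initial data $x(\tau(\omega),\omega)$, whose $L^2$-norm is controlled by $\sup_{r\leq\tau}\|x(r)\|_{L^2}$; taking $E^P$ of the resulting estimate and absorbing constants gives the required $C_{t,q}(\|x_0\|_{L^2}^{2q}+1)$, after possibly enlarging $C_{t,q}$.

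The main work — and the step I expect to be the principal obstacle — is (M2), the martingale property of $M^i_{t,0}$ under $P\otimes_\tau R$ with the correct quadratic variation. The standard route (Stroock–Varadhan, Theorem 1.2.10) would require $(Q_\omega)$ to be a regular conditional probability distribution of $P\otimes_\tau R$ given $\mathcal{B}_\tau$, which, as the preceding remark emphasizes, fails here because $\tau$ is only a stopping time for the right-continuous filtration. The plan is instead to argue directly: fix $e_i$ and $s\leq t$; one wants $E^{P\otimes_\tau R}[(M^i_{t,0}-M^i_{s,0})Z] = 0$ and the analogous identity for $(M^i_{t,0})^2 - \int_0^t\|G(x)^*e_i\|^2$, for $Z$ bounded and $\mathcal{B}_s$-measurable. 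Decompose according to whether $\tau\leq s$, $s<\tau\leq t$, or $\tau>t$. When $\tau\leq s$: using \eqref{qomega2}, $M^i_{t,0}-M^i_{s,0} = M^i_{t,\tau}-M^i_{s,\tau}$ along $Q_\omega$-paths, and this is a martingale increment for $R_{\tau(\omega),x(\tau(\omega),\omega)}$, which by \eqref{Q1} is "started after" $s$, so the inner $Q_\omega$-expectation vanishes, and we integrate in $\omega$. When $\tau>t$: the increment only involves the stopped path, which equals $\omega$'s stopped path, and matches $P$, which is itself a martingale solution up to $\tau$. The delicate middle regime $s<\tau\leq t$ requires splitting $M^i_{t,0}-M^i_{s,0} = (M^i_{\tau,0}-M^i_{s,0}) + (M^i_{t,\tau}-M^i_{\tau,0})$ and combining the $P$-martingale property up to $\tau$ (for the first piece, using that $\tau$ is a $(\mathcal B_t)$-stopping time and optional stopping) with the $R$-martingale property from $\tau$ onward (for the second), being careful that \eqref{Q1} is exactly what glues the two descriptions consistently so the cross terms and the quadratic-variation bookkeeping close. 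Throughout, one must justify interchanging $E^P$ and the inner $Q_\omega$-integral and the measurability of $\omega\mapsto Q_\omega(B)$ (available from Proposition \ref{prop:1}); the square-integrability needed to talk about the $L^2$-martingale property follows from (M3) already established. Assembling these three cases yields (M2), completing the proof.
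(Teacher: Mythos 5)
Your plan follows essentially the same route as the paper: establish $P\otimes_\tau R = P$ on the stopped $\sigma$-algebra from \eqref{qomega} and \eqref{Q1}, check (M1) and (M3) by splitting at $\tau$ and invoking the bounds for $P$ on $[0,\tau]$ and for $R$ on $[\tau,t]$ respectively, and for (M2) piece together the $P$-martingale property up to $\tau$ with the $R$-martingale property from $\tau$ onward, with \eqref{Q1} serving as the glue. The one place where your argument diverges in organization from the paper's is (M2). The paper first proves, for cylinder sets $A\in\mathcal{B}_s$, the collapsing identity $E^{Q_\omega}\big[M^i_{t,0}\mathbf{1}_A\big] = E^{Q_\omega}\big[M^i_{(t\wedge\tau(\omega))\vee s,0}\mathbf{1}_A\big]$ coming from the $R$-side martingale property alone, then replaces $\tau(\omega)$ by $\tau$ inside the outer $P$-integral via \eqref{Q1}, and only afterwards splits into $\{\tau>s\}$ and $\{\tau\leq s\}$ — both of which lie in $\mathcal{B}_s$ — so that Doob's optional sampling under $P$ can be applied at once. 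Your three-regime split $\{\tau\leq s\}$, $\{s<\tau\leq t\}$, $\{\tau>t\}$ reaches the same conclusion but requires more care than the wording suggests: the latter two sets are \emph{not} $\mathcal{B}_s$-measurable, so the contribution from $\{\tau>t\}$ (after transferring to $P$ on the stopped $\sigma$-algebra) does not vanish on its own as your phrasing ``matches $P$, which is itself a martingale solution up to $\tau$'' might imply. It must be combined with the $P$-controlled piece $M^i_{\tau\wedge t,0}-M^i_{s\wedge\tau,0}$ coming from the middle regime so as to reassemble the integrand against $\mathbf{1}_{\{\tau>s\}}\in\mathcal{B}_s$ before invoking optional sampling; the regime $\{\tau\leq s\}$ is the only one that vanishes pointwise in $\omega$. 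Once that combination is spelled out the two proofs are equivalent, and the quadratic-variation identity follows from the analogous bookkeeping — the paper carries this out through a three-term decomposition $J_1+J_2+J_3$ of exactly the flavour you indicate.
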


\begin{proof}
First, we observe that due to \eqref{Q1} and \eqref{qomega}, it holds $P\otimes_{\tau}R(A)=P(A)$ for every Borel set $A\in\sigma(x(t\wedge \tau),t\geq0)$. It remains to verify that $P\otimes_{\tau}R$ satisfies {(M1)}, (M2) and {(M3)} in Definition~\ref{martingale solution} with $s=0$.
The first condition in (M1)  follows easily since by construction
$
P\otimes_{\tau}R(x(0)=x_{0})=P(x(0)=x_{0})=1.
$
The second one in (M1) follows from (M3) and the assumption on $G$.
In order to show (M3), we write
 \begin{equation*}
 \aligned
  &E^{P\otimes_{\tau}R}\left(\sup_{r\in[0,t]}\|x(r)\|_{L^2}^{2q}+\int_0^t\|x(r)\|_{H^\gamma}^2dr\right)\\
& \leq E^{P\otimes_{\tau}R}\left(\sup_{r\in[0,t\wedge \tau]}\|x(r)\|_{L^2}^{2q}+\int_0^{t\wedge\tau}\|x(r)\|_{H^\gamma}^2dr\right)
 +E^{P\otimes_{\tau}R}\left(\sup_{r\in[t\wedge\tau,t]}\|x(r)\|_{L^2}^{2q}+\int_{t\wedge\tau}^t\|x(r)\|_{H^\gamma}^2dr\right).
 \endaligned
 \end{equation*}
Here, the first term can be estimated due to the bound (M3) for $P$, whereas the second term can be bounded based on (M3) for $R$.
Then by (\ref{Q1})
 \begin{equation*}
 \aligned
&  E^{P\otimes_{\tau}R}\left(\sup_{r\in[0,t]}\|x(r)\|_{L^2}^{2q}+\int_0^t\|x(r)\|_{H^\gamma}^2dr\right)\\
&\quad \leq C(\|x_{0}\|_{L^{2}}^{2q}+1)+C(E^P\|x(\tau)\|_{L^2}^{2q}+1)\leq C(\|x_{0}\|_{L^{2}}^{2q}+1).
 \endaligned
 \end{equation*}
 In the last step,   we used  the fact  that $\tau$ is bounded together with (M3) for $P$.

Finally, we shall verify (M2). To this end, we recall that since $P$ is a martingale solution on $[0,\tau]$, the process
 $M_{t\wedge\tau,0}^{i}$ is a   continuous square integrable $(\mathcal{B}_t)_{t\geq 0}$-martingale under $P$ with  the quadratic variation process
given by
$\int_0^{t\wedge\tau}\|G(x(r))^*e_i\|_{U}^2dr.$
On the other hand, since for every $\omega\in \Omega_{0}$, the probability measure $R_{\tau(\omega),x(\tau(\omega),\omega)}$ is a martingale solution  starting at the time $\tau(\omega)$ from the  initial condition $x(\tau(\omega),\omega)$,  the process $M^{i}_{t,t\wedge \tau(\omega)}$ is a continuous square integrable $(\mathcal{B}_{t})_{t\geq \tau(\omega)}$-martingale under $R_{\tau(\omega),x(\tau(\omega),\omega)}$ with  the quadratic variation process
given by
$\int_{t\wedge\tau(\omega)}^t\|G(x(r))^*e_i\|_{U}^2dr$, $t\geq\tau(\omega)$. In other words, the process $ M^{i}_{t,0}-M^{i}_{t\wedge\tau(\omega),0}$ is a continuous square integrable $(\mathcal{B}_{t})_{t\geq 0}$-martingale under $R_{\tau(\omega),x(\tau(\omega),\omega)}$ with  the quadratic variation process
given by
$\int_{t\wedge\tau(\omega)}^t\|G(x(r))^*e_i\|_{U}^2dr$.

Next, we will show that $M_{t,0}^{i}$ is a  continuous square integrable $(\mathcal{B}_t)_{t\geq 0}$-martingale under $P\otimes_{\tau}R$ with  the quadratic variation process
given by
$\int_{0}^t\|G(x(r))^*e_i\|_{U}^2dr.$
To this end, let $s\leq t$ and $A\in\mathcal{B}_s$. We first prove that
\begin{equation}\label{ma}E^{Q_\omega}\left[M_{t,0}^{i} {\bf 1}_A\right]=E^{Q_\omega}\left[M_{(t\wedge\tau(\omega))\vee s,0}^{i}{\bf 1}_A\right].\end{equation}
In fact, it is enough to consider sets of the form $A=\{x(t_{1})\in \Gamma_{1},\dots, x(t_{n})\in\Gamma_{n}\}$ where $n\in\mathbb{N}$, $0\leq t_{1}<\cdots< t_{n}\leq s$, and $\Gamma_{1},\dots,\Gamma_{n}\in\mathcal{B}(H^{-3})$. For more general $A\in\mathcal{B}_s$ we could use the approximation and the continuity of $M_{\cdot,0}^i$ to conclude. Then by the definition of $Q_{\omega}$ and using the martingale property with respect to $R_{\tau(\omega),x(\tau(\omega),\omega)}$ which is valid  for $t\geq\tau(\omega)$, we have
\begin{align*}
\begin{aligned}
& E^{Q_{\omega}}\left[(M_{t,0}^i-M_{(t\wedge\tau(\omega))\vee s,0}^i){\bf 1}_A\right]\\&={\bf 1}_{[0,t_{1})}(\tau(\omega))E^{R_{\tau(\omega),x(\tau(\omega),\omega)}}\left[(M_{t,0}^i-M_{s,0}^i){\bf 1}_A\right]\\
&\quad+\sum_{k=1}^{n-1}{\bf 1}_{[t_{k},t_{k+1})}(\tau(\omega)){\bf 1}_{\Gamma_{1}}(x(t_{1},\omega))\cdots {\bf 1}_{\Gamma_{k}}(x(t_{k},\omega))\\
&\qquad\quad\times E^{R_{\tau(\omega),x(\tau(\omega),\omega)}}\big((M_{t,0}^i-M_{s,0}^i){\bf 1}_{x(t_{k+1})\in \Gamma_{k+1},\dots, x(t_{n})\in\Gamma_{n}}\big)\\
&\quad+{\bf 1}_{[t_{n},\infty)}(\tau(\omega)){\bf 1}_{\Gamma_{1}}(x(t_{1},\omega))\cdots {\bf 1}_{\Gamma_{n}}(x(t_{n},\omega))\times E^{R_{\tau(\omega),x(\tau(\omega),\omega)}}(M_{t,0}^i-M_{(t\wedge \tau(\omega))\vee s,0}^i)
\\&=0.
\end{aligned}
\end{align*}
Now (\ref{ma}) follows.

Then it follows from \eqref{eq:PR} and \eqref{q1} that
\begin{equation*}\aligned
E^{P\otimes_{\tau} R}\left[M_{t,0}^{i} {\bf 1}_A\right]
&=\int_{\Omega_{0}} E^{Q_\omega}\left[M_{t,0}^{i} {\bf 1}_A\right]P(d\omega)
\\
&=
\int_{\Omega_{0}} E^{\delta_\omega\otimes_{\tau(\omega)}R_{\tau(\omega),x(\tau(\omega),\omega)}}\left[M_{t,0}^{i} {\bf 1}_A\right]P(d\omega).
\endaligned\end{equation*}
According to (\ref{ma}) and then using the key assumption \eqref{Q1} we further deduce that
\begin{equation*}\aligned
E^{P\otimes_{\tau} R}\left[M_{t,0}^{i} {\bf 1}_A\right]
&=\int_{\Omega_{0}} E^{\delta_\omega\otimes_{\tau(\omega)}R_{\tau(\omega),x(\tau(\omega),\omega)}}\left[M_{(t\wedge\tau(\omega))\vee s,0}^{i}{\bf 1}_A\right]P(d\omega)
\\&=E^{P\otimes_{\tau} R}\left[M_{(t\wedge\tau)\vee s,0}^{i}{\bf 1}_A\right]
\\&=E^{P\otimes_{\tau} R}\left[M_{t\wedge\tau,0}^{i}{\bf 1}_{A\cap \{\tau>s\}}\right]+E^{P\otimes_{\tau} R}\left[M_{s,0}^{i}{\bf 1}_{A\cap \{\tau\leq s\}}\right].
\endaligned\end{equation*}
Finally, using the martingale property up to $\tau$ with respect to $P$, we get
\begin{equation*}\aligned
E^{P\otimes_{\tau} R}\left[M_{t,0}^{i} {\bf 1}_A\right]
&=E^{P\otimes_{\tau} R}\left[M_{s,0}^{i}{\bf 1}_{A\cap \{\tau>s\}}\right]+E^{P\otimes_{\tau} R}\left[M_{s,0}^{i}{\bf 1}_{A\cap \{\tau\leq s\}}\right]
\\&=E^{P\otimes_{\tau} R}\left[M_{s,0}^{i}{\bf 1}_{A}\right].
\endaligned\end{equation*}
Hence $M^{i}$ is a $(\mathcal{B}_{t})_{t\geq0}$-martingale with respect to $P\otimes_{\tau}R$. In order to identify its quadratic variation, we proceed similarly and write
\begin{equation*}\aligned&
E^{P\otimes_{\tau} R}\left[\left((M_{t,0}^{i})^2-\int_0^t\|G(x(r))^*e_i\|_{U}^2dr\right){\bf 1}_{A}\right]
\\&=\int_{\Omega_{0}}E^{Q_{\omega}}\left[\left((M_{t,0}^{i}-M_{t\wedge \tau(\omega),0}^{i})^2-\int_{t\wedge\tau(\omega)}^t\|G(x(r))^*e_i\|_{U}^2dr\right){\bf 1}_{A}\right]P(d\omega)
\\&\quad+\int_{\Omega_{0}}E^{Q_{\omega}}\left[\left((M_{t\wedge \tau(\omega),0}^{i})^2-\int_0^{t\wedge \tau(\omega)}\|G(x(r))^*e_i\|_{U}^2\right){\bf 1}_{A}\right]P(d\omega)
\\&\quad+2\int_{\Omega_{0}}E^{Q_{\omega}}\left[\left(M_{t\wedge \tau(\omega),0}^{i}(M_{t,0}^{i}-M_{t\wedge \tau(\omega),0}^{i})\right){\bf 1}_{A}\right]P(d\omega)
\\&=:J_1+J_2+J_3.
\endaligned\end{equation*}
Here, due to the martingale property with respect to $R$ and $P$ similar as in (\ref{ma}), we obtain
\begin{equation*}\aligned
J_1&=\int_{\Omega_{0}}E^{Q_{\omega}}\left[\left((M_{t\wedge \tau(\omega)\vee s,0}^{i}-M_{t\wedge \tau(\omega),0}^{i})^2-\int^{t\wedge \tau(\omega)\vee s}_{t\wedge \tau(\omega)}\|G(x(r))^*e_i\|_{U}^2dr\right){\bf 1}_{A}\right]P(d\omega),
\endaligned\end{equation*}
\begin{align*}
J_2&=\int_{\Omega_{0}}E^{Q_{\omega}}\left[\left((M_{s\wedge \tau(\omega),0}^{i})^2-\int_0^{s\wedge \tau(\omega)}\|G(x(r))^*e_i\|_{U}^2dr\right){\bf 1}_{A}\right]P(d\omega),
\end{align*}
\begin{equation*}\aligned J_3&=2\int_{\Omega_{0}}E^{Q_{\omega}}\left[M_{t\wedge \tau(\omega),0}^{i}\left(M_{t\wedge \tau(\omega)\vee s,0}^{i}-M_{t\wedge \tau(\omega),0}^{i}\right){\bf 1}_{A}\right]P(d\omega).
\endaligned\end{equation*}
Combining these calculations and using (\ref{Q1}) as above we finally deduce that
\begin{equation*}\aligned
&E^{P\otimes_{\tau} R}\left[\left((M_{t,0}^{i})^2-\int_0^t\|G(x(r))^*e_i\|_{U}^2dr\right){\bf 1}_{A}\right]
\\&=E^{P\otimes_{\tau} R}\left[\left((M_{s\wedge \tau,0}^{i})^2-\int_0^{s\wedge \tau}\|G(x(r))^*e_i\|_{U}^2dr\right){\bf 1}_{A}\right]\\
&\quad+E^{P\otimes_{\tau} R}\left[\left((M_{s,0}^{i}-M_{\tau,0}^{i})^2-\int_\tau^s\|G(x(r))^*e_i\|_{U}^2dr\right){\bf 1}_{A\cap \{\tau\leq s\}}\right]
\\&\quad+2E^{P\otimes_{\tau} R}\left[M_{\tau,0}^{i}\left(M_{s,0}^{i}-M_{\tau,0}^{i}\right){\bf 1}_{A\cap \{\tau\leq s\}}\right]
\\&=E^{P\otimes_{\tau} R}\left[\left((M_{s,0}^{i})^2-\int_0^s\|G(x(r))^*e_i\|_{U}^2dr\right){\bf 1}_{A}\right],
\endaligned\end{equation*}
which completes the proof of (M2).
\end{proof}

As the next step, we present an auxiliary result which allows to show that for weakly continuous stochastic processes, hitting times of  open sets are  stopping times with respect to the corresponding right continuous filtration. Here we want to emphasize that the filtration $(\mathcal{B}_t)_{t\geq0}$ used below is not the augmented one since we have to consider different probabilities. As a consequence, we have to be careful about making any conclusions about stopping times.

\begin{lem}\label{time}
Let $(\Omega,\mathcal{F},(\mathcal{F}_{t})_{t\geq 0},\mathbb{P})$ be a stochastic basis. Let $H_{1},H_{2}$ be separable Hilbert spaces such that the embedding $H_{1}\subset H_{2}$ is continuous. Suppose that there exists $\{h_k\}_{k\in\mathbb{N}}\subset H_2^{*}\subset H_1^{*}$ such that we have for $f\in H_1$
$$\|f\|_{H_{1}}=\sup_{k\in\mathbb{N}}h_k(f).$$
Suppose that $X$ is an $(\mathcal{F}_t)_{t\geq 0}$-adapted stochastic process with trajectories in $C([0,\infty);H_{2})$.
Let $L>0$ and $\alpha\in(0,1)$. Then
$$\tau_1:=\inf\{t\geq  0; \|X(t)\|_{H_{1}}>L\}\qquad\text{and}\qquad \tau_2:=\inf\{t\geq 0; \|X\|_{C_t^\alpha H_{1}}>L\}$$
are $(\mathcal{F}_{t+})_{t\geq0}$-stopping times where $\mathcal{F}_{t+}=\cap_{\varepsilon>0}\mathcal{F}_{t+\varepsilon}$.
\end{lem}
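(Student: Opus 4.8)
The plan is to show that the sublevel (equivalently, superlevel) sets of $\|X(t)\|_{H_1}$ are $(\mathcal{F}_{t+})$-measurable by exploiting the weak-type formula $\|f\|_{H_1}=\sup_k h_k(f)$ together with continuity of $X$ in the weaker space $H_2$. The point is that even though $\|X(\cdot)\|_{H_1}$ need not be continuous (only lower semicontinuous, as a sup of continuous functions $t\mapsto h_k(X(t))$, since each $h_k\in H_2^*$), lower semicontinuity in time is exactly what is needed to rewrite a hitting time of an open set as a countable operation over rational times. Concretely, for $\tau_1$ I would first argue that $\{\tau_1<t\}=\bigcup_{s\in\mathbb{Q}\cap[0,t)}\{\|X(s)\|_{H_1}>L\}$: the inclusion $\supseteq$ is trivial, and $\subseteq$ follows because if $\|X(s_0)\|_{H_1}>L$ for some $s_0<t$, then $h_k(X(s_0))>L$ for some $k$, and by continuity of $s\mapsto h_k(X(s))$ (valid since $h_k\in H_2^*$ and $X$ is $H_2$-continuous) we get $h_k(X(s))>L$, hence $\|X(s)\|_{H_1}>L$, for all rational $s$ near $s_0$. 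Since each event $\{\|X(s)\|_{H_1}>L\}=\bigcup_k\{h_k(X(s))>L\}\in\mathcal{F}_s\subset\mathcal{F}_t$, we conclude $\{\tau_1<t\}\in\mathcal{F}_t$ for every $t$, and therefore $\{\tau_1\leq t\}=\bigcap_{\varepsilon>0}\{\tau_1<t+\varepsilon\}\in\bigcap_{\varepsilon>0}\mathcal{F}_{t+\varepsilon}=\mathcal{F}_{t+}$, i.e. $\tau_1$ is an $(\mathcal{F}_{t+})$-stopping time.

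For $\tau_2$ I would proceed analogously, replacing pointwise evaluation by the Hölder seminorm. Write, for $s<t$ and $s\neq t$,
$$
\frac{\|X(s)-X(t)\|_{H_1}}{|s-t|^\alpha}=\sup_{k\in\mathbb{N}}\frac{h_k(X(s))-h_k(X(t))}{|s-t|^\alpha},
$$
so that $\|X\|_{C_t^\alpha H_1}=\sup_{k}\sup_{s\neq r\in[0,t]}\frac{h_k(X(s)-X(r))}{|s-r|^\alpha}$, and for fixed $k$ the inner double supremum over $s,r\in[0,t]$ can be taken over rationals by continuity of $s\mapsto h_k(X(s))$. Thus $t\mapsto\|X\|_{C_t^\alpha H_1}$ is nondecreasing and lower semicontinuous from the left in $t$ as a countable supremum of continuous (in fact monotone) functions, which yields
$$
\{\tau_2<t\}=\bigcup_{s\in\mathbb{Q}\cap[0,t)}\{\|X\|_{C_s^\alpha H_1}>L\}=\bigcup_{s\in\mathbb{Q}\cap[0,t)}\ \bigcup_{k\in\mathbb{N}}\ \bigcup_{\substack{p,q\in\mathbb{Q}\cap[0,s]\\ p\neq q}}\Big\{h_k(X(p)-X(q))>L|p-q|^\alpha\Big\},
$$
which lies in $\mathcal{F}_t$ since each generating event is $\mathcal{F}_{\max(p,q)}$-measurable with $\max(p,q)\leq s<t$. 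Again $\{\tau_2\leq t\}=\bigcap_{\varepsilon>0}\{\tau_2<t+\varepsilon\}\in\mathcal{F}_{t+}$.

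The only genuinely delicate point — and the place where the structure hypothesis on $\{h_k\}$ is used in an essential way — is the passage from "$\|X(s_0)\|_{H_1}>L$ at a single time $s_0$" to "$\|X(s)\|_{H_1}>L$ on a rational neighbourhood of $s_0$". This works precisely because $\|\cdot\|_{H_1}$ is realized as a countable supremum of functionals that are continuous on $H_2$, so strict inequality is detected by a single $h_k$ and then propagated by $H_2$-continuity of $X$; one cannot use continuity of $\|X(\cdot)\|_{H_1}$ itself, which may genuinely fail (the $H_1$-norm can jump down but, being lower semicontinuous, never jumps up, and it is upward jumps that would break the argument). I would also note the harmless edge cases: if $X(0)$ already satisfies the constraint the infimum is $0$ and the claim is trivial, and if the set in the infimum is empty then $\tau_i=\infty$, which is still a stopping time. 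No further regularity of $X$ beyond $H_2$-continuity and $(\mathcal{F}_t)$-adaptedness is needed, and the augmentation of the filtration plays no role, consistent with the remark preceding the lemma.
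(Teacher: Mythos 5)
Your proof is correct and uses essentially the same ingredients as the paper's: the sup representation $\|f\|_{H_1}=\sup_k h_k(f)$ with $h_k\in H_2^*$, the resulting continuity of $t\mapsto h_k(X(t))$ (equivalently, lower semicontinuity of $t\mapsto\|X(t)\|_{H_1}$), and reduction to rational times, finishing with $\{\tau\leq t\}=\bigcap_\varepsilon\{\tau<t+\varepsilon\}\in\mathcal{F}_{t+}$. The only difference is cosmetic: the paper establishes $\{\tau_i\geq t\}=\bigcap_{s\in\mathbb{Q}\cap[0,t]}\{\|X(s)\|_{H_1}\leq L\}\in\mathcal{F}_t$ by propagating the closed condition $\|X(s)\|_{H_1}\leq L$ over rationals via lower semicontinuity, whereas you pass to the complement and write $\{\tau_i<t\}$ as a countable union of open conditions $\{h_k(X(s))>L\}$ propagated by continuity of $h_k(X)$ — dual formulations of the same argument.
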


We note that in  the above result, the process $X$ a priori does not need to take values in $H_{1}$. In other words, without additional regularity of the trajectories of $X$, we simply have $\tau_{1}=\tau_{2}=0$. However, in the application of Lemma \ref{time} in the proof of Theorem \ref{Main results2} below, additional regularity will be known a.s. under a suitable probability measure.

\begin{proof}[Proof of Lemma \ref{time}]
In the proof we use $X^\omega(s)$ to denote $X(s,\omega)$. First, we  observe that the trajectories of $X$ are lower semicontinuous in $H_{1}$ in the following sense
\begin{equation}\label{lower}
\|X(t)\|_{H_{1}}=\sup_{k\in\mathbb{N}}h_k( X(t))
=\sup_{k\in\mathbb{N}}\lim_{s\to t}h_k( X(s)) \leq \liminf_{s\to t}\sup_{k\in\mathbb{N}}h_k(X(s))\leq\liminf_{s\to t}\|X(s)\|_{H_{1}},
\end{equation}
where $t\geq 0$.
Note  that since by assumption we only know that $X$ takes values in $ H_{2}\supset H_{1}$, the $H_{1}$-norms appearing in \eqref{lower} may be infinite.
Next, we have for $t>0$
 \begin{equation*}
 \aligned
 \{\tau_1\geq t\}=&\cap_{s\in[0,t]}\left\{\|X(s)\|_{H_{1}}\leq L\right\}
=\cap_{s\in[0,t]\cap \mathbb{Q}}\left\{\|X(s)\|_{H_{1}}\leq L\right\}\in\mathcal{F}_t.
\endaligned
\end{equation*}
Indeed, to show  the first equality, we observe that the right hand side is a subset of the left  one. For the converse inclusion, we know that
  $\{\tau_1> t\}$ is a subset of the right hand side.  Now, we consider $\omega\in\{\tau_1= t\}$. In this case, $\|X^{\omega}(s)\|_{H_{1}}\leq L$ for every $s\in [0,t)$. Thus, there exists  a sequence $t_k\uparrow t$ such that $\|X^\omega(t_k)\|_{H_{1}}\leq L$ and by the lower semicontinuity of $X$ it follows that $\|X^\omega(t)\|_{H_{1}}\leq L$.
The second equality is also a consequence of lower semicontinuity. Indeed, if $\omega$ belongs to the right hand side, then for $s\in[0,t]$, $s\notin\mathbb{Q}$, there is a sequence $(s_{k})_{k\in\mathbb{N}}\subset[0,t]\cap\mathbb{Q}$, $s_{k}\to s$, such that $\|X^{\omega}(s_{k})\|_{H_{1}}\leq L$. Hence $\|X^{\omega}(s)\|_{H_{1}}\leq L$ and $\omega$ belongs to the let hand side as well.
Therefore, we deduce that
$$\{\tau_{1}\leq t\}=\cap_{\varepsilon>0}\{\tau_{1}<t+\varepsilon\}\in\mathcal{F}_{t+},$$
which proves that $\tau_{1}$ is an $(\mathcal{F}_{t+})_{t\geq 0}$-stopping time.

We proceed similarly for $\tau_{2}$. By the same argument as in \eqref{lower} we obtain that also the time increments of $X$ are lower semicontinuous in $H_{1}$. More precisely, for $t_{1},t_{2}\geq 0$ we have
  $$\|X(t_1)-X(t_2)\|_{H_{1}}\leq \liminf_{{s_{1}\to t_{1},s_{2}\to t_{2}}}\|X(s_1)-X(s_2)\|_{H_{1}}$$
  and as a consequence if $t_{1}\neq t_{2}$ then
$$
  \frac{\|X(t_1)-X(t_2)\|_{H_{1}}}{|t_{1}-t_{2}|^{\alpha}}\leq \liminf_{\substack{s_{1}\to t_{1},s_{2}\to t_{2}\\s_{1}\neq s_{2}}}\frac{\|X(s_1)-X(s_2)\|_{H_{1}}}{|s_{1}-s_{2}|^{\alpha}}.
$$
  This implies  for $t>0$ that
 \begin{equation}\label{eq:1}
 \aligned
 \{\tau_2\geq t\}=&\left\{\|X\|_{C_t^{\alpha}H_{1}}\leq L\right\}
 =\cap_{s_1\neq s_2\in[0,t]\cap \mathbb{Q}}\left\{\frac{\|X(s_1)-X(s_2)\|_{H_{1}}}{|s_1-s_2|^{\alpha}}\leq L\right\}\in\mathcal{F}_t,
 \endaligned
 \end{equation}
Indeed, for  the first equality, we obtain immediately that the right hand side is a subset of the left  one, because  the process $t\mapsto \|X\|_{C^{\alpha}_{t}H_{1}}$ is nondecreasing. For the converse inclusion, we know that
  $\{\tau_2> t\}$ is a subset of the right hand side.  Let $\omega\in\{\tau_2= t\}$. Then there is  a sequence $t_k\uparrow t$ such that $\|X^{\omega}\|_{C_{t_k}^{\alpha}H}\leq L$ and we have
  \begin{align*}\sup_{s_1\neq s_2\in [0,t]}\frac{\|X^{\omega}(s_1)-X^{\omega}(s_2)\|_{H_{1}}}{|s_1-s_2|^\alpha}&\leq \sup_{s_1\neq s_2\in [0,t]}\liminf_{k\to\infty}\frac{\|X^{\omega}(s_1\wedge t_k)-X^{\omega}(s_2\wedge t_k)\|_{H_{1}}}{|s_1\wedge t_k-s_2\wedge t_k|^\alpha}\\
&  \leq \sup_{k\in\mathbb{N}} \sup_{s_1\neq s_2\in [0,t_k]}\frac{\|X^{\omega}(s_1)-X^{\omega}(s_2)\|_{H_{1}}}{|s_1-s_2|^\alpha}\leq L.
  \end{align*}
We deduce that $\|X^{\omega}\|_{C_t^{\alpha}H_{1}}\leq L$ hence $\omega$ also belongs to the set on the right hand side of the first equality in \eqref{eq:1}. The second equality in \eqref{eq:1} follows by a similar argument. Therefore, we conclude that $\tau_{2}$ is an $(\mathcal{F}_{t+})_{t\geq 0}$-stopping time.
\end{proof}

\subsection{Application to solutions obtained through Theorem \ref{Main results1}}
\label{s:1.2}

As the first step, we decompose the Navier--Stokes system (\ref{ns})  into two parts, one is linear and contains the stochastic integral, whereas the second one is a nonlinear but random PDE. More precisely, we consider
\begin{equation}\label{linear}
\aligned
 dz -\Delta z+\nabla P_1dt&=dB,
\\\div z&=0,
\\z(0)&=0,
\endaligned
\end{equation}
and
\begin{equation}\label{nonlinear}
\aligned
 \partial_tv -\Delta v+\div((v+z)\otimes (v+z))+\nabla P_{2}&=0,
\\\div v&=0,
\endaligned
\end{equation}
where by $P_{1}$ and $P_{2}$ we denoted the associated pressure terms. Note that the initial value for $v$ was not given in advance but it was part of the construction in Theorem~\ref{Main results1}.
This decomposition allows to separate the difficulties coming from the stochastic perturbation from those originating in the nonlinearity.

Now, we fix   a $GG^{*}$-Wiener process $B$ defined on a probability space $(\Omega, \mathcal{F},\mathbf{P})$ and we denote by  $(\mathcal{F}_{t})_{t\geq0}$ its normal filtration, i.e. the canonical filtration of $B$ augmented by all the $\mathbf{P}$-negligible sets. This filtration is right continuous.
We recall that using the  factorization method it is standard to derive  regularity of the stochastic convolution $z$ which solves the linear equation \eqref{linear} on $(\Omega, \mathcal{F},(\mathcal{F}_{t})_{t\geq0},\mathbf{P})$.  In particular, the following result follows from \rmb{\cite[Theorem 5.14, 5.16]{DPZ92}}  together with the Kolmogorov continuity criterion.

{\begin{prp}\label{fe z}
Suppose that $\mathrm{Tr}(GG^*)<\infty$. Then for all $\delta\in (0,1/2)$ and $T>0$
$$
E^{\mathbf{P}}\left[\|z\|_{C_{T}H^{1-\delta}}+\|z\|_{C_T^{\frac{1}{2}-\delta}L^2}\right]<\infty.
$$
\end{prp}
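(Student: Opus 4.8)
The plan is to derive the stated regularity of the stochastic convolution $z$ from the standard linear theory for the stochastic heat (Stokes) semigroup, combined with the factorization method and Kolmogorov's continuity criterion. Since the equation \eqref{linear} for $z$ is linear with additive noise and $z(0)=0$, the solution is the stochastic convolution
$$
z(t)=\int_0^t e^{(t-r)\Delta}\,\mathbb{P}\,\mathrm dB(r),
$$
where $e^{t\Delta}$ is the Stokes semigroup on $L^2_\sigma$ (on the torus this is just the heat semigroup restricted to divergence-free fields) and $\mathbb{P}$ is the Leray projection. The noise being of trace class, i.e. $\mathrm{Tr}(GG^*)<\infty$, means $G$ is Hilbert--Schmidt from the reference Hilbert space into $L^2_\sigma$.

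\medskip

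\textbf{Step 1: Moment bounds via the factorization identity.} For $\alpha\in(0,1/2)$ use the factorization formula
$$
z(t)=\frac{\sin(\pi\alpha)}{\pi}\int_0^t (t-s)^{\alpha-1}e^{(t-s)\Delta}Y(s)\,\mathrm ds,\qquad Y(s)=\int_0^s (s-r)^{-\alpha}e^{(s-r)\Delta}\,\mathbb{P}\,\mathrm dB(r).
$$
By the Itô isometry and the smoothing estimate $\|e^{t\Delta}\|_{L^2\to H^\beta}\lesssim t^{-\beta/2}$, one gets for any $p\geq 2$, using the Burkholder--Davis--Gundy inequality in the Hilbert space $L^2_\sigma$,
$$
\mathbf{E}\|Y(s)\|_{H^\beta}^p\lesssim \Big(\int_0^s (s-r)^{-2\alpha-\beta}\,\mathrm dr\Big)^{p/2}\,\big(\mathrm{Tr}(GG^*)\big)^{p/2},
$$
which is finite and bounded uniformly on $[0,T]$ as long as $2\alpha+\beta<1$. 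Then applying the deterministic integral operator with kernel $(t-s)^{\alpha-1}e^{(t-s)\Delta}$, which maps $L^p([0,T];H^\beta)$ into $C([0,T];H^{\beta'})$ boundedly for $\beta'<\beta$ and $p>1/\alpha$ (a standard Sobolev embedding in time after the smoothing gain $\alpha-1$ is compensated), one obtains
$$
\mathbf{E}\,\|z\|_{C_TH^{\beta'}}^p<\infty
$$
for any $\beta'<1-2\alpha$; letting $\alpha\downarrow 0$ covers every $\beta'<1$, in particular $\beta'=1-\delta$ for any $\delta\in(0,1/2)$.

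\medskip

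\textbf{Step 2: Time regularity via Kolmogorov.} For the temporal H\"older norm, estimate $\mathbf{E}\|z(t)-z(s)\|_{L^2}^p$ directly. Splitting $z(t)-z(s)=\int_s^t e^{(t-r)\Delta}\mathbb{P}\,\mathrm dB(r)+\big(e^{(t-s)\Delta}-\mathrm{Id}\big)z(s)$ and using the Itô isometry on the first term together with $\|(e^{(t-s)\Delta}-\mathrm{Id})\|_{H^{2\gamma}\to L^2}\lesssim |t-s|^{\gamma}$ on the second (combined with the moment bound on $\|z(s)\|_{H^{2\gamma}}$ from Step 1), one gets
$$
\mathbf{E}\|z(t)-z(s)\|_{L^2}^p\lesssim |t-s|^{(\tfrac12-\delta')p}
$$
for $p$ large and any $\delta'>0$. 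The Kolmogorov continuity theorem then yields $z\in C_T^{\frac12-\delta}L^2$ almost surely with finite moments, for every $\delta\in(0,1/2)$. Combining the two contributions and taking $p=1$ gives the claimed bound
$$
\mathbf{E}^{\mathbf{P}}\big[\|z\|_{C_TH^{1-\delta}}+\|z\|_{C_T^{\frac12-\delta}L^2}\big]<\infty.
$$

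\medskip

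\textbf{Main obstacle.} There is no deep difficulty here---this is a textbook application of \cite[Theorem 5.14, 5.16]{DPZ92} and Kolmogorov's criterion, as indicated in the statement. The only points requiring a little care are the precise bookkeeping of exponents in the factorization step (ensuring $2\alpha+\beta<1$ so that the stochastic integral defining $Y$ is well defined, while keeping $p>1/\alpha$ so that the deterministic convolution lands in a space of continuous functions), and checking that the trace-class assumption $\mathrm{Tr}(GG^*)<\infty$ is exactly what makes all the appearing expectations finite. Since on the torus the Stokes semigroup coincides with the heat semigroup on the divergence-free subspace, all the required semigroup smoothing and commutation properties are immediate, and no further input is needed.
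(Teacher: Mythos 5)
Your proof is correct and takes essentially the same route as the paper, which simply cites Da Prato–Zabczyk Theorems 5.14 and 5.16 (the factorization method) together with Kolmogorov's continuity criterion; your Steps 1 and 2 reconstruct precisely what those citations invoke. The bookkeeping of the exponents $\alpha,\beta,p$ is consistent, and the decomposition of $z(t)-z(s)$ in Step 2 is the standard one.
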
}

As the next step, for every $\omega\in \Omega_0$ we define a process $M_{t,0}^\omega$ similarly to  Definition \ref{martingale solution}, that is,
\begin{equation}\label{eq:M}
M_{t,0}^\omega:= \omega(t)-\omega({0})+\int^t_0 [\mathbb{P} \div(\omega(r)\otimes \omega(r))-\Delta \omega(r) ]dr
\end{equation}
and for every $\omega\in \Omega_0$ we let
 \begin{equation}\label{eq:Z}
 Z^\omega(t):= M_{t,0}^\omega+\int_0^t\mathbb{P}\Delta e^{(t-r)\Delta} M_{r,0}^\omega dr.
 \end{equation}
The idea behind these definitions is as follows. The process $M$ is defined in terms of the canonical process $x$ and hence its definition makes sense for every $\omega\in \Omega_{0}$, i.e. without the reference to any probability measure.  Consequently, the same applies to $Z$. In addition, if $P$ is a martingale solution to the Navier--Stokes system \eqref{ns}, the process  $M$ is a $GG^{*}$-Wiener process under $P$. Hence we may apply an integration by parts formula to show that, the process $Z$ solves \eqref{linear} with $B$ replaced by $M$.  In other words, under $P$, $Z$ is almost surely equal to a stochastic convolution, i.e., we have
\begin{equation*}
Z(t)=\int_0^t\mathbb{P} e^{(t-r)\Delta} d M_{r,0}\qquad P\text{-a.s}.
\end{equation*}

In addition, by definition of $Z$ and $M$ together with the regularity of trajectories in $\Omega_{0}$, it follows that for every $\omega\in \Omega_0$, $Z^\omega\in C([0,\infty),H^{-3})$.
 For $n\in\mathbb{N},L>0$ and for  $\delta\in(0,1/12)$ to be determined below   we define
 \begin{equation*}
 \aligned\tau_L^n(\omega)&=\inf\left\{t\geq 0, \|Z^\omega(t)\|_{\rmb{H^{{1-\delta}}}}>\frac{(L-\frac{1}{n})^{1/4}}{C_S}\right\}\bigwedge \inf\left\{t>0,\|Z^\omega\|_{C_t^{\frac{1}{2}-2\delta}\rmb{L^2}}>\frac{(L-\frac{1}{n})^{1/2}}{C_S}\right\}\bigwedge L,
 \endaligned
 \end{equation*}
 where $C_S$ is the Sobolev constant for $\|f\|_{L^\infty}\leq C_S \|f\|_{H^{\frac{3+\sigma}{2}}}$ with $\sigma>0$.
We observe that the sequence $(\tau^{n}_{L})_{n\in\mathbb{N}}$ is nondecreasing and define
\begin{equation}\label{eq:tauL}
\tau_L:=\lim_{n\rightarrow\infty}\tau_L^n.
\end{equation}
Note that without additional regularity of the trajectory $\omega$, it holds true that $\tau^{n}_{L}(\omega)=0$.
However, under $P$ we may use the regularity assumption on $G$ to deduce that $Z\in {CH^{1-\delta}\cap C^{1/2-\delta}_{\mathrm{loc}}L^2}$ $P$-a.s.
By Lemma \ref{time} we obtain that $\tau_L^{n}$ is a $(\mathcal{B}_t)_{t\geq0}$-stopping time and consequently also  $\tau_L$ is a $(\mathcal{B}_t)_{t\geq 0}$-stopping time as an increasing limit of stopping times.  {We emphasize that we need to introduce the stopping time on the path space without using any probability. The introduction of $\tau_L^n$ is to approximate $\tau_L$ which coincides with the stopping time $T_L$ introduced in \eqref{stopping time} below under  the law of the convex integration solution. Moreover, $\tau_L^n$ is defined on the path space and is a stopping time by Lemma \ref{time}. We cannot directly prove that $T_L$ is stopping time on the path space without using continuity property of the Brownian motion.}

As the next step,  we apply Theorem~\ref{Main results1}  on the stochastic basis $(\Omega,\mathcal{F},(\mathcal{F}_{t})_{t\geq0},\mathbf{P})$. We note that the stopping time $\mathfrak{t}$ from the statement of Theorem~\ref{Main results1} is given by $T_{L}$ for a sufficiently large $L>1$,
 defined in \eqref{stopping time} below. We recall that $u$ is adapted  to $(\mathcal{F}_{t})_{t\geq0}$ which is an essential property employed in the sequel. We denote by $P$ the law of $u$ and prove the following result.

\begin{prp}\label{prop:ext}
The probability measure $P$ is a martingale solution to the Navier--Stokes system \eqref{ns} on $[0,\tau_{L}]$ in the sense of Definition \ref{def:martsol}, where $\tau_{L}$ was defined in \eqref{eq:tauL}.
\end{prp}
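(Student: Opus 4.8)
The goal is to verify that the law $P$ of the convex integration solution $u$ fulfils (M1), (M2), (M3) of Definition~\ref{def:martsol} on the stochastic interval $[0,\tau_{L}]$. The key observation is that $\tau_{L}$, which is defined on the path space in terms of the canonical process via $Z^{\omega}$ and $M^{\omega}_{\cdot,0}$, coincides $P$-a.s.\ with the stopping time $\mathfrak{t}=T_{L}$ from Theorem~\ref{Main results1}. Indeed, under $P$ the canonical process $x$ has the law of $u$, and a standard integration by parts (the factorization / mild-formulation argument sketched after \eqref{eq:Z}) shows that under $P$ the process $M_{\cdot,0}$ is a $GG^{*}$-Wiener process and $Z$ is $P$-a.s.\ equal to the stochastic convolution $z$ solving \eqref{linear} driven by $M_{\cdot,0}$. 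Since $u = v + z$ with $v$ as in Theorem~\ref{Main results1}, the random variables defining $\tau_{L}$ on the path space (the $H^{1-\delta}$- and $C^{1/2-2\delta}L^{2}$-norms of $Z^{\omega}$) agree $P$-a.s.\ with the corresponding quantities built from the genuine stochastic convolution, so $\tau_{L}$ pushed to the path space equals $T_{L}$, and by construction $u(t)=u(t\wedge T_{L})$, so $P\in\mathscr{P}(\Omega_{0,\tau_{L}})$.

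\textbf{Verification of (M1)--(M3).} First I would record that $P(x(0)=u(0))=1$ since $u(0)$ is deterministic; the integrability condition in (M1), $\int_{0}^{n\wedge\tau_{L}}\|G\|^{2}_{L_{2}(U;L^{2}_{\sigma})}\,dr<\infty$, is immediate here since $G$ is a fixed Hilbert--Schmidt operator with $\mathrm{Tr}(GG^{*})<\infty$ and $\tau_{L}\leq L$ is bounded. For (M2): since $u$ is an $(\mathcal{F}_{t})_{t\geq0}$-adapted analytically weak solution of \eqref{ns} on $[0,T_{L}]$ driven by the Wiener process $B$, for each $e_{i}\in C^{\infty}(\mT^{3})\cap L^{2}_{\sigma}$ the process $t\mapsto \langle u(t\wedge T_{L})-u(0),e_{i}\rangle+\int_{0}^{t\wedge T_{L}}\langle\div(u\otimes u)-\Delta u,e_{i}\rangle\,dr$ equals $\langle\int_{0}^{t\wedge T_{L}}G\,dB,e_{i}\rangle$, which is a continuous square integrable $(\mathcal{F}_{t})_{t\geq0}$-martingale with quadratic variation $\int_{0}^{t\wedge T_{L}}\|G^{*}e_{i}\|_{U}^{2}\,dr$. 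Pushing forward under the law map and using that the canonical filtration $(\mathcal{B}_{t})_{t\geq0}$ on $\Omega_{0}$ is coarser than (the image of) $(\mathcal{F}_{t})_{t\geq0}$ — more precisely, that $M^{i}_{t\wedge\tau_{L},0}$ is $\mathcal{B}_{t}$-measurable and the martingale property transfers under the image measure — yields that $M^{i}_{t\wedge\tau_{L},0}$ is a continuous square integrable $(\mathcal{B}_{t})_{t\geq0}$-martingale under $P$ with the required quadratic variation. Finally (M3) follows by taking expectations in the law from \eqref{eq:vv}: since $\esssup_{\omega}\sup_{t\in[0,\mathfrak{t}]}\|u(t)\|_{H^{\gamma}}<\infty$ and $H^{\gamma}\hookrightarrow L^{2}$, both $\sup_{r\in[0,t\wedge\tau_{L}]}\|x(r)\|_{L^{2}}^{2q}$ and $\int_{0}^{t\wedge\tau_{L}}\|x(r)\|_{H^{\gamma}}^{2}\,dr$ (the latter controlled by $\tau_{L}\leq L$ times the uniform bound) are $P$-a.s.\ bounded by a deterministic constant, which certainly gives the bound $C_{t,q}(\|x_{0}\|_{L^{2}}^{2q}+1)$.

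\textbf{Main obstacle.} The delicate point is not (M1)/(M3) but the measurability bookkeeping in (M2) and, upstream of it, the identification $\tau_{L}=T_{L}$ $P$-a.s. One has to be careful that $\tau_{L}^{n}$ is defined purely on the path space — without reference to $\mathbf{P}$ — via the lower-semicontinuous hitting-time construction of Lemma~\ref{time} applied to the process $Z$ (which a priori lives only in $H^{-3}$), and that it is only \emph{under the image measure $P$} that $Z$ acquires the regularity ($Z\in CH^{1-\delta}\cap C^{1/2-\delta}_{\mathrm{loc}}L^{2}$, from Proposition~\ref{fe z} applied to the stochastic convolution) making $\tau_{L}^{n}$ nontrivial and making it match the definition of $T_{L}$ in \eqref{stopping time}. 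Thus the argument must (i) invoke the integration by parts to see $Z\overset{P\text{-a.s.}}{=}z$, (ii) use Proposition~\ref{fe z} for the regularity of $z$, (iii) conclude $\tau_{L}^{n}\uparrow\tau_{L}=T_{L}$ $P$-a.s., and only then (iv) transfer the martingale property of the stochastic integral from the original stochastic basis to the canonical one. Keeping straight which statements hold pathwise on $\Omega_{0}$ and which hold only $P$-a.s.\ is where the care is required.
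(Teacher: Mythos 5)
Your proposal follows the paper's proof essentially verbatim: establish $\tau_{L}(u)=T_{L}$ $\mathbf{P}$-a.s.\ via the identity $Z^{u}=z$ on $[0,T_{L}]$ and the regularity of the stochastic convolution from Proposition~\ref{fe z}, then transfer the adaptedness of $u(\cdot\wedge T_{L})$ and the martingale property of the stopped It\^o integral to the canonical path space, with (M1) and (M3) following from the deterministic initial datum and the uniform bound \eqref{eq:vv}. One small precision worth noting: rather than asserting that "under $P$ the process $M_{\cdot,0}$ is a $GG^{*}$-Wiener process" (which is close to what (M2) says and could look circular), the paper works on the original basis where $M^{u}_{\cdot,0}=B$ follows directly from $u$ being a weak solution, and only afterwards pushes forward — but your "Main obstacle" paragraph makes clear you have this correct order of operations in mind.
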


\begin{proof}
Recall that the stopping time $T_{L}$ was defined in \eqref{stopping time} in terms of the process $z$, the solution to the linear equation \eqref{linear}. Theorem~\ref{Main results1} yields the existence of a solution $u$ to the Navier--Stokes system \eqref{ns} on $[0,T_{L}]$  such that $u(\cdot\wedge T_{L})\in \Omega_{0}$ $\mathbf{P}$-a.s. We will now prove that
 \begin{equation}\label{eq}\tau_L(u)=T_L \quad \mathbf{P}\text{-a.s}.\end{equation}
To this end, we observe that due to the definition of $M$ in \eqref{eq:M} and $Z$ in \eqref{eq:Z} together with the fact that $u$ solves the Navier--Stokes system \eqref{ns} on $[0,T_{L}]$, we have
  \begin{equation}\label{eq1}
  Z^u(t)=z(t) \quad\textrm{ for } t\in[0,T_L] \ \mathbf{P}\text{-a.s.}
  \end{equation}
Since $z\in{CH^{1-\delta}\cap C^{1/2-\delta}_{\mathrm{loc}}L^2}$ $\mathbf{P}$-a.s. according to Proposition \ref{fe z}, the trajectories of the processes
$$t\mapsto \|z(t)\|_{{H^{1-\delta}}}\quad\text{and}\quad t\mapsto \|z\|_{C^{\frac12-2\delta}_{t}{L^2}}$$
are  $\mathbf{P}$-a.s. continuous.
 It follows from the definition of $T_L$ that one of the following three statements holds $\mathbf{P}$-a.s.:
$$
\text{either }\  T_L=L\ \text{ or }\  \|z(T_L)\|_{{H^{1-\delta}}}\geq L^{1/4}/C_S \ \text{ or }\  \|z\|_{C_{T_L}^{\frac{1}{2}-2\delta}{L^2}}\geq L^{1/2}/C_S.
$$
Therefore, as a consequence of (\ref{eq1}), we deduce that $\tau_L(u)\leq T_L$ $\mathbf{P}$-a.s. Suppose now  that $\tau_L(u)<T_L$ holds true on a set of positive probability $\mathbf{P}$. Then it holds on this set that
$$
\|z(\tau_L(u))\|_{{H^{1-\delta}}}=\|Z^u(\tau_L(u))\|_{{H^{1-\delta}}}\geq L^{1/4}/C_S\ \text{ or }\  \|Z^u\|_{C_{\tau_L(u)}^{\frac{1}{2}-2\delta}{L^2}}=\|z\|_{C_{\tau_L(u)}^{\frac{1}{2}-2\delta}{L^2}}\geq L^{1/2}/C_S,
$$
which however contradicts the definition of $T_L$. Hence we have proved  (\ref{eq}).

Recall that $\tau_{L}$ is a $(\mathcal{B}_{t})_{t\geq 0}$-stopping time. We intend to show that  $P$ is a martingale solution to the Navier--Stokes system \eqref{ns} on $[0,\tau_{L}]$ in the sense of Definition \ref{def:martsol}. First, we observe that it can be seen from the construction in Theorem \ref{Main results1} that the initial value $u(0)=v(0)+z(0)=v(0)$ is indeed deterministic.   Hence the condition (M1) follows. However, we note that the initial value $v(0)$ cannot be prescribed in advance. In other words, Theorem \ref{Main results1} does not yield a solution to the Cauchy problem, it only provides the existence of an initial condition for which a solution violating the energy inequality exists.
For an appropriate choice of the constant $C_{t,q}$ in Definition \ref{def:martsol}, which has to depend on the constant $C_{L}$ in \eqref{eq:vv} in Theorem \ref{Main results1}, the condition (M3) also follows.

Let us now  verify (M2). To this end, let $s\leq t$ and let  $g$ be a bounded and real valued $\mathcal{B}_s$-measurable and continuous function on $\Omega_0$. Since $u(\cdot\wedge T_{L})$   is an $(\mathcal{F}_t)_{t\geq0}$-adapted process and \eqref{eq} holds, we deduce that $u(\cdot\wedge \tau_{L}(u))$ is also $(\mathcal{F}_t)_{t\geq0}$-adapted.  Consequently, the composition $g(u(\cdot\wedge\tau_{L}(u)))$ is ${\mathcal{F}}_s$-measurable. On the other hand, we know that under $\mathbf{P}$, $M^{u,i}_{t\wedge \tau_L(u),0}=\langle B_{t\wedge \tau_L(u)},e_i\rangle$ is an $(\mathcal{F}_t)_{t\geq0}$-martingale. Its quadratic variation process is given by $\|Ge_{i}\|_{L^{2}}^{2}(t\wedge\tau_{L}(u))$. Therefore, we have
$$E^P[M^{i}_{t\wedge \tau_L,0}g]=E^{\mathbf{P}}[M^{u,i}_{t\wedge \tau_L(u),0}g(u)]
=E^{\mathbf{P}}[M^{u,i}_{s\wedge \tau_L(u),0}g(u)]=E^P[M^{i}_{s\wedge \tau_L,0}g]$$
and by similar arguments we also obtain that
$$E^P\left[\left((M^{i}_{t\wedge \tau_L,0})^2-(t\wedge\tau_L)\|Ge_{i}\|_{L^{2}}^{2}\right)g\right]=E^P\left[\left((M^{i}_{s\wedge \tau_L,0})^2-(s\wedge\tau_L)\|Ge_{i}\|_{L^{2}}^{2}\right)g\right].$$
Accordingly,  the process $M_{t\wedge\tau_L,0}^{i}$ is a   continuous square integrable $(\mathcal{B}_t)_{t\geq 0}$-martingale under $P$ with  the quadratic variation process
given by
$\|Ge_i\|_{L^2}^2(t\wedge\tau_L)$ and  (M2) in   Definition \ref{def:martsol} follows.
\end{proof}

At this point, we are already able to deduce that martingale solutions on $[0,\tau_{L}]$ in the sense of Definition \ref{def:martsol} are not unique. However, we aim at a stronger result, namely that globally defined martingale solutions on $[0,\infty)$ in the sense of Definition \ref{martingale solution} are not unique. Moreover, we will prove that
  for an arbitrary time interval $[0,T]$, the martingale solutions on $[0,T]$ are not unique.
To this end, we will  extend $P$ to a martingale solution on $[0,\infty)$ through the procedure developed in Section \ref{ss:gen}.
More precisely, as an immediate corollary of Proposition \ref{prop:ext} and the fact that $\tau_{L}$ is a $(\mathcal{B}_{t})_{t\geq0}$-stopping time, we may apply Proposition \ref{prop:1}. In particular, we construct $Q_{\omega}$ for all $\omega\in\Omega_{0}$.
In view of Proposition \ref{prop:2},  (M1-M3) follows once we verify the condition \eqref{Q1} for $Q_{\omega}$. This will be achieved in the following result.

\begin{prp}\label{prp:ext2}
The probability measure $P\otimes_{\tau_{L}}R$ is a martingale solution to the Navier--Stokes system \eqref{ns} on $[0,\infty)$ in the sense of Definition \ref{martingale solution}.
\end{prp}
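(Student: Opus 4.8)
The plan is to apply Proposition~\ref{prop:2} with $\tau=\tau_L\wedge N$ for an arbitrary $N\in\mathbb N$ (so as to deal with a bounded stopping time and then let $N\to\infty$), once we have verified the crucial hypothesis \eqref{Q1}, namely that there is a $P$-null Borel set $\mathcal N\subset\Omega_{0,\tau_L}$ such that for $\omega\notin\mathcal N$ one has $Q_\omega(\tau_L(\omega')=\tau_L(\omega))=1$. Everything else in Proposition~\ref{prp:ext2} is then immediate: Proposition~\ref{prop:ext} gives that $P$ is a martingale solution on $[0,\tau_L]$, Proposition~\ref{prop:1} produces the family $(Q_\omega)$, and Proposition~\ref{prop:2} upgrades $P\otimes_{\tau_L}R$ to a martingale solution on $[0,\infty)$ in the sense of Definition~\ref{martingale solution}. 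So the entire content of the proof is the verification of \eqref{Q1}, and I expect that to be the main obstacle.

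To verify \eqref{Q1} I would argue as follows. Recall from \eqref{eq:tauL} that $\tau_L=\lim_n\tau_L^n$ where each $\tau_L^n$ is the hitting time of an \emph{open} set by the process $Z$ (as defined in \eqref{eq:Z}), which depends on the canonical path only through $M$ and hence through the canonical trajectory $\omega$. The key structural point is that $\tau_L^n(\omega')$, and hence $\tau_L(\omega')$, depends only on the restriction of $\omega'$ to $[0,\tau_L(\omega')]$, and more importantly: on the event $\{\omega':\ x(t,\omega')=\omega(t)\ \text{for } 0\le t\le \tau_L(\omega)\}$, which has $Q_\omega$-probability one by \eqref{qomega} (for $\omega$ with $x(\tau_L(\omega))\in L^2_\sigma$), the processes $Z^{\omega'}$ and $Z^{\omega}$ agree on $[0,\tau_L(\omega)]$. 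From this one gets $\tau_L(\omega')\le\tau_L(\omega)$ is essentially automatic, but the converse inequality $\tau_L(\omega')\ge\tau_L(\omega)$ is the delicate part: it would fail if the solution governed by $R_{\tau_L(\omega),x(\tau_L(\omega),\omega)}$ immediately re-entered the bad set after time $\tau_L(\omega)$. To handle this I would use the analogue of \eqref{eq} established inside the proof of Proposition~\ref{prop:ext}, namely that under the law of the convex integration solution $\tau_L$ coincides with $T_L$ and, crucially, that $Z$ remains $\mathbf P$-a.s. strictly inside the defining ball up to $\tau_L$ with continuous trajectories (Proposition~\ref{fe z}); combined with the strong Markov-type property built into $R_{s,x_0}$ being itself a martingale solution whose associated $Z$-process is a genuine stochastic convolution (hence continuous and, away from the threshold, remaining below it for a positive time), one concludes that $Q_\omega$-a.s. $\tau_L(\omega')=\tau_L(\omega)$. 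The $P$-null exceptional set $\mathcal N$ absorbs those $\omega$ for which either $x(\tau_L(\omega))\notin L^2_\sigma$ or the trajectory lacks the regularity making $Z^\omega$ continuous; by Proposition~\ref{fe z} and the construction in Theorem~\ref{Main results1}, $P(\mathcal N)=0$.

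Concretely, the steps in order are: (i) fix $N$, set $\tau=\tau_L\wedge N$, which is a bounded $(\mathcal B_t)_{t\ge0}$-stopping time, and note $P$ is still a martingale solution on $[0,\tau]$; (ii) invoke Proposition~\ref{prop:1} to get $(Q_\omega)_{\omega\in\Omega_0}$; (iii) define $\mathcal N$ as the union of $\{x(\tau)\notin L^2_\sigma\}$ with the set where the canonical trajectory fails the regularity of Proposition~\ref{fe z}, and check $P(\mathcal N)=0$ using that $P$ is the law of the convex integration solution $u$ for which $z=Z^u$ has the regularity of Proposition~\ref{fe z} on $[0,T_L]$ and $u(\cdot\wedge T_L)\in\Omega_0$; (iv) for $\omega\notin\mathcal N$, use \eqref{qomega} to transfer the identity $Z^{\omega'}=Z^{\omega}$ on $[0,\tau(\omega)]$ to $Q_\omega$-a.e.\ $\omega'$, deduce $\tau(\omega')\le\tau(\omega)$, and then use that under $R_{\tau(\omega),x(\tau(\omega),\omega)}$ the associated process $Z$ is $R$-a.s.\ continuous with $Z(\tau(\omega))$ lying strictly inside the defining ball whenever $\tau(\omega)<N$ (which is exactly the content of \eqref{eq} / the threshold analysis in the proof of Proposition~\ref{prop:ext}) to get $\tau(\omega')\ge\tau(\omega)$, hence \eqref{Q1}; (v) apply Proposition~\ref{prop:2} to conclude $P\otimes_\tau R$ is a martingale solution on $[0,\infty)$; (vi) finally observe that since on $\{\tau_L\le N\}$ the measures $P\otimes_{\tau_L\wedge N}R$ are consistent and $\tau_L<\infty$ $P$-a.s.\ (indeed $\tau_L\le L$ by construction), letting $N\to\infty$ — or simply taking $N\ge L$ so that $\tau_L\wedge N=\tau_L$ — gives $P\otimes_{\tau_L}R$ as the desired martingale solution on $[0,\infty)$. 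The main obstacle, as indicated, is step (iv): reconciling the path-space stopping time $\tau_L$ (defined without reference to any measure, via $Z$) with the probabilistic behaviour of the concatenated solution at and just after the random time $\tau_L(\omega)$, which is where the lower-semicontinuity arguments of Lemma~\ref{time} and the continuity of the stochastic convolution under $R_{s,x_0}$ must be combined carefully.
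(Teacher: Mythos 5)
Your overall strategy matches the paper's: everything reduces to verifying \eqref{Q1}, which you correctly identify as the core obstacle, and the route is to show that $Z^{\omega'}$ is continuous $Q_\omega$-a.s.\ and then exploit the agreement on $[0,\tau_L(\omega)]$ from \eqref{qomega}. Steps (i)--(iii), (v), (vi) are fine, though (i) and (vi) are superfluous since $\tau_L\le L$ by construction so $\tau_L$ is already bounded.

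The problem is in step (iv), where your mechanism is wrong and, more importantly, the actual argument is never carried out. You assert that agreement alone gives $\tau(\omega')\le\tau(\omega)$, and then invoke ``$Z(\tau(\omega))$ lying strictly inside the defining ball'' under $R$ to get $\tau(\omega')\ge\tau(\omega)$. But the threshold analysis in the proof of Proposition~\ref{prop:ext}, which you cite, shows the opposite: if $T_L<L$ then $z(T_L)$ is at or beyond the threshold, not strictly inside. Moreover the continuation $R_{\tau_L(\omega),x(\tau_L(\omega),\omega)}$ governs only the increment $\mathbb{Z}^{\omega'}_{\tau_L(\omega)}$ of $Z^{\omega'}$ past $\tau_L(\omega)$, which starts from $0$ -- it does not govern $Z^{\omega'}(\tau_L(\omega))$, which equals $Z^\omega(\tau_L(\omega))$ by \eqref{qomega}. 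In fact, once continuity of $Z^{\omega'}$ is established, \emph{both} inequalities follow from the agreement alone, because then $\tau_L$ may be replaced by $\bar{\tau}_L$ (using $\geq$ thresholds and suprema over rational times) and $\{\bar{\tau}_L\le t\}$ becomes determined by the trajectory restricted to $[0,t]$. What you must actually prove -- and do not -- is: (a) that $Z^{\omega'}$ is continuous $Q_\omega$-a.s., which requires the decomposition $Z^{\omega'}(t) = Z^{\omega'}(t\wedge\tau_L(\omega)) + (e^{(t-t\wedge\tau_L(\omega))\Delta}-I)Z^{\omega'}(t\wedge\tau_L(\omega)) + \mathbb{Z}^{\omega'}_{\tau_L(\omega)}(t)$, the observation that $\mathbb{Z}^{\omega'}_{\tau_L(\omega)}$ is $\mathcal{B}^{\tau_L(\omega)}$-measurable, and separate applications of \eqref{qomega} (the first two pieces are a fixed continuous trajectory since $\omega\in\mathcal{N}^c$) and of \eqref{qomega2} together with Proposition~\ref{fe z} (the third piece is a stochastic convolution under $R$); and (b) that continuity plus \eqref{qomega} actually yields $Q_\omega(\tau_L(\omega')=\tau_L(\omega))=1$, which requires writing $\{\tau_L(\omega')\le t\}\cap N_\omega^c$ as $A_t\cap N_\omega^c$ with $A_t\in\mathcal{B}_t^0$ built from rational-time suprema and then identifying $\{\omega'\in N_\omega^c:\bar{\tau}_L(\omega')=\tau_L(\omega)\}$ as an event in $N_\omega^c\cap\mathcal{B}^0_{\tau_L(\omega)}$. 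Without this last measurability step the argument does not close: $\tau_L$ is a priori only a $(\mathcal{B}_t)$-stopping time for the right-continuous filtration, so \eqref{qomega}, which fixes the trajectory on $[0,\tau_L(\omega)]$, cannot control $\tau_L(\omega')$ until you have downgraded $\tau_L$ to $\bar{\tau}_L$ via continuity.
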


\begin{proof}
In light of Proposition \ref{prop:1} and Proposition \ref{prop:2}, it only remains to establish \eqref{Q1}.
Due to \eqref{eq} and \eqref{eq1}, we know that
\begin{align*}
\begin{aligned}
P\left(\omega:Z^\omega(\cdot\wedge \tau_L(\omega))\in C{H^{1-\delta}}\cap C^{\frac{1}{2}-\delta}_{\mathrm{loc}}{L^2}\right)&=\mathbf{P}\left(Z^u(\cdot\wedge \tau_L(u))\in C{H^{1-\delta}}\cap C^{\frac{1}{2}-\delta}_{\mathrm{loc}}{L^2}\right)\\
&=\mathbf{P}\left(z(\cdot\wedge T_L)\in C{H^{1-\delta}}\cap C^{\frac{1}{2}-\delta}_{\mathrm{loc}}{L^2}\right)=1.
\end{aligned}
\end{align*}
This means that  there exists a $P$-measurable set $\mathcal{N}\subset \Omega_{0,\tau_L}$ such that $P(\mathcal{N})=0$ and for $\omega\in \mathcal{N}^c$
\begin{equation}\label{continuity1}
Z^\omega_{\cdot\wedge \tau_L(\omega)}\in C{H^{1-\delta}}\cap C^{\frac{1}{2}-\delta}_{\mathrm{loc}}{L^2}.
\end{equation}
On the other hand, it follows from \eqref{eq:Z} that for every $\omega'\in\Omega_0$
\begin{align*}Z^{\omega'}(t)-Z^{\omega'}(t\wedge\tau_L(\omega))&=M^{\omega'}_{t,0}-e^{(t-t\wedge\tau_L(\omega))\Delta}M^{\omega'}_{t\wedge \tau_L(\omega),0}+\int_{t\wedge\tau_L(\omega)}^t\mathbb{P}\Delta e^{(t-s)\Delta}M_{s,0}^{\omega'} ds
\\&\quad+(e^{(t-t\wedge\tau_L(\omega))\Delta}-I)\left[M^{\omega'}_{t\wedge\tau_L(\omega),0}+\int_0^{t\wedge \tau_L(\omega)}\mathbb{P}\Delta e^{(t\wedge\tau_L(\omega)-s)\Delta}M_{s,0}^{\omega'} ds\right]\\&=\mathbb{Z}_{\tau_L(\omega)}^{\omega'}(t)+(e^{(t-t\wedge\tau_L(\omega))\Delta}-I)Z^{\omega'}(t\wedge\tau_L(\omega)),\end{align*}
with
\begin{align*}
\mathbb{Z}^{\omega'}_{\tau_L(\omega)}(t)&=M^{\omega'}_{t,0}-e^{(t-t\wedge\tau_L(\omega))\Delta}M^{\omega'}_{t\wedge \tau_L(\omega),0}+\int_{t\wedge\tau_L(\omega)}^t\mathbb{P}\Delta e^{(t-s)\Delta}M_{s,0}^{\omega'} ds
\\&=M^{\omega'}_{t,0}-M^{\omega'}_{t\wedge \tau_L(\omega),0}+\int_{t\wedge\tau_L(\omega)}^t\mathbb{P}\Delta e^{(t-s)\Delta}(M_{s,0}^{\omega'}-M_{s\wedge\tau_L(\omega),0}^{\omega'}) ds.\end{align*}
Since $M_{\cdot,0}-M_{\cdot\wedge \tau_L(\omega),0}$ is $\mathcal{B}^{\tau_L(\omega)}$-measurable,
we know that $\mathbb{Z}^{\omega'}_{\tau_L(\omega)}$ is $\mathcal{B}^{\tau_L(\omega)}$-measurable.

Using  \eqref{qomega} and \eqref{qomega2} it holds that for all $\omega\in\Omega_0$
\begin{equation*}\aligned
& Q_\omega\left(\omega'\in\Omega_{0}; Z^{\omega'}_{\cdot}\in C{H^{1-\delta}}\cap C^{\frac{1}{2}-\delta}_{\mathrm{loc}}{L^2}\right)
\\&=Q_\omega\left(\omega'\in\Omega_{0}; Z^{\omega'}_{\cdot\wedge \tau_L(\omega)}\in C{H^{1-\delta}}\cap C^{\frac{1}{2}-\delta}_{\mathrm{loc}}{L^2}, \mathbb{Z}_{\tau_L(\omega)}^{\omega'}\in C{H^{1-\delta}}\cap C^{\frac{1}{2}-\delta}_{\mathrm{loc}}{L^2}\right)
\\&=\delta_\omega\left(\omega'\in\Omega_{0}; Z^{\omega'}_{\cdot\wedge \tau_L(\omega)}\in C{H^{1-\delta}}\cap C^{\frac{1}{2}-\delta}_{\mathrm{loc}}{L^2}\right) \\
&\qquad\times  R_{\tau_L(\omega),x(\tau_L(\omega),\omega)}\left(\omega'\in\Omega_{0}; \mathbb{Z}_{\tau_L(\omega)}^{\omega'}\in C{H^{1-\delta}}\cap C^{\frac{1}{2}-\delta}_{\mathrm{loc}}{L^2}\right).\endaligned\end{equation*}
Here the first factor on the right hand side equals to $1$ for all  $\omega\in \mathcal{N}^c$ due to \eqref{continuity1}.
Since for $\omega\in \{x(\tau)\in L^2_\sigma\}$, $R_{\tau_L(\omega),x(\tau_L(\omega),\omega)}$ is a martingale solution to the Navier--Stokes system \eqref{ns} starting at the deterministic time $\tau_{L}(\omega)$ from the deterministic initial condition $x(\tau_{L}(\omega),\omega)$, the process $\omega'\mapsto M_{\cdot,0}^{\omega'}-M^{\omega'}_{\cdot\wedge \tau_L(\omega),0}$ is a $GG^{*}$-Wiener process starting from $\tau_L(\omega)$ with respect to $(\mathcal{B}_{t})_{t\geq0}$ under the measure $R_{\tau_L(\omega),x(\tau_L(\omega),\omega)}$.
Due to the regularity of its covariance we deduce that also the second factor equals to $1$.
Indeed,  we have for $R_{\tau_L(\omega),x(\tau_L(\omega),\omega)}$-a.e. $\omega'$ that
$$\mathbb{Z}^{\omega'}_{\tau_L(\omega)}(t)=\int_{0}^t\mathbb{P}e^{(t-s)\Delta}d(M_{s,0}^{\omega'}-M_{s\wedge\tau_L(\omega),0}^{\omega'})$$
and the regularity of this stochastic convolution follows again from Proposition \ref{fe z}.
In particular, it holds for $R_{\tau_L(\omega),x(\tau_L(\omega),\omega)}$-a.e. $\omega'$ that
$${\mathbb{Z}}^{\omega'}_{\tau_L(\omega)}\in C{H^{1-\delta}}\cap C^{\frac{1}{2}-\delta}_{\mathrm{loc}}{L^2}.$$
To summarize, we have proved that  for all  $\omega\in \mathcal{N}^c\cap \{x(\tau)\in L^2_\sigma\}$
$$
Q_\omega\left(\omega'\in\Omega_{0}; Z^{\omega'}_{\cdot}\in C{H^{1-\delta}}\cap C^{\frac{1}{2}-\delta}_{\mathrm{loc}}{L^2}\right)=1.
$$

As a consequence, for all
$\omega\in \mathcal{N}^c\cap \{x(\tau)\in L^2_\sigma\}$ there exists a measurable set $N_\omega$ such that $Q_\omega(N_\omega)=0$ and for all $\omega'\in N_\omega^c$ the trajectory
$t\mapsto Z^{\omega'}(t)$ belongs to $C{H^{1-\delta}}\cap C^{\frac{1}{2}-\delta}_{\mathrm{loc}}{L^2}$. Therefore, by \eqref{eq:tauL} we obtain that
 $\tau_L(\omega')=\bar{\tau}_L(\omega')$ for all $\omega'\in N_\omega^c$ where
 \begin{equation*}\bar{\tau}_L(\omega'):=\inf\left\{t\geq 0, \|Z^{\omega'}(t)\|_{{H^{1-\delta}}}\geq L^{1/4}/C_{S}\right\}\bigwedge\inf\left\{t\geq 0,\|Z^{\omega'}\|_{C_t^{1/2-2\delta}{L^2}}\geq L^{1/2}/C_{S}\right\}\bigwedge L.\end{equation*}
This implies that for $t< L$
\begin{equation}\label{mea}\aligned
\left\{\omega'\in N_\omega^c,\tau_L(\omega')\leq t\right\}&=\left\{\omega'\in N_\omega^c, \sup_{s\in\mathbb{Q},s\leq t}
\|Z^{\omega'}(s)\|_{{H^{1-\delta}}}\geq L^{1/4}/C_S\right\}
\\&\qquad\bigcup\left\{\omega'\in N_\omega^c, \sup_{s_1\neq s_2\in \mathbb{Q}\cap [0,t]}\frac{\|Z^{\omega'}(s_1)-Z^{\omega'}(s_2)\|_{{L^2}}}{|s_1-s_2|^{\frac{1}{2}-2\delta}}\geq L^{1/2}/C_S\right\}
\\&:= N^c_\omega \cap A_t.\endaligned
\end{equation}
Finally, we deduce that for all $\omega\in\mathcal{N}^c\cap \{x(\tau)\in L^2_\sigma\}$ with $P(x(\tau)\in L^2_\sigma)=1$
\begin{equation}\label{Q}
\aligned
&Q_\omega\big(\omega'\in\Omega_{0}; \tau_L(\omega')=\tau_L(\omega)\big)=Q_\omega\big(\omega'\in N_\omega^c; \tau_L(\omega')=\tau_L(\omega)\big)
\\&\quad=Q_\omega\big(\omega'\in N_\omega^c; \omega'(s)=\omega(s), 0\leq s\leq \tau_L(\omega), \tau_L(\omega')=\tau_L(\omega)\big)=1,
\endaligned
\end{equation}
where we used \eqref{qomega} and the fact that (\ref{mea}) implies
$$\{\omega'\in N_\omega^c; \bar{\tau}_L(\omega')=\tau_L(\omega)\}=N_\omega^c\cap (A_{\tau_L(\omega)}\backslash (\cup_{n=1}^\infty A_{\tau_L(\omega)-\frac1n}))\in N_\omega^c\cap \mathcal{B}_{\tau_L(\omega)}^0,$$
and  $Q_\omega(A_{\tau_L(\omega)}\backslash (\cup_{n=1}^\infty A_{\tau_L(\omega)-\frac1n}))=1$.
This verifies the condition \eqref{Q1} in Proposition \ref{prop:2} and as a consequence $P\otimes_{\tau_{L}}R$ is a martingale solution to the Navier--Stokes system \eqref{ns} on $[0,\infty)$ in the sense of Definition \ref{martingale solution}.
\end{proof}

\begin{rem}
The property \eqref{Q} is essential for showing that the concatenated probability measure satisfies (M1-M3). This is the reason why we had to introduce $\bar{\tau}_{L}$ and make use of the continuity of $Z$ under the law of a martingale solution, which is different from the original regularity of $Z$ which follows merely from its  definition \eqref{eq:Z} together with the regularity of trajectories in $\Omega_{0}$. Without the improved regularity, we could only prove that $\tau_{L}$ is a stopping time with respect to the right continuous filtration $(\mathcal{B}_{t})_{t\geq 0}$ and the dependence on the right limit does not allow to establish \eqref{Q}.
\end{rem}

Finally, we have all in hand to conclude the proof of our main result, Theorem \ref{Main results2}.

\begin{proof}[Proof of Theorem  \ref{Main results2}]
Let $T>0$ be arbitrary, let $\kappa=1/2$ and $K=2$. Based on Theorem~\ref{Main results1} and Proposition~\ref{prp:ext2} there exists $L>1$ and a measure $P\otimes_{\tau_{L}}R$ which is a martingale solution to the Navier--Stokes system \eqref{ns} on $[0,\infty)$ and it coincides  on the random interval $[0,\tau_{L}]$ with the law of the solution constructed through Theorem~\ref{Main results1}.  The martingale solution $P\otimes_{\tau_{L}}R$ starts from certain deterministic initial value $x_{0}=v(0)\in L^{2}_{\sigma}$ dictated by the construction in  Theorem~\ref{Main results1}. The key result is the failure of the energy inequality at time $T$ formulated in \eqref{eq:env} on the set $\{T_{L}\geq T\}\subset \Omega$. In view of \eqref{eq:PR}, \eqref{Q} and \eqref{eq}, we obtain
%
by \eqref{eq:env} and the choice of $K=2$
$$
\aligned
E^{P\otimes_{\tau_{L}}R}\big[\|x(T)\|^{2}_{L^{2}}\big]&= E^{P\otimes_{\tau_{L}}R}\big[{\bf 1}_{\{\tau_{L}\geq T\}}\|x(T)\|^{2}_{L^{2}}\big] + E^{P\otimes_{\tau_{L}}R}\big[{\bf 1}_{\{\tau_{L}< T\}}\|x(T)\|^{2}_{L^{2}}\big]\\
&\geq \int_{\Omega_{0}}E^{Q_{\omega}}[{\bf 1}_{\{\tau_{L}\geq T\}}\|x(T)\|^{2}_{L^{2}}]P(d\omega)> 2\left( \|x_{0}\|^{2}_{L^{2}} +T\,\mathrm{Tr}(GG^{*})\right).
\endaligned
$$

On the other hand, by a classical compactness argument based on a Galerkin approximation we may  construct another martingale solution $\tilde P$ which starts from the same deterministic initial condition $x_{0}$ and which satisfies the energy inequality
$$
E^{\tilde P}\big[\|x(T)\|^{2}_{L^{2}}\big]\leq \|x_{0}\|^{2}_{L^{2}} +T\,\mathrm{Tr}(GG^{*}).
$$

Therefore, we can finally conclude that the two martingale solutions $P\otimes_{\tau_{L}}R$ and $\tilde P$ are distinct and non-uniqueness in law holds for the Navier--Stokes system \eqref{ns}.
\end{proof}

\section{Proof of Theorem \ref{Main results1}}
\label{s:1.1}

In this section we fix a probability space $(\Omega,\mathcal{F},\mathbf{P})$ and let $B$ be a $GG^*$-Wiener process on $(\Omega,\mathcal{F},\mathbf{P})$. We let $(\mathcal{F}_t)_{t\geq0}$ be the normal filtration generated by $B$, that is, the  canonical right continuous filtration augmented by all the $\mathbf{P}$-negligible sets.  In order to verify that the solution constructed in this section is a martingale solution before a suitable stopping time, it is essential that the solution is adapted to the filtration $(\mathcal{F}_t)_{t\geq0}$, which corresponds to a probabilistically strong solution. In the following, we construct a probabilistically strong solution before a stopping time. Furthermore, the solution does not satisfy the energy inequality.

We intend to develop an iteration procedure leading to the proof of Theorem \ref{Main results1}. More precisely, we apply the convex integration method to the nonlinear equation (\ref{nonlinear}). The iteration is indexed by a parameter $q\in\mathbb{N}_{0}$. We consider an increasing sequence $\{\lambda_q\}_{q\in\mathbb{N}}\subset \mathbb{N}$ which diverges to $\infty$, and a sequence $\{\delta_q\}_{q\in \mathbb{N}}\subset(0,1)$  which is decreasing to $0$. We choose $a\in\mathbb{N}$, $b\in\mathbb{N}$,  $\beta\in (0,1)$ and let
$$\lambda_q=a^{(b^q)}, \quad \delta_q=\lambda_q^{-2\beta},$$
where $\beta$ will be chosen sufficiently small and $a$ as well as $b$ will be chosen sufficiently large. At each step $q$, a pair $(v_q, \mathring{R}_q)$ is constructed solving the following system
\begin{equation}\label{induction}
\aligned
 \partial_tv_q-\Delta v_q +\div((v_q+{z_q})\otimes (v_q+{z_q}))+\nabla p_q&=\div \mathring{R}_q,
\\
\div v_q&=0,
\endaligned
\end{equation}
{with $z_q=\mP_{\leq f(q)}z$ for $f(q)=\lambda_{q+1}^{\alpha/8}$ and $\mP_{\leq f(q)}$ being the a Fourier multiplier operator, which projects a function onto its Fourier frequencies $\leq f(q)$ in absolute value. Hence, in addition to the conditions below, we need $b\alpha/8\in \mN$ such that $f(q)\in \mN$.}
By the Sobolev embedding we know $\|f\|_{L^\infty}\leq C_S\|f\|_{H^{\frac{3+\sigma}{2}}}$ for $\sigma>0$, where we choose $C_S\geq 1$.
Define for  $L>1$ and $0<\delta<1/12$
\begin{equation}\label{stopping time}
T_L:=\inf\{t\geq0, \|z(t)\|_{{H^{1-\delta}}}\geq L^{1/4}/C_S\}\wedge \inf\{t\geq0,\|z\|_{C_t^{1/2-2\delta}{L^2}}\geq L^{1/2}/C_S\}\wedge L.
\end{equation}
According to Proposition \ref{fe z}, the stopping time $T_{L}$ is $\mathbf{P}$-a.s. strictly positive and it holds that $T_{L}\uparrow\infty$ as $L\rightarrow\infty$ $\mathbf{P}$-a.s.
Moreover, for  $t\in[0, T_L]$
{\begin{equation}\label{z}
\aligned
\| z(t)\|_{H^{1-\delta}}\leq L^{1/4}/C_S, \quad& \|z\|_{C_t^{\frac{1}{2}-2\delta}L^2}\leq L^{1/2}/C_S,\quad \|z_q\|_{C_t^{\frac{1}{2}-2\delta}L^2}\leq L^{1/2}/C_S
\\
\| z_q(t)\|_{L^\infty}\leq L^{1/4}{\lambda_{q+1}^{\frac\alpha8}}, \quad&\|\nabla z_q(t)\|_{L^\infty}\leq L^{1/4}{\lambda_{q+1}^{\frac\alpha4}}, \quad \|z_q\|_{C_t^{\frac{1}{2}-2\delta}L^\infty}\leq {\lambda_{q+1}^{\frac\alpha4}}L^{1/2}.
\endaligned
\end{equation}}
Let $M_{0}(t)=L^{4}e^{4Lt}$. By induction on $q$ we  assume the following bounds for the iterations $(v_q,\mathring{R}_q)$: if $t\in[0, T_L]$ then
\begin{equation}\label{inductionv}
\aligned
\|v_q\|_{C_{t}L^2}&\leq M_0(t)^{1/2}(1+\sum_{1\leq r\leq q}\delta_{r}^{1/2})\leq 2M_0(t)^{1/2} ,
\\ \|v_q\|_{C^1_{t,x}}&\leq M_0(t)^{1/2}\lambda_q^4,
\\
\|\mathring{R}_q\|_{C_{t}L^1}&\leq M_0(t)c_R\delta_{q+1}.
\endaligned
\end{equation}
Here we defined $\sum_{1\leq r\leq 0}:=0$,  $c_R>0$ is a sufficiently small universal constant given in (\ref{estimate a}) and \eqref{estimate wqp} below. In addition, we used $\sum_{r\geq 1}\delta_{r}^{1/2}\leq \sum_{r\geq1}a^{-rb\beta}=\frac{a^{-\beta b}}{1-a^{-\beta b}}<1/2$ which boils down  to the requirement
\begin{equation}\label{aaa}
a^{\beta b}>3,
\end{equation}
which we assume from now on.
The iteration will be initiated through the following result which also establishes compatibility conditions between the parameters $L,a,\beta,b$ essential for  the sequel.

\begin{lem}\label{lem:v0}
For $L>1$ define
$$
v_0(t,x)=\frac{L^2e^{2Lt}}{(2\pi)^{\frac{3}{2}}}\left(\sin(x_3),0,0\right).
$$
Then the associated Reynolds stress is given by\footnote{We denote by $\mathring{\otimes}$  the trace-free part of the tensor product.}
\begin{equation}\label{eq:R0}
\mathring{R}_0(t,x)=\frac{(2L+1)L^{2}e^{2Lt}}{(2\pi)^{3/2}}\left(
\begin{array}{ccc} 0 & 0 &-\cos(x_3)
\\ 0 & 0 &0\\ -\cos(x_3) &0 &0
 \end{array}
\right)+v_0\mathring{\otimes}\rmb{ z_0+ z_0\mathring{\otimes} v_0+z_0\mathring{\otimes} z_0}.
\end{equation}
Moreover,   all the estimates in \eqref{inductionv} on the level $q=0$ for $(v_{0},\mathring{R}_{0})$ as well as \eqref{aaa} are valid  provided
\begin{equation}\label{c:a3}
45\cdot (2\pi)^{3/2}<5\cdot (2\pi)^{3/2}a^{2\beta b}\leq c_{R}L\leq c_{R}\left(\frac{(2\pi)^{3/2}a^{4}}{2}-1\right).
\end{equation}
In particular, we require
\begin{equation}\label{c:a2}
c_{R}L>45\cdot (2\pi)^{3/2}.
\end{equation}
Furthermore, the initial values $v_0(0,x)$ and $\mathring{R}_0(0,x)$ are deterministic.
\end{lem}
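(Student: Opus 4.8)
The plan is a direct verification. First I would derive $\mathring{R}_0$ from the defining relation \eqref{induction} at level $q=0$, then check the three bounds in \eqref{inductionv} at $q=0$ against the hypothesis \eqref{c:a3}, and finally read off \eqref{aaa}, \eqref{c:a2} and the determinism of the initial data.

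\emph{Identifying $\mathring{R}_0$.} Since $v_0(t,\cdot)$ depends only on $x_3$ and has vanishing third component, one has $\div v_0=0$ and $\div(v_0\otimes v_0)=0$. A one-line computation gives $\partial_t v_0=2Lv_0$ and, using $\Delta\sin x_3=-\sin x_3$, $\Delta v_0=-v_0$, hence $\partial_t v_0-\Delta v_0=(2L+1)v_0$. I would then observe that $(2L+1)v_0$ is exactly the divergence of the symmetric trace-free matrix appearing as the first summand of \eqref{eq:R0} (this is the computation $\partial_3(-\cos x_3)=\sin x_3$). Writing $(v_0+z_0)\otimes(v_0+z_0)=v_0\otimes v_0+v_0\otimes z_0+z_0\otimes v_0+z_0\otimes z_0$ and splitting each of the last three tensors into its trace-free part and a multiple of the identity, the identity parts and the residual gradient terms are absorbed into the pressure $p_0$. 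What remains is precisely $\mathring{R}_0$ as stated, and it is trace-free by construction.

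\emph{The three bounds.} Using $\int_{\mathbb{T}^3}\sin^2 x_3\,dx=(2\pi)^3/2$ gives $\|v_0(t)\|_{L^2}=L^2e^{2Lt}/\sqrt2\le M_0(t)^{1/2}$, which is the $q=0$ case of the first line of \eqref{inductionv} (the sum over $1\le r\le 0$ being empty). From the explicit form of $v_0$, $\|v_0\|_{C^1_{t,x}}\le(2+2L)L^2e^{2Lt}/(2\pi)^{3/2}$, and this is $\le M_0(t)^{1/2}\lambda_0^4=L^2e^{2Lt}a^4$ exactly when $2+2L\le(2\pi)^{3/2}a^4$, i.e. precisely the rightmost inequality $c_RL\le c_R((2\pi)^{3/2}a^4/2-1)$ of \eqref{c:a3}. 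For the Reynolds stress I would estimate on $[0,T_L]$ term by term: the first summand of \eqref{eq:R0} has $L^1$-norm $\lesssim(2L+1)L^2e^{2Lt}(2\pi)^{1/2}$ since $\|\cos x_3\|_{L^1(\mathbb{T}^3)}\lesssim(2\pi)^2$; for the mixed and quadratic $z_0$-terms I would use $\|z_0(t)\|_{L^2}\le\|z(t)\|_{L^2}\le\|z(t)\|_{H^{1-\delta}}\le L^{1/4}/C_S$ from \eqref{z} together with $\|v_0(t)\|_{L^2}=L^2e^{2Lt}/\sqrt2$ and Cauchy--Schwarz to get $\|v_0\mathring{\otimes}z_0+z_0\mathring{\otimes}v_0+z_0\mathring{\otimes}z_0\|_{L^1}\lesssim L^{9/4}e^{2Lt}+L^{1/2}$. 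Since $L\ge 1$ and $e^{2Lt}\le e^{4Lt}$, all three contributions are dominated by $C_*(2\pi)^{3/2}L^3e^{4Lt}$ for a universal constant $C_*$; as $c_R$ is chosen small enough that the universal constant in the middle inequality of \eqref{c:a3} majorizes $C_*$, the bound $5\cdot(2\pi)^{3/2}a^{2\beta b}\le c_RL$ yields $\|\mathring{R}_0\|_{C_tL^1}\le c_Ra^{-2\beta b}L^4e^{4Lt}=M_0(t)c_R\delta_1$, i.e. the third bound at $q=0$.

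\emph{Remaining assertions.} Condition \eqref{aaa}, $a^{\beta b}>3$, is just the first strict inequality of \eqref{c:a3} rewritten, namely $a^{2\beta b}>9$; and \eqref{c:a2} follows immediately since then $c_RL\ge 5\cdot(2\pi)^{3/2}a^{2\beta b}>45\cdot(2\pi)^{3/2}$. For the initial data, $z(0)=0$ by \eqref{linear}, hence $z_0(0)=\mathbb{P}_{\le f(0)}z(0)=0$, so $v_0(0,\cdot)$ and $\mathring{R}_0(0,\cdot)$ reduce to their explicit deterministic parts. The only point requiring care is the bookkeeping of universal constants in the Reynolds-stress estimate so that they line up with the numbers $5$ and $45$ in \eqref{c:a3}--\eqref{c:a2}; this becomes a non-issue once $c_R$ is taken sufficiently small, which is exactly how it is used throughout the iteration.
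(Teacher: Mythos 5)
Your proposal follows essentially the same route as the paper: compute $\partial_t v_0 - \Delta v_0 = (2L+1)v_0$, observe $\div(v_0 \otimes v_0)=0$ since $v_0$ depends only on $x_3$ with vanishing third component, read off that $(2L+1)v_0$ is the divergence of the stated trace-free matrix, and account for the $z_0$-cross terms by absorbing the trace and gradient parts into the pressure. The first two bounds in \eqref{inductionv} and the chain $\eqref{aaa}\Leftrightarrow$ first strict inequality of \eqref{c:a3}, and \eqref{c:a2}, are handled correctly, as is the determinism of the initial data via $z(0)=0$.

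The one genuine weak point is the closing remark on the Reynolds-stress constant. You bound $\|\mathring{R}_0\|_{C_tL^1}\leq C_*(2\pi)^{3/2}L^3e^{4Lt}$ with an unspecified universal $C_*$ and then assert that the discrepancy with the factor $5$ in \eqref{c:a3} ``becomes a non-issue once $c_R$ is taken sufficiently small''. That reasoning does not work as stated: the needed inequality after your estimate is $C_*(2\pi)^{3/2}a^{2\beta b}\leq c_RL$, and shrinking $c_R$ makes this \emph{harder}, not easier, since $c_R$ sits on the right. The role of a small $c_R$ is indirect — through \eqref{c:a3} it forces $L$ to be large. This is in fact how the paper proceeds: it writes the explicit bound $(2\pi)^{3/2}M_0(t)^{1/2}\cdot 2(2L+1)+2M_0(t)^{1/2}L^{1/4}+L^{1/2}$ and then uses the largeness of $L$ coming from \eqref{c:a2} (e.g. $4L+2\leq 5L$ with slack to absorb the $L^{9/4}e^{2Lt}$ and $L^{1/2}$ terms, together with $e^{2Lt}\leq e^{4Lt}$) to land exactly on $5\cdot(2\pi)^{3/2}M_0(t)/L$, which then matches the middle inequality of \eqref{c:a3}. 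You should replace the appeal to $c_R$ with this explicit $L$-large argument; the rest of the proof is sound.
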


\begin{proof}
The first bound in \eqref{inductionv} follows immediately since
$$
\|v_{0}(t)\|_{L^{2}}= \frac{L^2e^{2Lt}}{\sqrt{2}}\leq M_{0}(t)^{1/2}.
$$
For the second bound, we have
$$
\|v_{0}\|_{C^{1}_{t,x}}\leq M_{0}(t)^{1/2}\frac{2(1+L)}{(2\pi)^{3/2}}\leq M_{0}(t)^{1/2}\lambda_{0}^{4}=M_{0}(t)^{1/2}a^{4}
$$
provided
\begin{equation}\label{c:2}
\frac{2(1+L)}{(2\pi)^{3/2}}\leq a^{4}.
\end{equation}
A direct computation implies that the corresponding Reynolds stress is given by \eqref{eq:R0} 
and we obtain
$$
\|\mathring{R}_0(t)\|_{L^{1}}\leq (2\pi)^{3/2} M_{0}(t)^{1/2}2(2L+1)+2M_{0}(t)^{1/2}L^{1/4}+L^{1/2}.
$$
Therefore, the desired third bound in \eqref{inductionv} holds  provided
$$
\|\mathring{R}_0(t)\|_{L^{1}}\leq 5\cdot (2\pi)^{3/2} M_0(t)/L\leq M_{0}(t)c_{R}\delta_{1}=M_{0}(t)c_{R}a^{-2\beta b},
$$
which requires $5\cdot (2\pi)^{3/2}L^{-1}\leq c_{R}a^{-2\beta b}$. Here we used \eqref{c:a2} in the first inequality.  Combining this condition with \eqref{c:2}, we obtain the requirement
\begin{equation}\label{c:a}
5\cdot (2\pi)^{3/2}a^{2\beta b}\leq c_{R}L\leq c_{R}\left(\frac{(2\pi)^{3/2}a^{4}}{2}-1\right).
\end{equation}
In particular, we require that
\begin{equation}\label{c:a1}
c_{R}L>5\cdot (2\pi)^{3/2},
\end{equation}
otherwise the left inequality in \eqref{c:a} cannot be fulfilled.  Under these conditions, all the estimates in \eqref{inductionv} are valid on the level $q=0$. Taking into account \eqref{aaa}, the conditions \eqref{c:a} and \eqref{c:a1} are strengthened to \eqref{c:a3} and \eqref{c:a2} from the statement of the lemma  and the proof is complete.
\end{proof}

The key result of this section  which is used to prove Theorem \ref{Main results1} is the following.

\begin{prp}\label{main iteration}
\emph{(Main iteration)}
Let  $L>1$ satisfying \eqref{c:a2} be given and let $(v_q,\mathring{R}_q)$ be an $(\mathcal{F}_t)_{t\geq0}$-adapted solution to \eqref{induction} satisfying \eqref{inductionv}. Then there exists a choice of parameters $a,b,\beta$ such that \eqref{c:a3} is fulfilled and  there exist $(\mathcal{F}_t)_{t\geq0}$-adapted processes
$(v_{q+1},\mathring{R}_{q+1})$ which solve \eqref{induction}, obey \eqref{inductionv} at level $q+1$ and  for $t\in[0,T_{L}]$ we have
\begin{equation}\label{iteration}
\|v_{q+1}(t)-v_q(t)\|_{L^2}\leq M_0(t)^{1/2}\delta_{q+1}^{1/2}.
\end{equation}
Furthermore, if $v_q(0), \mathring{R}_q(0)$ are deterministic, so are $v_{q+1}(0), \mathring{R}_{q+1}(0)$.
\end{prp}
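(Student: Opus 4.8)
The plan is to carry out one step of the Buckmaster--Vicol convex integration scheme built on intermittent jets, with the additional \emph{stochastic corrector} $w^{(s)}_{q+1}$ accounting for the replacement of $z_{q}$ by $z_{q+1}$, while tracking $(\mathcal{F}_{t})_{t\geq0}$-adaptedness throughout. Fix $\alpha>0$ small (it enters $f(q)=\lambda_{q+1}^{\alpha/8}$ and the jet geometry); the parameters will be chosen at the very end, $\beta$ small relative to $\alpha$, then $b$ large, then $a$ large, so that all the inequalities collected below, together with \eqref{c:a3}, hold. \textbf{Step 1 (mollification).} First I would mollify $v_{q}$, $z_{q}$ and $\mathring{R}_{q}$ in space at a scale $\ell$ of the form $\ell=\lambda_{q}^{-c_{1}}\lambda_{q+1}^{-c_{2}\alpha}$, obtaining $(v_{\ell},z_{\ell},\mathring{R}_{\ell})$ with $\|v_{\ell}\|_{C^{N}_{t,x}}\lesssim \ell^{1-N}M_{0}(t)^{1/2}\lambda_{q}^{4}$ for $N\geq1$ and $\|v_{q}-v_{\ell}\|_{C_{t}L^{2}}\lesssim \ell M_{0}(t)^{1/2}\lambda_{q}^{4}\leq\tfrac14\delta_{q+1}^{1/2}M_{0}(t)^{1/2}$ once $b$ is large. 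Mollifying \eqref{induction} produces a commutator (mollification) error in the new Reynolds stress of the same size. All these operations preserve adaptedness since $z$, hence $z_{q}$, is $(\mathcal{F}_{t})$-adapted.

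\textbf{Step 2 (the perturbation).} Using a temporal partition of unity and the geometric lemma for the intermittent jets $\mathbb{W}_{(\xi)}$ recalled in Appendix~\ref{s:B}, I would take a smooth $\rho$ with $\rho\gtrsim M_{0}c_{R}\delta_{q+1}$ and $\|\rho\|_{C_{t}^{0}}\lesssim M_{0}c_{R}\delta_{q+1}$ (e.g.\ $\rho\simeq 2\sqrt{\varepsilon_{\ell}^{2}+|\mathring{R}_{\ell}|^{2}}$ with $\varepsilon_{\ell}\simeq M_{0}c_{R}\delta_{q+1}$) and define amplitudes $a_{(\xi)}$ so that $\sum_{\xi}a_{(\xi)}^{2}\langle\mathbb{W}_{(\xi)}\otimes\mathbb{W}_{(\xi)}\rangle=\rho\,\Id-\mathring{R}_{\ell}$, where $\langle\cdot\rangle$ denotes the spatial average. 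With the principal part $w^{(p)}_{q+1}=\sum_{\xi}a_{(\xi)}\mathbb{W}_{(\xi)}$ at frequency $\lambda_{q+1}$, the incompressibility corrector $w^{(c)}_{q+1}$ and the temporal corrector $w^{(t)}_{q+1}$ (chosen to cancel the low-mode part of $\div(w^{(p)}_{q+1}\otimes w^{(p)}_{q+1}-\rho\,\Id)$), I would set
\[
v_{q+1}:=v_{\ell}+w^{(p)}_{q+1}+w^{(c)}_{q+1}+w^{(t)}_{q+1},\qquad w^{(s)}_{q+1}:=z_{q+1}-z_{\ell}.
\]
The $L^{p}$-bounds for intermittent jets give $\|w^{(p)}_{q+1}\|_{C_{t}L^{2}}\leq\tfrac14 M_{0}^{1/2}\delta_{q+1}^{1/2}$ (after absorbing universal constants into $c_{R}$), $\|w^{(c)}_{q+1}+w^{(t)}_{q+1}\|_{C_{t}L^{2}}\lesssim\lambda_{q+1}^{-\eta}M_{0}^{1/2}\delta_{q+1}^{1/2}$ for some $\eta>0$, and a $C^{1}_{t,x}$-bound $\lesssim M_{0}^{1/2}\lambda_{q+1}^{4}$ once $b$ is large; combined with Step~1 this yields \eqref{iteration} and the first two bounds of \eqref{inductionv} at level $q+1$. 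That $w^{(s)}_{q+1}$ is harmless for the $L^{2}$-bound follows from $\|z_{q+1}-z_{\ell}\|_{L^{2}}\lesssim f(q)^{-(1-\delta)}L^{1/4}$ on $[0,T_{L}]$ by \eqref{z}.

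\textbf{Step 3 (the new Reynolds stress).} Applying the inverse-divergence operator $\mathcal{R}$ to the equation for $v_{q+1}+z_{q+1}$ obtained by subtracting the mollified \eqref{induction}, I would write $\mathring{R}_{q+1}$ as a sum of a linear error $\mathcal{R}(\partial_{t}(w^{(c)}_{q+1}+w^{(t)}_{q+1})-\Delta w_{q+1})$, a Nash error $\mathcal{R}\,\mathbb{P}(w_{q+1}\cdot\nabla(v_{\ell}+z_{\ell})+(v_{\ell}+z_{\ell})\cdot\nabla w_{q+1})$, the oscillation error (where the choice of the $a_{(\xi)}$ and of $w^{(t)}_{q+1}$ produces the decisive gain of a factor $\lambda_{q+1}^{-1}$ via $\mathcal{R}$), corrector errors quadratic in $w^{(c)}_{q+1},w^{(t)}_{q+1},w^{(p)}_{q+1}$, the mollification error from Step~1, and the \emph{stochastic error} arising from the terms of $\div((v_{q+1}+z_{q+1})\otimes(v_{q+1}+z_{q+1}))-\div((v_{\ell}+z_{\ell})\otimes(v_{\ell}+z_{\ell}))$ that involve $w^{(s)}_{q+1}$ or the difference $z_{\ell}-z_{q}$, i.e.\ essentially $\mathcal{R}\,\mathbb{P}\,\div((v_{q+1}+z_{q+1})\mathring{\otimes}w^{(s)}_{q+1}+w^{(s)}_{q+1}\mathring{\otimes}(v_{\ell}+z_{\ell}))$. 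Each term is estimated in $C_{t}L^{1}$ on $[0,T_{L}]$ using the jet $L^{p}$-bounds together with \eqref{z}; the crucial point for the stochastic terms is that, although $\|\nabla z_{q}\|_{L^{\infty}}$ grows like $L^{1/4}\lambda_{q+1}^{\alpha/4}$, it is overwhelmed by the smallness of $\delta_{q+1}$ and by negative powers of $\lambda_{q+1}$, so the whole contribution fits inside $M_{0}(t)c_{R}\delta_{q+2}$; note also that $M_{0}(t)=L^{4}e^{4Lt}$ with $\dot M_{0}=4LM_{0}$ supplies, via \eqref{c:a2}, exactly the room to dominate the quadratic $v$- and $z$-terms. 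Collecting the finitely many resulting inequalities on $a,b,\beta$, one checks they are consistent with \eqref{c:a3}, which fixes the parameters and gives the third bound of \eqref{inductionv} at level $q+1$. This balancing — the oscillation error being already the tightest estimate in the deterministic theory, now competing with the stochastic errors carrying the divergent $C^{1}$-norm of $z_{q}$ — is the main obstacle, and the correct order of quantifiers is $\alpha$ small, then $\beta\ll\alpha$, then $b\gg1$, then $a\gg1$.

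\textbf{Step 4 (adaptedness and initial data).} Mollification, pointwise multiplication, the algebraic geometric lemma, $\mathbb{P}$, $\mathcal{R}$ and the explicit jet formulas all preserve $(\mathcal{F}_{t})$-measurability, and $z,z_{q},z_{q+1}$ are $(\mathcal{F}_{t})$-adapted; hence $(v_{q+1},\mathring{R}_{q+1})$ is $(\mathcal{F}_{t})$-adapted. Finally, since $z(0)=0$ we have $z_{q}(0)=z_{\ell}(0)=z_{q+1}(0)=0$, so $w^{(s)}_{q+1}(0)=0$, and because the mollification is only in space, $v_{\ell}(0),\mathring{R}_{\ell}(0)$, hence $\rho(0)$, the $a_{(\xi)}(0)$ and all correctors at time $0$, are deterministic whenever $v_{q}(0),\mathring{R}_{q}(0)$ are; therefore $v_{q+1}(0),\mathring{R}_{q+1}(0)$ are deterministic, which closes the induction.
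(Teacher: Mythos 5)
Your overall plan matches the paper's proof quite closely: space–time mollification, Buckmaster--Vicol intermittent jets with $w^{(p)}_{q+1}+w^{(c)}_{q+1}+w^{(t)}_{q+1}$, decomposition of $\mathring{R}_{q+1}$ into linear, corrector, oscillation, commutator and a ``stochastic'' piece from $z_{q+1}-z_{\ell}$ and $z_{q}-z_{\ell}$, followed by the parameter selection in the order $\alpha$, $\beta$, $b$, $a$. Two points, however, are genuine gaps rather than presentational differences.

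\textbf{Time mollification is not optional.} You propose to mollify only in space. The inductive assumption \eqref{inductionv} controls $\mathring{R}_{q}$ only in $C_{t}L^{1}$, not any of its time derivatives, yet the construction of $a_{(\xi)}$ and the linear error require bounds on $\partial_{t}\mathring{R}_{\ell}$, $\partial_{t}a_{(\xi)}$ etc. (e.g.\ via estimates of the form $\|\mathring{R}_\ell\|_{C^N_{t,x}}\lesssim \ell^{-4-N}c_R\delta_{q+1}M_0(t)$ and $\|a_{(\xi)}\|_{C^{N}_{t,x}}\lesssim\ell^{-2-5N}c_R^{1/4}\delta_{q+1}^{1/2}M_0(t)^{1/2}$). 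Without mollification in time, the scheme does not close, because those $C^{N}_{t,x}$ bounds are produced by the time mollifier. The paper therefore mollifies in space \emph{and} time, but uses a mollifier $\varphi_{\ell}$ supported on $\mathbb{R}^{+}$, which (i) preserves $(\mathcal{F}_{t})_{t\geq0}$-adaptedness since $v_{\ell}(t)$ then depends only on $\{v_{q}(s):s\leq t\}$, and (ii) leaves $v_{\ell}(0),\mathring{R}_\ell(0),\partial_{t}\mathring{R}_{\ell}(0)$ deterministic because $v_{q}(t),\mathring{R}_{q}(t)$ are extended by their initial value for $t<0$. Your Step~4 would have to be adapted accordingly; as stated it relies on the space-only mollification precisely to sidestep this issue, which however reintroduces the problem of unbounded time derivatives.

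\textbf{The linear/oscillation split is inconsistent.} You place $\partial_{t}(w^{(c)}_{q+1}+w^{(t)}_{q+1})$ in the linear error. If taken literally this cannot work: $\partial_{t}w^{(t)}_{q+1}$ carries the full temporal oscillation factor $\frac{r_{\perp}\lambda_{q+1}\mu}{r_{\|}}\cdot\mu^{-1}\sim r_{\perp}\lambda_{q+1}/r_{\|}$ and is not small after a single $\mathcal{R}$; it is tolerable precisely because it cancels the term $\sum_{\xi}\mathbb{P}_{\neq0}\bigl(a^{2}_{(\xi)}\div(W_{(\xi)}\otimes W_{(\xi)})\bigr)$ inside the oscillation error, cf. \eqref{equation for temporal}. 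The correct grouping is $\mathcal{R}\,\partial_{t}(w^{(p)}_{q+1}+w^{(c)}_{q+1})=\mathcal{R}\,\partial_{t}\bigl(\sum_{\xi}\mathrm{curl\,curl}(a_{(\xi)}V_{(\xi)})\bigr)$ in the linear error (small thanks to the $\lambda_{q+1}^{-2}$ prefactor of $V_{(\xi)}$), and $\partial_{t}w^{(t)}_{q+1}$ together with $\div\bigl(w^{(p)}_{q+1}\otimes w^{(p)}_{q+1}+\mathring{R}_{\ell}\bigr)$ in the oscillation error. Your Step~2 describes the purpose of $w^{(t)}_{q+1}$ correctly, so this may only be a slip, but as written the linear error estimate would blow up.

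Two small variations that are harmless: your $\rho\simeq 2\sqrt{\varepsilon_{\ell}^{2}+|\mathring{R}_{\ell}|^{2}}$ is the De Lellis--Sz\'ekelyhidi style amplitude; the paper uses the Buckmaster--Vicol cutoff $\rho=4c_{R}\delta_{q+1}M_{0}\,\chi\bigl((c_{R}\delta_{q+1}M_{0})^{-1}|\mathring{R}_{\ell}|\bigr)$ -- either works. Your constant $\tfrac14$ in place of the paper's $\tfrac12$ for $\|w^{(p)}_{q+1}\|_{C_{t}L^{2}}$ is a matter of bookkeeping. The identification of the stochastic error (the paper's $R_{\mathrm{com}1}$, driven by $\|z_{\ell}-z_{q+1}\|_{C_{t}L^{2}}$ controlled via the Fourier truncation scale $f(q)$ and \eqref{z}), the treatment of the divergent $\|\nabla z_{q}\|_{L^{\infty}}\sim L^{1/4}\lambda_{q+1}^{\alpha/4}$ by absorbing it into spare powers of $\lambda_{q+1}$, and the use of $\dot M_{0}=4LM_{0}$ together with \eqref{c:a2} are all correctly described.
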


The proof of  Proposition \ref{main iteration} is presented in Section \ref{ss:it}. At this point, we take  Proposition \ref{main iteration} for granted and  apply it in order to complete the proof of Theorem \ref{Main results1}.

\begin{proof}[Proof of Theorem \ref{Main results1}]
The proof relies on the above described iteration procedure. More precisely, our goal is to prove that for $L>1$ satisfying \eqref{c:a2}, Lemma \ref{lem:v0} and Proposition~\ref{main iteration} give rise to an $(\mathcal{F}_{t})_{t\geq0}$-adapted analytically weak solution  $v$ to the transformed problem \eqref{nonlinear}. By possibly increasing the value of $L$, the corresponding solution $v$ fails a suitable energy inequality at the given time $T$. Finally, again by  possibly  making $L$ bigger, we verify that $u:=v+z$ and $\mathfrak{t}:=T_{L}$ fulfill all the requirements in the statement of the theorem.

Starting from $(v_0,\mathring{R}_0)$ given in Lemma \ref{lem:v0}, the iteration Proposition \ref{main iteration} yields a sequence $(v_q, \mathring{R}_q)$ satisfying (\ref{inductionv}) and (\ref{iteration}).
 By interpolation we deduce that the following series is summable for $\gamma\in (0,\frac{\beta}{4+\beta})$, $t\in [0,T_L]$
 \begin{equation*}
 \sum_{q\geq0}\|v_{q+1}(t)-v_q(t)\|_{H^{\gamma}}\lesssim \sum_{q\geq0}\|v_{q+1}(t)-v_q(t)\|_{L^2}^{1-\gamma}\|v_{q+1}(t)-v_q(t)\|_{H^1}^{\gamma}\lesssim M_0(t)\sum_{q\geq0}\delta_{q+1}^{\frac{1-\gamma}{2}}\lambda_{q+1}^{4\gamma}\lesssim M_0(t).
 \end{equation*}
 Thus  we obtain a limiting solution $v=\lim_{q\rightarrow\infty}v_q$, which lies in $C([0,T_L],H^{\gamma})$.  Since $v_q$ is $(\mathcal{F}_t)_{t\geq0}$-adapted for every $q\geq0$, the limit
$v$ is $(\mathcal{F}_t)_{t\geq0}$-adapted as well.
   Furthermore,  $v$ is an analytically  weak solution to (\ref{nonlinear})  since it holds $\lim_{q\rightarrow\infty}\mathring{R}_q=0$ in $C([0,T_L];L^1)$ \rmb{and $\lim_{q\rightarrow\infty}z_q=z$ in $C([0,T_L];L^2)$}. In addition, there exists a deterministic constant $C_{L}$ such that
\begin{equation}\label{eq:vvv}
 \|v(t)\|_{H^{\gamma}}\leq C_{L}
\end{equation}
holds true for all $t\in[0,T_{L}]$.

Let us now show that the constructed solution $v$ fails the corresponding energy inequality at time $T$. Namely,  we will  show
\begin{equation}\label{eq:env1}
\|v(T)\|_{L^{2}}> (\|v(0)\|_{L^{2}}+L)e^{LT}.
\end{equation}
According to \eqref{iteration}, in view of $b^{q+1}\geq b(q+1)$ which holds if $b\geq 2$ and then applying \eqref{aaa}, we obtain for all $t\in[0,T_{L}]$
$$
\|v(t)-v_{0}(t)\|_{L^{2}}\leq\sum_{q\geq0}\|v_{q+1}(t)-v_{q}(t)\|_{L^{2}}\leq M_{0}(t)^{1/2}\sum_{q\geq0}\delta_{q+1}^{1/2}\leq M_{0}(t)^{1/2}\sum_{q\geq0} (a^{-\beta b})^{q+1}
$$
$$
=  M_{0}(t)^{1/2} \frac{a^{-\beta b}}{1-a^{-\beta b}} <  \frac12 M_{0}(t)^{1/2}.
$$
Consequently,
$$(\|v(0)\|_{L^{2}}+L)e^{LT}\leq (\|v_{0}(0)\|_{L^{2}}+\|v(0)-v_{0}(0)\|_{L^{2}}+L)e^{LT}
\leq\left(\frac32 M_{0}(0)^{1/2}+L\right)e^{LT},
$$
which we want to estimate (strictly) by
$$
\left(\frac{1}{\sqrt{2}}-\frac12\right)M_{0}(T)^{1/2}\leq \|v_{0}(T)\|_{L^{2}}-\|v(T)-v_{0}(T)\|_{L^{2}}\leq \|v(T)\|_{L^{2}}
$$
on the set $\{T_{L}\geq T\}\subset\Omega$.
In view of the definition of $M_{0}(t)$, this is indeed possible provided
\begin{equation}\label{c:7}
\left(\frac32+\frac{1}{L}\right)<\left(\frac{1}{\sqrt{2}}-\frac12\right)e^{LT}.
\end{equation}
In other words,  given $T>0$ and the universal constant $c_{R}>0$, we can choose $L=L(T,c_{R})>1$ large enough so that \eqref{c:a2} as well as \eqref{c:7} holds and consequently \eqref{eq:env1} is satisfied. Moreover, in view of Proposition \ref{fe z} and the definition of the stopping times \eqref{stopping time}, we observe that for a given $T>0$ we may possibly increase  $L$ so that the set $\{T_{L}\geq T\}$ satisfies $\mathbf{P}(T_{L}\geq T)>\kappa$.

Let us now define $u:=v+z$. Then $u$ is $(\mathcal{F}_t)_{t\geq0}$-adapted, solves the Navier--Stokes system \eqref{ns} and we deduce from \eqref{eq:vvv} together with \eqref{z} that   \eqref{eq:vv} holds true. To verify \eqref{eq:env}, we use \eqref{z} and apply \eqref{eq:env1}  on $ \{T_{L}\geq T\}$ to obtain
$$
\|u(T)\|_{L^{2}}\geq \|v(T)\|_{L^{2}}-\|z(T)\|_{L^{2}}> (\|v(0)\|_{L^{2}}+L)e^{LT}-\rmb{L^{1/2}/C_S}.
$$
Thus, since $u(0)=v(0)$ we may possibly increase the value of $L$ depending on $K$  and $\mathrm{Tr}(GG^{*})$ in order to conclude the desired lower bound \eqref{eq:env}. The initial value $v(0)$ is deterministic by our construction.  Finally, we set $\mathfrak{t}:=T_{L}$ which finishes the proof.
\end{proof}

To summarize the above discussion, first we fix the parameter $L$ large enough in dependence on $T,c_{R},\kappa,K$ and $\mathrm{Tr}(GG^{*})$. Then we apply Proposition~\ref{main iteration} and deduce the result of Theorem \ref{Main results1}. It remains to prove Proposition~\ref{main iteration} and to verify that the parameters $a,b,\beta$ can be appropriately chosen.

\subsection{The main iteration -- proof of Proposition \ref{main iteration}}
\label{ss:it}

The proof of Proposition  \ref{main iteration} proceeds along the lines of \cite[Section~7]{BV19}. We have to track the proof carefully to make the construction in each step $(\mathcal{F}_t)_{t\geq0}$-adapted and the initial value $v(0)$ deterministic. In the course of the proof we will need to adjust the value of the parameters $a,b,\beta$ as further conditions on these parameters will appear. The parameter $L$ is given and will be kept fixed. In addition, we have to make sure that the condition \eqref{c:a3}, which is essential in order to prove the failure of the energy inequality in Theorem \ref{Main results1}, is not violated. However, we observe that the right inequality in \eqref{c:a3} remains valid if we increase the value of $a$. In other words, given $L$ we find the minimal value of $a$ for which this inequality holds and from now on we may increase $a$ as we wish. On the other hand, increasing the value of $a$ or $b$ can in principle cause problems in the left inequality in \eqref{c:a3}, but here we may make the parameter $\beta$ smaller so that the inequality remains true. To summarize, we may freely increase $a$ or $b$ at the cost of making $\beta$ smaller.

\subsubsection{Choice of parameters}\label{s:c}

In the sequel, additional parameters will be indispensable and their value has to be carefully chosen in order to respect all the compatibility conditions appearing in the estimations below. First, for a sufficiently small  $\alpha\in (0,1)$ to be chosen below, we let $\ell\in (0,1)$ be  a small parameter  satisfying
\begin{equation}\label{ell}
\ell \lambda_q^4\leq \lambda_{q+1}^{-\alpha},\quad \ell^{-1}\leq \lambda_{q+1}^{2\alpha},\quad 4L\leq \ell^{-1}.
\end{equation}
In particular, we define
\begin{equation}\label{ell1}\ell:=\lambda_{q+1}^{-\frac{3\alpha}{2}}\lambda_q^{-2}.\end{equation}
The last condition in \eqref{ell}  together with \eqref{c:a3} leads to
$$
45\cdot (2\pi)^{3/2}<5\cdot (2\pi)^{3/2}a^{2\beta b}\leq c_{R}L\leq
c_{R}\frac{a^{4}\cdot (2\pi)^{3/2}-1}{2}.
$$
We remark that the reasoning from the beginning of Section \ref{ss:it} remains valid for this new condition: we may freely increase the value of $a$  provided we make $\beta$ smaller at the same time. In addition, we will require  $\alpha b>16$ and \rmb{$\alpha>18\beta b$}.

In order to verify the inductive estimates \eqref{inductionv} in Section~\ref{sss:v} and Section~\ref{sss:R}, it will also be necessary to absorb various expressions including  $M_{0}(t)^{1/2}$ for all $t\in[0,T_{L}]$. Since the stopping time $T_{L}$ is bounded by $L$, this reduces to absorbing $M_{0}(L)^{1/2}$ and it will be seen that the strongest such requirement is
\begin{equation}\label{c:M}
M_0(L)^{1/2}\lambda_{q+1}^{13\alpha-\frac{1}{7}}\leq \frac{c_R\delta_{q+2}}{10}
\end{equation}
needed in Section \ref{sss:R}. In other words,
$$
L^2e^{2L^2}a^{b(13\alpha-\frac{1}{7}+2b\beta)}\ll1
$$
and choosing  $b=\rmb{8\cdot 14^2L^{2}}$, $L\in \mathbb{N}$, (this choice is coming from the fact that with our choice of $\alpha$ below we want to guarantee that \rmb{$\alpha b>16$} as well as the fact that $b$ is a multiple of $7$ needed for the choice of parameters needed for the intermittent jets below, cf. Appendix \ref{s:B}) and $e^2\leq a$ leads to
$$ba^{b/14}a^{b(13\alpha-\frac{1}{7}+2b\beta)}\ll1.$$
In view of $\alpha>18\beta b$, this  can be achieved by choosing $a$ large enough and $\alpha=14^{-2}$. This choice also satisfies \rmb{$\alpha b>16$} required above and the condition \rmb{$\alpha>18\beta b$} can be achieved by choosing $\beta$ small. It is also compatible with all the other requirements needed below.

From now on, the parameters $\alpha$ and $b$ remain fixed and the  free parameters are $a$ and $\beta$ for which we already have a lower, respectively upper, bound. In the sequel, we will possibly increase $a$, and decrease $\beta$ at the same time in order to preserve all the above conditions and to fulfil further conditions appearing below.

\subsubsection{Mollification}\label{s:p}

We intend to replace $v_q$ by a mollified velocity field $v_\ell$. To this end, we extend $z_q(t)=z_q(0)$, $v_q(t)=v_q(0)$ for $t<0$ and let $\{\phi_\varepsilon\}_{\varepsilon>0}$ be a family of standard mollifiers on $\mathbb{R}^3$, and let $\{\varphi_\varepsilon\}_{\varepsilon>0}$ be a family of  standard mollifiers with support on $\mathbb{R}^+$. We define a mollification of $v_q$, $\mathring{R}_q$ and $z_q$ in space and time by convolution as follows
$$v_\ell=(v_q*_x\phi_\ell)*_t\varphi_\ell,\qquad
\mathring{R}_\ell=(\mathring{R}_q*_x\phi_\ell)*_t\varphi_\ell,\qquad
z_\ell=(\rmb{z_q}*_x\phi_\ell)*_t\varphi_\ell,$$
where $\phi_\ell=\frac{1}{\ell^3}\phi(\frac{\cdot}{\ell})$ and $\varphi_\ell=\frac{1}{\ell}\varphi(\frac{\cdot}{\ell})$.
Since the mollifier $\varphi_\ell$ is supported on $\mathbb{R}^+$, it is easy to see that $z_\ell$ is $(\mathcal{F}_t)_{t\geq0}$-adapted and so are $v_\ell$ and $\mathring{R}_\ell$. Since $\varphi_\ell$ is supported on $\mathbb{R}^+$, if the initial values $v_q(0), \mathring{R}_q(0)$ are deterministic, so are $v_\ell(0)$ and $\mathring{R}_\ell(0), \partial_t\mathring{R}_\ell(0)$. Moreover, $z_q(0)=0$ implies that $z_\ell(0)$ and ${R}_{\mathrm{com}}(0)$ given below are deterministic as well.
Then using (\ref{induction}) we obtain that $(v_\ell,\mathring{R}_\ell)$ satisfies
\begin{equation}\label{mollification}
\aligned
 \partial_tv_\ell -\Delta v_\ell+\div((v_\ell+z_\ell)\otimes (v_\ell+z_\ell))+\nabla p_\ell&=\div (\mathring{R}_\ell+R_{\textrm{com}})
\\\div v_\ell&=0,
\endaligned
\end{equation}
where
\begin{equation*}
R_{\textrm{com}}=(v_\ell+z_\ell)\mathring{\otimes}(v_\ell+z_\ell)-((v_q+\rmb{z_q})\mathring{\otimes}(v_q+\rmb{z_q}))*_x\phi_\ell*_t\varphi_\ell,
\end{equation*}
\begin{equation*}
p_\ell=(p_q*_x\phi_\ell)*_t\varphi_\ell-\frac{1}{3}(|v_\ell+z_\ell|^2-(|v_q+\rmb{z_q}|^2*_x\phi_\ell)*_t\varphi_\ell).
\end{equation*}

By using (\ref{inductionv}) and (\ref{ell}) we know for $t\in[0, T_L]$
\begin{equation}\label{error}
\|v_q-v_\ell\|_{C_tL^2}\lesssim\|v_q-v_\ell\|_{C^0_{t,x}}\lesssim \ell\|v_q\|_{C^1_{t,x}}\leq \ell\lambda_q^4 M_0(t)^{1/2}\leq M_0(t)^{1/2} \lambda_{q+1}^{-\alpha}\leq \frac{1}{4} M_0(t)^{1/2}\delta_{q+1}^{1/2},
\end{equation}
where we  used the fact that $\alpha>\beta$ and we  chose $a$ large enough in order to absorb the implicit constant.
In addition, it holds for $t\in[0, T_L]$
\begin{equation}\label{eq:vl}
\|v_\ell\|_{C_{t}L^2}\leq \|v_q\|_{C_{t}L^2}\leq  M_0(t)^{1/2} (1+\sum_{1\leq r\leq q}\delta_r^{1/2}),
\end{equation}
and for $N\geq1$\begin{equation}\label{eq:vl2}
\|v_\ell\|_{C^N_{t,x}}\lesssim \ell^{-N+1}\|v_q\|_{C^1_{t,x}}\leq \ell^{-N+1}\lambda_q^4 M_0(t)^{1/2}\leq M_0(t)^{1/2} \ell^{-N}\lambda_{q+1}^{-\alpha},
\end{equation}
where we have chosen $a$ large enough to absorb implicit constant.

\subsubsection{Construction of  $v_{q+1}$}\label{s:con}

Let us now proceed with the construction of the perturbation $w_{q+1}$ which then defines the next iteration by $v_{q+1}:=v_{\ell}+w_{q+1}$.
To this end, we make use of the construction of the intermittent jets \cite[Section 7.4]{BV19}, which we recall in Appendix~\ref{s:B}. In particular, the building blocks $W_{(\xi)}=W_{\xi,r_\perp,r_\|,\lambda,\mu}$ for $\xi\in\Lambda$ are defined in (\ref{intermittent}) and the set $\Lambda$ is introduced in Lemma \ref{geometric}.
The necessary estimates are collected  in \eqref{bounds}.  For the intermittent jets
we choose the following parameters
\begin{equation}\label{parameter}
\aligned
\lambda&=\lambda_{q+1},
\qquad
 r_\|=\lambda_{q+1}^{-4/7},
\qquad r_\perp=r_\|^{-1/4}\lambda_{q+1}^{-1}=\lambda_{q+1}^{-6/7},
\qquad
\mu=\lambda_{q+1}r_\|r_\perp^{-1}=\lambda_{q+1}^{9/7}.
\endaligned
\end{equation}
It is required that $b$ is a multiple of $7$ to ensure that $\lambda_{q+1}r_\perp= a^{(b^{q+1})/7}\in\mathbb{N}$.

In order to define the amplitude functions, let $\chi$ be a smooth function such that
$$\chi(z)=\begin{cases}
  1,& \text{if}\  0\leq z\leq1,\\
 z,&  \text{if}\  z\geq2,
\end{cases}
$$
and  $z\leq 2\chi(z)\leq 4z$ for $z\in (1,2)$. We then define for $t\in[0, T_L]$ and $\omega\in\Omega$
\begin{equation*}\rho(\omega,t,x)=4c_R\delta_{q+1}M_0(t)\chi\left((c_R\delta_{q+1}M_0(t))^{-1}|\mathring{R}_\ell(\omega,t,x)|\right),\end{equation*}
which is $(\mathcal{F}_t)_{t\geq 0}$-adapted and we have
\begin{equation*}\left|\frac{\mathring{R}_\ell(\omega,t,x)}{\rho(\omega,t,x)}\right|=\frac{1}{4}\frac{(c_R\delta_{q+1}M_0(t))^{-1}|
\mathring{R}_\ell(\omega,t,x)|}{\chi((c_R\delta_{q+1}M_0(t))^{-1}|\mathring{R}_\ell(\omega, t,x)|)}\leq\frac{1}{2}.\end{equation*}
Note that if $\mathring{R}_\ell(0,x), \partial_t\mathring{R}_\ell(0,x)$ are deterministic, so is $\rho( 0,x)$ and $\partial_t\rho( 0,x)$.
Moreover, we have for any $p\in [1,\infty]$, $t\in [0,T_L]$
\begin{equation}\label{rho}
\|\rho\|_{C_tL^p}\leq 16\left((8\pi^3)^{1/p}c_R\delta_{q+1}M_0(t)+\|\mathring{R}_\ell\|_{C_tL^p}\right).
\end{equation}
Furthermore, by mollification estimates, the embedding $W^{4,1}\subset L^\infty$ and (\ref{inductionv}) we obtain for $N\geq0$
 $t\in[0, T_L]$
 $$\|\mathring{R}_\ell\|_{C^N_{t,x}}\lesssim \ell^{-4-N}c_R\delta_{q+1}M_0(t)$$
and by a repeated application of the chain rule (see \cite[Proposition C.1]{BDLIS16}) we obtain
\begin{equation}\label{rhoN}
\aligned
\|\rho\|_{C^N_{t,x}}
&\lesssim\ell^{-4-N}c_R\delta_{q+1}M_0(t)+(c_R\delta_{q+1}M_0(t))^{-N+1}\ell^{-5N}(c_R\delta_{q+1}M_0(t))^N
\\
&\lesssim\ell^{-4-5N}c_R\delta_{q+1}M_0(t),
\endaligned
\end{equation}
where we used the fact that $\tfrac{d}{dt} M_{0}(t)=4L M_{0}(t)$ as well as $4L\leq \ell^{-1}$ and the implicit constants are independent of $\omega$.

As the next step, we define the amplitude functions
\begin{equation}\label{amplitudes}a_{(\xi)}(\omega,t,x):=a_{\xi,q+1}(\omega,t,x):=\rho(\omega,t,x)^{1/2}\gamma_\xi\left(\Id
-\frac{\mathring{R}_\ell(\omega,t,x)}{\rho(\omega,t,x)}\right)(2\pi)^{-\frac{3}{4}},\end{equation}
where $\gamma_\xi$ is introduced in  Lemma \ref{geometric}. Since $\rho$ and $\mathring{R}_\ell$ are $(\mathcal{F}_t)_{t\geq0}$-adapted, we know that also $a_{(\xi)}$ is $(\mathcal{F}_t)_{t\geq0}$-adapted. If $\mathring{R}_\ell(0,x), \partial_t\mathring{R}_\ell(0,x)$ are deterministic, so are $a_{(\xi)}(0,x)$ and $\partial_ta_{(\xi)}( 0,x)$.
By  (\ref{geometric equality}) we have
\begin{equation}\label{cancellation}(2\pi)^{\frac{3}{2}}\sum_{\xi\in\Lambda}a_{(\xi)}^2\strokedint_{\mathbb{T}^3}W_{(\xi)}\otimes W_{(\xi)}dx=\rho \Id-\mathring{R}_\ell,\end{equation}
and using (\ref{rho}) for $t\in[0, T_L]$
\begin{equation}\label{estimate a}\|a_{(\xi)}\|_{C_tL^2}\leq \|\rho\|_{C_tL^1}^{1/2}\|\gamma_\xi\|_{C^0(B_{1/2}(\Id))}\leq \frac{4c_R^{1/2}(8\pi^3+1)^{1/2}M}{8|\Lambda|(8\pi^3+1)^{1/2}}M_0(t)^{1/2}\delta_{q+1}^{1/2}\leq \frac{c_R^{1/4}M_0(t)^{1/2}\delta_{q+1}^{1/2}}{2|\Lambda|},\end{equation}
where we choose $c_R$ as a small universal constant to absorb $M$ and we use $M$ to denote the universal constant as in  Lemma~\ref{geometric}.
Furthermore, by using the fact that $\rho$ is bounded from below by $4c_R\delta_{q+1} M_0(t)$ we obtain by similar arguments as in (\ref{rhoN}) that it holds for $t\in[0, T_L]$ that
\begin{equation}\label{estimate aN}
\|a_{(\xi)}\|_{C^N_{t,x}}\leq \ell^{-2-5N}c_R^{1/4} \delta_{q+1}^{1/2}M_0(t)^{1/2},
\end{equation}
for $N\geq 0$.

With these preparations in hand,  we define the principal part $w_{q+1}^{(p)}$ of the perturbation $w_{q+1}$ as
\begin{equation}\label{principle}
w_{q+1}^{(p)}:=\sum_{\xi\in\Lambda} a_{(\xi)}W_{(\xi)}.
\end{equation}
If $\mathring{R}_\ell(0,x), \partial_t\mathring{R}_\ell(0,x)$ are deterministic, so are $w_{q+1}^{(p)}( 0,x)$ and $\partial_tw_{q+1}^{(p)}( 0, x)$.
Since the coefficients $a_{(\xi)}$ are $(\mathcal{F}_t)_{t\geq0}$-adapted and $W_{(\xi)}$ is a deterministic function we deduce that
$w_{q+1}^{(p)}$ is also $(\mathcal{F}_t)_{t\geq0}$-adapted.
Moreover, according to (\ref{cancellation}) and (\ref{Wxi}) it follows that
\begin{equation}\label{can}w_{q+1}^{(p)}\otimes w_{q+1}^{(p)}+\mathring{R}_\ell=\sum_{\xi\in \Lambda}a_{(\xi)}^2 \mathbb{P}_{\neq0}(W_{(\xi)}\otimes W_{(\xi)})+\rho \Id,
\end{equation}
where we use the notation $\mathbb{P}_{\neq0}f:=f-\mathcal{F}f(0)=f-(2\pi)^{3/2}\strokedint_{\mathbb{T}^3}f$.

We also define an incompressibility corrector by
\begin{equation}\label{incompressiblity}
w_{q+1}^{(c)}:=\sum_{\xi\in \Lambda}\textrm{curl}(\nabla a_{(\xi)}\times V_{(\xi)})+\nabla a_{(\xi)}\times \textrm{curl}V_{(\xi)}+a_{(\xi)}W_{(\xi)}^{(c)},\end{equation}
with $W_{(\xi)}^{(c)}$ and $V_{(\xi)}$ being given in (\ref{corrector}).
Since $a_{(\xi)}$ is $(\mathcal{F}_t)_{t\geq0}$-adapted and $W_{(\xi)}, W_{(\xi)}^{(c)}$ and $V_{(\xi)}$ are  deterministic functions we know that
$w_{q+1}^{(c)}$ is also $(\mathcal{F}_t)_{t\geq0}$-adapted. If $\mathring{R}_\ell(0,x), \partial_t\mathring{R}_\ell(0,x)$ are deterministic, so are $w_{q+1}^{(c)}( 0,x)$ and $\partial_tw_{q+1}^{(c)}(0, x)$.
By a direct computation we deduce that
\begin{equation*}
w_{q+1}^{(p)}+w_{q+1}^{(c)}=\sum_{\xi\in\Lambda}\textrm{curl}\,\textrm{curl}(a_{(\xi)}V_{(\xi)}),
\end{equation*}
hence
\begin{equation*}\div(w_{q+1}^{(p)}+w_{q+1}^{(c)})=0.\end{equation*}
We also introduce a temporal corrector
\begin{equation}\label{temporal}w_{q+1}^{(t)}:=-\frac{1}{\mu}\sum_{\xi\in \Lambda}\mathbb{P}\mathbb{P}_{\neq0}\left(a_{(\xi)}^2\phi_{(\xi)}^2\psi_{(\xi)}^2\xi\right),\end{equation}
where $\mathbb{P}$ is the Helmholtz projection. If $\mathring{R}_\ell(0,x), \partial_t\mathring{R}_\ell(0,x)$ are deterministic, so is $w_{q+1}^{(t)}( 0,x)$.  Similarly to above $w_{q+1}^{(t)}$ is $(\mathcal{F}_t)_{t\geq0}$-adapted and by a direct computation  we obtain
\begin{equation}\label{equation for temporal}
\aligned
&\partial_t w_{q+1}^{(t)}+\sum_{\xi\in\Lambda}\mathbb{P}_{\neq0}\left(a_{(\xi)}^2\div(W_{(\xi)}\otimes W_{(\xi)})\right)
\\
&\qquad= -\frac{1}{\mu}\sum_{\xi\in\Lambda}\mathbb{P}\mathbb{P}_{\neq0}\partial_t\left(a_{(\xi)}^2\phi_{(\xi)}^2\psi_{(\xi)}^2\xi\right)
+\frac{1}{\mu}\sum_{\xi\in\Lambda}\mathbb{P}_{\neq0}\left( a^2_{(\xi)}\partial_t(\phi^2_{(\xi)}\psi^2_{(\xi)}\xi)\right)
\\&\qquad= (\Id-\mathbb{P})\frac{1}{\mu}\sum_{\xi\in\Lambda}\mathbb{P}_{\neq0}\partial_t\left(a_{(\xi)}^2\phi_{(\xi)}^2\psi_{(\xi)}^2\xi\right)
-\frac{1}{\mu}\sum_{\xi\in\Lambda}\mathbb{P}_{\neq0}\left(\partial_t a^2_{(\xi)}(\phi^2_{(\xi)}\psi^2_{(\xi)}\xi)\right).
\endaligned
\end{equation}
Note that the first term on the right hand side can be viewed as a pressure term $\nabla p_{1}$.

Finally, the total perturbation $w_{q+1}$ is defined by
\begin{equation}\label{wq}w_{q+1}:=w_{q+1}^{(p)}+w_{q+1}^{(c)}+w_{q+1}^{(t)},\end{equation}
which is mean zero, divergence free and $(\mathcal{F}_t)_{t\geq0}$-adapted. If $\mathring{R}_\ell(0,x), \partial_t\mathring{R}_\ell(0,x)$ are deterministic, so is $w_{q+1}(0,x)$. The new velocity $v_{q+1}$ is defined as
\begin{equation}\label{vq}
v_{q+1}:=v_\ell+w_{q+1}.
\end{equation}
Thus, it is also $(\mathcal{F}_t)_{t\geq0}$-adapted. If $\mathring{R}_q(0,x), v_q(0,x)$ are deterministic, so is $v_{q+1}(0,x)$.

\subsubsection{Verification of the inductive estimates for $v_{q+1}$}
\label{sss:v}
Next, we verify the inductive estimates (\ref{inductionv}) on the level $q+1$ for $v$ and we prove (\ref{iteration}).
First, we  recall the following result from \cite[Lemma~7.4]{BV19}.

\begin{lem}\label{Lp}
Fix integers $N, \kappa\geq1$ and let $\zeta>1$ be such that
\begin{equation*}
\frac{2\pi \sqrt{3}\zeta}{\kappa}\leq\frac{1}{3}\quad \textrm{ and } \quad\zeta^4\frac{(2\pi \sqrt{3}\zeta)^N}{\kappa^N}\leq1.
\end{equation*}
Let $p\in \{1,2\}$ and let $f$ be a $\mathbb{T}^3$-periodic function such that there exists a constant $C_f>0$ such that
$$\|D^jf\|_{L^p}\leq C_f\zeta^j,$$
holds for all $0\leq j\leq N+4$. In addition, let $g$ be a $(\mathbb{T}/\kappa)^3$-periodic function. Then it holds that
$$\|fg\|_{L^p}\lesssim C_f\|g\|_{L^p},$$
where the implicit constant is universal.
\end{lem}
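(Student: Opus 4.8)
The plan is to prove this decorrelation estimate --- it is \cite[Lemma~7.4]{BV19} --- by localizing to spatial cubes of side $2\pi/\kappa$, the scale at which $g$ oscillates. The underlying heuristic is that the hypotheses force $\zeta/\kappa$ to be small, so $f$ is essentially constant on each such cube; making this quantitative via a rescaled Sobolev embedding is all that is needed.

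First I would set $h:=2\pi/\kappa$ and partition $\mathbb{T}^3=(\mathbb R/2\pi\mathbb Z)^3$ into the $\kappa^3$ cubes $Q$ obtained by translating $[0,h)^3$ through the vectors in $h\{0,1,\dots,\kappa-1\}^3$. Because $g$ is $(\mathbb T/\kappa)^3$-periodic, each such $Q$ is a period cell of $g$, so its $L^p$-mass is equidistributed, $\|g\|_{L^p(Q)}^p=\kappa^{-3}\|g\|_{L^p(\mathbb T^3)}^p$; Hölder's inequality on each cube then reduces the claim to the bound $\sum_Q\|f\|_{L^\infty(Q)}^p\lesssim\kappa^{3}C_f^p$, since
$$
\|fg\|_{L^p(\mathbb T^3)}^p=\sum_Q\|fg\|_{L^p(Q)}^p\le\sum_Q\|f\|_{L^\infty(Q)}^p\|g\|_{L^p(Q)}^p=\kappa^{-3}\|g\|_{L^p(\mathbb T^3)}^p\sum_Q\|f\|_{L^\infty(Q)}^p.
$$

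To control the right-hand side I would use, on each cube of side $h$, the scaled form of the three-dimensional Sobolev embedding $W^{4,p}\subset L^\infty$ (which holds for both $p=1$ and $p=2$ with a universal constant, $p$ running over a finite set), namely $\|f\|_{L^\infty(Q)}\lesssim\sum_{i=0}^{4}h^{\,i-3/p}\|D^if\|_{L^p(Q)}$. Raising to the power $p$, using $(\sum_{i=0}^{4}a_i)^p\le 5^{p-1}\sum_{i=0}^{4}a_i^p$, summing over $Q$ and invoking the hypothesis $\sum_Q\|D^if\|_{L^p(Q)}^p=\|D^if\|_{L^p(\mathbb T^3)}^p\le C_f^{p}\zeta^{ip}$ would give
$$
\sum_Q\|f\|_{L^\infty(Q)}^{p}\lesssim\sum_{i=0}^{4}h^{(i-3/p)p}C_f^{p}\zeta^{ip}=C_f^{p}\kappa^{3}(2\pi)^{-3}\sum_{i=0}^{4}\Big(\frac{2\pi\zeta}{\kappa}\Big)^{ip},
$$
and since the first hypothesis forces $2\pi\zeta/\kappa\le(3\sqrt3)^{-1}<1$, the sum over $i$ is bounded by a universal constant. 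Taking $p$-th roots yields $\|fg\|_{L^p}\lesssim C_f\|g\|_{L^p}$.

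The point requiring care --- rather than a genuine obstacle --- is keeping every implicit constant universal, i.e.\ independent of $\kappa$ and of $N$; this is precisely what the smallness of $\zeta/\kappa$ buys, making the geometric-type series in the last display converge uniformly. Note that the $L^p$ bound as stated uses only the derivative bounds up to order $4$ and the first hypothesis; the integer $N$ and the second hypothesis $\zeta^{4}(2\pi\sqrt3\,\zeta/\kappa)^{N}\le1$ are retained because in \cite{BV19} the same lemma is also used to bound $\|D^{N}(fg)\|_{L^p}$, which requires differentiating the product up to order $N$ (hence the hypothesis on $D^jf$ for $j\le N+4$).
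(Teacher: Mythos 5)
Your argument is correct, and since the paper recalls this lemma from \cite[Lemma~7.4]{BV19} without reproving it, there is no in-text proof to compare against; your cube decomposition into period cells of $g$, equidistribution of $\|g\|_{L^p}$, rescaled $W^{4,p}\hookrightarrow L^{\infty}$ Sobolev embedding, and geometric summation in $2\pi\zeta/\kappa$ is precisely the standard argument used there. You are also right that the second hypothesis (involving $N$) is not needed for the $L^p$ bound stated here and is carried over from the companion estimate on $\|D^N(fg)\|_{L^p}$ in \cite{BV19}.
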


This result shall be used in order to bound $w_{q+1}^{(p)}$ in $L^{2}$ whereas for the other $L^{p}$-norms we apply a different approach.  By (\ref{estimate a}) and (\ref{estimate aN}) we obtain for $t\in[0, T_L]$
\begin{equation*}\|D^ja_{(\xi)}\|_{C_tL^2}\lesssim \frac{c_R^{1/4}M_0(t)^{1/2}}{2|\Lambda|}\delta_{q+1}^{1/2}\ell^{-8j},\end{equation*}
which combined with Lemma \ref{Lp} for $\zeta=\ell^{-8}$ we obtain for $t\in[0, T_L]$
\begin{equation}\label{estimate wqp}
\|w_{q+1}^{(p)}\|_{C_tL^2}\leq \sum_{\xi\in\Lambda}\frac{1}{2|\Lambda|}c_R^{1/4}M_0(t)^{1/2}\delta_{q+1}^{1/2}\|W_{(\xi)}\|_{C_tL^2}\leq \frac{1}{2}M_0(t)^{1/2}\delta_{q+1}^{1/2},
\end{equation}
where we used $c_R^{1/4}$ to absorb the universal constant and the fact that due to (\ref{intermittent}) together with the normalizations \eqref{eq:phi}, \eqref{eq:psi} we have that $\|W_{(\xi)}\|_{L^2}\simeq 1$ uniformly in all the involved parameters.

For general $L^p$ norm we apply (\ref{bounds}) and (\ref{estimate aN}) to deduce for $t\in[0, T_L]$, $p\in(1,\infty)$
\begin{equation}\label{principle est1}
\aligned
\|w_{q+1}^{(p)}\|_{C_tL^p}&\lesssim \sum_{\xi\in \Lambda}\|a_{(\xi)}\|_{C^0_{t,x}}\|W_{(\xi)}\|_{C_tL^p}\lesssim M_0(t)^{1/2}\delta_{q+1}^{1/2}\ell^{-2}r_\perp^{2/p-1}r_\|^{1/p-1/2},
\endaligned
\end{equation}
\begin{equation}\label{correction est}
\aligned
\|w_{q+1}^{(c)}\|_{C_tL^p}&\lesssim\sum_{\xi\in \Lambda}\left(\|a_{(\xi)}\|_{C^0_{t,x}}\|W_{(\xi)}^{(c)}\|_{C_tL^p}+\|a_{(\xi)}\|_{C^2_{t,x}}\|V_{(\xi)}\|_{C_tW^{1,p}}\right)
\\&\lesssim M_0(t)^{1/2}\delta_{q+1}^{1/2}\ell^{-12}r_\perp^{2/p-1}r_\|^{1/p-1/2}\left(r_\perp r_\|^{-1}+\lambda_{q+1}^{-1}\right)\lesssim M_0(t)^{1/2}\delta_{q+1}^{1/2}\ell^{-12}r_\perp^{2/p}r_\|^{1/p-3/2},
\endaligned
\end{equation}
and
\begin{equation}\label{temporal est1}
\aligned
\|w_{q+1}^{(t)}\|_{C_tL^p}&\lesssim \mu^{-1}\sum_{\xi\in\Lambda}\|a_{(\xi)}\|_{C^0_{t,x}}^2\|\phi_{(\xi)}\|_{L^{2p}}^2\|\psi_{(\xi)}\|_{C_tL^{2p}}^2
\\
&\lesssim\delta_{q+1}M_0(t) \ell^{-4}r_\perp^{2/p-1}r_\|^{1/p-2}(\mu^{-1}r_\perp^{-1}r_\|)= M_0(t)\delta_{q+1}\ell^{-4}r_\perp^{2/p-1}r_\|^{1/p-2}\lambda_{q+1}^{-1}.
\endaligned
\end{equation}
We note that for $p=2$ \eqref{principle est1} provides a worse bound than \eqref{estimate wqp} which was based on Lemma \ref{Lp}.
Since by (\ref{c:M}) $M_0(L)^{1/2}\lambda_{q+1}^{4\alpha-\frac{1}{7}}<1$ we have for $t\in[0, T_L]$
\begin{equation}\label{corr temporal}
\aligned
&\|w_{q+1}^{(c)}\|_{C_tL^p}+\|w_{q+1}^{(t)}\|_{C_tL^p}\\
&\quad\lesssim  M_0(t)^{1/2}\delta_{q+1}^{1/2}\ell^{-2}r_\perp^{2/p-1}r_\|^{1/p-1/2}\left(\ell^{-10}r_\perp r_\|^{-1}+M_0(t)^{1/2}\delta_{q+1}^{1/2}\ell^{-2}r_\|^{-3/2}\lambda_{q+1}^{-1}\right)
\\ &\quad\lesssim M_0(t)^{1/2}\delta_{q+1}^{1/2}\ell^{-2}r_\perp^{2/p-1}r_\|^{1/p-1/2},
\endaligned
\end{equation}
where we use (\ref{ell}) and the fact that $\lambda_{q+1}^{20\alpha-\frac{2}{7}}<1$ by our choice of $\alpha$. The bound \eqref{corr temporal} will be used below in the estimation of the Reynolds stress.

Combining (\ref{estimate wqp}), (\ref{correction est}) and (\ref{temporal est1}) we obtain for $t\in[0, T_L]$
\begin{equation}\label{estimate wq}
\aligned
\|w_{q+1}\|_{C_tL^2}&\leq M_0(t)^{1/2}\delta_{q+1}^{1/2}\left(\frac{1}{2}+C\ell^{-12}r_\perp r_\|^{-1}+CM_0(t)^{1/2}\delta_{q+1}^{1/2}\ell^{-4}r_\|^{-3/2}\lambda_{q+1}^{-1}\right)
\\&\leq M_0(t)^{1/2}\delta_{q+1}^{1/2}\left(\frac{1}{2}+C\lambda_{q+1}^{24\alpha-2/7}+CM_0(t)^{1/2}\delta_{q+1}^{1/2}\lambda_{q+1}^{8\alpha-1/7}\right)
\leq \frac{3}{4}M_0(t)^{1/2}\delta_{q+1}^{1/2},
\endaligned
\end{equation}
where by \eqref{c:M} we choose  $ \beta$ small enough and $a$ large enough such that
$$
C\lambda_{q+1}^{24\alpha-2/7}\leq 1/8,\quad\text{and}\quad CM_0(L)^{1/2}\delta_{q+1}^{1/2}\lambda_{q+1}^{8\alpha-1/7}\leq 1/8.
$$
The bound \eqref{estimate wq} can be directly combined with \eqref{eq:vl} and the definition of the velocity $v_{q+1}$ \eqref{vq} to deduce the first bound in \eqref{inductionv} on the level $q+1$. Indeed, for $t\in[0, T_L]$
$$
\|v_{q+1}\|_{C_{t}L^{2}}\leq \|v_{\ell}\|_{C_{t}L^{2}}+\|w_{q+1}\|_{C_{t}L^{2}} \leq M_{0}(t)^{1/2}(1+\sum_{1\leq r\leq q+1}\delta_{r}^{1/2}).
$$
In addition, \eqref{estimate wq} together with \eqref{error} yields for $t\in[0, T_L]$
$$
\|v_{q+1}-v_{q}\|_{C_{t}L^{2}}\leq \|w_{q+1}\|_{C_{t}L^{2}}+\|v_{\ell}-v_{q}\|_{C_{t}L^{2}}\leq M_{0}(t)^{1/2} \delta_{q+1}^{1/2},
$$
hence \eqref{iteration} holds.

As the next step, we shall verify the second bound in \eqref{inductionv}.
Using (\ref{estimate aN}) and (\ref{bounds}) we have for $t\in[0, T_L]$
\begin{equation}\label{principle est2}
\aligned
\|w_{q+1}^{(p)}\|_{C^1_{t,x}}&\leq \sum_{\xi\in\Lambda}\|a_{(\xi)}\|_{C^1_{t,x}}\|W_{(\xi)}\|_{C^1_{t,x}}\\
&\lesssim M_0(t)^{1/2} \ell^{-7}r_\perp^{-1}r_\|^{-1/2}\lambda_{q+1}\left(1+\frac{r_\perp \mu}{r_\|}\right)\lesssim M_0(t)^{1/2} \ell^{-7}r_\perp^{-1}r_\|^{-1/2}\lambda_{q+1}^2,
\endaligned
\end{equation}
\begin{equation}\label{correction est2}
\aligned
\|w_{q+1}^{(c)}\|_{C^1_{t,x}}
&\lesssim  \sum_{\xi\in\Lambda}\left(\|a_{(\xi)}\|_{C^1_{t,x}}\|W_{(\xi)}^{(c)}\|_{C^1_{t,x}}+\|a_{(\xi)}\|_{C^3_{t,x}}(\|V_{(\xi)}\|_{C^1_{t}C^1_x}
+\|V_{(\xi)}\|_{C_tC^2_{x}})\right)\\
&\lesssim M_0(t)^{1/2}\ell^{-17}r_\|^{-3/2}\left(\mu+\frac{r_\perp \mu\lambda_{q+1}}{r_\|}\right)\lesssim M_0(t)^{1/2}\ell^{-17}r_\|^{-3/2}\lambda_{q+1}^2,
\endaligned
\end{equation}
and
\begin{equation}\label{temporal est2}
\aligned
\|w_{q+1}^{(t)}\|_{C^1_{t,x}}&\leq \frac{1}{\mu}\sum_{\xi\in\Lambda}[\|a^2_{(\xi)}
\phi^2_{(\xi)}\psi^2_{(\xi)}\|_{C_tW^{1+\alpha,p}}+\|a^2_{(\xi)}\phi^2_{(\xi)}\psi^2_{(\xi)}\|_{C^1_tW^{\alpha,p}}]
\\
&\leq\frac{1}{\mu}\sum_{\xi\in\Lambda}
\Big(\|a_{(\xi)}\|_{C^0_{t,x}}\|a_{(\xi)}\|_{C^{1+\alpha}_{t,x}}\|\phi_{(\xi)}\|_{L^{\infty}}^2\|\psi_{(\xi)}\|_{C_tL^{\infty}}^2\\
&\quad+\|a_{(\xi)}\|_{C^1_{t,x}}\|a_{(\xi)}\|_{C^0_{t,x}}\|\phi_{(\xi)}\|_{L^{\infty}}\big(\| \phi_{(\xi)}\|_{W^{1+\alpha,\infty}}\|\psi_{(\xi)}\|_{C_tL^{\infty}}^2\\
&\quad\quad+\| \phi_{(\xi)}\|_{W^{\alpha,\infty}}\|\psi_{(\xi)}\|_{C_tL^{\infty}}\|\psi_{(\xi)}\|_{C^1_tL^{\infty}}\big)\\
&\quad+\|a_{(\xi)}\|_{C^1_{t,x}}\|a_{(\xi)}\|_{C^0_{t,x}}\|\phi_{(\xi)}\|_{L^{\infty}}^2\big(\|\psi_{(\xi)}\|_{C_tL^{\infty}}\|\psi_{(\xi)}\|_{C_tW^{1+\alpha,p}}
\\&\quad\quad+\|\psi_{(\xi)}\|_{C^1_tL^{\infty}}\|\psi_{(\xi)}\|_{C_{t}W^{\alpha,p}}+\|\psi_{(\xi)}\|_{C_tL^{\infty}}\|\psi_{(\xi)}\|_{C^1_tW^{\alpha,p}}\big)\Big)
\\&\lesssim {\frac{1}{\mu}M_0(t) \ell^{-9}r_\perp^{-2}r_\|^{-1}\lambda_{q+1}^{1+\alpha}\left(1+\frac{r_\perp \mu}{r_\|}\right)}
\lesssim M_0(t)\ell^{-9}r_\perp^{-1}r_\|^{-2}\lambda_{q+1}^{1+\alpha},
\endaligned
\end{equation}
where we chose $p$ large enough and applied the Sobolev embedding in the first inequality in \eqref{temporal est2} needed because $\mathbb{P}\mathbb{P}_{\neq0}$ is not a bounded operator on $C^0$;  in the last inequality we used interpolation and an extra $\lambda_{q+1}^\alpha$ appeared.
Combining   \eqref{eq:vl2} and \eqref{principle est2}, \eqref{correction est2}, \eqref{temporal est2} with (\ref{ell}) we obtain for $t\in[0, T_L]$
\begin{equation*}
\aligned
\|v_{q+1}\|_{C^1_{t,x}}&\leq \|v_\ell\|_{C^1_{t,x}}+\|w_{q+1}\|_{C^1_{t,x}}\\
&\leq M_0(t)^{1/2}\left(\lambda_{q+1}^\alpha+C\lambda_{q+1}^{14\alpha+22/7}+C\lambda_{q+1}^{34\alpha+20/7}+CM_0(t)^{1/2}\lambda_{q+1}^{19\alpha+3}\right)
 \leq M_0(t)^{1/2}\lambda_{q+1}^4,
\endaligned
\end{equation*}
where we used (\ref{c:M}) to have the fact that $CM_0(L)^{1/2}\leq \frac{1}{2}\lambda_{q+1}^{1-19\alpha}$. Thus, the second estimate in \eqref{inductionv} holds true on the level $q+1$.

We conclude this part with further estimates of the perturbations $w^{(p)}_{q+1}$, $w^{(c)}_{q+1}$ and $w^{(t)}_{q+1}$, which will be used below in order to bound the Reynolds stress $\mathring{R}_{q+1}$ and to establish the final estimate in \eqref{inductionv} on the level $q+1$.
By a similar approach as in \eqref{principle est1}, \eqref{correction est}, \eqref{temporal est1}, we derive  the following estimates: for $t\in[0, T_L]$ by using (\ref{ell}), (\ref{estimate aN}) and (\ref{bounds})
\begin{equation}\label{principle est22}
\aligned
\|w_{q+1}^{(p)}+w_{q+1}^{(c)}\|_{C_tW^{1,p}}&\leq\sum_{\xi\in\Lambda}
\|\textrm{curl\,}\textrm{curl}(a_{(\xi)}V_{(\xi)})\|_{C_tW^{1,p}}
\\
&\lesssim \sum_{\xi\in \Lambda}\Big(\|a_{(\xi)}\|_{C^3_{t,x}}\|V_{(\xi)}\|_{C_tL^p}+\|a_{(\xi)}\|_{C^2_{t,x}}\|V_{(\xi)}\|_{C_tW^{1,p}}\\
&\quad+\|a_{(\xi)}\|_{C^1_{t,x}}\|V_{(\xi)}\|_{C_tW^{2,p}}+\|a_{(\xi)}\|_{C^0_{t,x}}\|V_{(\xi)}\|_{C_tW^{3,p}}\Big)
\\
&\lesssim M_0(t)^{1/2}r_\perp^{2/p-1}r_\|^{1/p-1/2}\left(\ell^{-17}
\lambda_{q+1}^{-2}+\ell^{-12}\lambda_{q+1}^{-1}+\ell^{-7}+\ell^{-2}\lambda_{q+1}\right)\\
&\lesssim M_0(t)^{1/2}r_\perp^{2/p-1}r_\|^{1/p-1/2}\ell^{-2}\lambda_{q+1},
\endaligned
\end{equation}
and
\begin{equation}\label{corrector est2}
\aligned
\|w_{q+1}^{(t)}\|_{C_tW^{1,p}}
&\leq\frac{1}{\mu}\sum_{\xi\in\Lambda}
\Big(\|a_{(\xi)}\|_{C^0_{t,x}}\|a_{(\xi)}\|_{C^1_{t,x}}\|\phi_{(\xi)}\|_{L^{2p}}^2\|\psi_{(\xi)}\|_{C_tL^{2p}}^2\\
&\quad+\|a_{(\xi)}\|_{C^0_{t,x}}^2\|\phi_{(\xi)}\|_{L^{2p}}\|\nabla \phi_{(\xi)}\|_{L^{2p}}\|\psi_{(\xi)}\|_{C_tL^{2p}}^2\\
&\quad+\|a_{(\xi)}\|_{C^0_{t,x}}^2\|\phi_{(\xi)}\|_{L^{2p}}^2\|\nabla \psi_{(\xi)}\|_{C_tL^{2p}}\|\psi_{(\xi)}\|_{C_tL^{2p}}\Big)
\\
&\lesssim \frac{M_0(t)}{\mu}r_\perp^{2/p-2}r_\|^{1/p-1}\left(\ell^{-9}+\ell^{-4}\lambda_{q+1}\right)\lesssim M_0(t)r_\perp^{2/p-2}r_\|^{1/p-1}\ell^{-4}\lambda_{q+1}^{-2/7}.
\endaligned
\end{equation}

\subsubsection{Definition of the Reynolds stress $\mathring{R}_{q+1}$}\label{s:def}

Subtracting from (\ref{induction}) at level $q+1$ the system (\ref{mollification}), we obtain
\begin{equation}\label{stress}
\aligned
\div\mathring{R}_{q+1}-\nabla p_{q+1}&=\underbrace{-\Delta w_{q+1}+\partial_t(w_{q+1}^{(p)}+w_{q+1}^{(c)})+\div((v_\ell+z_\ell)\otimes w_{q+1}+w_{q+1}\otimes (v_\ell+z_\ell))}_{\div(R_{\textrm{lin}})+\nabla p_{\textrm{lin}}}
\\&\quad+\underbrace{\div\left((w_{q+1}^{(c)}+w_{q+1}^{(t)})\otimes w_{q+1}+w_{q+1}^{(p)}\otimes (w_{q+1}^{(c)}+w_{q+1}^{(t)})\right)}_{\div(R_{\textrm{cor}})+\nabla p_{\textrm{cor}}}
\\&\quad+\underbrace{\div(w_{q+1}^{(p)}\otimes w_{q+1}^{(p)}+\mathring{R}_\ell)+\partial_tw_{q+1}^{(t)}}_{\div(R_{\textrm{osc}})+\nabla p_{\textrm{osc}}}
\\&\quad+\underbrace{\div\left(v_{q+1}{\otimes}\rmb{z_{q+1}}-v_{q+1}{\otimes}z_\ell+\rmb{z_{q+1}}{\otimes}v_{q+1}-z_\ell{\otimes}v_{q+1}
+\rmb{z_{q+1}}{\otimes}\rmb{z_{q+1}}-z_\ell{\otimes}z_\ell\right)}_{\div(R_{\textrm{com}1})+\nabla p_{\textrm{com}1}}
\\&\quad+\div(R_{\textrm{com}})-\nabla p_\ell.
\endaligned
\end{equation}
We recall the inverse divergence operator $\mathcal{R}$ as in \cite[Section 5.6]{BV19}, which acts on vector fields $v$ with $\int_{\mathbb{T}^3}vdx=0$ as
\begin{equation*}
(\mathcal{R}v)^{kl}=(\partial_k\Delta^{-1}v^l+\partial_l\Delta^{-1}v^k)-\frac{1}{2}(\delta_{kl}+\partial_k\partial_l\Delta^{-1})\div\Delta^{-1}v,
\end{equation*}
for $k,l\in\{1,2,3\}$. Then $\mathcal{R}v(x)$ is a symmetric trace-free matrix for each $x\in\mathbb{T}^3$, and $\mathcal{R}$ is a right inverse of the div operator, i.e. $\div(\mathcal{R} v)=v$. By using $\mathcal{R}$ we define
\begin{equation*}
R_{\textrm{lin}}:=-\mathcal{R}\Delta w_{q+1}+\mathcal{R}\partial_t(w_{q+1}^{(p)}+w_{q+1}^{(c)})
+(v_\ell+z_\ell)\mathring\otimes w_{q+1}+w_{q+1}\mathring\otimes (v_\ell+z_\ell),
\end{equation*}
\begin{equation*}
R_{\textrm{cor}}:=(w_{q+1}^{(c)}+w_{q+1}^{(t)})\mathring{\otimes} w_{q+1}+w_{q+1}^{(p)}\mathring{\otimes} (w_{q+1}^{(c)}+w_{q+1}^{(t)}),
\end{equation*}
\begin{equation*}
R_{\textrm{com}1}:=v_{q+1}\mathring{\otimes}\rmb{z_{q+1}}-v_{q+1}\mathring{\otimes}z_\ell+\rmb{z_{q+1}}\mathring{\otimes}v_{q+1}-z_\ell\mathring{\otimes}v_{q+1}
+\rmb{z_{q+1}}\mathring{\otimes}\rmb{z_{q+1}}-z_\ell\mathring{\otimes}z_\ell.
\end{equation*}
We observe that if $\mathring{R}_q(0,x), v_q(0,x)$ are deterministic, the same is valid for the above defined error terms $R_{\mathrm{lin}}(0,x)$, $R_{\mathrm{cor}}(0,x)$, $R_{\mathrm{com1}}(0,x)$.

In order to define the remaining oscillation error from the third line in \eqref{stress}, we apply (\ref{can}) and (\ref{equation for temporal}) to obtain
\begin{align*}
&\div(w_{q+1}^{(p)}\otimes w_{q+1}^{(p)}+\mathring{R}_\ell)+\partial_tw_{q+1}^{(t)}\\
&\quad=\sum_{\xi\in\Lambda}\div\left(a^2_{(\xi)}\mathbb{P}_{\neq0}(W_{(\xi)}\otimes W_{(\xi)})\right)+\nabla \rho+\partial_tw_{q+1}^{(t)}
\\
&\quad=\sum_{\xi\in\Lambda}\mathbb{P}_{\neq0}\left(\nabla a^2_{(\xi)}\mathbb{P}_{\neq0}(W_{(\xi)}\otimes W_{(\xi)})\right)+\nabla \rho+\sum_{\xi\in\Lambda}\mathbb{P}_{\neq0}\left(a^2_{(\xi)}\div(W_{(\xi)}\otimes W_{(\xi)})\right)+\partial_tw_{q+1}^{(t)}
\\
&\quad=\sum_{\xi\in\Lambda}\mathbb{P}_{\neq0}
\left(\nabla a_{(\xi)}^2\mathbb{P}_{\neq0}(W_{(\xi)}\otimes W_{(\xi)}) \right)+\nabla \rho+\nabla p_1-\frac{1}{\mu}\sum_{\xi\in\Lambda}\mathbb{P}_{\neq0}
\left(\partial_t a_{(\xi)}^2(\phi_{(\xi)}^2\psi_{(\xi)}^2\xi) \right)
\end{align*}
Therefore,
\begin{equation*}
\aligned
 R_{\textrm{osc}}:=&\sum_{\xi\in\Lambda}\mathcal{R}
\left(\nabla a_{(\xi)}^2\mathbb{P}_{\neq0}(W_{(\xi)}\otimes W_{(\xi)}) \right)-\frac{1}{\mu}\sum_{\xi\in\Lambda}\mathcal{R}
\left(\partial_t a_{(\xi)}^2(\phi_{(\xi)}^2\psi_{(\xi)}^2\xi) \right)=:R_{\textrm{osc}}^{(x)}+R_{\textrm{osc}}^{(t)},
\endaligned
\end{equation*}
which is also deterministic at time $0$.
Finally we define the Reynolds stress on the level $q+1$ by
\begin{equation*}\aligned
 \mathring{R}_{q+1}:=R_{\textrm{lin}}+R_{\textrm{cor}}+R_{\textrm{osc}}+R_{\textrm{com}}+R_{\textrm{com}1}.
 \endaligned
 \end{equation*}
We note that by construction $\mathring{R}_{q+1}(0,x)$ is deterministic.

\subsubsection{Verification of the inductive estimate \eqref{inductionv} for $\mathring{R}_{q+1}$}
\label{sss:R}
To conclude the proof of Proposition~\ref{main iteration}, we shall verify the third estimate in \eqref{inductionv}. To this end, we estimate each term in the definition of $\mathring{R}_{q+1}$ separately.

In the following we choose $p=\frac{32}{32-7\alpha}>1$ so that it holds  in particular that $r_\perp^{2/p-2}r_\|^{1/p-1}\leq \lambda_{q+1}^\alpha$.
For the linear  error we apply \eqref{inductionv} to obtain for $t\in[0, T_L]$
\begin{equation*}
\aligned
\|R_{\textrm{lin}}\|_{C_tL^p}
&\lesssim\|\mathcal{R}\Delta w_{q+1}\|_{C_tL^p}+\|\mathcal{R}\partial_t(w_{q+1}^{(p)}+w_{q+1}^{(c)})\|_{C_tL^p}+\|(v_\ell+z_\ell)\mathring{\otimes}w_{q+1}+w_{q+1}\mathring{\otimes}(v_\ell+z_\ell)\|_{C_tL^p}
\\
&\lesssim\|w_{q+1}\|_{C_tW^{1,p}}+\sum_{\xi\in\Lambda}\|\partial_t\textrm{curl}
(a_{(\xi)}V_{(\xi)})\|_{C_tL^p}+M_0(t)^{1/2}(\lambda_{q}^4+\rmb{\lambda_{q+1}^{\frac\alpha8}})\|w_{q+1}\|_{C_tL^p},
\endaligned
\end{equation*}
where by \eqref{bounds} and \eqref{estimate aN}
\begin{equation*}
\aligned
\sum_{\xi\in\Lambda}\|\partial_t\textrm{curl}
(a_{(\xi)}V_{(\xi)})\|_{C_tL^p}&\leq \sum_{\xi\in\Lambda}\left(\|
a_{(\xi)}\|_{C_tC^1_x}\|\partial_t V_{(\xi)}\|_{C_tW^{1,p}}+\|\partial_ta_{(\xi)}\|_{C_tC^1_x}\| V_{(\xi)}\|_{C_tW^{1,p}}\right)\\
&\lesssim M_{0}(t)^{1/2} \ell^{-7}r_{\perp}^{2/p}r_{\|}^{1/p-3/2}\mu+M_{0}(t)^{1/2} \ell^{-12}r_{\perp}^{2/p-1}r_{\|}^{1/p-1/2}\lambda_{q+1}^{-1}
\endaligned
\end{equation*}
In view of (\ref{principle est22}), (\ref{corrector est2}) as well as (\ref{principle est1}), (\ref{corr temporal}),  we  deduce  for $t\in[0,T_L]$
\begin{equation*}
\aligned
\|R_{\textrm{lin}}\|_{C_tL^p}
&\lesssim M_0(t)^{1/2}\ell^{-2}r_\perp^{2/p-1}r_\|^{1/p-1/2}\lambda_{q+1}+M_0(t)\ell^{-4}r_\perp^{2/p-2}r_\|^{1/p-1}\lambda_{q+1}^{-2/7}
\\
&\ +M_0(t)^{1/2}\ell^{-7}r_\perp^{2/p}r_\|^{1/p-3/2}\mu
+M_0(t)^{1/2}\ell^{-12}r_\perp^{2/p-1}r_\|^{1/p-1/2}\lambda_{q+1}^{-1}\\&\ +M_0(t)\ell^{-2}r_\perp^{2/p-1}r_\|^{1/p-1/2}(\lambda^4_{q}+\rmb{\lambda_{q+1}^{\frac\alpha8}})
\\&\lesssim M_0(t)^{1/2}\lambda_{q+1}^{5\alpha-1/7}+M_0(t)\lambda_{q+1}^{9\alpha-2/7}+M_0(t)^{1/2}\lambda_{q+1}^{15\alpha-1/7}+M_0(t)^{1/2}\lambda_{q+1}^{25\alpha-15/7}
\\
&\leq\frac{M_0(t)c_R\delta_{q+2}}{5}.
\endaligned
\end{equation*}
Here, we have taken $a$ sufficiently large and $\beta$ sufficiently small.

The corrector error  is estimated  using \eqref{principle est1}, \eqref{correction est}, \eqref{temporal est1}, (\ref{corr temporal})  for $t\in[0, T_L]$ as
\begin{equation*}
\aligned
\|R_{\textrm{cor}}\|_{C_tL^p}
&\leq\|w_{q+1}^{(c)}+ w_{q+1}^{(t)}\|_{C_tL^{2p}}\| w_{q+1}\|_{C_tL^{2p}}+\|w_{q+1}^{(c)}+ w_{q+1}^{(t)}\|_{C_tL^{2p}}\| w_{q+1}^{(p)}\|_{C_tL^{2p}}
\\
&\lesssim M_0(t)\left(\ell^{-12}r_\perp^{1/p}r_\|^{1/(2p)-3/2}
+\ell^{-4}M_0(t)^{1/2}r_\perp^{1/p-1}r_\|^{1/(2p)-2}\lambda_{q+1}^{-1}\right)\ell^{-2}r_\perp^{1/p-1}r_\|^{1/(2p)-1/2}
\\
&\lesssim M_0(t)\left(\ell^{-14}r_\perp^{2/p-1}r_\|^{1/p-2}
+\ell^{-6}M_0(t)^{1/2}r_\perp^{2/p-2}r_\|^{1/p-5/2}\lambda_{q+1}^{-1}\right)
\\
&\lesssim M_0(t)\left(\lambda_{q+1}^{29\alpha-2/7}+M_0(t)^{1/2}\lambda_{q+1}^{13\alpha-1/7}\right)
\leq \frac{M_0(t)c_R\delta_{q+2}}{5}.
\endaligned
\end{equation*}
Here we use (\ref{c:M}) to have $M_0(L)^{1/2}\lambda_{q+1}^{13\alpha-1/7}\leq \frac{c_R\delta_{q+2}}{10}$.

Finally, we proceed with the oscillation error $R_{\textrm{osc}}$ and we focus on $R_{\textrm{osc}}^{(x)}$ first. Since $W_{(\xi)}$ is $(\mathbb{T}/(r_\perp \lambda_{q+1}))^3$ periodic, we deduce that
\begin{equation*}\mathbb{P}_{\neq0}(W_{(\xi)}\otimes W_{(\xi)})=\mathbb{P}_{\geq r_\perp \lambda_{q+1}/2}(W_{(\xi)}\otimes W_{(\xi)}),\end{equation*}
where $\mathbb{P}_{\geq r}=\Id-\mathbb{P}_{<r}$ and $\mathbb{P}_{< r}$ denotes a Fourier multiplier operator, which projects a function onto its Fourier frequencies $<r$ in absolute value.  We also recall the following results from \cite[Lemma~7.5]{BV19}.

\begin{lem}\label{lemma} Fix parameters $1\leq \zeta<\kappa, p\in (1,2]$, and assume there exists $N\in\mathbb{N}$ such that $\zeta^N\leq \kappa^{N-2}$. Let
$a\in C^N(\mathbb{T}^3)$ be such that there exists $C_a>0$ with
\begin{equation*}\|D^j a\|_{C^0}\leq C_a \zeta^j,\end{equation*}
for all $0\leq j\leq N$. Assume that $f\in L^p(\mathbb{T}^3)$ such that $\int_{\mathbb{T}^3}a(x)\mathbb{P}_{\geq\kappa}f(x)dx=0$. Then we have
\begin{equation*}\||\nabla|^{-1}(a \mathbb{P}_{\geq\kappa}f)\|_{L^p}\leq C_a \frac{\|f\|_{L^p}}{\kappa},\end{equation*}
where the implicit constant depends only on $p$ and $N$.

\end{lem}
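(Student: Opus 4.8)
The plan is to follow the scheme of \cite[Lemma~7.5]{BV19}. By homogeneity in $a$ we may assume $C_{a}=1$ (the case $C_{a}=0$ forces $a\equiv0$ and is trivial). Write $g:=\mathbb{P}_{\geq\kappa}f$, so that $\widehat{g}$ is supported at frequencies $\gtrsim\kappa$ and, by hypothesis, $\int_{\mathbb{T}^{3}}ag\,dx=0$; hence $|\nabla|^{-1}(ag)$ is well defined with zero mean. The guiding heuristic is that $g$ oscillates on scale $\kappa^{-1}$ whereas $a$ is — up to rapidly decaying tails controlled by its $N$ derivatives — slowly varying on scale $\zeta^{-1}\gg\kappa^{-1}$; therefore $ag$ is for the most part concentrated at frequencies of order $\kappa$, on which $|\nabla|^{-1}$ behaves like multiplication by $\kappa^{-1}$.

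To make this precise I would split, with a smooth Littlewood--Paley cutoff,
\begin{equation*}
|\nabla|^{-1}(ag)=|\nabla|^{-1}\mathbb{P}_{\geq\kappa/2}(ag)+|\nabla|^{-1}\mathbb{P}_{<\kappa/2}\mathbb{P}_{\neq0}(ag)=:\mathrm{I}+\mathrm{II}.
\end{equation*}
For $\mathrm{I}$ one uses that the Fourier multiplier $|\xi|^{-1}$ restricted to $|\xi|\gtrsim\kappa$ satisfies the H\"ormander--Mikhlin condition with constant $\lesssim\kappa^{-1}$, so $\|\mathrm{I}\|_{L^{p}}\lesssim\kappa^{-1}\|ag\|_{L^{p}}\lesssim\kappa^{-1}\|a\|_{C^{0}}\|g\|_{L^{p}}\lesssim\kappa^{-1}\|f\|_{L^{p}}$, invoking the $L^{p}$-boundedness of $\mathbb{P}_{\geq\kappa}$. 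For $\mathrm{II}$ the key point is a frequency-gap observation: if $|k|<\kappa/2$ and $\widehat{g}(k')\neq0$ (so $|k'|\gtrsim\kappa$) then $|k-k'|\gtrsim\kappa$, whence $\mathbb{P}_{<\kappa/2}(ag)=\mathbb{P}_{<\kappa/2}\bigl((\mathbb{P}_{\gtrsim\kappa}a)\,g\bigr)$ — only the high frequencies of $a$ resonate down to low output frequency. Since $\|\mathbb{P}_{\gtrsim\kappa}a\|_{C^{0}}\lesssim\kappa^{-N}\|D^{N}a\|_{C^{0}}\leq\kappa^{-N}\zeta^{N}$ (standard, valid for $N>3$, by writing $\mathbb{P}_{\gtrsim\kappa}a$ as the convolution of $D^{N}a$ with an $L^{1}$ kernel of mass $\lesssim\kappa^{-N}$), and since $|\nabla|^{-1}\mathbb{P}_{<\kappa/2}\mathbb{P}_{\neq0}$ is bounded on $L^{p}$ with an $O(1)$ constant, one obtains $\|\mathrm{II}\|_{L^{p}}\lesssim\kappa^{-N}\zeta^{N}\|f\|_{L^{p}}\leq\kappa^{-2}\|f\|_{L^{p}}\leq\kappa^{-1}\|f\|_{L^{p}}$, where the assumption $\zeta^{N}\leq\kappa^{N-2}$ and $\kappa>1$ were used. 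Adding $\mathrm{I}$ and $\mathrm{II}$ gives the claim.

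The routine part is the harmonic-analysis bookkeeping: fixing the convention for the smooth projectors $\mathbb{P}_{<r},\mathbb{P}_{\geq r}$, checking the $L^{p}\to L^{p}$ bounds with the stated $\kappa$-dependence via Marcinkiewicz/Mikhlin multiplier theorems on $\mathbb{T}^{3}$ (with some care since $1<p<\infty$ and a sharp ball cutoff would \emph{not} be admissible here), and justifying the $C^{0}\to C^{0}$ high-frequency estimate for $a$. I expect the only genuine obstacle to be the control of the resonant term $\mathrm{II}$: unlike $\mathrm{I}$, the operator $|\nabla|^{-1}\mathbb{P}_{<\kappa/2}$ gains no power of $\kappa$, and it is precisely the smoothness hypothesis $\|D^{j}a\|_{C^{0}}\leq C_{a}\zeta^{j}$ up to $j=N$, together with $\zeta^{N}\leq\kappa^{N-2}$, that converts this loss into the gain $\kappa^{-N}\zeta^{N}$ via the frequency-gap argument above. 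An equivalent route is the $N$-fold symbolic (commutator) expansion $|\nabla|^{-1}(ag)=\sum_{|\alpha|<N}c_{\alpha}(\partial^{\alpha}a)\bigl[(\partial^{\alpha}|\cdot|^{-1})(D)g\bigr]+\mathrm{Rem}_{N}$, where the $\alpha$-th term is $\lesssim\zeta^{|\alpha|}\kappa^{-1-|\alpha|}\|f\|_{L^{p}}$ (a convergent geometric series in $\zeta/\kappa<1$) and the remainder is $\lesssim\zeta^{N}\kappa^{1-N}\|f\|_{L^{p}}\leq\kappa^{-1}\|f\|_{L^{p}}$; in that formulation the rigorous $L^{p}$-estimate of $\mathrm{Rem}_{N}$ is the crux.
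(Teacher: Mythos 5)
The paper does not prove this lemma — it is quoted verbatim from \cite[Lemma~7.5]{BV19}, so there is no internal argument to compare against. Your proof is correct and is precisely the standard frequency-splitting scheme that \cite{BV19} uses: push the output into $|k|\gtrsim\kappa$, where $|\nabla|^{-1}$ gains $\kappa^{-1}$, and into $0<|k|\lesssim\kappa$, where the frequency gap forces the high Fourier modes of $a$ to be the only active ones, and these are small by the $C^N$ hypothesis together with $\zeta^{N}\leq\kappa^{N-2}$. Two small remarks. First, as you yourself flag, every $\mathbb{P}_{\geq r}$, $\mathbb{P}_{<r}$ appearing in the argument must be understood as a smooth Littlewood--Paley cutoff rather than a sharp ball multiplier, since you invoke $L^{p}$-boundedness with $p\neq 2$ in three separate places: $\|\mathbb{P}_{\geq\kappa}f\|_{L^{p}}\lesssim\|f\|_{L^{p}}$, $\||\nabla|^{-1}\mathbb{P}_{\geq\kappa/2}\|_{L^{p}\to L^{p}}\lesssim\kappa^{-1}$, and $\||\nabla|^{-1}\mathbb{P}_{<\kappa/2}\mathbb{P}_{\neq0}\|_{L^{p}\to L^{p}}\lesssim1$; this is harmless in the paper's applications because $f=W_{(\xi)}\otimes W_{(\xi)}$ has Fourier support on the lattice $(r_\perp\lambda_{q+1})\mathbb{Z}^{3}$, so that $\mathbb{P}_{\geq\kappa/2}f=\mathbb{P}_{\neq0}f$ anyway. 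Second, the restriction ``$N>3$'' you impose for the high-frequency bound $\|\mathbb{P}_{\gtrsim\kappa}a\|_{C^{0}}\lesssim\kappa^{-N}\|D^{N}a\|_{C^{0}}$ is not needed: the dyadic-block estimate $\|\Delta_{j}a\|_{L^{\infty}}\lesssim2^{-jN}\|D^{N}a\|_{L^{\infty}}$ holds for every integer $N\geq1$ by writing the Littlewood--Paley kernel as an $N$-fold derivative of a Schwartz function, and then summing the geometric series over $2^{j}\gtrsim\kappa$ gives the bound without any dimensional constraint on $N$.
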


Using Lemma \ref{lemma} with $a=\nabla a^2_{(\xi)}$ for $C_a=M_0(t)\ell^{-9}$, $\zeta=\ell^{-5}$, $\kappa=r_{\perp}\lambda_{q+1}$ and any $N\geq 3$,
we have
\begin{equation*}
\aligned
 \|R_{\textrm{osc}}^{(x)}\|_{C_{t}L^p}&\leq \sum_{\xi\in\Lambda}\big\|\mathcal{R}\big(\nabla a^2_{(\xi)}\mathbb{P}_{\geq r_\perp\lambda_{q+1}/2}(W_{(\xi)}\otimes W_{(\xi)})\big)\big\|_{C_{t}L^p}
\\
&\lesssim M_0(t)\ell^{-9}\frac{\|W_{(\xi)}\otimes W_{(\xi)}\|_{C_{t}L^p}}{r_\perp\lambda_{q+1}}\lesssim M_0(t)\ell^{-9}\frac{\|W_{(\xi)}\|^2_{C_{t}L^{2p}}}{r_\perp\lambda_{q+1}}
\\&\lesssim M_0(t)\ell^{-9}r_\perp^{2/p-2}r_\|^{1/p-1}(r_\perp^{-1}\lambda_{q+1}^{-1})\lesssim M_0(t)\ell^{-9}\lambda_{q+1}^\alpha (r_\perp^{-1}\lambda_{q+1}^{-1})\\
&\lesssim M_{0}(t)\lambda_{q+1}^{19\alpha-1/7}\leq\frac{M_0(t)c_R\delta_{q+2}}{10}.
\endaligned
\end{equation*}
For the second term $R_{\textrm{osc}}^{(t)}$ we use Fubini's theorem to integrate along the orthogonal directions of $\phi_{(\xi)}$ and $\psi_{(\xi)}$ and use (\ref{bounds}) to deduce
\begin{equation*}
\aligned
 \|R_{\textrm{osc}}^{(t)}\|_{C_{t}L^p}&\leq \mu^{-1}\sum_{\xi\in\Lambda}\|\partial_t a_{(\xi)}^2\|_{C^{0}_{t,x}}\|\phi_{(\xi)}\|_{C_{t}L^{2p}}^2\|\psi_{(\xi)}\|_{C_{t}L^{2p}}^2
\\
&\lesssim M_0(t) \mu^{-1}\ell^{-9}r_\perp^{2/p-2}r_\|^{1/p-1}\lesssim M_0(t) \lambda_{q+1}^{19\alpha-9/7}\leq \frac{M_0(t)c_R\delta_{q+2}}{10}.
\endaligned
\end{equation*}

In view of the standard mollification estimates we \rmb{use \eqref{z} to} have that for $t\in[0, T_L]$
\begin{equation*}
\aligned
\|R_{\textrm{com}}\|_{C_{t}L^1}&\lesssim \ell(\|v_q\|_{C^1_{t,x}}+\|\rmb{z_{q}}\|_{C_tC^1})(\|v_q\|_{C_tL^2}+\|\rmb{z_{q}\|_{C_tL^2}})\\
&\qquad+\ell^{\frac{1}{2}-2\delta}(\|\rmb{z_{q}}\|_{C_t^{\frac{1}{2}-2\delta}L^\infty}+\|v\|_{C_{t,x}^1})(\|v_q\|_{C_tL^2}
+\|\rmb{z_{q}}\|_{C_tL^2})\\
&\lesssim2\ell (\lambda_q^4+\rmb{\lambda_{q+1}^{\frac\alpha4}})M_0(t)+\ell^{\frac{1}{2}-2\delta}(\rmb{\lambda_{q+1}^{\frac\alpha4}}+\lambda_q^4)M_0(t)\leq \frac{M_0(t)c_R\delta_{q+2}}{5},
\endaligned
\end{equation*}
 where $\delta<\frac{1}{12}$ and  we require  that  $\ell^{\frac{1}{2}-2\delta}(\rmb{\lambda_{q+1}^{\frac\alpha4}+\lambda_q^4})<\frac{c_R\delta_{q+2}}{10}$, i.e.
 \rmb{$$\lambda_{q+1}^{2\beta b-\alpha/2}\lambda_q^{-2/3}(\lambda_{q+1}^{\frac\alpha4}+\lambda_q^4)\ll1.$$}

  With the  choice of $\ell$ in \eqref{ell1} and since we postulated that \rmb{$\alpha>18\beta b$ and $\alpha b>16$}, this can indeed be achieved by possibly increasing  $a$ and consequently decreasing $\beta$.
Finally, we \rmb{use \eqref{z} to} obtain for $t\in[0, T_L]$
\rmb{\begin{align*}
\|R_{\textrm{com}1}\|_{C_tL^1}&\lesssim (\|v_{q+1}\|_{C_tL^2}+\|z_{q+1}\|_{C_tL^2}+\|z_\ell\|_{C_tL^2})\|z_\ell-\rmb{z_{q+1}}\|_{C_tL^2}
\\&\lesssim M_0(t)^{1/2}\|z_\ell-\rmb{z_{q+1}}\|_{C_tL^2}\lesssim M_0(t)^{1/2}(\|z_\ell-\rmb{z_{q}}\|_{C_tL^2}+\|z_{q+1}-\rmb{z_{q}}\|_{C_tL^2})
\\&\lesssim M_0(t) (\ell^{\frac{1}{2}-2\delta}+\lambda_{q+1}^{-\frac\alpha8(1-\delta)})\leq \frac{M_0(t)c_R\delta_{q+2}}{5},
\end{align*}
where we use $$\lambda_{q+1}^{2b\beta-\frac\alpha8(1-\delta)}\ll1,$$
which holds by $\alpha>18\beta b$}.
Summarizing all the above estimates we obtain
\begin{equation*}
\aligned
&\|\mathring{R}_{q+1}\|_{C_{t}L^1}\leq M_0(t)c_R\delta_{q+2},
\endaligned
\end{equation*}
which is the desired last bound in \eqref{inductionv}. The proof of Proposition \ref{main iteration} is complete.

\section{Non-uniqueness in law II: the case of  a linear multiplicative noise}
\label{s:nonuniquII}

\subsection{Probabilistically weak solutions}

In the case of an additive noise, the stopping times employed in the convex integration  can be regarded as functions of the solution $u$. This does not follow a priori from their definition \eqref{stopping time}, but can be seen from \eqref{eq:Z} and \eqref{eq1}. Accordingly, it was possible to prove non-uniqueness of martingale solutions in the sense of Definition \ref{martingale solution} directly. However, the situation is rather different in the case of a linear multiplicative noise. Indeed,  the stopping times are functions of the driving noise $B$, which is not a function of $u$, and therefore it is necessary to work with the extended canonical space $\bar\Omega$  including trajectories of both the solution $u$ and the noise $B$. To this end, we define the notion of probabilistically weak solution. In the first step, we then establish  joint non-uniqueness in law: we show that the joint law of  $(u,B)$ is not unique. In the second step, we extend the finite-dimensional result of Cherny \cite{C03} to a general SPDE setting (see Appendix \ref{s:C}), proving that uniqueness in law implies joint uniqueness in law. This permits us to conclude the desired non-uniqueness of martingale solutions stated in Theorem~\ref{Main results2 li}.

To avoid confusion, we point out that the two notions of solution, i.e. martingale solution and probabilistically weak solution, are equivalent. The only reason why the proof of non-uniqueness in law from Section~\ref{s:nonuniqueI} does not apply to the case of linear multiplicative noise is the different definition of stopping times. Conversely, the proof of the present section applies to the additive noise case as well. However,  it is more complicated than the direct proof in Section~\ref{s:nonuniqueI} which does not rely on the generalization Cherny's result, Theorem~\ref{cherny}.

\begin{defn}\label{weak solution}
Let $s\geq 0$ and $x_{0}\in L^{2}_{\sigma}$, $y_0\in U_1$. A probability measure $P\in \mathscr{P}(\bar{\Omega})$ is  a probabilistically weak solution to the Navier--Stokes system (\ref{1})  with the initial value $(x_0,y_0) $ at time $s$ provided

\no(M1) $P(x(t)=x_0, y(t)=y_0,  0\leq t\leq s)=1$  and for any $n\in\mathbb{N}$
$$P\left\{(x,y)\in \bar{\Omega}: \int_0^n\|G(x(r))\|_{L_2(U;L_2^\sigma)}^2dr<+\infty\right\}=1.$$

\no(M2) Under $P$,  $y$ is a  cylindrical $(\bar{\mathcal{B}}_{t})_{t\geq s}$-Wiener process on  $U$ starting from $y_0$ at time $s$ and for every $e_i\in C^\infty(\mathbb{T}^3)\cap L^2_\sigma$, and for $t\geq s$
$$\langle x(t)-x(s),e_i\rangle+\int^t_s\langle \div(x(r)\otimes x(r))-\Delta x(r),e_i\rangle dr=\int_s^t \langle e_i, G(x(r))  dy_r\rangle.$$

\no (M3) For any $q\in \mathbb{N}$ there exists a positive real function $t\mapsto C_{t,q}$ such that  for all $t\geq s$
$$E^P\left(\sup_{r\in [0,t]}\|x(r)\|_{L^2}^{2q}+\int_{s}^t\|x(r)\|^2_{H^{\gamma}}dr\right)\leq C_{t,q}(\|x_0\|_{L^2}^{2q}+1).$$
\end{defn}

For the application to the Navier--Stokes system, we will again require a definition of probabilistically weak solutions defined up to a stopping time $\tau$. To this end, we set
$$
\bar{\Omega}_{\tau}:=\{\omega(\cdot\wedge\tau(\omega));\omega\in \bar{\Omega}\}.
$$

\begin{defn}\label{weak solution 1}
Let $s\geq 0$ and $x_{0}\in L^{2}_{\sigma}$, $y_0\in U_1 $. Let $\tau\geq s$ be a $(\bar{\mathcal{B}}_{t})_{t\geq s}$-stopping time. A probability measure $P\in \mathscr{P}(\bar{\Omega}_\tau)$ is  a probabilistically weak solution to the Navier--Stokes system (\ref{1}) on $[s,\tau]$ with the initial value $(x_0,y_0) $ at time $s$ provided

\no(M1) $P(x(t)=x_0, y(t)=y_0,  0\leq t\leq s)=1$
  and for any $n\in\mathbb{N}$
$$P\left\{(x,y)\in \bar\Omega: \int_0^{n\wedge \tau}\|G(x(r))\|_{L_2(U;L_2^\sigma)}^2dr<+\infty\right\}=1.$$

\no(M2) Under $P$, $\langle y(\cdot\wedge \tau),l\rangle_U$ is a {continuous square integrable $(\bar{\mathcal{B}}_{t})_{t\geq s}$}-martingale  starting from $y_0$ at time $s$ with quadratic variation process given by $(t\wedge \tau-s)\|l\|_U^2$ for $l\in U$.  For every $e_i\in C^\infty(\mathbb{T}^3)\cap L^2_\sigma$, and for $t\geq s$
$$\langle x(t\wedge \tau)-x(s),e_i\rangle+\int^{t\wedge \tau}_s\langle \div(x(r)\otimes x(r))-\Delta x(r),e_i\rangle dr=\int_s^{t\wedge\tau} \langle e_i, G(x(r))  dy_r\rangle.$$

\no (M3) For any $q\in \mathbb{N}$ there exists a positive real function $t\mapsto C_{t,q}$ such that  for all $t\geq s$
$$E^P\left(\sup_{r\in [0,t\wedge\tau]}\|x(r)\|_{L^2}^{2q}+\int_{s}^{t\wedge\tau}\|x(r)\|^2_{H^{\gamma}}dr\right)\leq C_{t,q}(\|x_0\|_{L^2}^{2q}+1).$$
\end{defn}

Similarly to Theorem \ref{convergence} we obtain the following existence and stability result. The proof is presented in Appendix~\ref{ap:A}.

\begin{thm}\label{convergence 1}
  For every $(s,x_0,y_0)\in [0,\infty)\times L_{\sigma}^2\times U_1 $, there exists  $P\in\mathscr{P}(\bar{\Omega})$ which is a probabilistically weak solution to the Navier--Stokes system \eqref{1} starting at time $s$ from the initial condition $(x_0,y_0)$  in the sense of Definition \ref{weak solution}. The set of all such probabilistically weak solutions  with the same implicit constant $C_{t,q}$ in  Definition \ref{weak solution} is denoted by $\mathscr{W}(s,x_0,y_0, C_{t,q})$.

   Let $(s_n,x_n,y_n)\rightarrow (s,x_0,y_0)$ in $[0,\infty)\times L_{\sigma}^2\times U_1 $ as $n\rightarrow\infty$  and let $P_n\in \mathscr{W}(s_n,x_n,y_n, C_{t,q})$. Then there exists a subsequence $n_k$ such that the sequence $(P_{n_k})_{k\in\mathbb{N}}$  converges weakly to some $P\in\mathscr{W}(s,x_0,y_0, C_{t,q})$.
\end{thm}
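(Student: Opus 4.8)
The statement is a straightforward adaptation of the existence-and-stability result for martingale solutions (Theorem~\ref{convergence}) to the enlarged canonical space $\bar\Omega$ which keeps track of the driving noise. The plan is therefore to mirror the proof of Theorem~\ref{convergence} (deferred to Appendix~\ref{ap:A}), carefully incorporating the Wiener-process component. First I would establish \emph{existence}: fix $(s,x_0,y_0)$, run a Galerkin (or spectral) approximation of \eqref{1} on the probability space carrying a cylindrical Wiener process on $U$, and derive the a priori estimate (M3) from an application of It\^o's formula to $\|x_n(r)\|_{L^2}^{2q}$ together with the Burkholder--Davis--Gundy inequality and the linear growth bound $\|G(x)\|_{L_2(U,L^2_\sigma)}\le C(1+\|x\|_{L^2})$, plus the energy dissipation term yielding the $H^\gamma$ integral bound (with $\gamma<1$ fixed, so $H^\gamma\subset H^1$ trivially from the dissipation; one keeps $\gamma$ for uniformity with Definition~\ref{weak solution}). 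These uniform bounds give tightness of the laws of $(x_n,y_n)$ on $\bar\Omega=C([0,\infty);H^{-3}\times U_1)\cap L^2_{\mathrm{loc}}([0,\infty);L^2_\sigma\times U_1)$, using the compact embeddings $L^2_\sigma\hookrightarrow\hookrightarrow H^{-3}$ and $U\hookrightarrow\hookrightarrow U_1$ (the latter Hilbert--Schmidt), the Aubin--Lions--Simon lemma for the $x$-component, and the fact that $y_n$ is already a genuine $U$-cylindrical Wiener process so its laws are all equal. Passing to a weakly convergent subsequence and identifying the limit as a probabilistically weak solution is the standard martingale-problem argument: (M1) is preserved under weak limits since it is a closed condition; the L\'evy characterization gives that the limit of $y_n$ remains a cylindrical Wiener process on $U$ w.r.t.\ the limit filtration $(\bar{\mathcal B}_t)_{t\ge s}$; and (M2) for the $x$-equation follows by testing against $e_i\in C^\infty(\mathbb T^3)\cap L^2_\sigma$, where the only delicate point is the passage to the limit in the stochastic integral $\int_s^t\langle e_i, G(x_n(r))\,dy_r\rangle$ and in the nonlinear term $\int_s^t\langle\mathrm{div}(x_n\otimes x_n),e_i\rangle\,dr$.

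For the \emph{stability} part, suppose $(s_n,x_n,y_n)\to(s,x_0,y_0)$ and $P_n\in\mathscr W(s_n,x_n,y_n,C_{t,q})$ with the \emph{same} function $C_{t,q}$. The uniform bound (M3) (uniform precisely because $C_{t,q}$ is fixed along the sequence, and $\|x_n\|_{L^2}\to\|x_0\|_{L^2}$) again yields tightness on $\bar\Omega$ by the same compactness argument; note that one needs tightness of the $x$-component in $C([0,\infty);H^{-3})$, for which the required equicontinuity estimate comes from the equation itself, i.e.\ from (M2): $x_n(t)-x_n(s_n)$ is, modulo the bounded drift controlled by (M3), a martingale with quadratic variation controlled by $\int\|G(x_n)\|_{L_2}^2\,dr\lesssim\int(1+\|x_n\|_{L^2}^2)\,dr$, giving a uniform H\"older-type modulus in a negative Sobolev space via Kolmogorov's criterion. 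Then one extracts $P_{n_k}\rightharpoonup P$ and verifies $P\in\mathscr W(s,x_0,y_0,C_{t,q})$ exactly as in the existence proof: (M1) at the limiting initial time $s$ is obtained from $P_{n_k}(x(t)=x_{n_k},y(t)=y_{n_k},\,0\le t\le s_{n_k})=1$ together with $s_{n_k}\to s$ and the continuity of trajectories (this is the step where stability \emph{with respect to the initial time} genuinely enters, as emphasized in the remark before Theorem~\ref{convergence}); (M2) passes to the limit as before; and (M3) passes to the limit by weak lower semicontinuity of the relevant functionals, using the fixed $C_{t,q}$ on the right-hand side and $\|x_{n_k}\|_{L^2}^{2q}\to\|x_0\|_{L^2}^{2q}$.

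The main obstacle — and the only place where real care is needed beyond bookkeeping — is the identification of the limit stochastic integral in (M2). Because convergence is only in law (not in probability on a fixed space), one cannot pass to the limit in $\int_s^t\langle e_i,G(x_n(r))\,dy_r\rangle$ directly; the standard remedy is to characterize the solution purely through the martingale problem: show that under $P$ the process $M^i_{t,s}:=\langle x(t)-x(s),e_i\rangle+\int_s^t\langle\mathrm{div}(x\otimes x)-\Delta x,e_i\rangle\,dr$ is a continuous $(\bar{\mathcal B}_t)$-martingale with quadratic variation $\int_s^t\|G(x(r))^*e_i\|_U^2\,dr$ and cross-variation with $\langle y,l\rangle_U$ equal to $\int_s^t\langle G(x(r))^*e_i,l\rangle_U\,dr$, and then invoke the (infinite-dimensional) martingale representation theorem to recover the integral form in (M2). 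Each of these bracket identities is checked by passing to the limit in the corresponding identities valid under $P_{n_k}$, where the continuity/growth hypothesis on $G$ — namely $\|G(y_n)^*x-G(y)^*x\|_U\to0$ when $y_n\to y$ in $L^2$ — is exactly what licenses the limit in the quadratic-variation term; the nonlinear term $\mathrm{div}(x_n\otimes x_n)$ converges because of the strong $L^2_{\mathrm{loc}}L^2$ convergence extracted from Aubin--Lions (after a further subsequence / Skorokhod representation if one prefers the a.s.\ formulation). Since the hypotheses on $G$ listed in Section~\ref{s:not} are tailored precisely to make these passages work, and since all of this is by now routine for 3D stochastic Navier--Stokes, I would simply carry out this programme in Appendix~\ref{ap:A}, pointing to \cite{FR08,GRZ09} for the parts that are identical and highlighting only the modifications due to the enlarged state space and the moving initial time.
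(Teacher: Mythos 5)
Your proposal follows essentially the same strategy as the paper's Appendix~\ref{ap:A}: establish tightness on $\bar\Omega$ from the uniform bound (M3), pass to a Skorokhod representation, and identify the limit through the martingale problem by checking the martingale property, the quadratic variation, and the cross--variation with $y$ (the paper phrases the last step as showing that $M^{x,i}_{\cdot,s}-\int_s^\cdot\langle e_i,G(x)\,dy\rangle$ has vanishing quadratic variation rather than invoking martingale representation, but this is the same idea). Two points are worth flagging. First, for existence the paper does not re-run a Galerkin scheme on the pair $(x,y)$; it simply combines Theorem~\ref{convergence} with the martingale representation theorem, which is shorter and reuses the bounds already derived. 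Second, and more substantively, your treatment of the $y$-component tightness in the \emph{stability} step is too quick: the remark that the ``laws of $y_n$ are all equal'' is only correct when the initial time and value are fixed. Under $P_n$ the process $y$ is frozen at $y_n$ on $[0,s_n]$ and then moves like a Wiener path, so its law on $C([0,\infty);U_1)$ genuinely depends on $(s_n,y_n)$. The paper handles this by noting that the re-centred, time-shifted processes $y(\cdot+s_n)-y_n$ all have the standard Wiener law, extracting a compact set $K_1$ for that law, and then showing that the union $K_2$ of shifted and translated copies is still relatively compact \emph{because} $(s_n,y_n)\to(s,y_0)$. An analogous care is needed for the $x$-component: the $H^\gamma$ bound in (M3) only holds on $[s_n,\infty)$, so a generic Aubin--Lions/Simon argument does not apply; the paper's Lemma~\ref{tightness} exploits (M1), i.e.\ the constancy of trajectories on $[0,s_n]$, together with $(s_n,x_n)\to(s,x_0)$, to repair this. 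These are exactly the places where the moving initial time enters the compactness argument, beyond the (M1) limit passage you already identified.
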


As in the case of additive noise, the non-uniqueness in law stated in Theorem \ref{Main results2 li} means non-uniqueness of martingale solutions in the sense of Definition \ref{martingale solution}. Non-uniqueness of probabilistically weak solutions corresponds to the joint non-uniqueness in law.

\begin{defn}\label{def:uniquelaw1}
We say that joint uniqueness in law holds for  \eqref{1} if probabilistically weak  solutions starting from the same initial distribution are unique.
\end{defn}

\subsection{General construction for  probabilistically weak solutions}
\label{ss:gen1}

The overall strategy is similar to Section~\ref{ss:gen}: in the first step, we shall extend  probabilistically weak solutions defined up a {$(\bar{\mathcal{B}}_{t})_{t\geq 0}$}-stopping time $\tau$ to the whole interval $[0,\infty)$. We denote by $\bar{\mathcal{B}}_{\tau}$ the $\sigma$-field associated to $\tau$.

\begin{prp}\label{prop:1 1}
 Let $\tau$ be a bounded $(\bar{\mathcal{B}}_{t})_{t\geq0}$-stopping time. Then for every $\omega\in \bar{\Omega}$ there exists $Q_{\omega}\in\mathscr{P}(\bar{\Omega})$ such that for $\omega\in \{x(\tau)\in L^2_\sigma\}$
\begin{equation}\label{qomega 1}
Q_\omega\big(\omega'\in\bar{\Omega}; (x,y)(t,\omega')=(x,y)(t,\omega) \textrm{ for } 0\leq t\leq \tau(\omega)\big)=1,
\end{equation}
and
\begin{equation}\label{qomega2 1}
Q_\omega(A)=R_{\tau(\omega),x(\tau(\omega),\omega),y(\tau(\omega),\omega)}(A)\qquad\text{for all}\  A\in \mathcal{B}^{\tau(\omega)}.
\end{equation}
where $R_{\tau(\omega),x(\tau(\omega),\omega),y(\tau(\omega),\omega)}\in\mathscr{P}(\bar{\Omega})$ is a probabilistically weak solution to the Navier--Stokes system \eqref{1} starting at time $\tau(\omega)$ from the initial condition $(x(\tau(\omega),\omega), y(\tau(\omega),\omega))$. Furthermore, for every $B\in\bar{\mathcal{B}}$ the mapping $\omega\mapsto Q_{\omega}(B)$ is $\bar{\mathcal{B}}_{\tau}$-measurable.
\end{prp}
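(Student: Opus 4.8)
The plan is to mirror the proof of Proposition~\ref{prop:1} but on the extended canonical space $\bar\Omega$, since the two settings are structurally identical once the driving noise $y$ is included as an extra coordinate of the canonical process. First I would invoke Theorem~\ref{convergence 1} to obtain, for every $(s,x_0,y_0)\in[0,\infty)\times L^2_\sigma\times U_1$, the compact set $\mathscr{W}(s,x_0,y_0,C_{t,q})\subset\mathscr{P}(\bar\Omega)$ of probabilistically weak solutions with a fixed constant $C_{t,q}$; compactness here is exactly the weak-compactness statement of that theorem. Combining the stability with respect to the initial data from Theorem~\ref{convergence 1} with \cite[Lemma 12.1.8]{SV79}, the map $(s,x_0,y_0)\mapsto\mathscr{W}(s,x_0,y_0,C_{t,q})$ into $\mathrm{Comp}(\mathscr{P}(\bar\Omega))$ equipped with the Hausdorff metric is Borel measurable, and then \cite[Theorem 12.1.10]{SV79} yields a Borel measurable selection $(s,x_0,y_0)\mapsto R_{s,x_0,y_0}$ with $R_{s,x_0,y_0}\in\mathscr{W}(s,x_0,y_0,C_{t,q})$.

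Next I would record the measurability of the relevant maps on $\bar\Omega$. Since the canonical process $(x,y)$ is continuous in $H^{-3}\times U_1$, it is progressively measurable with respect to $(\bar{\mathcal{B}}^0_t)_{t\geq0}$ and hence also with respect to the right-continuous filtration $(\bar{\mathcal{B}}_t)_{t\geq0}$; because $\tau$ is a $(\bar{\mathcal{B}}_t)_{t\geq0}$-stopping time, \cite[Lemma 1.2.4]{SV79} gives that $\tau$ and $(x(\tau(\cdot),\cdot),y(\tau(\cdot),\cdot))$ are $\bar{\mathcal{B}}_\tau$-measurable. By Kuratowski's theorem, $L^2_\sigma\in\mathcal{B}(H^{-3})$ and $\mathcal{B}(L^2_\sigma)=\mathcal{B}(H^{-3})\cap L^2_\sigma$, so $1_{\{x(\tau)\in L^2_\sigma\}}$ is $\bar{\mathcal{B}}_\tau$-measurable and therefore $\omega\mapsto R_{\tau(\omega),\,x(\tau(\omega),\omega)1_{\{x(\tau(\omega),\omega)\in L^2_\sigma\}},\,y(\tau(\omega),\omega)}$ is $\bar{\mathcal{B}}_\tau$-measurable as a composition of $\bar{\mathcal{B}}_\tau$-measurable maps. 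For $\omega\in\{x(\tau)\in L^2_\sigma\}$ this is a probabilistically weak solution starting at the deterministic time $\tau(\omega)$ from the deterministic datum $(x(\tau(\omega),\omega),y(\tau(\omega),\omega))$, so in particular it is concentrated on trajectories that agree with $(x(\tau(\omega),\omega),y(\tau(\omega),\omega))$ at time $\tau(\omega)$.

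Then I would apply \cite[Lemma 6.1.1]{SV79} to concatenate at the deterministic time $\tau(\omega)$ the Dirac mass $\delta_\omega$ with this martingale/probabilistically weak solution, obtaining a unique $\delta_\omega\otimes_{\tau(\omega)}R_{\tau(\omega),x(\tau(\omega),\omega),y(\tau(\omega),\omega)}\in\mathscr{P}(\bar\Omega)$ satisfying \eqref{qomega 1} and \eqref{qomega2 1}; on the complement $\{x(\tau)\notin L^2_\sigma\}$ I would simply set $Q_\omega:=\delta_{(x,y)(\cdot\wedge\tau(\omega))}$. Finally, the $\bar{\mathcal{B}}_\tau$-measurability of $\omega\mapsto Q_\omega(B)$ for $B\in\bar{\mathcal{B}}$ reduces, by a monotone-class/approximation argument, to cylinder sets $A=\{(x,y)(t_1)\in\Gamma_1,\dots,(x,y)(t_n)\in\Gamma_n\}$; expanding $\delta_\omega\otimes_{\tau(\omega)}R_{\dots}(A)$ exactly as in the decomposition over the intervals $[t_k,t_{k+1})$ according to the value of $\tau(\omega)$, each summand is a product of $\bar{\mathcal{B}}_\tau$-measurable functions (indicators of $\tau(\omega)$ lying in an interval, evaluations $(x,y)(t_k,\omega)$ for $t_k\leq\tau(\omega)$, and the measurable selection), multiplied by $1_{\{x(\tau)\in L^2_\sigma\}}$, while on the exceptional set $\delta_{(x,y)(\cdot\wedge\tau(\omega))}(A)$ is $\bar{\mathcal{B}}_\tau$-measurable because $(x,y)(\cdot\wedge\tau)$ is. I do not expect a genuine obstacle here: everything is a routine transcription of the additive-noise argument, the only points requiring a little care being the verification that Theorem~\ref{convergence 1} indeed provides weak compactness of $\mathscr{W}(s,x_0,y_0,C_{t,q})$ (needed for \cite[Lemma 12.1.8]{SV79}) and the bookkeeping that the selection is taken at the ``projected'' initial datum $x(\tau(\omega),\omega)1_{\{x(\tau(\omega),\omega)\in L^2_\sigma\}}$ so that it is globally $\bar{\mathcal{B}}_\tau$-measurable while still coinciding with the genuine solution on $\{x(\tau)\in L^2_\sigma\}$.
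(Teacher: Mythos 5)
Your proposal is correct and takes essentially the same approach as the paper: the paper's own proof simply states that it is identical to the proof of Proposition~\ref{prop:1} with $\Omega_{0}$ replaced by $\bar\Omega$ and Theorem~\ref{convergence} replaced by Theorem~\ref{convergence 1}, which is precisely the transcription you carry out.
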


\begin{proof}

The proof of this result is identical to the proof of Proposition \ref{prop:1} applied to the extended path space $\bar\Omega$ instead of $\Omega_{0}$ and making use of Theorem~\ref{convergence 1} instead of Theorem \ref{convergence}.
\end{proof}

We proceed with a result which is analogous to Proposition~\ref{prop:2}.

\begin{prp}\label{prop:2 1}
 Let $x_{0}\in L^{2}_{\sigma}$.
Let $P$ be a probabilistically weak solution to the Navier--Stokes system \eqref{1} on $[0,\tau]$ starting at the time $0$ from the initial condition $(x_{0},0)$. In addition to the assumptions of Proposition \ref{prop:1 1}, suppose that there exists a Borel  set $\mathcal{N}\subset\bar{\Omega}_{\tau}$ such that $P(\mathcal{N})=0$ and for every $\omega\in \mathcal{N}^{c}$ it holds
\begin{equation}\label{Q1 1}
\aligned
&Q_\omega\big(\omega'\in\bar{\Omega}; \tau(\omega')=
\tau(\omega)\big)=1.
\endaligned
\end{equation}
Then the  probability measure $ P\otimes_{\tau}R\in \mathscr{P}(\bar{\Omega})$ defined by
\begin{equation*}
P\otimes_{\tau}R(\cdot):=\int_{\bar{\Omega}}Q_{\omega} (\cdot)\,P(d\omega)
\end{equation*}
satisfies $P\otimes_{\tau}R= P$ on $\sigma\{x(t\wedge\tau),y(t\wedge \tau),t\geq0\}$ and
 is a probabilistically weak solution to the Navier--Stokes system \eqref{1} on $[0,\infty)$ with initial condition $(x_{0},0)$.
\end{prp}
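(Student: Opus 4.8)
The plan is to mimic the proof of Proposition~\ref{prop:2} word for word, simply carrying the extra coordinate $y$ through all of the computations and replacing the martingale property (M2) for the additive noise by the two statements in (M2) of Definition~\ref{weak solution}: that $\langle y,l\rangle_U$ is a continuous square integrable $(\bar{\mathcal B}_t)$-martingale with quadratic variation $(t-s)\|l\|_U^2$ (equivalently $y$ is a cylindrical Wiener process), and that the weak formulation of \eqref{1} holds with the stochastic integral $\int_s^t\langle e_i,G(x(r))\,dy_r\rangle$. First I would observe that \eqref{Q1 1} together with \eqref{qomega 1} immediately gives $P\otimes_\tau R(A)=P(A)$ for $A\in\sigma\{x(t\wedge\tau),y(t\wedge\tau),t\ge0\}$, and that (M1) follows exactly as before: the first part from the construction, the second from (M3) and the growth assumption on $G$.

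Next I would verify (M3), which is purely a matter of splitting the time integral at $t\wedge\tau$, using (M3) for $P$ on $[0,t\wedge\tau]$, using (M3) for the measurable family $R$ starting at $\tau(\omega)$ from $x(\tau(\omega),\omega)$ on $[t\wedge\tau,t]$, and then bounding $E^P\|x(\tau)\|_{L^2}^{2q}$ by (M3) for $P$ together with boundedness of $\tau$ — verbatim as in Proposition~\ref{prop:2}. Then the core of the proof is (M2). For the Wiener process part, I would introduce, for $l\in U$, the process $N_{t,s}^l:=\langle y(t),l\rangle_U$ and show it is a continuous square integrable $(\bar{\mathcal B}_t)$-martingale with quadratic variation $(t-s)\|l\|_U^2$ under $P\otimes_\tau R$ by exactly the same decomposition used for $M^i$: one proves the identity $E^{Q_\omega}[N_{t,0}^l\mathbf 1_A]=E^{Q_\omega}[N_{(t\wedge\tau(\omega))\vee s,0}^l\mathbf 1_A]$ for cylindrical sets $A\in\bar{\mathcal B}_s$ using the martingale property of $y$ under $R_{\tau(\omega),\dots}$, then integrates over $P$, then uses \eqref{Q1 1} and the martingale property up to $\tau$ under $P$, and does the analogous computation for $(N_{t,0}^l)^2-(t-s)\|l\|_U^2$. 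For the equation part in (M2), I would set $M_{t,s}^i:=\langle x(t)-x(s),e_i\rangle+\int_s^t\langle\div(x\otimes x)-\Delta x,e_i\rangle\,dr-\int_s^t\langle e_i,G(x(r))\,dy_r\rangle$ — noting that the stochastic integral is a $\bar{\mathcal B}_t$-measurable functional of the canonical process once $y$ is a Wiener process, so this expression makes sense under any probabilistically weak solution — and then show $M^i\equiv0$ a.s. under $P\otimes_\tau R$ by the same concatenation argument: it vanishes under $R_{\tau(\omega),\dots}$ for $t\ge\tau(\omega)$ by (M2) for $R$, it equals $M_{t\wedge\tau,0}^i$ under $\delta_\omega$ for $t\le\tau(\omega)$, and the concatenation formula together with \eqref{Q1 1} and (M2) for $P$ up to $\tau$ forces it to be identically zero.

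The main obstacle is the same subtlety flagged in the remark after Proposition~\ref{prop:1}: because $\tau$ is only a $(\bar{\mathcal B}_t)_{t\ge0}$-stopping time for the \emph{right-continuous} (un-augmented) filtration, the family $(Q_\omega)$ is \emph{not} known to be a regular conditional probability of $P\otimes_\tau R$ given $\bar{\mathcal B}_\tau$, so one cannot simply quote \cite[Theorem~1.2.10]{SV79}; this is precisely why the extra hypothesis \eqref{Q1 1} is imposed and why the martingale identities must be established by the hands-on cylindrical-set computation rather than by abstract conditioning. A secondary technical point worth a line of care is the measurable dependence of the It\^o integral $\int_s^t\langle e_i,G(x(r))\,dy_r\rangle$ on the canonical trajectory and its compatibility under the concatenation at the deterministic time $\tau(\omega)$ (i.e.\ that the stochastic integral of the concatenated path splits as the sum of the two pieces), which follows from the corresponding property for It\^o integrals against the Wiener process $y$ once one knows $y$ is Brownian under both $P$ up to $\tau$ and under each $R_{\tau(\omega),\dots}$ afterwards. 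Apart from these points, every estimate and identity is the exact analogue of the additive-noise case, so I would simply write ``the proof follows the lines of Proposition~\ref{prop:2}, carrying along the additional component $y$ and replacing (M2) accordingly'' and only spell out the stochastic-integral bookkeeping in detail.
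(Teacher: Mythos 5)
Your proposal is correct and follows essentially the same route as the paper: reduce (M1) and (M3) to the additive-noise argument verbatim, establish the Wiener-process half of (M2) by the cylindrical-set martingale/quadratic-variation computation of Proposition~\ref{prop:2}, and then verify the weak formulation by splitting at $\tau$, using (Q1~1) in place of any conditional-probability argument. The one cosmetic difference is that you phrase the equation part as showing a difference process $M^i_{t,0}-\int_0^t\langle e_i,G(x)\,dy\rangle$ vanishes a.s., whereas the paper directly computes the $P\otimes_\tau R$-probability of the event that the weak formulation holds for all $t$ and all $e_i$ by writing it as $\int Q_\omega(\,\cdot\,)\,dP$, decomposing the event at $\tau(\omega)$, and invoking the elementary bound $Q_\omega(A\cap B)\ge 1-Q_\omega(A^c)-Q_\omega(B^c)$; these are the same argument. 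You also correctly single out the two genuine subtleties — the failure of $(Q_\omega)$ to be a regular conditional probability, and the splitting/measurability of the It\^o integral under concatenation at the deterministic time $\tau(\omega)$ — which the paper handles in the same implicit way.
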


\begin{proof}
The fact that $P\otimes_{\tau}R(A)=P(A)$ holds for every Borel set $A\in \sigma\{x(t\wedge\tau),y(t\wedge \tau),t\geq0\} $ as well as the property (M1) follows directly from the construction together with \eqref{Q1 1}.
In order to show (M3), we write
 \begin{equation*}
 \aligned
  &E^{P\otimes_{\tau}R}\left(\sup_{r\in[0,t]}\|x(r)\|_{L^2}^{2q}+\int_0^t\|x(r)\|_{H^\gamma}^2dr\right)\\
& \leq E^{P\otimes_{\tau}R}\left(\sup_{r\in[0,t\wedge \tau]}\|x(r)\|_{L^2}^{2q}+\int_0^{t\wedge\tau}\|x(r)\|_{H^\gamma}^2dr\right)
 +E^{P\otimes_{\tau}R}\left(\sup_{r\in[t\wedge\tau,t]}\|x(r)\|_{L^2}^{2q}+\int_{t\wedge\tau}^t\|x(r)\|_{H^\gamma}^2dr\right)\\
 & \leq C(\|x_{0}\|_{L^{2}}^{2q}+1)+C(E^P\|x(\tau)\|_{L^2}^{2q}+1)\leq C(\|x_{0}\|_{L^{2}}^{2q}+1),
 \endaligned
 \end{equation*}
where we used  (M3) for $P$ and for $R$, \eqref{Q1 1} and the boundedness of the stopping time $\tau$.

For (M2), we first recall that since $P$ is a probabilistically weak solution on $[0,\tau]$, the process
 $\langle y_{t\wedge\tau},l\rangle_U$ is a   continuous square integrable $(\bar{\mathcal{B}}_t)_{t\geq 0}$-martingale under $P$ with  the quadratic variation process
given by
$(t\wedge\tau)\|l\|_U^2.$
On the other hand, since for every $\omega\in \bar\Omega$, the probability measure $R_{\tau(\omega),x(\tau(\omega),\omega),y(\tau(\omega),\omega)}$ is a probabilistically weak solution  starting at the time $\tau(\omega)$ from the  initial condition $(x(\tau(\omega),\omega),y(\tau(\omega),\omega))$,  the process $\langle y_{t}-y_{t\wedge \tau(\omega)},l\rangle_U$ is a continuous square integrable $(\bar{\mathcal{B}}_{t})_{t\geq \tau(\omega)}$-martingale under $R_{\tau(\omega),x(\tau(\omega),\omega),y(\tau(\omega),\omega)}$ with  the quadratic variation process
given by
$(t-\tau(\omega))\|l\|_U^2$, $t\geq\tau(\omega)$. Then by the same arguments as in the proof of Proposition \ref{prop:2} we deduce
that under $P\otimes_\tau R$, the process $\langle y,l\rangle_{U}$ is a  continuous square integrable $(\bar{\mathcal{B}}_{t})_{t\geq 0}$-martingale  with  the quadratic variation process
given by
$t\|l\|_{U}^2$, $t\geq0$, which implies that $y$ is an  cylindrical $(\bar{\mathcal{B}}_{t})_{t\geq 0}$-Wiener process on $U $.

Furthermore, under $P$ it holds for every $e_i\in C^\infty(\mathbb{T}^3)\cap L^2_\sigma$ and for $t\geq 0$
$$M^{x,y,i}_{t\wedge\tau,0}:=\langle x(t\wedge \tau)-x(0),e_i\rangle+\int^{t\wedge \tau}_0\langle \div(x(r)\otimes x(r))-\Delta x(r),e_i\rangle dr=\int_0^{t\wedge\tau} \langle e_i, G(x(r))  dy_r\rangle.$$
On the other hand, for $\omega\in \bar\Omega$, under $R_{\tau(\omega),x(\tau(\omega),\omega),y(\tau(\omega),\omega)}$ it holds for $t\geq \tau(\omega)$
$$M^{x,y,i}_{t,t\wedge\tau}:=\langle x(t)-x(\tau(\omega)),e_i\rangle+\int^{t}_{\tau(\omega)}\langle \div(x(r)\otimes x(r))-\Delta x(r),e_i\rangle dr=\int_{\tau(\omega)}^{t} \langle e_i, G(x(r))  dy_r\rangle.$$
Therefore, we obtain
\begin{align*}
&P\otimes_\tau R\bigg\{M^{x,y,i}_{t,0}
=\int^{t}_0 \langle e_i, G(x(r))  dy_r\rangle, e_i\in C^\infty(\mathbb{T}^3)\cap L^2_\sigma, t\geq0\bigg\}
\\&\quad=\int_{\bar\Omega} dP(\omega)Q_\omega\bigg\{M^{x,y,i}_{t,t\wedge\tau(\omega)}
=\int^{t}_{t\wedge\tau(\omega)}\langle e_i, G(x(r))  dy_r\rangle,
\\&
\hspace{3.5cm}M^{x,y,i}_{t\wedge \tau(\omega),0}
=\int^{t\wedge\tau(\omega)}_0\langle e_i, G(x(r))  dy_r\rangle, e_i\in C^\infty(\mathbb{T}^3)\cap L^2_\sigma, t\geq0\bigg\}
.
\end{align*}
Now,  using (\ref{Q1 1}) and \eqref{qomega2 1} we obtain
\begin{align*}
&\int_{\bar\Omega} dP(\omega)Q_\omega\bigg\{M^{x,y,i}_{t,t\wedge\tau(\omega)}
=\int^{t}_{t\wedge\tau(\omega)}\langle e_i, G(x(r))  dy_r\rangle,e_i\in C^\infty(\mathbb{T}^3)\cap L^2_\sigma, t\geq0\bigg\}\\
& =\int_{\bar\Omega} dP(\omega)R_{\tau(\omega),x(\tau(\omega),\omega),y(\tau(\omega),\omega)}\bigg\{M^{x,y,i}_{t,t\wedge \tau(\omega)}
=\int^{t}_{t\wedge\tau(\omega)} \langle e_i, G(x(r))  dy_r\rangle, e_i\in C^\infty(\mathbb{T}^3)\cap L^2_\sigma,t\geq0\bigg\}\\
&=1,
\end{align*}
and using (\ref{Q1 1}) and (\ref{qomega 1}) we deduce
\begin{align*}
&\int_{\bar\Omega} dP(\omega)Q_\omega\bigg\{M^{x,y,i}_{t\wedge \tau(\omega),0}
=\int^{t\wedge\tau(\omega)}_0 \langle e_i, G(x(r))  dy_r\rangle, e_i\in C^\infty(\mathbb{T}^3)\cap L^2_\sigma, t\geq0\bigg\}
\\&\quad=P\bigg\{M^{x,y,i}_{t\wedge \tau,0}
=\int^{t\wedge\tau}_0 \langle e_i, G(x(r))  dy_r\rangle, e_i\in C^\infty(\mathbb{T}^3)\cap L^2_\sigma, t\geq0\bigg\}=1.
\end{align*}
In view of the elementary inequality for probability measures $Q_{\omega}(A\cap B)\geq 1-Q_{\omega}(A^{c})-Q_{\omega}(B^{c})$, we finally deduce that $P\otimes_{\tau}R$-a.s.
$$
M^{x,y,i}_{t,0}
=\int^{t}_0 \langle e_i, G(x(r))  dy_r\rangle\qquad\text{ for all }\   e_i\in C^\infty(\mathbb{T}^3)\cap L^2_\sigma,\  t\geq0,
$$
hence the condition (M2) follows.
\end{proof}

\subsection{Application to solutions obtained through Theorem \ref{Main results1 li}}
\label{s:1.4}

The general construction presented in Section \ref{ss:gen1} applies to a general infinite dimensional stochastic perturbation of the Navier--Stokes system. From now on, we restrict ourselves to the setting of a linear multiplicative noise. In particular, the driving Wiener process is real-valued and consequently $U=U_{1}=\mathbb{R}$.

For $n\in\mathbb{N}$, $L>1$ and  $\delta\in(0,1/12)$  we define
 \begin{equation*}
 \aligned\tau_L^n(\omega)&=\inf\left\{t\geq 0, |y(t,\omega)|>(L-\frac{1}{n})^{1/4}\right\}\bigwedge \inf\left\{t>0,\|y(t,\omega)\|_{C_t^{\frac{1}{2}-2\delta}}>(L-\frac{1}{n})^{1/2}\right\}\bigwedge L.
 \endaligned
 \end{equation*}
Then the sequence $\{\tau^{n}_{L}\}_{n\in\mathbb{N}}$ is nondecreasing and we define
\begin{equation}\label{eq:tauL 1}
\tau_L:=\lim_{n\rightarrow\infty}\tau_L^n.
\end{equation}
Without additional regularity of the process $y$, it holds true that $\tau^{n}_{L}(\omega)=0$.
By Lemma~\ref{time} we obtain that $\tau_L^{n}$ is $(\bar{\mathcal{B}}_t)_{t\geq0}$-stopping time and consequently also  $\tau_L$ is a $(\bar{\mathcal{B}}_t)_{t\geq 0}$-stopping time as an increasing limit of stopping times.

Now, we fix   a real-valued Wiener process $B$ defined on a probability space $(\Omega, \mathcal{F},\mathbf{P})$ and we denote by  $(\mathcal{F}_{t})_{t\geq0}$ its normal filtration. On this stochastic basis, we apply Theorem~\ref{Main results1 li} and denote by $u$ the corresponding solution to the Navier--Stokes system \eqref{ns1} on $[0,T_{L}]$, where the stopping time $T_{L}$ is defined in \eqref{stopping time li}. We recall that $u$ is adapted with respect to $(\mathcal{F}_{t})_{t\geq0}$ which is an essential property employed to prove the martingale property in the proof of  Proposition~\ref{prop:ext 1}. We denote by $P$ the law of $(u,B)$ and obtain the following result by similar arguments as in the proof of Proposition \ref{prop:ext}.

\begin{prp}\label{prop:ext 1}
The probability measure $P$ is a probabilistically weak solution to the Navier--Stokes system \eqref{ns1} on $[0,\tau_{L}]$ in the sense of Definition \ref{weak solution 1}, where $\tau_{L}$ was defined in \eqref{eq:tauL 1}.
\end{prp}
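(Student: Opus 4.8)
## Proof proposal for Proposition~\ref{prop:ext 1}

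\textbf{Overall strategy.} The plan is to mirror the argument already carried out for the additive noise case in Proposition~\ref{prop:ext}, replacing the auxiliary process $Z^\omega$ built from the canonical $x$ by the $y$-component of the canonical process on $\bar\Omega$, which under $P$ is precisely the driving Brownian motion $B$. The three conditions (M1), (M2), (M3) of Definition~\ref{weak solution 1} will each be verified by pushing forward the corresponding properties of the convex integration solution $u$ and the Wiener process $B$ through the map $(u,B):\Omega\to\bar\Omega$, whose law is $P$. The only genuinely new point compared with Proposition~\ref{prop:ext} is the identification of the stopping time $\tau_L$ on the path space $\bar\Omega$ with the stopping time $T_L$ appearing in the construction of $u$.

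\textbf{Step 1: matching the stopping times.} First I would show that
$$
\tau_L(u,B)=T_L\qquad \mathbf{P}\text{-a.s.},
$$
where $T_L$ is defined in \eqref{stopping time li} in terms of $B$ (and possibly $e^B$), and $\tau_L$ is defined in \eqref{eq:tauL 1} in terms of the $y$-coordinate of the canonical process on $\bar\Omega$. Since under $P$ the $y$-coordinate is almost surely equal to $B$, and since $T_L$ is — just as in \eqref{stopping time} — defined via the hitting times of the balls $\{|y|\le (L-\tfrac1n)^{1/4}\}$ and $\{\|y\|_{C^{1/2-2\delta}_t}\le (L-\tfrac1n)^{1/2}\}$ for the real-valued process $B$, the continuity of $t\mapsto|B(t)|$ and $t\mapsto\|B\|_{C^{1/2-2\delta}_t}$ (the latter holding $\mathbf{P}$-a.s. by the Kolmogorov criterion, exactly as for $z$ in Proposition~\ref{fe z}) gives the equality by the same hitting-time bookkeeping as in the proof of \eqref{eq}. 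As a consequence $\tau_L(u,B)$ is a $(\bar{\mathcal B}_t)_{t\ge0}$-stopping time (we already know this abstractly from Lemma~\ref{time}), and $u(\cdot\wedge T_L)=u(\cdot\wedge\tau_L(u,B))$ is $(\mathcal F_t)_{t\ge0}$-adapted because $u(\cdot\wedge T_L)$ is, by Theorem~\ref{Main results1 li}.

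\textbf{Step 2: verifying (M1)–(M3).} Condition (M1) follows because $u(0)$ is deterministic by Theorem~\ref{Main results1 li} and $B(0)=0$; the integrability of $\|G(x(r))\|^2_{L_2}$ up to $n\wedge\tau_L$ is immediate from (M3) and the linear growth assumption on $G$ (here $G(u)=u$, so $\|G(u)\|\lesssim 1+\|u\|_{L^2}$). Condition (M3) follows from the analogous a priori bound for $u$ on $[0,T_L]$ — which in the present excerpt is stated through \eqref{eq:vv}-type control of $\|u(t)\|_{H^\gamma}$ on $\{\mathfrak t\ge T\}$, and more precisely through the deterministic constant $C_L$ in the analogue of \eqref{eq:vvv} — after choosing $C_{t,q}$ large enough in terms of $C_L$ and using that $\tau_L\le L$ is bounded. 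For (M2): under $\mathbf P$, $B(\cdot\wedge T_L)$ is a continuous square integrable $(\mathcal F_t)_{t\ge0}$-martingale with quadratic variation $t\wedge T_L$; since $u$ solves \eqref{ns1} on $[0,T_L]$, the Itô formulation
$$
\langle u(t\wedge T_L)-u(0),e_i\rangle+\int_0^{t\wedge T_L}\langle\div(u\otimes u)-\Delta u,e_i\rangle\,dr=\int_0^{t\wedge T_L}\langle e_i,u\,dB\rangle
$$
holds $\mathbf P$-a.s. for every $e_i\in C^\infty(\mathbb T^3)\cap L^2_\sigma$. Testing against a bounded continuous $\bar{\mathcal B}_s$-measurable functional $g$ on $\bar\Omega$, using that $g(u(\cdot\wedge\tau_L(u,B)),B(\cdot\wedge\tau_L(u,B)))$ is $\mathcal F_s$-measurable (Step 1), and invoking the martingale property of $B(\cdot\wedge T_L)$ and of the stochastic integral, one transfers both the martingale identity for $\langle y(\cdot\wedge\tau_L),l\rangle$ with quadratic variation $(t\wedge\tau_L)|l|^2$ and the integral equation in (M2) from $\mathbf P$ to $P$, exactly as in the display chain at the end of the proof of Proposition~\ref{prop:ext}.

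\textbf{Main obstacle.} The only delicate point is Step 1, the identification $\tau_L(u,B)=T_L$: one must be careful that $\tau_L$ is defined purely on the path space $\bar\Omega$ without reference to any probability measure (so that $\tau_L^n(\omega)=0$ for irregular $\omega$, as noted after \eqref{eq:tauL 1}), and the equality with $T_L$ is only recovered $P$-a.s., after exploiting the a.s.\ continuity of the trajectories of $B$ in the relevant norms — mirroring the role played by $\bar\tau_L$ versus $\tau_L$ in the additive case. Everything else is a routine transcription of Proposition~\ref{prop:ext} to the enlarged path space, with the stochastic integral $\int\langle e_i,u\,dB\rangle$ playing the role of the Wiener increment $M^i_{t,0}$.
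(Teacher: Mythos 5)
Your proposal is correct and takes essentially the same route as the paper: the paper's proof simply notes that the $y$-coordinate of the canonical process under $P$ equals $B$ $\mathbf{P}$-a.s., and then says the rest is analogous to Proposition~\ref{prop:ext}. You spell out what "analogous" means — the stopping-time identification $\tau_L(u,B)=T_L$ (which, as you correctly observe, is in fact \emph{simpler} here than in the additive case since no auxiliary process $Z^\omega$ is needed because $\tau_L$ is defined directly through $y$), and the pushforward verification of (M1)--(M3) — all of which matches the intended argument.
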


\begin{proof}
The proof is similar as the proof of Proposition \ref{prop:ext} once we note that
$$y(t,(u,B))=B(t)\quad \textrm{for } t\in [0,T_L] \quad \mathbf{P}\text{-a.s.}$$
In particular, the property (M2) in Definition \ref{weak solution 1} follows since $(u,B)$ satisfies \eqref{ns1}.
\end{proof}

\begin{prp}\label{prp:ext2 1}
The probability measure $P\otimes_{\tau_{L}}R$ is a probabilistically weak solution to the Navier--Stokes system \eqref{ns1} on $[0,\infty)$ in the sense of Definition \ref{weak solution}.
\end{prp}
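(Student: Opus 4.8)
The plan is to follow the same two-step scheme as in the additive-noise case (Proposition~\ref{prp:ext2}), exploiting the fact that in the multiplicative setting the stopping time $\tau_{L}$ from \eqref{eq:tauL 1} is a function of the noise trajectory $y$ alone. Since $\tau_{L}\le L$ is a bounded $(\bar{\mathcal{B}}_{t})_{t\ge0}$-stopping time, Proposition~\ref{prop:1 1} furnishes the family $(Q_{\omega})_{\omega\in\bar\Omega}$, and Proposition~\ref{prop:2 1} then reduces the whole assertion --- including (M1)--(M3) for $P\otimes_{\tau_{L}}R$ --- to verifying the single compatibility condition \eqref{Q1 1}: for $P$-a.e.\ $\omega$ one needs $Q_{\omega}(\tau_{L}=\tau_{L}(\omega))=1$, where $P$ is the law of $(u,B)$ supplied by Theorem~\ref{Main results1 li} via Proposition~\ref{prop:ext 1}. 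Note that $P(x(\tau_{L})\in L^{2}_{\sigma})=1$ by construction, so it suffices to argue on $\{x(\tau_{L})\in L^{2}_{\sigma}\}$.

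To check \eqref{Q1 1}, I would first pin down the pathwise regularity of $y$ under the relevant measures. Under $P$ the process $y(\cdot\wedge\tau_{L})$ has the same law as $B(\cdot\wedge T_{L})$ --- this is exactly the identity used in the proof of Proposition~\ref{prop:ext 1} --- so off a $P$-null Borel set $\mathcal{N}\subset\bar\Omega_{\tau_{L}}$ its trajectories lie in $C^{1/2-2\delta}_{\mathrm{loc}}$. Under $R_{\tau_{L}(\omega),x(\tau_{L}(\omega),\omega),y(\tau_{L}(\omega),\omega)}$, the increment $t\mapsto y(t)-y(t\wedge\tau_{L}(\omega))$ is, by (M2) of Definition~\ref{weak solution}, a real Wiener process started at time $\tau_{L}(\omega)$, hence has $(1/2-2\delta)$-Hölder continuous trajectories almost surely by the Kolmogorov criterion. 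Plugging these two facts into the concatenation formulas \eqref{qomega 1}--\eqref{qomega2 1} just as in the proof of Proposition~\ref{prp:ext2}, one gets that for every $\omega\in\mathcal{N}^{c}\cap\{x(\tau_{L})\in L^{2}_{\sigma}\}$ the whole trajectory $t\mapsto y(t)$ is $Q_{\omega}$-a.s.\ in $C^{1/2-2\delta}_{\mathrm{loc}}$.

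The remaining step is to convert this into the identity of stopping times. Fixing, for each such $\omega$, a $Q_{\omega}$-null set $N_{\omega}$ on whose complement $y$ is genuinely $(1/2-2\delta)$-Hölder, the definition \eqref{eq:tauL 1} gives $\tau_{L}(\omega')=\bar\tau_{L}(\omega')$ on $N_{\omega}^{c}$, where $\bar\tau_{L}$ is the analogous hitting time with the suprema attained,
$$
\bar\tau_{L}(\omega')=\inf\{t\ge0;\ |y(t,\omega')|\ge L^{1/4}\}\wedge\inf\{t\ge0;\ \|y(\cdot,\omega')\|_{C^{1/2-2\delta}_{t}}\ge L^{1/2}\}\wedge L.
$$
As in \eqref{mea}, for $t<L$ the event $\{\bar\tau_{L}\le t\}$ is expressible through countably many rational-time evaluations of $y$, so $\{\omega'\in N_{\omega}^{c};\ \bar\tau_{L}(\omega')=\tau_{L}(\omega)\}$ lies in $N_{\omega}^{c}\cap\bar{\mathcal{B}}^{0}_{\tau_{L}(\omega)}$ and carries full $Q_{\omega}$-mass; combining this with \eqref{qomega 1}, which forces $y=y(\omega)$ on $[0,\tau_{L}(\omega)]$ $Q_{\omega}$-a.s., yields \eqref{Q1 1} precisely as in \eqref{Q}. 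Proposition~\ref{prop:2 1} then delivers the claim.

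The main obstacle is the by-now familiar measurability subtlety: $\tau_{L}$ is only a stopping time for the right-continuous filtration $(\bar{\mathcal{B}}_{t})_{t\ge0}$, so \eqref{Q1 1} cannot be read off from the bare definition of $\tau_{L}$ --- that would only place the relevant event in $\bar{\mathcal{B}}_{\tau_{L}(\omega)}$, not in $\bar{\mathcal{B}}^{0}_{\tau_{L}(\omega)}$. One genuinely needs the improved continuity of $y$ under $Q_{\omega}$ --- available only because $y$ becomes an honest Brownian motion after time $\tau_{L}(\omega)$ under $R$ --- in order to replace $\tau_{L}$ by the left-limit-attained $\bar\tau_{L}$ and thus descend into the smaller $\sigma$-field where the concatenation property \eqref{qomega 1} can be applied. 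All other points are already absorbed into Propositions~\ref{prop:1 1} and \ref{prop:2 1}.
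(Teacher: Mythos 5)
Your proposal correctly identifies what needs to be done — Propositions~\ref{prop:1 1} and \ref{prop:2 1} reduce everything to verifying \eqref{Q1 1}, which you check by showing the path of $y$ under $Q_\omega$ has enough Hölder regularity to replace $\tau_L$ by its closed-inequality analogue $\bar\tau_L$, whose level sets at rational times make the event $\{\bar\tau_L=\tau_L(\omega)\}$ lie in $\bar{\mathcal{B}}^0_{\tau_L(\omega)}$ so that \eqref{qomega 1} applies. This is precisely the route the paper takes: the proof there consists of a one-line deferral to the arguments of Proposition~\ref{prp:ext2}, and your write-up is a faithful and correct fleshing-out of that deferral. You also rightly observe that the multiplicative case is simpler than the additive one because $y$ is a coordinate of the path space and needs no reconstruction through a functional $Z^\omega$ of $x$, so the decomposition of $\mathbb{Z}^{\omega'}_{\tau_L(\omega)}$ that appears in the additive proof has no counterpart here.

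One precision issue: you repeatedly claim that trajectories of $y$ (under $P$ restricted to $[0,\tau_L]$, and under $R$ on the continuation) are $(1/2-2\delta)$-H\"older, and use this to conclude $\tau_L=\bar\tau_L$. This is one notch too weak. The stopping time $\tau_L$ is built from $\|y\|_{C^{1/2-2\delta}_t}$, and to identify $\tau_L$ with the closed-inequality hitting time $\bar\tau_L$ you need the map $t\mapsto\|y\|_{C^{1/2-2\delta}_t}$ to be continuous; mere $(1/2-2\delta)$-H\"older regularity of $y$ only gives lower semicontinuity, not continuity, of that H\"older norm. One needs strict extra regularity, e.g.\ $y\in C^{1/2-\delta}_{\mathrm{loc}}$, which forces $\frac{|y(s_1)-y(s_2)|}{|s_1-s_2|^{1/2-2\delta}}\to0$ as $|s_1-s_2|\to0$ and so rules out jumps of the $(1/2-2\delta)$-H\"older norm. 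This is exactly how the paper proceeds in the additive case (observe the interplay of $C^{1/2-\delta}_{\mathrm{loc}}$ versus $C^{1/2-2\delta}_t$ in \eqref{continuity1} and the definition of $\bar\tau_L$ there). Since under $P$ the path $y(\cdot\wedge\tau_L)$ has Brownian law stopped at $T_L$, and under $R$ the continuation is again a Brownian increment, both parts are $(1/2-\delta)$-H\"older a.s., so the required regularity is available — the fix is to insert the sharper exponent at the three places you currently write $1/2-2\delta$.
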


\begin{proof}
In light of Proposition \ref{prop:1 1} and Proposition \ref{prop:2 1}, it only remains to establish \eqref{Q1 1}, which follows by similar arguments as in the proof of Proposition \ref{prp:ext2}.
\end{proof}

Finally, we have all in hand to conclude the proof of our main result Theorem \ref{Main results2 li}.

\begin{proof}[Proof of Theorem  \ref{Main results2 li}]
Let $T>0$ be arbitrary. Let $\kappa=1/2$ and $K=2$ and apply Theorem \ref{Main results1 li} and  Proposition \ref{prp:ext2 1}. As in the proof of Theorem~\ref{Main results2} it follows that the constructed probability measure $P\otimes_{\tau_{L}}R$ satisfies
$$
P\otimes_{\tau_{L}}R(\tau_{L}\geq T)=\mathbf{P}(T_{L}\geq T)>1/2,
$$
and consequently
$$
E^{P\otimes_{\tau_{L}}R}\big[\|x(T)\|_{L^{2}}^{2}\big]>2e^{T}\|x_{0}\|_{L^{2}}^{2}.
$$
The initial value   $x_{0}=v(0)\in L^2_\sigma$ is given through the construction in Theorem~\ref{Main results1 li}.
However, based on a Galerkin approximation one can construct a  probabilistically weak solution $\tilde P$ to (\ref{ns1}) starting from the same initial value as $P\otimes_{\tau_{L}}R$. In addition, this solution satisfies the usual energy inequality, that is,
$$
E^{\tilde P}\big[\|x(T)\|_{L^{2}}^{2}\big]\leq e^{T}\|x_{0}\|_{L^{2}}^{2}.
$$

Therefore, the two probabilistically weak solutions are distinct and as a consequence joint non-uniqueness in law, i.e. non-uniqueness of probabilistically weak solutions, holds for the Navier--Stokes system \eqref{ns1}. In view of Theorem \ref{cherny} we finally deduce that the desired non-uniqueness in law, i.e., non-uniqueness of martingale solutions, holds as well.
\end{proof}

\section{Proof of Theorem \ref{Main results1 li}}
\label{s:1.3}

As the first step,  we transform (\ref{ns1}) to a random PDE. To this end, we consider the stochastic process
$$\theta(t)=e^{B_t},\  t\geq0,$$
and define $v:=\theta^{-1} u$. Then, by It\^o's formula we obtain
\begin{equation}
\label{ns2}
\aligned
 \partial_t v+\frac{1}{2}v-\Delta v+\theta\div(v\otimes v)+\theta^{-1}\nabla P&=0,
\\\div v&=0.
\endaligned
\end{equation}

Our aim is to develop a similar induction argument as in Section \ref{s:1.1} and apply it to \eqref{ns2}.  At each step $q\in\mathbb{N}_{0}$, a pair $(v_q, \mathring{R}_q)$ is constructed solving the following system
\begin{equation}
\label{induction li}
\aligned
 \partial_tv_q+\frac{1}{2}v_q-\Delta v_q +\theta\div(v_q\otimes v_q)+\nabla p_q&=\div \mathring{R}_q,
\\
\div v_q&=0.
\endaligned
\end{equation}
We choose suitable parameters $a\in\mathbb{N}$ and $b\in\mathbb{N}$ sufficiently large and a parameter $\beta\in (0,1)$ sufficiently small and define
$$\lambda_q=a^{(b^q)}, \quad \delta_q=\lambda_q^{-2\beta}.$$
The necessary stopping times $T_{L}$ are now defined in terms of the Wiener Process $B$ as
\begin{equation}\label{stopping time li}
T_L:=\inf\{t>0, |B(t)|\geq L^{1/4}\}\wedge \inf\{t>0,\|B\|_{C_t^{1/2-2\delta}}\geq L^{1/2}\}\wedge L
\end{equation}
for  $L>1$ and $\delta\in (0,1/12)$.
As a consequence, it holds for  $t\in[0, T_L]$
\begin{equation}
\label{B}\aligned
|B(t)|\leq L^{1/4},\quad\|B\|_{C^{1/2-2\delta}_t}&\leq L^{1/2},
\endaligned
\end{equation}
which implies
\begin{equation}\label{alpha}
\|\theta\|_{C_t^{\frac{1}{2}-2\delta}}+|\theta(t)|+|\theta^{-1}(t)|\leq 3L^{1/2}e^{L^{1/4}}=:m_L^2.
\end{equation}
We also define
\begin{equation}\label{M0}
M_0(t):=e^{4Lt+2L}.
\end{equation}
For the induction, we will assume the following bounds for $(v_q,\mathring{R}_q)$ which are   valid  for $t\in[0,T_L]$
\begin{equation}
\label{inductionv li}
\aligned
\|v_q\|_{C_tL^2}\leq& m_LM_0(t)^{1/2}(1+\sum_{1\leq r\leq q}\delta_{r}^{1/2})\leq 2m_LM_0(t)^{1/2}  ,
\\ \|v_q\|_{C^1_{t,x}}\leq& m_LM_0(t)^{1/2}\lambda_q^4,
\\\|\mathring{R}_q\|_{C_tL^1}\leq & c_RM_0(t)\delta_{q+1}.
\endaligned
\end{equation}
Here  $\sum_{1\leq r\leq 0}\delta_{r}^{1/2}:=0$,  $c_R>0$ is a sufficiently small universal constant given in (\ref{estimate a li}), \eqref{estimate wqp li}  and we used the fact that $\sum_{r\geq 1}\delta_{r}^{1/2}\leq \sum_{r\geq1}a^{-rb\beta}=\frac{a^{-\beta b}}{1-a^{-\beta b}}<1/2$ and
\begin{equation}\label{cl:3}
 a^{\beta b}>3
\end{equation}
in the first inequality. The following result sets the starting point of our iteration procedure and gives the key compatibility conditions between the parameters $L,a,\beta,b$.

\begin{lem}\label{lem:v0 li}
Let $L>1$ and define
$$
v_0(t,x):=\frac{m_L e^{2Lt+L}}{(2\pi)^{\frac{3}{2}}}\left(\sin( x_3),0,0\right).
$$
Then the associated Reynolds stress is given by
\begin{equation*}
\mathring{R}_0(t,x)=\frac{m_L(2L+3/2)e^{2Lt+L}}{(2\pi)^{3/2}}\left(
\begin{array}{ccc} 0 & 0 &-\cos(x_3)
\\ 0 & 0 &0\\ -\cos(x_3) &0 &0
 \end{array}
\right).
\end{equation*}
The initial values $v_0(0,x)$ and $\mathring{R}_0(0,x)$ are deterministic.
Moreover,   all the estimates in \eqref{inductionv li} on the level $q=0$ for $(v_{0},\mathring{R}_{0})$ as well as \eqref{cl:3} are valid  provided
\begin{equation}\label{c:li1}
18\cdot (2\pi)^{3/2}\sqrt{3}<2\cdot(2\pi)^{3/2}\sqrt{3} a^{2\beta b}\leq \frac{c_{R}e^{L}}{L^{1/4}(2L+\frac32)e^{\frac12 L^{1/4}}},\qquad 4L\leq a^{4}.
\end{equation}
In particular, the minimal lower bound for $L$ is given through
\begin{equation}\label{c:li2}
18\cdot (2\pi)^{3/2}\sqrt{3}<\frac{c_{R}e^{L}}{L^{1/4}(2L+\frac32)e^{\frac12 L^{1/4}}}.
\end{equation}
\end{lem}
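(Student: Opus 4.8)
The plan is to verify Lemma~\ref{lem:v0 li} by a completely explicit computation: first check that $(v_0,\mathring{R}_0)$ solves the system \eqref{induction li} at level $q=0$ with vanishing pressure, then confirm the three inductive bounds \eqref{inductionv li} at level $q=0$, reading off the compatibility conditions \eqref{c:li1} and \eqref{c:li2} as one proceeds. There is no conceptual obstacle here; the only delicate point is bookkeeping the universal constants so that the sufficient conditions take exactly the stated form.

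First I would observe that $v_0$ has a single nonzero component $v_0^{(1)}$, which depends only on $x_3$; hence $\div v_0=\partial_1 v_0^{(1)}=0$, and $v_0\otimes v_0$ has the single nonzero entry $(v_0^{(1)})^2$, again a function of $x_3$ only, so that $\div(v_0\otimes v_0)=0$. Consequently the nonlinear term---together with the multiplicative factor $\theta$---drops out completely, which is precisely why $\mathring{R}_{0}$ ends up being deterministic (unlike in the additive case, there is no $z$-correction at the base level). Using $\partial_t v_0=2Lv_0$ and $-\Delta v_0=v_0$, equation \eqref{induction li} at $q=0$ with $p_0\equiv0$ reduces to $\div\mathring{R}_{0}=(2L+\tfrac32)v_0$. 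A one-line computation of the divergence of the matrix field displayed in the statement confirms this identity, and that field is manifestly symmetric and trace free. Since $v_0$ and $\mathring{R}_{0}$ are deterministic functions of $(t,x)$ they are automatically $(\mathcal{F}_t)_{t\ge0}$-adapted and their values at $t=0$ are deterministic.

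Next I would check the three bounds in \eqref{inductionv li}. For the $L^2$ bound, $\int_{\mathbb{T}^{3}}\sin^2(x_3)\,dx=(2\pi)^{3}/2$ gives $\|v_0(t)\|_{L^2}=\tfrac{m_L}{\sqrt2}e^{2Lt+L}=\tfrac{m_L}{\sqrt2}M_0(t)^{1/2}\le m_LM_0(t)^{1/2}$, which is exactly the first line of \eqref{inductionv li} with the empty-sum convention and requires no condition. For the $C^1_{t,x}$ bound, $|v_0|$, $|\partial_t v_0|=2L|v_0|$ and $|\nabla v_0|$ are each bounded on $[0,t]$ by $\tfrac{m_L}{(2\pi)^{3/2}}M_0(t)^{1/2}$, so $\|v_0\|_{C^1_{t,x}}\le\tfrac{(2L+2)m_L}{(2\pi)^{3/2}}M_0(t)^{1/2}\le m_LM_0(t)^{1/2}\lambda_0^4$ as soon as $2L+2\le a^4$, which is implied by the condition $4L\le a^4$ appearing in \eqref{c:li1}. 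For the Reynolds stress, $\int_{\mathbb{T}^{3}}|\cos(x_3)|\,dx=4(2\pi)^2$ yields $\|\mathring{R}_{0}(t)\|_{L^1}\lesssim m_L(2L+\tfrac32)e^{2Lt+L}$; substituting $m_L=\sqrt3\,L^{1/4}e^{L^{1/4}/2}$ and using $e^{2Lt+L}=M_0(t)^{1/2}\ge e^{L}$ for $t\ge0$, the bound $\|\mathring{R}_{0}(t)\|_{L^1}\le c_RM_0(t)\delta_1=c_RM_0(t)a^{-2\beta b}$ follows once $a^{2\beta b}$ is at most a fixed multiple of $\tfrac{c_Re^L}{L^{1/4}(2L+3/2)e^{L^{1/4}/2}}$; tracking the numerical prefactors gives the right-hand inequality in \eqref{c:li1}.

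Finally, I would note that the left-hand inequality $18(2\pi)^{3/2}\sqrt3<2(2\pi)^{3/2}\sqrt3\,a^{2\beta b}$ in \eqref{c:li1} forces $a^{2\beta b}>9$, i.e. $a^{\beta b}>3$, which is \eqref{cl:3} (used to sum the geometric series $\sum_{r\ge1}\delta_r^{1/2}<1/2$). For \eqref{c:li1} to be simultaneously satisfiable one needs the lower bound to lie strictly below the upper bound, i.e. $18(2\pi)^{3/2}\sqrt3<\tfrac{c_Re^L}{L^{1/4}(2L+3/2)e^{L^{1/4}/2}}$, which is \eqref{c:li2}; this holds for all sufficiently large $L$ because $e^{L}/\big(L^{1/4}(2L+3/2)e^{L^{1/4}/2}\big)\to\infty$ as $L\to\infty$. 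Fixing such an $L$ (and with $b$ determined later in Section~\ref{s:1.3}, exactly as in the additive case), one then picks $a$ large and $\beta$ small so that $2(2\pi)^{3/2}\sqrt3\,a^{2\beta b}$ lands in the admissible window and $4L\le a^4$ holds, which finishes the verification. As anticipated, the only ``hard'' part is arithmetic: matching universal constants so the sufficient conditions are precisely those stated in \eqref{c:li1}.
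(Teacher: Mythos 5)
Your proposal is correct and follows essentially the same route as the paper: compute $\partial_t v_0+\tfrac12 v_0-\Delta v_0=(2L+\tfrac32)v_0$ with the nonlinear term vanishing identically (so no pressure and a deterministic stress), verify $\div\mathring R_0=(2L+\tfrac32)v_0$ by inspection, and then check the three bounds in \eqref{inductionv li}, reading off $4L\le a^4$ from the $C^1$ estimate and the right-hand inequality in \eqref{c:li1} from the $L^1$ estimate after substituting $m_L=\sqrt3\,L^{1/4}e^{L^{1/4}/2}$. Your identification of the left-hand inequality in \eqref{c:li1} with the summability condition \eqref{cl:3} and of \eqref{c:li2} as the consistency requirement also matches the paper.
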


\begin{proof}
We observe that   for $t\in [0,T_L]$
$$\|v_0(t)\|_{L^2}=\frac{m_Le^{2Lt+L}}{\sqrt{2}}\leq m_LM_0(t)^{1/2},\qquad\|v_0\|_{C^1_{t,x}}\leq 4Lm_Le^{2Lt+L}\leq m_LM_0(t)^{1/2}\lambda_0^4,$$
 provided
 \begin{equation}\label{cl:2}4L\leq a^4.\end{equation}
The associated Reynolds stress can be directly computed
and admits the  bound
\begin{equation*}
\|\mathring{R}_0(t)\|_{L^1}\leq 2\cdot(2\pi)^{\frac{3}{2}}m_L(2L+3/2)e^{2Lt+L}\leq M_0(t)c_R\delta_1,
\end{equation*}
provided
\begin{equation}\label{cl:4}2\cdot(2\pi)^{\frac{3}{2}}\sqrt{3}L^{1/4}(2L+3/2)e^{\frac{1}{2}L^{1/4}}\leq e^Lc_Ra^{-2\beta b}.\end{equation}
Under the conditions \eqref{cl:2} and \eqref{cl:4}  all the estimates in (\ref{inductionv li}) are valid on the level $q=0$. Combining \eqref{cl:2}, \eqref{cl:4} with \eqref{cl:3} we arrive at \eqref{c:li1}, \eqref{c:li2} from the statement of the lemma.
\end{proof}

We note that the compatibility conditions \eqref{c:li1}, \eqref{c:li2} are similar in spirit to the corresponding conditions in the additive noise case, i.e. \eqref{c:a3}, \eqref{c:a2}. In other words, \eqref{c:li2} gives the minimal admissible lower bound for $L$. Then based on the second condition in \eqref{c:li1} we obtain a minimal admissible lower bound for $a$. Whenever we need to increase $a$ or $b$ in the course of the main iteration proposition below, we have to decrease the value of $\beta$ simultaneously so that the first condition in \eqref{c:li1} is not violated.

\begin{prp}\label{main iteration li} \emph{(Main iteration)}
Let  $L>1$ satisfying \eqref{c:li2} be given and let $(v_q,\mathring{R}_q)$ be an $(\mathcal{F}_t)_{t\geq0}$-adapted solution to \eqref{induction li} satisfying \eqref{inductionv li}. Then there exists a choice of parameters $a,b,\beta$ such that \eqref{c:li1} is fulfilled and  there exist $(\mathcal{F}_t)_{t\geq0}$-adapted processes
$(v_{q+1},\mathring{R}_{q+1})$ which solve \eqref{induction li}, obey \eqref{inductionv li} at level $q+1$ and  for $t\in[0,T_{L}]$ we have
\begin{equation}\label{iteration li}
\|v_{q+1}(t)-v_q(t)\|_{L^2}\leq m_LM_0(t)^{1/2}\delta_{q+1}^{1/2}.
\end{equation}
Furthermore, if $v_q(0), \mathring{R}_q(0)$ are deterministic, so are $v_{q+1}(0), \mathring{R}_{q+1}(0)$.
\end{prp}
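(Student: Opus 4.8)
The plan is to reproduce the convex integration iteration of Section~\ref{ss:it}, carried out for the transformed equation \eqref{ns2} instead of \eqref{nonlinear}, following the deterministic scheme of \cite[Section~7]{BV19} but tracking two extra features: $(\mathcal{F}_{t})_{t\geq0}$-adaptedness of all objects, and determinism of the data at $t=0$. Since the linear multiplicative noise has been removed by the substitution $v=\theta^{-1}u$ with $\theta=e^{B}$, the equation \eqref{induction li} is now a random PDE whose only stochastic ingredient is the scalar multiplier $\theta(t)$, together with the extra damping term $\tfrac12 v_q$. Both of these are harmless: $\theta$ is $(\mathcal{F}_t)_{t\geq0}$-adapted and, on $[0,T_L]$, controlled by $m_L$ via \eqref{alpha}, while the damping term only contributes a manageable extra piece to the Reynolds stress and to the $C^1_{t,x}$ estimate. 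The role of $M_0(t)=e^{4Lt+2L}$ from \eqref{M0} is again to absorb these linear-in-$v$ terms since $\tfrac{d}{dt}M_0(t)=4LM_0(t)$ and $4L\leq\ell^{-1}$.

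First I would fix the parameters $\alpha$, $b$ exactly as in Section~\ref{s:c} (the only change being that the powers of $m_L$ now accompany the powers of $M_0(t)^{1/2}$, so the analogue of \eqref{c:M} must absorb $m_L^2 M_0(L)^{1/2}\lambda_{q+1}^{13\alpha-1/7}$ — still achievable by enlarging $a$), then mollify $(v_q,\mathring R_q)$ in space and time with a temporal mollifier supported on $\mathbb{R}^+$ to preserve adaptedness and determinism at $0$, producing a commutator stress $R_{\mathrm{com}}$ that now also contains the mollification error of $\theta\,\div(v_q\otimes v_q)$. Next I would build the perturbation $w_{q+1}=w^{(p)}_{q+1}+w^{(c)}_{q+1}+w^{(t)}_{q+1}$ from the intermittent jets of Appendix~\ref{s:B} with the same parameters \eqref{parameter}; the amplitudes are defined as in \eqref{amplitudes} with $\rho(\omega,t,x)=4c_R\delta_{q+1}M_0(t)\chi((c_R\delta_{q+1}M_0(t))^{-1}|\mathring R_\ell(\omega,t,x)|)$, which is $(\mathcal{F}_t)_{t\geq0}$-adapted because $\mathring R_\ell$ is, and deterministic at $t=0$. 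Note that the linear multiplicative case does not require the randomization by $e^B$ at the level of the perturbation, since $\theta^{-1}u=v$ already removed the noise; the factor $\theta$ only enters the oscillation balance through the term $\theta\,\div(w^{(p)}_{q+1}\otimes w^{(p)}_{q+1}+\mathring R_\ell)$, which forces the temporal corrector to carry an extra $\theta$, i.e. $w^{(t)}_{q+1}=-\tfrac1\mu\sum_\xi \mathbb{P}\mathbb{P}_{\neq0}(\theta a_{(\xi)}^2\phi_{(\xi)}^2\psi_{(\xi)}^2\xi)$, and the time derivative hitting $\theta$ produces a new term controlled via $\|\theta\|_{C_t^{1/2-2\delta}}\leq m_L^2$ and an interpolation argument (this is where the complementary $C^{1/2-2\delta}$-regularity of $B$, hence of $\theta$, from \eqref{alpha} is used). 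The inductive estimates \eqref{inductionv li} at level $q+1$ and \eqref{iteration li} are then checked exactly as in Sections~\ref{sss:v}--\ref{sss:R}, with every occurrence of $M_0(t)^{1/2}$ replaced by $m_L M_0(t)^{1/2}$ and with the $\theta$-dependent terms absorbed using \eqref{alpha} at the cost of finitely many more powers of $m_L$ and $L$, which are swept into the choice of $a$; the new damping term $\tfrac12 v_\ell$ is grouped into $R_{\mathrm{lin}}$ and is lower order.

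The adaptedness and determinism-at-$0$ claims propagate through every step verbatim as in Section~\ref{s:1.1}: the mollifier supported on $\mathbb{R}^+$ makes $v_\ell,\mathring R_\ell,\theta_\ell$ (if needed) adapted and deterministic at $0$ together with their first time derivatives; $\rho$, $a_{(\xi)}$, each of $w^{(p)},w^{(c)},w^{(t)}$ and hence $v_{q+1},\mathring R_{q+1}$ inherit both properties. I expect the main obstacle to be the treatment of the extra $\theta$ in the oscillation error: specifically, bounding $R^{(t)}_{\mathrm{osc}}$ which now contains $\partial_t(\theta a^2_{(\xi)})$ rather than just $\partial_t a^2_{(\xi)}$, so a term of the form $\mu^{-1}\|\dot\theta\|\,\|a_{(\xi)}\|^2_{C^0}$ appears where $\dot\theta$ is not a function but only a distribution of Hölder regularity $1/2-2\delta$. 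This is handled by keeping $\theta$ inside the norm and using the product estimate in negative Sobolev/Besov spaces together with the bound $\|\theta\|_{C_t^{1/2-2\delta}}\leq m_L^2$ from \eqref{alpha}, exactly as the term $\|\psi_{(\xi)}\|_{C^1_t W^{\alpha,p}}$ was handled in \eqref{temporal est2}; since $\mu^{-1}=\lambda_{q+1}^{-9/7}$ carries a large negative power of $\lambda_{q+1}$, there is ample room to absorb the loss. Once Proposition~\ref{main iteration li} is established, Theorem~\ref{Main results1 li} follows by the same summation argument as for Theorem~\ref{Main results1}: interpolating \eqref{iteration li} gives $v=\lim_q v_q\in C([0,T_L];H^\gamma)$, adapted, solving \eqref{ns2} with $\mathring R_q\to0$ in $C([0,T_L];L^1)$; then $u=\theta v$ solves \eqref{ns1}, is adapted, and the energy bound together with the lower bound $\|v(T)\|_{L^2}>(\|v(0)\|_{L^2}+L)e^{LT}$ (obtained as in \eqref{eq:env1} using $\|v_0(T)\|_{L^2}=m_Le^{2LT+L}/\sqrt2$) yields, after multiplying by $\theta(T)=e^{B_T}\geq e^{-L^{1/4}}$ on $\{T_L\geq T\}$ and possibly enlarging $L$, the desired $\|u(T)\|_{L^2}>Ke^{T/2}\|u(0)\|_{L^2}$.
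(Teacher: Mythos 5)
Your proposal omits the single structural move that makes the linear multiplicative case work: the amplitude functions must be damped by $\theta_\ell^{-1/2}$, i.e.\ one must take $\bar a_{(\xi)}:=\theta_\ell^{-1/2}a_{(\xi)}$ in the principal part and the incompressibility corrector, while keeping the undamped $a_{(\xi)}$ in the temporal corrector. Without this, the scheme has a genuine gap. The correct oscillation balance coming from \eqref{induction li} is
\begin{equation*}
\div\bigl(\theta_\ell\, w^{(p)}_{q+1}\otimes w^{(p)}_{q+1}+\mathring R_\ell\bigr)+\partial_t w^{(t)}_{q+1},
\end{equation*}
where $\theta_\ell$ multiplies \emph{only} the quadratic term and \emph{not} $\mathring R_\ell$; your formula $\theta\,\div(w^{(p)}_{q+1}\otimes w^{(p)}_{q+1}+\mathring R_\ell)$ misstates this. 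If one then keeps the undamped amplitudes $a_{(\xi)}$ for $w^{(p)}_{q+1}$, the geometric identity \eqref{geometric equality} yields
\begin{equation*}
\theta_\ell\,w^{(p)}_{q+1}\otimes w^{(p)}_{q+1}+\mathring R_\ell
=\theta_\ell\rho\,\Id+(1-\theta_\ell)\mathring R_\ell+\theta_\ell\sum_{\xi\in\Lambda}a_{(\xi)}^2\,\mathbb{P}_{\neq0}\bigl(W_{(\xi)}\otimes W_{(\xi)}\bigr),
\end{equation*}
and the residual $(1-\theta_\ell)\mathring R_\ell$ is not a pressure gradient and is not small: on $[0,T_L]$ it is of size $\sim m_L^2\,M_0(t)\,c_R\delta_{q+1}$, which is of the \emph{same order} as the previous Reynolds stress and cannot be brought below $M_0(t)c_R\delta_{q+2}$ by enlarging $a$. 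In short, the Reynolds stress does not contract, and the induction fails.

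The paper's choice $\bar a_{(\xi)}=\theta_\ell^{-1/2}a_{(\xi)}$ fixes this precisely because $\theta_\ell\bar a_{(\xi)}^2=a_{(\xi)}^2$, so $\theta_\ell w^{(p)}_{q+1}\otimes w^{(p)}_{q+1}+\mathring R_\ell=\sum_\xi a_{(\xi)}^2\mathbb{P}_{\neq 0}(W_{(\xi)}\otimes W_{(\xi)})+\rho\,\Id$ \emph{exactly} as in the additive case, with $\mathring R_\ell$ cancelled and no $\theta$ factor left. This is also why the temporal corrector is built from $a_{(\xi)}$ rather than $\bar a_{(\xi)}$: it matches $\div\bigl(\sum_\xi a_{(\xi)}^2\mathbb{P}_{\neq0}(W_{(\xi)}\otimes W_{(\xi)})\bigr)$ and the cancellation in $\partial_t w^{(t)}_{q+1}$ is the same clean computation as \eqref{equation for temporal}. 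Consequently the obstacle you isolate --- a term $\partial_t\theta$ appearing inside the temporal corrector and requiring negative Besov estimates --- never arises in the paper's scheme; the only time derivative of $\theta$ that appears is $\partial_t\theta_\ell^{-1/2}$ inside $\partial_t\bar a_{(\xi)}$, which is a genuine function (since $\theta_\ell$ is time-mollified) and contributes a benign factor $\ell^{-1}m_L^4$, already absorbed in \eqref{estimate aN li}. Your claim that ``the linear multiplicative case does not require the randomization by $e^B$ at the level of the perturbation'' is thus misleading: the principal part \emph{is} randomized, just by $\theta_\ell^{-1/2}$ rather than $\theta$, and this is the mechanism that replaces, at the $v$-level, the $e^B$-randomization of the $u$-level perturbations noted in the paper's remark. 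The remaining bookkeeping you sketch (mollification with forward-in-time kernel, adaptedness and determinism at $t=0$, powers of $m_L$ absorbed into the choice of $a$, the $\frac12 v_\ell$ and $(\theta-\theta_\ell)$ terms in $R_{\mathrm{lin}}$ and $R_{\mathrm{com}1}$) does match the paper once the correct amplitudes are in place.
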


The proof of Proposition \ref{iteration li} is presented in Section \ref{s:it li} below. Based on this result, we are able to conclude the proof of Theorem \ref{Main results1 li}.

\begin{proof}[Proof of Theorem \ref{Main results1 li}]
Starting from $(v_0,\mathring{R}_0)$ given in Lemma \ref{lem:v0 li} and using Proposition \ref{main iteration li} we obtain a sequence $(v_q, \mathring{R}_q)$ satisfying (\ref{inductionv li}) and (\ref{iteration li}).
 By interpolation, it follows for $\gamma\in (0,\frac{\beta}{4+\beta})$, $t\in [0,T_L]$
 \begin{equation*}
 \aligned
 \sum_{q\geq0}\|v_{q+1}(t)-v_q(t)\|_{H^{\gamma}}&\lesssim \sum_{q\geq0}\|v_{q+1}(t)-v_q(t)\|_{L^2}^{1-\gamma}\|v_{q+1}(t)-v_q(t)\|_{H^1}^{\gamma}
 \lesssim m_LM_0(t)^{1/2}.
 \endaligned
 \end{equation*}
Therefore, the sequence $v_{q}$ converges to a limit $v\in C([0,T_L],H^{\gamma})$ which is $(\mathcal{F}_{t})_{t\geq0}$-adapted.  Furthermore, we know that $v$ is an analytically weak solution to  (\ref{ns2}) with a deterministic initial value, since due to \eqref{inductionv li} it holds $\lim_{q\rightarrow\infty}\mathring{R}_q=0$ in $C([0,T_L];L^1)$.
According to (\ref{iteration li}) and \eqref{cl:3}, it follows  for $t\in[0,T_L]$
\begin{align*}
\|v(t)-v_0(t)\|_{L^2}&\leq \sum_{q\geq0}\|v_{q+1}(t)-v_q(t)\|_{L^2}\leq m_LM_0(t)^{1/2}\sum_{q\geq0}\delta_{q+1}^{1/2}
\leq \frac{1}{2}m_LM_0(t)^{1/2}.
\end{align*}
Now, we show that for a given $T>0$ we can choose $L=L(T)>1$ large enough so that $v$ fails the corresponding energy inequality at time $T$, namely, it holds
\begin{equation}\label{eq:env li}
\|v(T)\|_{L^{2}}>e^{2L^{1/2}}\|v(0)\|_{L^{2}}
\end{equation}
on the set $\{T_{L}\geq T\}$.
To this end, we observe that
$$e^{2L^{1/2}}\|v(0)\|_{L^2}\leq e^{2L^{1/2}}\left(\|v_0(0)\|_{L^2}+\|v(0)-v_0(0)\|_{L^2}\right)\leq e^{2L^{1/2}}\frac{3}{2}m_LM_0(0)^{1/2}.$$
On the other hand, we obtain on  $\{T_L\geq T\}$
\begin{equation*}
\aligned
&\|v(T)\|_{L^2}\geq (\|v_0(T)\|_{L^2}-\|v(T)-v_0(T)\|_{L^2})
\geq\left(\frac{1}{\sqrt{2}}-\frac{1}{2}\right)m_LM_0(T)^{1/2}>e^{2L^{1/2}}\frac{3}{2}m_LM_0(0)^{1/2}
\endaligned
\end{equation*}
provided
\begin{equation}\label{cl:1}
\left(\frac{1}{\sqrt{2}}-\frac{1}{2}\right)e^{2LT}>\frac{3}{2}e^{2L^{1/2}}.
\end{equation}
Hence \eqref{eq:env li} follows for a suitable choice of $L$ satisfying additionally \eqref{cl:1}.
Furthermore, for a given $T>0$ we could possibly increase $L$ so that
 $\mathbf{P}(T_L\geq T)>\kappa$.

To conclude the proof, we define  $u:=\theta v$ and observe that $u(0)=v(0)$. In addition, $u$ is $(\mathcal{F}_{t})_{t\geq0}$-adapted and solves the original Navier--Stokes system \eqref{ns}. Then in view of \eqref{eq:env li} and the fact that due to \eqref{B} it holds true $|\theta_{T}|\geq e^{-L^{1/4}}$ on the set  $\{T_{L}\geq T\}$, we obtain
$$
\|u(T)\|_{L^{2}}=|\theta(T)|\,\|v(T)\|_{L^{2}}> e^{L^{1/2}}\|u(0)\|_{L^{2}}
$$
on $\{T_{L}\geq T\}$.
Choosing $L$ sufficiently large in dependence on $K$ and $T$ from the statement of the theorem, the desired lower bound follows. Finally, setting $\mathfrak{t}:=T_{L}$ completes the proof.
\end{proof}

\subsection{The main iteration -- proof of Proposition \ref{main iteration li}}
\label{s:it li}

The overall strategy of the proof is similar to Section~\ref{ss:it} but modifications are required since the approximate system on the level $q$ has a different form. As in Section~\ref{ss:it}, we have to make sure that the construction is $(\mathcal{F}_{t})_{t\geq0}$-adapted at each step.

\subsubsection{Choice of parameters}
We choose a small parameter $\ell\in (0,1)$  as in Section \ref{s:c}:
for a sufficiently small  $\alpha\in (0,1)$ to be chosen below, we let $\ell\in (0,1)$ be  a small parameter defined in (\ref{ell1}) and satisfying (\ref{ell}). We note that the compatibility conditions \eqref{c:li1}, (\ref{c:li2}) as well and the last condition in (\ref{ell}) can all be fulfilled provided we make $a$ large enough and $\beta$ small enough at the same time. In addition,
we will require $\alpha b>16$ and $\alpha>8\beta b$.

In order to verify the inductive estimates (\ref{inductionv li}) we need to absorb various expressions including $m_L^{4}M_0(t)^{1/2}$ for all $t\in[0,T_L]$. To this end, we need to change  the condition (\ref{c:M}) in Section \ref{s:c} to
\begin{equation}\label{c:M li}
Cm_L^4\ell^{1/3}\lambda_q^4\leq\frac{c_R\delta_{q+2}}{5}, \quad m_L^4M_0(L)^{1/2}\lambda_{q+1}^{13\alpha-\frac{1}{7}}\leq \frac{c_R\delta_{q+2}}{10},\quad m_L\leq \ell^{-1}.
\end{equation}
 In other words, we need
 $$
 9Le^{2L^{1/4}}a^{b(-\frac{\alpha}{2}+\frac{10}{3b}+2b\beta)}\ll1,
 $$
$$
9Le^{2L^{1/4}}e^{2L^2+L}a^{b(13\alpha-\frac{1}{7}+2b\beta)}\ll1,
$$
$$\sqrt{3}L^{1/4}e^{1/2L^{1/4}}\leq a^{2+\frac{3\alpha}{2}\cdot7L^2}.$$
Choosing  $b=(7L^{2})\vee (17\cdot 14^{2})$,
in view of $\alpha>8\beta b$, \eqref{c:M li}  can be achieved by choosing $a$ large enough and $\alpha=14^{-2}$. This choice also satisfies $\alpha b>16$ required above and the condition $\alpha>8\beta b$ can be achieved by choosing $\beta$ small. It is also compatible with all the other requirements needed below.

\subsubsection{Mollification}
As the next step, we define
space-time mollifications of $v_q$ and $\mathring{R}_q$ and a time mollification of $\theta$ as follows
$$v_\ell=(v_q*_x\phi_\ell)*_t\varphi_\ell,\qquad\mathring{R}_\ell=(\mathring{R}_q*_x\phi_\ell)*_t\varphi_\ell,\qquad\theta_\ell=e^{B}*_t\varphi_\ell.$$
By choosing time mollifiers that are compactly supported in $\mathbb{R}^{+}$, the mollification preserves   $(\mathcal{F}_t)_{t\geq0}$-adaptedness. If the initial data $v_q(0), \mathring{R}_q(0)$ are deterministic, so are $v_\ell(0)$ and $\mathring{R}_\ell(0), \partial_t\mathring{R}_\ell(0)$.
Then using (\ref{induction li}) we obtain that $(v_\ell,\mathring{R}_\ell)$ satisfies
\begin{equation*}
\aligned
 \partial_tv_\ell+\frac{1}{2}v_\ell -\Delta v_\ell+\theta_\ell\div(v_\ell\otimes v_\ell)+\nabla p_\ell&=\div (\mathring{R}_\ell+R_{\textrm{com}})
\\
\div v_\ell&=0,
\endaligned
\end{equation*}
where
\begin{equation*}
R_{\textrm{com}}=\theta_\ell(v_\ell\mathring{\otimes}v_\ell)-(\theta v_q\mathring{\otimes}v_q)*_x\phi_\ell*_t\varphi_\ell,
\end{equation*}
\begin{equation*}
p_\ell=(p_q*_x\phi_\ell)*_t\varphi_\ell-\frac{1}{3}(\theta_\ell|v_\ell|^2-(\theta|v_q|^2*_x\phi_\ell)*_t\varphi_\ell).
\end{equation*}

With this setting, the counterparts of the estimates \eqref{error}, \eqref{eq:vl} and \eqref{eq:vl2} are obtained the same way only replacing $M_0(t)^{1/2}$  by $m_LM_0(t)^{1/2}$. In particular,
\begin{equation}\label{error li}
\|v_q-v_\ell\|_{C_tL^2}
\leq \frac{1}{4} m_LM_0(t)^{1/2}\delta_{q+1}^{1/2},
\end{equation}
\begin{equation}\label{eq:vl li}
\|v_\ell\|_{C_tL^2}
\leq m_L M_0(t)^{1/2} (1+\sum_{1\leq r\leq q}\delta_r^{1/2})\leq 2m_LM_0(t)^{1/2},
\end{equation}
\begin{equation}\label{eq:vl2 li}
\|v_\ell\|_{C^N_{t,x}}
\leq m_L M_0(t)^{1/2} \ell^{-N}\lambda_{q+1}^{-\alpha}.
\end{equation}

\subsubsection{Construction of $v_{q+1}$}

We recall that the intermittent jets $W_{(\xi)}$
and the corresponding estimates are summarized in Appendix \ref{s:B}. The parameters $\lambda,r_{\|},r_{\perp},\mu$ are chosen as in (\ref{parameter}) and we define  $\chi$ and $\rho$ be the same functions as in Section \ref{s:con} with $M_0(t)$ given by  (\ref{M0}).
Now, we define the modified amplitude functions
\begin{equation}
\label{amplitudes li}
\aligned
\bar{a}_{(\xi)}(\omega,t,x):=\bar{a}_{\xi,q+1}(\omega,t,x)&:=\theta_\ell^{-1/2}\rho(\omega,t,x)^{1/2}\gamma_\xi\left(\Id
-\frac{\mathring{R}_\ell(\omega,t,x)}{\rho(\omega,t,x)}\right)(2\pi)^{-\frac{3}{4}}
\\&\,=\theta_\ell^{-1/2}a_{\xi,q+1}(\omega,t,x),\endaligned\end{equation}
where $\gamma_\xi$ is introduced in  Lemma \ref{geometric} and $a_{\xi,q+1}$ is as in Section \ref{s:con} with $M_0(t)$ given in (\ref{M0}). Since $\rho, \theta_\ell$ and $\mathring{R}_\ell$ are $(\mathcal{F}_t)_{t\geq0}$-adapted, we know $\bar{a}_{(\xi)}$ is $(\mathcal{F}_t)_{t\geq0}$-adapted.
Note that since $\theta_\ell(0)$ and $\partial_t\theta_\ell(0)$ are deterministic, if $\mathring{R}_\ell(0), \partial_t\mathring{R}_\ell(0)$ are deterministic, so are $\bar{a}_\xi(0)$ and $\partial_t\bar{a}_\xi(0)$. By  (\ref{geometric equality}) we have
\begin{equation}
\label{cancellation li}
(2\pi)^{3/2}\sum_{\xi\in\Lambda}\bar{a}_{(\xi)}^2\strokedint_{\mathbb{T}^3}W_{(\xi)}\otimes W_{(\xi)}dx= \theta_\ell^{-1}(\rho\Id-\mathring{R}_\ell),
\end{equation}
and for $t\in[0, T_L]$
\begin{equation}
\label{estimate a li}
\aligned
\|\bar{a}_{(\xi)}\|_{C_tL^2}&\leq  \|\theta_\ell^{-1/2}\|_{C_t}\|\rho\|_{C_tL^1}^{1/2}\|\gamma_\xi\|_{C^0(B_{1/2}(\Id))}\\
&\leq \frac{4c_R^{1/2}(8\pi^3+1)^{1/2}M}{8|\Lambda|(8\pi^3+1)^{1/2}}m_LM_0(t)^{1/2}\delta_{q+1}^{1/2}\leq \frac{c_R^{1/4}m_LM_0(t)^{1/2}\delta_{q+1}^{1/2}}{2|\Lambda|},
\endaligned
\end{equation}
where we choose $c_R$ as a small universal constant to absorb $M$ and $M$ denotes the universal constant from  Lemma \ref{geometric} and we apply the bound $|\theta_\ell^{-1}|\leq m_L^2$.
Furthermore, since $\rho$ is bounded from below by $4c_R \delta_{q+1}M_0(t)$ we obtain for $t\in[0, T_L]$
\begin{equation}
\label{estimate aN li}
\|\bar{a}_{(\xi)}\|_{C^N_{t,x}}\lesssim \ell^{-2-5N}c_R^{1/4}m_LM_0(t)^{1/2}\delta_{q+1}^{1/2},
\end{equation}
for $N\geq0$, where we used (\ref{alpha}) and $4L\leq \ell^{-1}$ and the derivative of $\theta_\ell^{-1/2}$ gives extra $\ell^{-1}m_L^4$ and $m_L\leq \ell^{-1}$

As the next step, we define $w_{q+1}$ similarly as in Section \ref{s:con}. In particular, first
we define the principal part $w_{q+1}^{(p)}$ of $w_{q+1}$ as (\ref{principle}) with $a_{(\xi)}$ replaced by $\bar{a}_{(\xi)}$ given in (\ref{amplitudes li}).
Then it follows from  (\ref{cancellation li})
\begin{equation*}\theta_\ell w_{q+1}^{(p)}\otimes w_{q+1}^{(p)}+\mathring{R}_\ell=\theta_\ell\sum_{\xi\in \Lambda}\bar{a}_{(\xi)}^2 \mathbb{P}_{\neq0}(W_{(\xi)}\otimes W_{(\xi)})+\rho \Id.\end{equation*}
The incompressible corrector $w_{q+1}^{(c)}$ is therefore defined as in (\ref{incompressiblity}) again  with $a_{(\xi)}$ replaced by $\bar{a}_{(\xi)}$.
The temporal corrector  $w_{q+1}^{(t)}$ is now defined as in (\ref{temporal}) with  $a_{(\xi)}$ given in (\ref{amplitudes}) for $M_0(t)$ from (\ref{M0}). Note that  for the temporal corrector we use the original amplitude functions $a_{(\xi)}$ from Section~\ref{s:con} (only using  a different function  $M_{0}(t)$), since we need the extra $\theta_\ell$ to obtain a suitable cancelation.
The total velocity increment $w_{q+1}$ and the new velocity $v_{q+1}$ are then given by
\begin{equation*}
w_{q+1}:=w_{q+1}^{(p)}+w_{q+1}^{(c)}+w_{q+1}^{(t)},\qquad v_{q+1}:=v_\ell+w_{q+1}.
\end{equation*}
Both are $(\mathcal{F}_t)_{t\geq0}$-adapted, divergence free and $w_{q+1}$ is mean zero. If $v_q(0)$,  $\mathring{R}_\ell(0)$ are deterministic, so is $v_{q+1}(0)$.

\subsubsection{Verification of the inductive estimates for $v_{q+1}$}

For the counterparts of the estimates (\ref{estimate wqp})-(\ref{corrector est2}), the main difference now is the  extra $m_L$ appearing in the bounds (\ref{estimate a li}) and (\ref{estimate aN li}) for $\bar{a}_{(\xi)}$. Therefore, many of the estimates remain valid with $M_{0}(t)^{1/2}$ replaced by $m_{L}M_{0}(t)^{1/2}$, only the bounds for the temporal corrector $w_{q+1}^{(t)}$ do not change. More precisely, we obtain for $t\in[0, T_L]$
\begin{equation}
\label{estimate wqp li}
\|w_{q+1}^{(p)}\|_{C_tL^2}\leq \frac{1}{2}m_LM_0(t)^{1/2}\delta_{q+1}^{1/2},
\end{equation}
\begin{equation}
\label{principle est1 li}
\aligned
\|w_{q+1}^{(p)}\|_{C_tL^p}\lesssim m_LM_0(t)^{1/2}\delta_{q+1}^{1/2}\ell^{-2}r_\perp^{2/p-1}r_\|^{1/p-1/2},\endaligned
\end{equation}
\begin{equation}\label{correction est li}\aligned\|w_{q+1}^{(c)}\|_{C_tL^p}\lesssim m_L M_0(t)^{1/2}\delta_{q+1}^{1/2}\ell^{-12}r_\perp^{2/p}r_\|^{1/p-3/2},
\endaligned
\end{equation}
\begin{equation}\label{temporal est1 li}
\aligned
\|w_{q+1}^{(t)}\|_{C_tL^p}\lesssim M_0(t)\delta_{q+1}\ell^{-4}r_\perp^{2/p-1}r_\|^{1/p-2}\lambda_{q+1}^{-1}.
\endaligned
\end{equation}
Combining (\ref{estimate wqp li}), (\ref{correction est li}) and (\ref{temporal est1 li}) then leads to
\begin{equation}\label{estimate wq li}
\aligned
\|w_{q+1}\|_{C_tL^2}
&\leq m_L M_0(t)^{1/2}\delta_{q+1}^{1/2}\left(\frac{1}{2}+C\lambda_{q+1}^{24\alpha-2/7}+CM_0(t)^{1/2}\delta_{q+1}^{1/2}\lambda_{q+1}^{8\alpha-1/7}\right)
\leq \frac{3}{4}m_LM_0(t)^{1/2}\delta_{q+1}^{1/2},
\endaligned
\end{equation}
where we used (\ref{c:M li}) to bound $CM_0(L)^{1/2}\delta_{q+1}^{1/2}\lambda_{q+1}^{8\alpha-1/7}\leq 1/8$.

As a consequence of \eqref{estimate wq li} and \eqref{eq:vl li}, the first bound in \eqref{inductionv li} on the level $q+1$ readily follows. In addition, \eqref{estimate wq li} together with \eqref{error li} implies \eqref{iteration li} from the statement of the proposition.
In order to verify the second bound in \eqref{inductionv li}, we observe that similarly to  (\ref{principle est2})-(\ref{temporal est2}) it holds for $t\in[0, T_L]$
\begin{equation}
\label{principle est2 li}
\aligned
\|w_{q+1}^{(p)}\|_{C^1_{t,x}}\lesssim m_L M_0(t)^{1/2}  \ell^{-7}r_\perp^{-1}r_\|^{-1/2}\lambda_{q+1}^2,
\endaligned
\end{equation}
\begin{equation}
\label{correction est2 li}
\aligned
\|w_{q+1}^{(c)}\|_{C^1_{t,x}}\lesssim m_LM_0(t)^{1/2}\ell^{-17}r_\|^{-3/2}\lambda_{q+1}^2,
\endaligned
\end{equation}
\begin{equation}
\label{temporal est2 li}
\aligned\|w_{q+1}^{(t)}\|_{C^1_{t,x}}
\lesssim M_0(t)\ell^{-9}r_\perp^{-1}r_\|^{-2}\lambda_{q+1}^{1+\alpha}.
\endaligned
\end{equation}
Combining \eqref{principle est2 li}, \eqref{correction est2 li}, \eqref{temporal est2 li} with \eqref{eq:vl2 li} and taking (\ref{c:M li}) into account, the second bound in \eqref{inductionv li} follows.

In order to control the Reynolds stress below, we observe that similarly to \eqref{principle est22}, \eqref{corrector est2}, the following bounds hold true for $t\in[0,T_{L}]$, $p\in(1,\infty)$
\begin{equation}
\label{principle est22 li}
\aligned
\|w_{q+1}^{(p)}+w_{q+1}^{(c)}\|_{C_tW^{1,p}}\leq m_L M_0(t)^{1/2}r_\perp^{2/p-1}r_\|^{1/p-1/2}\ell^{-2}\lambda_{q+1},
\endaligned
\end{equation}
\begin{equation}
\label{corrector est2 li}
\aligned\|w_{q+1}^{(t)}\|_{C_tW^{1,p}}&\leq M_0(t)r_\perp^{2/p-2}r_\|^{1/p-1}\ell^{-4}\lambda_{q+1}^{-2/7}.
\endaligned
\end{equation}

\subsubsection{Definition of the Reynolds Stress $\mathring{R}_{q+1}$}

Similar as before we know
\begin{equation*}
\aligned
\div\mathring{R}_{q+1}-\nabla p_{q+1}&=\underbrace{\frac{1}{2}w_{q+1}-\Delta w_{q+1}+\partial_t(w_{q+1}^{(p)}+w_{q+1}^{(c)})+\theta_\ell\div(v_\ell\otimes w_{q+1}+w_{q+1}\otimes v_\ell)}_{\div(R_{\textrm{lin}})+\nabla p_{\textrm{lin}}}
\\&\quad+\underbrace{\theta_\ell\div\left((w_{q+1}^{(c)}+w_{q+1}^{(t)})\otimes w_{q+1}+w_{q+1}^{(p)}\otimes (w_{q+1}^{(c)}+w_{q+1}^{(t)})\right)}_{\div(R_{\textrm{cor}}+\nabla p_{\textrm{cor}})}
\\&\quad+\underbrace{\div(\theta_\ell w_{q+1}^{(p)}\otimes w_{q+1}^{(p)}+\mathring{R}_\ell)+\partial_tw_{q+1}^{(t)}}_{\div(R_{\textrm{osc}})+\nabla p_{\textrm{osc}}}
\\&\quad+\underbrace{(\theta-\theta_\ell)\div\left(v_{q+1}\mathring{\otimes} v_{q+1}\right)}_{\div(R_{\textrm{com}1})+\nabla p_{\textrm{com}1}}+\div(R_{\textrm{com}})-\nabla p_\ell.
\endaligned\end{equation*}
Therefore, applying the inverse divergence operator $\mathcal{R}$ we define
\begin{equation*}
R_{\textrm{lin}}:=\frac{1}{2}\mathcal{R}w_{q+1}-\mathcal{R}\Delta w_{q+1}+\mathcal{R}\partial_t(w_{q+1}^{(p)}+w_{q+1}^{(c)})
+\theta_\ell v_\ell\mathring\otimes w_{q+1}+\theta_\ell w_{q+1}\mathring\otimes v_\ell,
\end{equation*}
\begin{equation*}
R_{\textrm{cor}}:=\theta_\ell(w_{q+1}^{(c)}+w_{q+1}^{(t)})\mathring{\otimes} w_{q+1}+\theta_\ell w_{q+1}^{(p)}\mathring{\otimes} (w_{q+1}^{(c)}+w_{q+1}^{(t)}),
\end{equation*}
\begin{equation*}
R_{\textrm{com}1}:=(\theta_\ell-\theta)(v_{q+1}\mathring{\otimes} v_{q+1}).
\end{equation*}
And similarly to Section \ref{s:def} we have
\begin{equation*} R_{\textrm{osc}}:=\sum_{\xi\in\Lambda}\mathcal{R}
\left(\nabla a_{(\xi)}^2\mathbb{P}_{\neq0}(W_{(\xi)}\otimes W_{(\xi)}) \right)-\frac{1}{\mu}\sum_{\xi\in\Lambda}\mathcal{R}
\left(\partial_t a_{(\xi)}^2(\phi_{(\xi)}^2\psi_{(\xi)}^2\xi) \right),
\end{equation*}
with $a_{(\xi)}$ given  in (\ref{amplitudes}) for $M_0(t)$ from (\ref{M0}). Hence the bounds for $R_{\textrm{osc}}$ are the same as in Section~\ref{sss:R}.
The Reynolds stress on the level $q+1$ is then defined as
$$\mathring{R}_{q+1}:=R_{\textrm{lin}}+R_{\textrm{osc}}+R_{\textrm{cor}}+R_{\textrm{com}}+R_{\textrm{com}1}.$$

\subsubsection{Verification of the inductive estimate for $\mathring{R}_{q+1}$}

In the following we estimate the remaining  terms in $\mathring{R}_{q+1}$ separately. We choose $p=\frac{32}{32-7\alpha}>1$.

For the linear  error, have for $t\in[0, T_L]$
\begin{equation*}
\aligned
\|R_{\textrm{linear}}\|_{C_tL^p}&\lesssim\|w_{q+1}\|_{C_tW^{1,p}}+\|\mathcal{R}\partial_t(w_{q+1}^{(p)}+w_{q+1}^{(c)})\|_{C_tL^p}+m_L^2\|v_\ell\mathring{\otimes}w_{q+1}+w_{q+1}\mathring{\otimes}v_\ell\|_{C_tL^p}
\\&\lesssim\|w_{q+1}\|_{C_tW^{1,p}}+\sum_{\xi\in\Lambda}\|\partial_t\textrm{curl}
({\bar a}_{(\xi)}V_{(\xi)})\|_{C_tL^p}+\lambda_q^4m_L^3 M_0(t)^{1/2}\|w_{q+1}\|_{C_tL^p}.
\endaligned
\end{equation*}
Hence using \eqref{principle est22 li}, \eqref{corrector est2 li}, \eqref{estimate aN li}, \eqref{bounds}, \eqref{principle est1 li}, \eqref{correction est li} and \eqref{temporal est1 li} we have for $t\in[0, T_L]$
\begin{equation*}
\aligned
\|R_{\textrm{linear}}\|_{C_tL^p}
&\lesssim m_LM_0(t)^{1/2}\ell^{-2}r_\perp^{2/p-1}r_\|^{1/p-1/2}\lambda_{q+1}+M_0(t)\ell^{-4}r_\perp^{2/p-2}r_\|^{1/p-1}\lambda_{q+1}^{-2/7}
\\&\quad+m_LM_0(t)^{1/2}\ell^{-7}r_\perp^{2/p}r_\|^{1/p-3/2}\mu+m_LM_0(t)^{1/2}\ell^{-12}r_\perp^{2/p-1}r_\|^{1/p-1/2}\lambda_{q+1}^{-1}
\\&\quad+m_L^{4}M_0(t)\ell^{-2}r_\perp^{2/p-1}r_\|^{1/p-1/2}\lambda_{q}^4
\\&\lesssim m_LM_0(t)^{1/2}\lambda_{q+1}^{5\alpha-1/7}+M_0(t)\lambda_{q+1}^{9\alpha-2/7}+m_LM_0(t)^{1/2}\lambda_{q+1}^{15\alpha-1/7}
\\&\quad+m_LM_0(t)^{1/2}\lambda_{q+1}^{25\alpha-15/7}+m_L^4M_0(t)\lambda_{q+1}^{7\alpha-8/7}
\\&\leq\frac{M_0(t)c_R\delta_{q+2}}{5},
\endaligned
\end{equation*}
where we used the fact that $a$ is sufficiently large   and $\beta $ is suffiently small, in particular, \eqref{c:M li} holds.

The corrector error is estimated  by (\ref{principle est1 li}), (\ref{correction est li}) and \eqref{temporal est1 li}  for $t\in[0, T_L]$ as follows
\begin{equation*}
\aligned
\|R_{\textrm{cor}}\|_{C_tL^p}&
\leq m_L^2\|w_{q+1}^{(c)}+ w_{q+1}^{(t)}\|_{C_tL^{2p}}\| w_{q+1}\|_{C_tL^{2p}}+m_L^2\|w_{q+1}^{(c)}+ w_{q+1}^{(t)}\|_{C_tL^{2p}}\| w_{q+1}^{(p)}\|_{C_tL^{2p}}
\\
&\lesssim m_L^4M_0(t)\left(\ell^{-12}r_\perp^{1/p}r_\|^{1/(2p)-3/2}
+\ell^{-4}M_0(t)^{1/2}r_\perp^{1/p-1}r_\|^{1/(2p)-2}\lambda_{q+1}^{-1}\right)\ell^{-2}r_\perp^{1/p-1}r_\|^{1/(2p)-1/2}
\\
&\lesssim m_L^4M_0(t)\left(\ell^{-14}r_\perp^{2/p-1}r_\|^{1/p-2}
+\ell^{-6}M_0(t)^{1/2}r_\perp^{2/p-2}r_\|^{1/p-5/2}\lambda_{q+1}^{-1}\right)
\\
&\leq m_L^4M_0(t)\left(\lambda_{q+1}^{29\alpha-2/7}+M_0(t)^{1/2}\lambda_{q+1}^{13\alpha-1/7}\right)
\leq \frac{M_0(t)c_R\delta_{q+2}}{5},
\endaligned
\end{equation*}
where we used again (\ref{c:M li}) to have $m_L^4\lambda_{q+1}^{29\alpha-2/7}+m_L^4M_0(L)^{1/2}\lambda_{q+1}^{13\alpha-1/7}\leq \frac{c_R\delta_{q+2}}{5}$.

In view of a standard mollification estimate we deduce that for $t\in[0, T_L]$
\begin{equation*}
|\theta_\ell(t)-\theta(t)|\leq \ell^{1/2-2\delta}L^{1/2}e^{L^{1/2}}\leq \ell^{1/2-2\delta}m_L^2,
\end{equation*}
\begin{equation*}
\aligned
\|R_{\textrm{com}}\|_{C_tL^1}&\lesssim m_L^2\ell\|v_q\|_{C^1_{t,x}}\|v_q\|_{C_tL^2}+\ell^{\frac{1}{2}-2\delta}m_L^4M_0(t)\lambda_q^4
\\&\lesssim \ell^{\frac{1}{2}-2\delta}m_L^4M_0(t)\lambda_q^4\leq \frac{M_0(t)c_R\delta_{q+2}}{5},
\endaligned
\end{equation*}
where $\delta\in(0,1/12)$ and we choose $a$ large enough  to have
\begin{equation}\label{c:last} C\ell^{\frac{1}{2}-2\delta}m_L^4\lambda_q^4<\frac{c_R}{5}\lambda_{q+2}^{-2\beta}.\end{equation}  With the choice of $\ell$  and since we
postulated that $\alpha>8\beta b$ and $\alpha b>16$, this can indeed be achieved by possibly increasing $a$ and consequently
decreasing $\beta$.

The second commutator error can be estimated for $t\in[0, T_L]$ as follows
\begin{equation*}
\|R_{\textrm{com}1}\|_{C_tL^1}\leq \ell^{1/2-2\delta}m_L^4M_0(t)\leq \frac{M_0(t)c_R\delta_{q+2}}{5},
\end{equation*}
where we used \eqref{c:last} to have $\ell^{1/2-2\delta} m_L^4<\frac{c_R}{5}\delta_{q+2}$.

Thus, collecting  the above estimates we obtain the desired third bound in \eqref{inductionv li} and the proof of Proposition~\ref{main iteration li} is complete.

\section{Non-uniqueness in law III: the case of a nonlinear noise}
\label{s:nonuniquIII}

The treatment of a nonlinear noise requires more input coming from the driving Brownian motion. Namely, the corresponding iterated integral of $B$ against $B$ as known in the theory of rough paths. This is reflected through an additional variable $\mathbb{Y}$ included in the path space. Furthermore, we include a  variable $Z$ which is used to control the first step of the iteration scheme defined via \eqref{induction g'}, \eqref{induction g} below, namely to control $z_{0}$. This is just an auxiliary point which as a consequence of Corollary~\ref{cor:1}  does not restrict the final non-uniqueness in law result.

In what follows, we therefore use the following notations.
Let  $$\widetilde{\Omega}:=C([0,\infty);H^{-3}\times \mR^m\times \mR^{m\times m}{\times H^{-3} })\cap {L^2_{\mathrm{loc}}([0,\infty);L^2_\sigma\times \mR^m\times \mR^{m\times m}{\times L^2_\sigma })}$$  
and let $\mathscr{P}(\widetilde{\Omega})$ denote the set of all probability measures on $(\widetilde{\Omega},\widetilde{\mathcal{B}})$ with $\widetilde{\mathcal{B}}$   being the Borel $\sigma$-algebra coming from the natural topology on $\widetilde\Omega$. Let  $(x,y,\mY,{Z}):\widetilde{\Omega}\rightarrow H^{-3}\times \mR^m\times \mR^{m\times m}{\times H^{-3}}$  denote the canonical process on $\widetilde{\Omega}$ given by
$$(x_t(\omega),y_t(\omega),\mY_t(\omega),{Z_t(\omega)})=\omega(t).$$
For $t\geq 0$ we define   $\sigma$-algebra $\widetilde{\mathcal{B}}^{t}=\sigma\{ (x(s),y(s),\mY(s),{Z(s)}),s\geq t\}$.
Finally,  we define the canonical filtration  $\widetilde{\mathcal{B}}_t^0:=\sigma\{ (x(s),y(s),\mY(s),{Z(s)}),s\leq t\}$, $t\geq0$, as well as its right continuous version $\widetilde{\mathcal{B}}_t:=\cap_{s>t}\widetilde{\mathcal{B}}^0_s$, $t\geq 0$. In this section we choose $U=\mR^m$.

\subsection{Generalized probabilistically weak solutions}

Accordingly, we need to generalize our  notion of solution, taking the additional variables $\mathbb Y$ and $Z$ into account.
We fix a deterministic function $v_0\in C^1_{t,x}$, which will be chosen in Lemma \ref{lem:v0 g} below. In order to define the stopping time in the same path space, the  process $Z$ solves the following SPDE
	\begin{align}\label{eq:Z0}dZ_t-\Delta  Z_t=G(v_0+Z_t)dB_t,\quad \div Z=0.\end{align}
 By \cite[Theorem 4.2.5]{LR15}, the solution to \eqref{eq:Z0} belongs to $C([0,\infty);L^2_\sigma)$ a.s.

\begin{defn}\label{ge weak solution}
	Let $s\geq 0$ and $x_{0}\in L^{2}_{\sigma}$, $y_0\in \mR^m$, $\mY_0\in \mR^{m\times m}$, {$Z_0\in L^2_\sigma$}. A probability measure $P\in \mathscr{P}(\widetilde{\Omega})$ is  a generalized probabilistically weak solution to the Navier--Stokes system (\ref{1})  with the initial value $(x_0,y_0,\mY_0,{Z_0}) $ at time $s$ provided
	
	\no(M1) $P(x(t)=x_0, y(t)=y_0, \mY(t)=\mY_0, {Z(t)=Z_0, } 0\leq t\leq s)=1$  and for any $n\in\mathbb{N}$
	$$P\left\{(x,y,\mY,{Z})\in \widetilde{\Omega}: \int_0^n\|G(x(r))\|_{L_2(\mathbb{R}^m;L^2_\sigma)}^2dr<+\infty, {Z\in CL^2_\sigma}\right\}=1.$$
	
	\no(M2) Under $P$,  $y$ is a   $(\widetilde{\mathcal{B}}_{t})_{t\geq s}$-Brownian motion in  $\mR^{m}$ starting from $y_0$ at time $s$ and for every $e_i\in C^\infty(\mathbb{T}^3)\cap L^2_\sigma$, and for $t\geq s$
	$$\langle x(t)-x(s),e_i\rangle+\int^t_s\langle \div(x(r)\otimes x(r))-\Delta x(r),e_i\rangle dr=\int_s^t \langle e_i, G(x(r))  dy_r\rangle,$$
	and
	$$\mY(t)-\mY(s)=\int_s^ty(r)\otimes d y(r),$$
$$\langle Z(t )-Z(s),e_i\rangle-\int_s^{t}\langle\Delta  Z(r),e_i\rangle dr=\int_s^t\langle e_i,G(v_0+Z(r))d y(r)\rangle.$$
	
	\no (M3) For any $q\in \mathbb{N}$ there exists a positive real function $t\mapsto C_{t,q}$ such that  for all $t\geq s$
	$$E^P\left(\sup_{r\in [0,t]}\|x(r)\|_{L^2}^{2q}+\int_{s}^t\|x(r)\|^2_{H^{\gamma}}dr\right)\leq C_{t,q}(\|x_0\|_{L^2}^{2q}+1),$$
and
	$$E^P\left(\sup_{r\in [0,t]}\|Z(r)\|_{L^2}^{2q}+\int_{s}^t\|\nabla Z(r)\|^2_{L^2}dr\right)\leq C_{t,q}(\|Z_0\|_{L^2}^{2q}+1).$$
\end{defn}

	By the assumption \eqref{eq:nl} on $G$,  pathwise uniqueness holds for \eqref{eq:Z0}.  Hence, since $v_{0}$ is deterministic,  the law of $Z$ is uniquely determined by the driven Brownian motion $B$ according to the Yamada--Watanabe Theorem.
Under \eqref{eq:nl},  we know that the constant $C_{t,q}$ is independent of $v_0$.

Define $y_{r,t}:=y(t)-y(r)$
and $\mX_{r,t}:=\mY(t)-\mY(r)-y(r)\otimes (y(t)-y(r))$.
Note that under a generalized probabilistically weak solution $P$, the pair
$(y,\mX)$ can be viewed as a rough path, concretely, it is  the It\^o lift of an $m$-dimensional Brownian motion. In particular, the Chen's relation
\begin{equation}\label{eq:chen}
\delta\mX_{r ,\theta ,t}=\mX_{r,t}-\mX_{r,\theta}-\mX_{\theta, t}= y_{r, \theta}\otimes  y_{\theta, t},\qquad r\leq\theta\leq t,
\end{equation}
holds true.

For the application to the Navier--Stokes system, we will again require a definition of generalized probabilistically weak solutions defined up to a stopping time $\tau$. To this end, we set
$$
\widetilde{\Omega}_{\tau}:=\{\omega(\cdot\wedge\tau(\omega));\omega\in \widetilde{\Omega}\}.
$$

\begin{defn}\label{ge weak solution 1}
	Let $s\geq 0$ and $x_{0}\in L^{2}_{\sigma}$, $y_0\in \mR^m $, $\mY_0\in \mR^{m\times m}$, $Z_0\in L^2_\sigma$. Let $\tau\geq s$ be a $(\widetilde{\mathcal{B}}_{t})_{t\geq s}$-stopping time. A probability measure $P\in \mathscr{P}(\widetilde{\Omega}_\tau)$ is  a generalized probabilistically weak solution to the Navier--Stokes system (\ref{1}) on $[s,\tau]$ with the initial value $(x_0,y_0,\mY_0, {Z_0}) $ at time $s$ provided
	
	\no(M1) $P(x(t)=x_0, y(t)=y_0, \mY(t)=\mY_0, {Z(t)=Z_0,} 0\leq t\leq s)=1$
	and for any $n\in\mathbb{N}$
	$$P\left\{(x,y,\mY,{Z})\in \widetilde\Omega: \int_0^{n\wedge \tau}\|G(x(r))\|_{L_2(\mathbb{R}^m;L_2^\sigma)}^2dr<+\infty, {Z\in CL^2_\sigma}\right\}=1.$$

	\no(M2) Under $P$, $y=(y_i)_{i=1}^m$ and each component $y_i(\cdot\wedge \tau),i=1,\dots,m$, is a {continuous square integrable $(\widetilde{\mathcal{B}}_{t})_{t\geq s}$}-martingale  starting from $y^i_0$ at time $s$ with cross variation process between $y_i$ and $y_j$ given by $(t\wedge \tau-s)\delta_{i=j}$.  For every $e_i\in C^\infty(\mathbb{T}^3)\cap L^2_\sigma$, and for $t\geq s$
	$$\langle x(t\wedge \tau)-x(s),e_i\rangle+\int^{t\wedge \tau}_s\langle \div(x(r)\otimes x(r))-\Delta x(r),e_i\rangle dr=\int_s^{t\wedge\tau} \langle e_i, G(x(r))  dy_r\rangle,$$
	and
	$$\mY(t\wedge\tau)-\mY(s)=\int_s^{t\wedge \tau}y(r)\otimes dy(r),$$
$$\langle Z(t\wedge\tau)-Z(s),e_i\rangle-\int_s^{t\wedge\tau}\langle\Delta  Z(r),e_i\rangle dr=\int_s^{t\wedge \tau}\langle e_i,G(v_0+Z(r))d y(r)\rangle.$$

	\no (M3) For any $q\in \mathbb{N}$ there exists a positive real function $t\mapsto C_{t,q}$ such that  for all $t\geq s$
	$$E^P\left(\sup_{r\in [0,t\wedge\tau]}\|x(r)\|_{L^2}^{2q}+\int_{s}^{t\wedge\tau}\|x(r)\|^2_{H^{\gamma}}dr\right)\leq C_{t,q}(\|x_0\|_{L^2}^{2q}+1),$$
	and 
		$$E^P\left(\sup_{r\in [0,t\wedge \tau ]}\|Z(r)\|_{L^2}^{2q}+\int_{s}^{t\wedge\tau}\|\nabla Z(r)\|^2_{L^2}dr\right)\leq C_{t,q}(\|Z_0\|_{L^2}^{2q}+1).$$
\end{defn}

It is easy to see the following relation between the generalized probabilistically weak solutions and probabilistically weak solutions.

\begin{cor}\label{cor:1}
	Let $P$ be a generalized probabilistically weak solution starting from  $(x_{0},y_{0},0,0)$ at time $s$. Then the canonical process $\mY$ and $Z$ under $P$ is a measurable function of $y$. In other words, $P$ is fully determined by the joint probability law of $x,y$ and can be identified with a probability measure on the reduced path space $\bar\Omega$. Hence $P$ is a probabilistically weak solution  with the initial value $(x_{0},y_{0})$ at time $s$.

	Conversely, let $(x_{0},y_{0})\in L^2_\sigma\times \mR^m$ be given and let $P\in \mathscr{P}(\bar\Omega)$ be a probabilistically weak solution and define $P$-a.s. for $t\geq s$
	$$
	\mY(t):=\int_s^ty(r)\otimes dy(r),
	$$
and set $Z$ to be the unique probabilistically strong solution to  \eqref{eq:Z0} with $B$ replaced by $y$ and $Z(s)=0$.
	Let $Q$ be the joint law of $(x,y,\mY,{Z})$ under $P$. Then $Q\in \mathscr{P}(\widetilde{\Omega})$
	gives raise to a generalized probabilistically weak solution  starting from the initial value $(x_{0},y_{0},0,{0})$ at the time $s$.
\end{cor}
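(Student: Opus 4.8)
The plan is to prove the two implications by realising the auxiliary canonical processes $\mathbb{Y}$ and $Z$ as \emph{universal} Borel functionals of the driving path $y$, which then lets one transport a solution freely between the path spaces $\widetilde{\Omega}$ and $\bar{\Omega}$.

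For the first implication, let $P$ be a generalized probabilistically weak solution starting from $(x_{0},y_{0},0,0)$ at time $s$. By (M2) of Definition~\ref{ge weak solution}, under $P$ the canonical process $y$ is an $m$-dimensional $(\widetilde{\mathcal{B}}_{t})_{t\geq s}$-Brownian motion, $\mathbb{Y}(t)=\int_{s}^{t}y(r)\otimes dy(r)$ for $t\geq s$ and $\mathbb{Y}(t)=0$ for $t\leq s$, while $Z$ solves \eqref{eq:Z0} with $B$ replaced by $y$ and $Z(s)=0$. First I would recall that the It\^o integral is an almost sure limit of Riemann--Stieltjes sums, which are pointwise-defined Borel functions of the path $y$; since the law of $y$ under $P$ is Wiener measure, hence independent of $P$, this produces a single Borel map $\Psi$ with $\mathbb{Y}=\Psi(y)$ $P$-a.s. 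Next, because $G$ has the cylindrical structure \eqref{eq:nl}, pathwise uniqueness holds for \eqref{eq:Z0} and a probabilistically strong solution exists by \cite[Theorem~4.2.5]{LR15}; the Yamada--Watanabe theorem then yields a Borel map $\Phi$, independent of the underlying stochastic basis, with $Z=\Phi(y)$ $P$-a.s. Consequently $P$ is concentrated on the graph $\{(x,y,\Psi(y),\Phi(y))\}$, so it is the image, under $\omega\mapsto(x,y,\Psi(y),\Phi(y))$, of its marginal law $\bar{P}\in\mathscr{P}(\bar{\Omega})$ of the pair $(x,y)$, and in particular $P$ is determined by $\bar{P}$. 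It then remains to check that $\bar{P}$ obeys (M1)--(M3) of Definition~\ref{weak solution}; each of these is a sub-case of the corresponding condition of Definition~\ref{ge weak solution}, and the Brownian/martingale properties with respect to $(\widetilde{\mathcal{B}}_{t})$ descend to $(\bar{\mathcal{B}}_{t})$ because every process involved is $(\bar{\mathcal{B}}_{t})$-adapted and $(\bar{\mathcal{B}}_{t})$ embeds into $(\widetilde{\mathcal{B}}_{t})$ under the projection.

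For the converse, given a probabilistically weak solution $P\in\mathscr{P}(\bar{\Omega})$, I would set, $P$-a.s., $\mathbb{Y}(t):=\int_{s}^{t}y(r)\otimes dy(r)$ for $t\geq s$ and $\mathbb{Y}(t):=0$ for $t\leq s$ (the It\^o integral against the $(\bar{\mathcal{B}}_{t})$-Brownian motion $y$), and let $Z=\Phi(y)$ be the probabilistically strong solution of \eqref{eq:Z0} started from $0$ at time $s$ and driven by $y$; both have, $P$-a.s., the regularity required for membership in $\widetilde{\Omega}$ (continuity in the relevant spaces, together with $Z\in L^{2}_{\mathrm{loc}}(H^{1})\subset L^{2}_{\mathrm{loc}}(L^{2}_{\sigma})$ from the energy estimate for \eqref{eq:Z0}). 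Let $Q$ be the law of $(x,y,\mathbb{Y},Z)$ under $P$; then $Q\in\mathscr{P}(\widetilde{\Omega})$. Since $\mathbb{Y}(r)$ and $Z(r)$ are $\sigma\{y(u):u\leq r\}$-measurable for every $r\leq t$, the $\sigma$-algebra generated by $(x,y,\mathbb{Y},Z)$ up to time $t$ coincides with that generated by $(x,y)$, so $\widetilde{\mathcal{B}}_{t}$ is carried exactly onto $\bar{\mathcal{B}}_{t}$ and $y$ remains an $m$-dimensional $(\widetilde{\mathcal{B}}_{t})$-Brownian motion under $Q$. The three identities in (M2) of Definition~\ref{ge weak solution} then hold under $Q$: the Navier--Stokes one because it holds under $P$, the one for $\mathbb{Y}$ by construction, and the one for $Z$ because $\Phi(y)$ solves \eqref{eq:Z0}; (M1) is immediate, the (M3) bound on $x$ is inherited from $P$, and the (M3) bound on $Z$ follows from the standard energy estimate for \eqref{eq:Z0} with the bounded coefficient $G$ of \eqref{eq:nl}.

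The only genuinely delicate step, which I expect to be the main obstacle, is the measurability and consistency of the stochastic objects across the change of probability space: one must ensure that the It\^o integral defining $\mathbb{Y}$ and the solution map $\Phi$ defining $Z$ are truly \emph{universal} Borel functionals of the Brownian path, so that pushing them forward by the canonical projections does not alter them. This is exactly where the Yamada--Watanabe theorem (applicable thanks to pathwise uniqueness for \eqref{eq:Z0}, itself a consequence of the structure \eqref{eq:nl} of $G$) and the classical construction of the It\^o integral as an a.s.\ limit of path-measurable Riemann sums come in; the remaining verifications amount to routine bookkeeping matching the two lists of conditions (M1)--(M3).
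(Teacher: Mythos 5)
Your proposal is correct and takes essentially the same route the paper implicitly relies on (the remarks just before the corollary about pathwise uniqueness for \eqref{eq:Z0} and the Yamada--Watanabe theorem): both directions hinge on realising the iterated integral $\mY$ and the solution $Z$ as $P$-a.s.\ versions of universal Borel functionals of the driving Brownian path, and then transferring the (M1)--(M3) conditions across the canonical projection between $\widetilde\Omega$ and $\bar\Omega$ by noting that the relevant filtrations coincide up to null sets. The only point worth flagging is that the paper leaves the corollary as a remark, whereas you make explicit the (correct, but not entirely automatic) step that a $(\widetilde{\mathcal{B}}_t)$-martingale adapted to $(\bar{\mathcal{B}}_t)$ is a $(\bar{\mathcal{B}}_t)$-martingale via the tower property; this is exactly the right bookkeeping.
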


Similarly to Theorem \ref{convergence}, the following existence and stability result holds. We prove it in Appendix~\ref{ap:A}.

\begin{thm}\label{ge convergence 1}
	For every $(s,x_0,y_0,\mY_0,{Z_0})\in [0,\infty)\times L_{\sigma}^2\times \mR^m\times \mR^{m\times m}{\times L^2_\sigma} $, there exists  $P\in\mathscr{P}(\widetilde{\Omega})$ which is a generalized probabilistically weak solution to the Navier--Stokes system \eqref{1} starting at time $s$ from the initial condition $(x_0,y_0,\mY_0,{Z_0})$  in the sense of Definition \ref{ge weak solution}. The set of all such generalized probabilistically weak solutions  with the same implicit constant $C_{t,q}$ in  Definition \ref{ge weak solution} is denoted by $\mathscr{GW}(s,x_0,y_0, \mY_0, {Z_0}, C_{t,q})$.
	
	Let $(s_n,x_n,y_n,\mY_n,{Z_n})\rightarrow (s,x_0,y_0,\mY_0,{Z_0})$ in $[0,\infty)\times L_{\sigma}^2\times \mR^m\times \mR^{m\times m}{\times L^2_\sigma} $ as $n\rightarrow\infty$  and let $$P_n\in \mathscr{GW}(s_n,x_n,y_n,\mY_n, {Z_n}, C_{t,q}).$$ Then there exists a subsequence $n_k$ such that the sequence $(P_{n_k})_{k\in\mathbb{N}}$  converges weakly to some $P\in\mathscr{GW}(s,x_0,y_0,\mY_0, {Z_0},  C_{t,q})$.
\end{thm}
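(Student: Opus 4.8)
\textbf{Proof strategy for Theorem \ref{ge convergence 1}.}
The plan is to follow the classical Stroock--Varadhan approach to existence and stability of martingale/probabilistically weak solutions, as already carried out for Theorem \ref{convergence} and Theorem \ref{convergence 1}, but now tracking the two extra components $\mY$ and $Z$ of the canonical process. Since the Navier--Stokes block $(x,y)$ is handled exactly as in the proof of Theorem \ref{convergence 1} (a priori estimates on a Galerkin approximation, tightness, Skorokhod representation, passage to the limit in the weak formulation (M2), and lower semicontinuity to recover (M3)), the real content is to show that adjoining $\mY$ and $Z$ neither destroys tightness nor obstructs identification of the limit.

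\emph{First}, for the existence part I would start from the Galerkin approximation $x^n$ of the Navier--Stokes system driven by a fixed $m$-dimensional Brownian motion $B$ (with increments matching $y$ shifted to start at time $s$ from $y_0$), and simply \emph{define} $\mY^n_t := \mY_0 + \int_s^t B_r\otimes dB_r$ (independent of $n$) and let $Z^n$ be the Galerkin approximation of the linear equation \eqref{eq:Z0}, which is globally well-posed in $C([0,\infty);L^2_\sigma)$ by \cite[Theorem 4.2.5]{LR15} and satisfies the same moment bound as in (M3) with a constant independent of $v_0$ by \eqref{eq:nl}. Tightness of the laws of $(x^n,B,\mY^n,Z^n)$ on $\widetilde\Omega$ then follows: tightness of $x^n$ is as in Theorem \ref{convergence 1}; $B$ and $\mY$ are fixed; and tightness of $Z^n$ follows from the uniform energy estimate $E\sup_{[0,t]}\|Z^n(r)\|_{L^2}^{2q}+E\int_s^t\|\nabla Z^n\|_{L^2}^2\,dr\le C_{t,q}(\|Z_0\|_{L^2}^{2q}+1)$ together with an equicontinuity-in-$H^{-3}$ bound coming from the equation, exactly the Aubin--Lions/Arzel\`a--Ascoli argument used for $x^n$. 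Applying Skorokhod, we get a.s.\ convergent versions on a new probability space; passing to the limit in the Galerkin equations identifies $y$ as a Brownian motion, gives the weak formulation for $x$, gives $\mY(t)-\mY(s)=\int_s^t y(r)\otimes dy(r)$ (the stochastic integral is stable under this convergence because $y$ is the \emph{same} Brownian motion, so this is in fact an equality of processes, not merely in law), and gives the weak formulation for $Z$; here one uses the continuity assumption $\|G(y_n)^*x-G(y)^*x\|_U\to0$ whenever $y_n\to y$ in $L^2$ to pass to the limit in $\int_s^t\langle e_i,G(v_0+Z^n(r))\,dy_r\rangle$. The moment bounds (M3) survive by Fatou/lower semicontinuity. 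This produces an element of $\mathscr{GW}(s,x_0,y_0,\mY_0,Z_0,C_{t,q})$.

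\emph{Second}, for the stability part, given $(s_n,x_n,y_n,\mY_n,Z_n)\to(s,x_0,y_0,\mY_0,Z_0)$ and $P_n\in\mathscr{GW}(s_n,x_n,y_n,\mY_n,Z_n,C_{t,q})$, the uniform bound (M3) with the \emph{common} constant $C_{t,q}$ gives, as above, tightness of $(P_n)$ on $\widetilde\Omega$; extract a weakly convergent subsequence $P_{n_k}\to P$. One then checks that $P$ lies in $\mathscr{GW}(s,x_0,y_0,\mY_0,Z_0,C_{t,q})$: (M1) is clear from $x_n\to x_0$ etc.\ and closedness of the relevant events; (M3) passes to the limit by lower semicontinuity of the norms under the topology of $\widetilde\Omega$ (the supremum in $L^2$ and the time-integral of $\|\cdot\|_{H^\gamma}^2$, respectively $\|\nabla\cdot\|_{L^2}^2$, are lower semicontinuous for the locally uniform / $L^2_{\mathrm{loc}}$ topology, so the bound with the same $C_{t,q}$ is preserved); and (M2) is the identification of the limiting martingale problem, which is handled by the standard argument that the maps $\omega\mapsto M^i_{t,s}(\omega)$, $\omega\mapsto (M^i_{t,s})^2-\int_s^t\|G(x(r))^*e_i\|^2dr$, $\omega\mapsto \mY(t)-\mY(s)-\int_s^t y\otimes dy$, and the analogous expressions for $Z$, are (after the usual localization to make them bounded and continuous) continuous in $\omega$ on $\widetilde\Omega$, so that the martingale identities pass to the weak limit. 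For the quadratic/cross variations of $y$ and the iterated integral $\mY$, one uses the Kolmogorov-type tightness together with the fact that $\mathbb X_{r,t}$ has the Chen relation \eqref{eq:chen}, which is stable under convergence, to rule out a Stratonovich-type correction and pin down the It\^o lift in the limit.

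\emph{The main obstacle} I anticipate is the measurable/continuous identification of the stochastic integrals in (M2) under weak convergence --- in particular showing that $\int_s^t\langle e_i,G(v_0+Z(r))\,dy_r\rangle$ and $\int_s^t\langle e_i,G(x(r))\,dy_r\rangle$ behave correctly in the limit despite $G$ being only continuous (not Lipschitz) in the sense assumed, and that the limiting $(y,\mathbb X)$ is genuinely the It\^o and not the Stratonovich lift; this is exactly the place where the cylindrical structure \eqref{eq:nl}, the bound $\|G(x)\|_{L_2}\le C(1+\|x\|_{L^2})$, and the continuity of $y\mapsto G(y)^*x$ are used, and where one leans on \cite{FR08,GRZ09} for the martingale-problem machinery and on the rough-path stability results recalled in Appendix \ref{s:D} to control $\mathbb X$. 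Everything else is a routine adaptation of the proofs of Theorem \ref{convergence} and Theorem \ref{convergence 1}, so I would present those parts briefly and concentrate the details on the passage to the limit for the $Z$-equation and the iterated integral.
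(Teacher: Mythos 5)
Your overall strategy (tightness in $\widetilde{\mathbb S}$, Skorokhod representation, passage to the limit in (M1)--(M3)) matches the paper's proof, which indeed treats the $(x,y)$-block exactly as in Theorem~\ref{convergence 1} and handles $\mY$ and $Z$ as additional components. The existence part is also along the paper's lines (the paper simply invokes \cite[Theorem 4.2.5]{LR15} for $Z$ rather than a separate Galerkin scheme, but either works). However, the step where you identify the limiting $\mY$ contains a genuine error: you argue that Chen's relation \eqref{eq:chen} together with the tightness of $\mathbb X$ ``rules out a Stratonovich-type correction and pins down the It\^o lift.'' Chen's relation is a purely algebraic constraint on a two-parameter process and is satisfied equally by the It\^o and the Stratonovich lifts of $y$; it carries no information about the symmetric part of $\mathbb X_{s,t}$, which is exactly what distinguishes them. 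So stability of Chen's relation under weak convergence cannot by itself identify the limit with the It\^o iterated integral.

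What the paper actually does, and what you should do instead, is to work purely at the level of martingale identities under the Skorokhod coupling. One first passes to the limit, exactly as for the $x$-block, in the identities expressing that each $\mY_{n,ij}$ is a $(\widetilde{\mathcal B}_t)$-martingale with quadratic variation $\int_s^t y_{n,i}^2(r)\,dr$ and cross variation with $y_{n,k}$ equal to $\delta_{k=j}\int_s^t y_{n,i}(r)\,dr$; these variation processes are ordinary Lebesgue integrals of continuous functionals of the trajectory, hence continuous on $\widetilde\Omega$, which is what makes the limit passage work (and also why your ``the map $\omega\mapsto\mY(t)-\mY(s)-\int_s^t y\otimes dy$ is continuous after localization'' does not: that map involves a stochastic integral and is not a pathwise-continuous functional of $\omega$). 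Once the limiting $\mY$ is shown to be a continuous square-integrable martingale with the correct quadratic and cross variations, the difference $\mY(t)-\mY(s)-\int_s^t y_r\otimes dy_r$ is a continuous martingale with vanishing quadratic variation and is therefore identically zero; this simultaneously forces the It\^o normalization, since the cross-variation identity encodes the symmetric part. The same quadratic-variation argument, not the assumed continuity of $y\mapsto G(y)^*x$, is what identifies the limiting $Z$-equation. Everything else in your proposal is a routine adaptation of Theorems~\ref{convergence} and~\ref{convergence 1}, as you correctly anticipated.
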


By Corollary \ref{cor:1} non-uniqueness of generalized probabilistically weak solutions from $(x_0,y_0,0,{0})$ implies  the joint non-uniqueness in law from $(x_0,y_0)$ in the sense of Definition \ref{def:uniquelaw1}.

\subsection{General construction for generalized probabilistically weak solutions}
\label{ss:gen1 ge}

The overall strategy is similar to Section~\ref{ss:gen}. In the first step, we shall extend generalized probabilistically weak solutions defined up a {$(\widetilde{\mathcal{B}}_{t})_{t\geq 0}$}-stopping time $\tau$ to the whole interval $[0,\infty)$. We denote by $\widetilde{\mathcal{B}}_{\tau}$ the $\sigma$-field associated to $\tau$.

\begin{prp}\label{prop:g1 1}
	Let $\tau$ be a bounded $(\widetilde{\mathcal{B}}_{t})_{t\geq0}$-stopping time. Then for every $\omega\in \widetilde{\Omega}$ there exists $Q_{\omega}\in\mathscr{P}(\widetilde{\Omega})$ such that for {$\omega\in \{x(\tau)\in L^2_\sigma, {Z(\tau)\in L^2_\sigma}\}$}
	\begin{equation}
	\label{qomega g1}
	Q_\omega\big(\omega'\in\widetilde{\Omega}; (x,y,\mY,{Z})(t,\omega')=(x,y,\mY,{Z})(t,\omega) \textrm{ for } 0\leq t\leq \tau(\omega)\big)=1,
	\end{equation}
	and
	\begin{equation}\label{qomega2 g1}
	Q_\omega(A)=R_{\tau(\omega),x(\tau(\omega),\omega),y(\tau(\omega),\omega),\mY(\tau(\omega),\omega),{Z}(\tau(\omega),\omega)}(A)\qquad\text{for all}\  A\in \mathcal{B}^{\tau(\omega)}.
	\end{equation}
	where $R_{\tau(\omega),x(\tau(\omega),\omega),y(\tau(\omega),\omega),\mY(\tau(\omega),\omega),{Z(\tau(\omega),\omega)}}\in\mathscr{P}(\widetilde{\Omega})$ is a generalized probabilistically weak solution to the Navier--Stokes system \eqref{1} starting at time $\tau(\omega)$ from the initial condition $$(x(\tau(\omega),\omega), y(\tau(\omega),\omega), \mY(\tau(\omega),\omega), {Z(\tau(\omega),\omega)}).$$ Furthermore, for every $B\in\widetilde{\mathcal{B}}$ the mapping $\omega\mapsto Q_{\omega}(B)$ is $\widetilde{\mathcal{B}}_{\tau}$-measurable.
\end{prp}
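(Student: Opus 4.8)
The plan is to follow verbatim the scheme of Proposition~\ref{prop:1} (equivalently Proposition~\ref{prop:1 1}), now working on the enlarged path space $\widetilde\Omega$ and invoking Theorem~\ref{ge convergence 1} in place of Theorem~\ref{convergence}. First, by the stability statement in Theorem~\ref{ge convergence 1}, for every $(s,x_0,y_0,\mY_0,Z_0)$ the set $\mathscr{GW}(s,x_0,y_0,\mY_0,Z_0,C_{t,q})$ is compact in $\mathscr{P}(\widetilde\Omega)$, and the set-valued map
$$
[0,\infty)\times L^2_\sigma\times \mR^m\times\mR^{m\times m}\times L^2_\sigma\to \mathrm{Comp}(\mathscr{P}(\widetilde\Omega)),\qquad (s,x_0,y_0,\mY_0,Z_0)\mapsto \mathscr{GW}(s,x_0,y_0,\mY_0,Z_0,C_{t,q})
$$
is Borel measurable by \cite[Lemma 12.1.8]{SV79}. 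Hence \cite[Theorem 12.1.10]{SV79} yields a Borel measurable selection $(s,x_0,y_0,\mY_0,Z_0)\mapsto R_{s,x_0,y_0,\mY_0,Z_0}$ with $R_{s,x_0,y_0,\mY_0,Z_0}\in\mathscr{GW}(s,x_0,y_0,\mY_0,Z_0,C_{t,q})$. Here one uses that the function $v_0$ entering Definition~\ref{ge weak solution} is a fixed deterministic object, so that this selection is consistent across all initial times and data, and that $C_{t,q}$ does not depend on $v_0$.

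Next I would argue the required measurability of the initial data. The canonical process $(x,y,\mY,Z)$ on $\widetilde\Omega$ is continuous in $H^{-3}\times\mR^m\times\mR^{m\times m}\times H^{-3}$, hence progressively measurable with respect to $(\widetilde{\mathcal B}^0_t)_{t\geq0}$ and a fortiori with respect to $(\widetilde{\mathcal B}_t)_{t\geq0}$; since $\tau$ is a stopping time for the latter filtration, \cite[Lemma 1.2.4]{SV79} gives that $\tau$ and the stopped values $x(\tau(\cdot),\cdot), y(\tau(\cdot),\cdot), \mY(\tau(\cdot),\cdot), Z(\tau(\cdot),\cdot)$ are $\widetilde{\mathcal B}_\tau$-measurable (the $\mR^m$ and $\mR^{m\times m}$ components cause no difficulty as those spaces embed continuously). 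By Kuratowski's theorem, $L^2_\sigma\in\mathcal B(H^{-3})$ with $\mathcal B(L^2_\sigma)=\mathcal B(H^{-3})\cap L^2_\sigma$, so $\mathbf 1_{\{x(\tau)\in L^2_\sigma,\,Z(\tau)\in L^2_\sigma\}}$ is $\widetilde{\mathcal B}_\tau$-measurable and, composing with the selection above,
$$
\omega\mapsto R_{\tau(\omega),\,x(\tau(\omega),\omega)\mathbf 1_{\{x(\tau)\in L^2_\sigma\}},\,y(\tau(\omega),\omega),\,\mY(\tau(\omega),\omega),\,Z(\tau(\omega),\omega)\mathbf 1_{\{Z(\tau)\in L^2_\sigma\}}}
$$
is $\widetilde{\mathcal B}_\tau$-measurable. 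For $\omega$ in the good set $\{x(\tau)\in L^2_\sigma,\,Z(\tau)\in L^2_\sigma\}$ this gives a generalized probabilistically weak solution starting at the deterministic time $\tau(\omega)$ from the deterministic datum $(x(\tau(\omega),\omega),y(\tau(\omega),\omega),\mY(\tau(\omega),\omega),Z(\tau(\omega),\omega))$.

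Then I would concatenate at the deterministic time $\tau(\omega)$ via \cite[Lemma 6.1.1]{SV79}: there is a unique $\delta_\omega\otimes_{\tau(\omega)}R_{\tau(\omega),\dots}\in\mathscr{P}(\widetilde\Omega)$ satisfying \eqref{qomega g1} and \eqref{qomega2 g1}, and I set $Q_\omega:=\delta_\omega\otimes_{\tau(\omega)}R_{\tau(\omega),\dots}$ on the good set and $Q_\omega:=\delta_{(x,y,\mY,Z)(\cdot\wedge\tau(\omega),\omega)}$ otherwise. The $\widetilde{\mathcal B}_\tau$-measurability of $\omega\mapsto Q_\omega(B)$ is then checked first on cylinder sets $B=\{x(t_1)\in\Gamma_1,\dots,x(t_n)\in\Gamma_n\}$ (and their analogues in the other coordinates) by expanding $Q_\omega(B)$ as in the proof of Proposition~\ref{prop:1} into a finite sum of terms of the form $\mathbf 1_{[t_k,t_{k+1})}(\tau(\omega))\,\mathbf 1_{\Gamma_1}(x(t_1,\omega))\cdots R_{\tau(\omega),\dots}(\cdots)$, each $\widetilde{\mathcal B}_\tau$-measurable by the above, and then extending to all $B\in\widetilde{\mathcal B}$ by a monotone class / Dynkin argument. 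I expect no genuinely new obstacle here — it is a routine adaptation — the only point to be careful about is that the extra coordinates $\mY$ and $Z$ must be carried along in every measurability step and in the cylinder-set bookkeeping; in particular one must include $\{Z(\tau)\in L^2_\sigma\}$ alongside $\{x(\tau)\in L^2_\sigma\}$ in the definition of the good set, which is why \eqref{qomega g1} is stated on that intersection.
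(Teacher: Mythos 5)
Your proposal is correct and follows essentially the same approach as the paper, which simply states that the proof is identical to that of Proposition~\ref{prop:1} carried out on $\widetilde\Omega$ with Theorem~\ref{ge convergence 1} in place of Theorem~\ref{convergence}; the details you supply (compactness and measurability of the selection via \cite[Lemma 12.1.8, Theorem 12.1.10]{SV79}, $\widetilde{\mathcal B}_\tau$-measurability of the stopped coordinates including $\mY$ and $Z$, Kuratowski for $\{x(\tau)\in L^2_\sigma,\, Z(\tau)\in L^2_\sigma\}$, concatenation via \cite[Lemma 6.1.1]{SV79}, and the cylinder-set/Dynkin argument) are exactly the translation the paper has in mind. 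Your remark that $v_0$ is a fixed deterministic function with $C_{t,q}$ independent of $v_0$, so the measurable selection is well defined uniformly, is a worthwhile explicit observation that the paper leaves implicit.
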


\begin{proof}
	
	The proof of this result is identical to the proof of Proposition \ref{prop:1} applied to the extended path space $\widetilde\Omega$ instead of $\Omega_{0}$ and making use of Theorem~\ref{ge convergence 1} instead of Theorem \ref{convergence}.
\end{proof}

We proceed with an analogue to Proposition~\ref{prop:2 1}.

\begin{prp}\label{prop:2 1 ge}
	Let $x_{0}\in L^{2}_{\sigma}$.
	Let $P$ be a generalized probabilistically weak solution to the Navier--Stokes system \eqref{1} on $[0,\tau]$ starting at the time $0$ from the initial condition $(x_{0},0,0,{0})$. In addition to the assumptions of Proposition \ref{prop:g1 1}, suppose that there exists a Borel  set $\mathcal{N}\subset\widetilde{\Omega}_{\tau}$ such that $P(\mathcal{N})=0$ and for every $\omega\in \mathcal{N}^{c}$ it holds
	\begin{equation}\label{Q1 g1}
	\aligned
	&Q_\omega\big(\omega'\in\widetilde{\Omega}; \tau(\omega')=
	\tau(\omega)\big)=1.
	\endaligned
	\end{equation}
	Then the  probability measure $ P\otimes_{\tau}R\in \mathscr{P}(\widetilde{\Omega})$ defined by
	\begin{equation*}
	P\otimes_{\tau}R(\cdot):=\int_{\widetilde{\Omega}}Q_{\omega} (\cdot)\,P(d\omega)
	\end{equation*}
	satisfies $P\otimes_{\tau}R= P$ on the $\sigma$-algebra $\sigma(x(t\wedge \tau), y(t\wedge \tau), \mY(t\wedge \tau),{Z(t\wedge \tau)},t\geq0)$ and
	is a generalized probabilistically weak solution to the Navier--Stokes system \eqref{1} on $[0,\infty)$ with initial condition $(x_{0},0,0,{0})$.
\end{prp}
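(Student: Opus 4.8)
The plan is to mirror the proofs of Proposition~\ref{prop:2} and Proposition~\ref{prop:2 1} step by step, the only genuinely new feature being the two additional components $\mY$ and $Z$ of the canonical process. First, exactly as there, the identity $P\otimes_{\tau}R(A)=P(A)$ for every Borel set $A\in\sigma(x(t\wedge\tau),y(t\wedge\tau),\mY(t\wedge\tau),Z(t\wedge\tau),t\geq0)$ is immediate from \eqref{qomega g1} together with \eqref{Q1 g1}, since on $\mathcal N^{c}$ the measure $Q_{\omega}$ is carried by paths that agree with $\omega$ on $[0,\tau(\omega)]$ and satisfy $\tau(\omega')=\tau(\omega)$. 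The first part of (M1) then follows from $P\otimes_{\tau}R(x(0)=x_{0},\,y(0)=0,\,\mY(0)=0,\,Z(0)=0)=P(\cdots)=1$, while the integrability assertion in (M1), as well as $Z\in CL^{2}_{\sigma}$ almost surely, follow from (M3) combined with the growth bound $\|G(x)\|_{L_{2}(\mathbb R^{m};L^{2}_{\sigma})}\leq C(1+\|x\|_{L^{2}})$ and the fact that $Z\in CL^{2}_{\sigma}$ holds $\delta_{\omega}$-a.s. on $[0,\tau(\omega)]$ and $R$-a.s. on $[\tau(\omega),\infty)$.

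Next, (M3) is to be verified exactly as in Proposition~\ref{prop:2}, now for both the bound on $x$ and the bound on $Z$: in either case one splits $\big(\sup_{r\in[0,t]}\|\cdot(r)\|_{L^{2}}^{2q}+\int_{0}^{t}\|\cdot(r)\|^{2}\,dr\big)$ at $t\wedge\tau$, estimates the part on $[0,t\wedge\tau]$ by (M3) for $P$ and the part on $[t\wedge\tau,t]$ by (M3) for $R$ (for $Z$ anchored at the value $Z(\tau(\omega),\omega)$), and then uses (M3) for $P$ once more, together with the boundedness of $\tau$, to absorb $E^{P}\|x(\tau)\|_{L^{2}}^{2q}$ and $E^{P}\|Z(\tau)\|_{L^{2}}^{2q}$; the constant $C_{t,q}$ in Definition~\ref{ge weak solution} is chosen accordingly.

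The substance of the proof is (M2). That under $P\otimes_{\tau}R$ the process $y$ is an $m$-dimensional $(\widetilde{\mathcal B}_{t})_{t\geq0}$-Brownian motion is obtained as in Proposition~\ref{prop:2 1}: each component $y_{i}(\cdot\wedge\tau)$ is a continuous square integrable martingale under $P$ with cross variation $(t\wedge\tau)\delta_{i=j}$, each $y_{i}(\cdot)-y_{i}(\cdot\wedge\tau(\omega))$ is one under $R_{\tau(\omega),\dots}$ with cross variation $(t-t\wedge\tau(\omega))\delta_{i=j}$, and the computation leading to \eqref{ma}, carried out componentwise with the cross variation $\langle y_{i},y_{j}\rangle$ in place of the quadratic variation, shows that under $P\otimes_{\tau}R$ the $y_{i}$ are continuous square integrable martingales with $\langle y_{i},y_{j}\rangle_{t}=t\,\delta_{i=j}$, hence $y$ is Brownian by L\'evy's characterisation. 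For the three remaining identities in (M2) the plan is to set
\begin{gather*}
M^{x,y,i}_{t,s}:=\langle x(t)-x(s),e_{i}\rangle+\int_{s}^{t}\langle\div(x(r)\otimes x(r))-\Delta x(r),e_{i}\rangle\,dr-\int_{s}^{t}\langle e_{i},G(x(r))\,dy_{r}\rangle,\\
N^{Z,i}_{t,s}:=\langle Z(t)-Z(s),e_{i}\rangle-\int_{s}^{t}\langle\Delta Z(r),e_{i}\rangle\,dr-\int_{s}^{t}\langle e_{i},G(v_{0}+Z(r))\,dy_{r}\rangle,\\
J^{\mY}_{t,s}:=\mY(t)-\mY(s)-\int_{s}^{t}y(r)\otimes dy(r),
\end{gather*}
and to argue as in the final part of the proof of Proposition~\ref{prop:2 1}. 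Since $P$ is a generalized probabilistically weak solution on $[0,\tau]$ one has $P$-a.s. $M^{x,y,i}_{t\wedge\tau,0}=0$, $N^{Z,i}_{t\wedge\tau,0}=0$, $J^{\mY}_{t\wedge\tau,0}=0$, and since each $R_{\tau(\omega),\dots}$ is a generalized probabilistically weak solution on $[\tau(\omega),\infty)$ starting from $(x(\tau(\omega),\omega),y(\tau(\omega),\omega),\mY(\tau(\omega),\omega),Z(\tau(\omega),\omega))$ one has $R$-a.s. $M^{x,y,i}_{t,t\wedge\tau(\omega)}=0$, $N^{Z,i}_{t,t\wedge\tau(\omega)}=0$, $J^{\mY}_{t,t\wedge\tau(\omega)}=0$ for $t\geq\tau(\omega)$. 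Using \eqref{qomega g1}, \eqref{qomega2 g1}, \eqref{Q1 g1}, the elementary inequality $Q_{\omega}(A_{1}\cap\cdots\cap A_{k})\geq 1-\sum_{j}Q_{\omega}(A_{j}^{c})$, integrating in $P(d\omega)$, and adding the ``up to $\tau$'' and ``after $\tau$'' identities --- legitimate because the Lebesgue and It\^o integrals split additively over $[0,t\wedge\tau(\omega)]$ and $[t\wedge\tau(\omega),t]$, so that $M^{x,y,i}_{t,0}=M^{x,y,i}_{t\wedge\tau,0}+M^{x,y,i}_{t,t\wedge\tau}$ and likewise for $N^{Z,i}$ and $J^{\mY}$ --- one obtains that $P\otimes_{\tau}R$-a.s. $M^{x,y,i}_{t,0}=0$, $N^{Z,i}_{t,0}=0$, $J^{\mY}_{t,0}=0$ for all $e_{i}\in C^{\infty}(\mathbb T^{3})\cap L^{2}_{\sigma}$ and all $t\geq0$, which is precisely (M2); Chen's relation \eqref{eq:chen} for $\mY$ then holds automatically.

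The step I expect to be the main obstacle is, as in Proposition~\ref{prop:2} and Proposition~\ref{prop:2 1}, the passage from these ``locally in time'' martingale and integral identities to the global ones at the \emph{random} concatenation time $\tau$, which is only a $(\widetilde{\mathcal B}_{t})_{t\geq0}$-stopping time for the right-continuous filtration and hence fails to make $(Q_{\omega})$ a regular conditional distribution of $P\otimes_{\tau}R$ given $\widetilde{\mathcal B}_{\tau}$. Condition \eqref{Q1 g1} --- that $Q_{\omega}$ almost surely fixes the value of $\tau$ --- is exactly what repairs this and must be invoked at every point where an integral is split at $\tau$; the only genuinely new bookkeeping relative to the additive and linear cases is to check that \eqref{Q1 g1} also controls the component $Z$ (whose $R$-dynamics is initialised at $Z(\tau(\omega),\omega)$, the value carried over by the gluing, so that the two pieces of the $Z$-equation fit) and the iterated integral $\mY$ (whose defect $J^{\mY}$ is additive in the time partition, so that no cross term of the form $y(t\wedge\tau)\otimes(y(t)-y(t\wedge\tau))$ intervenes in the verification).
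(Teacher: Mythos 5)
Your proposal is correct and follows essentially the same route as the paper: the paper's proof of Proposition~\ref{prop:2 1 ge} simply invokes the argument of Proposition~\ref{prop:2 1}, then spells out the two additional identities for $\mY$ and $Z$ via the same split at $\tau(\omega)$, the conditions \eqref{qomega g1}, \eqref{qomega2 g1}, \eqref{Q1 g1}, and the elementary inequality $Q_{\omega}(A\cap B)\geq 1-Q_{\omega}(A^{c})-Q_{\omega}(B^{c})$. Your remarks that $J^{\mY}$ is additive (because one works with $\mY$ rather than the rough increment $\mX$, so no Chen cross term appears) and that the $R$-dynamics for $Z$ is anchored at $Z(\tau(\omega),\omega)$ are exactly the two small points one must check, and they hold as you say.
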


\begin{proof}

Most of the  proof follows from exactly the same argument as in Proposition~\ref{prop:2 1}.
	For (M2), we consider $z$ part and similar as the proof of Proposition~\ref{prop:2 1} we obtain 
	\begin{align*}
	&P\otimes_\tau R\bigg\{\mY(t)-\mY(0)
	=\int^{t}_0 y(r) \otimes dy(r), {t\geq0}\bigg\}
	\\&\quad=\int_{\widetilde\Omega} dP(\omega)Q_\omega\bigg\{\mY(t)-\mY(t\wedge\tau(\omega))
	=\int^{t}_{t\wedge\tau(\omega)}y(r)\otimes dy(r),
	\\&
	\hspace{3.5cm}\mY(t\wedge\tau(\omega))-\mY(0)
	=\int^{t\wedge\tau(\omega)}_0y(r)\otimes dy(r), t\geq0\bigg\}=1
	,
	\end{align*}
	and 
	\begin{align*}
		P\otimes_\tau R\bigg\{&\langle Z(t)-Z(0),e_i\rangle-\int_0^t\langle Z(r),\Delta e_i\rangle dr
		\\&=\int^{t}_0 \langle e_i, G(v_0+Z(r))dy(r), e_i\in C^\infty(\mT^3)\cap L^2_\sigma, t\geq 0\bigg\}
		=1
		.
		\end{align*}
	Hence the condition (M2) follows.
\end{proof}

\subsection{Application to solutions obtained through Theorem \ref{Main resultsg1}}
\label{sg:1.4}

For $\alpha\in (0,1)$ we  denote
$$\|\mX\|_{\alpha,[0,T]}:=\sup_{0\leq r<t\leq T}\frac{|\mX_{r,t}|}{|r-t|^{\alpha}},$$
and for $n\in\mathbb{N}$, $L>1$ and  $\delta\in(0,1/12)$  we define
\begin{equation*}
\aligned\tau_L^n(\omega)&= \inf\left\{t>0,\|y(\omega)\|_{C_t^{\frac{1}{2}-2\delta}}>(\ln\ln L-\frac{1}{n})\right\}\wedge \ln\ln L
\\&\qquad \wedge \inf\left\{T\geq 0, \|\mX\|_{1-4\delta,[0,T]}>(\ln\ln L-\frac{1}{n})\right\}
\wedge \inf\left\{t\geq 0,\|Z(t)\|_{L^2}> L-\frac1n\right\}.
\endaligned
\end{equation*}
Then the sequence $\{\tau^{n}_{L}\}_{n\in\mathbb{N}}$ is nondecreasing and we define
\begin{equation}\label{eq:tauL g1}
\tau_L:=\lim_{n\rightarrow\infty}\tau_L^n.
\end{equation}
Without additional regularity of the process $y$, it holds true that $\tau^{n}_{L}(\omega)=0$.
By Lemma~\ref{time} we obtain that $\tau_L^{n}$ is $(\widetilde{\mathcal{B}}_t)_{t\geq0}$-stopping time and consequently also  $\tau_L$ is a $(\widetilde{\mathcal{B}}_t)_{t\geq 0}$-stopping time as an increasing limit of stopping times.

On a  stochastic basis $(\Omega, \mathcal{F},\mathbf{P})$, we apply Theorem~\ref{Main resultsg1} and denote by $u$ and $z_0$ the corresponding solutions to the Navier--Stokes system \eqref{ns g} and to the linear equation \eqref{induction g'} with $q=0$ on $[0,T_{L}]$, where the stopping time $T_{L}$ is defined in \eqref{stopping time g} below. We recall that $u$ is adapted with respect to $(\mathcal{F}_{t})_{t\geq0}$. The process $z_0$ is the unique probabilistically strong solution hence also adapted  to $(\mathcal{F}_{t})_{t\geq0}$. We denote by $P$ the law of $(u,B,\int_0^{\cdot} B dB,z_0)$ and obtain the following result by similar arguments as in the proof of Proposition~\ref{prop:ext}.

\begin{prp}\label{prop:ext g1}
	The probability measure $P$ is a generalized probabilistically weak solution to the Navier--Stokes system \eqref{ns g} with the initial condition $(u(0),0,0,0)$ on $[0,\tau_{L}]$ in the sense of Definition~\ref{ge weak solution 1}, where $\tau_{L}$ was defined in \eqref{eq:tauL g1}.
\end{prp}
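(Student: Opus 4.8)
The plan is to follow the scheme used in the proofs of Proposition~\ref{prop:ext} and Proposition~\ref{prop:ext 1}, now on the enlarged path space $\widetilde\Omega$ which additionally records the iterated integral $\mY$ and the auxiliary process $Z$. First I would apply Theorem~\ref{Main resultsg1} on the stochastic basis $(\Omega,\mathcal F,(\mathcal F_{t})_{t\geq0},\mathbf P)$ to obtain the $(\mathcal F_{t})_{t\geq0}$-adapted solution $u$ to \eqref{ns g} on $[0,T_{L}]$, together with $z_{0}$, the probabilistically strong (hence adapted) solution to \eqref{eq:Z0} driven by $B$; set $\mY_{t}:=\int_{0}^{t}B\otimes dB$ in the It\^o sense, so that $P$ is the law of $(u,B,\mY,z_{0})$. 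The central first step is the identification
\begin{equation*}
\tau_{L}(u,B,\mY,z_{0})=T_{L}\qquad\mathbf P\text{-a.s.},
\end{equation*}
where $T_{L}$ is the stopping time from \eqref{stopping time g}. Indeed, evaluated along $(u,B,\mY,z_{0})$ the canonical process $y$ equals $B$, the increment $\mX_{r,t}=\mY(t)-\mY(r)-y(r)\otimes(y(t)-y(r))$ equals the It\^o rough-path lift $\mX^{B}_{r,t}$, and $Z$ equals $z_{0}$; since the maps $t\mapsto\|B\|_{C^{1/2-2\delta}_{t}}$, $T\mapsto\|\mX^{B}\|_{1-4\delta,[0,T]}$ and $t\mapsto\|z_{0}(t)\|_{L^{2}}$ are $\mathbf P$-a.s. continuous (by Kolmogorov's criterion, the standard regularity of the Brownian rough path, and $z_{0}\in CL^{2}_{\sigma}$ a.s. by \cite[Theorem~4.2.5]{LR15}), the dichotomy argument of Proposition~\ref{prop:ext} applies verbatim and, together with \eqref{eq:tauL g1}, yields the displayed identity.

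Next I would verify (M1)--(M3) of Definition~\ref{ge weak solution 1} with $s=0$. For (M1): $u(0)=v(0)$ is deterministic by the construction in Theorem~\ref{Main resultsg1} and $y(0)=\mY(0)=Z(0)=0$; integrability of $\int_{0}^{n\wedge\tau}\|G(x(r))\|^{2}_{L_{2}(\mathbb R^{m};L^{2}_{\sigma})}dr$ follows from the sublinear growth of $G$ in \eqref{eq:nl} combined with the moment bound \eqref{energy}, and $Z=z_{0}\in CL^{2}_{\sigma}$ a.s. by \cite[Theorem~4.2.5]{LR15}. For (M3): the bound on $x$ is precisely \eqref{energy}, while the bound on $Z$ is the standard $L^{2}$-energy estimate for \eqref{eq:Z0}, which under \eqref{eq:nl} holds with a constant independent of the deterministic profile $v_{0}$; choosing $C_{t,q}$ as the larger of the two constants gives the claim. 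For (M2): as $B$ is a Brownian motion, $y(\cdot\wedge\tau_{L})$ is (componentwise) a continuous square integrable $(\widetilde{\mathcal B}_{t})$-martingale with the prescribed cross variation. Because $u$ is $(\mathcal F_{t})$-adapted and solves \eqref{ns g} on $[0,T_{L}]$, and $\tau_{L}(u,B,\mY,z_{0})=T_{L}$, the stopped Navier--Stokes identity transfers to the canonical process on $\widetilde\Omega$; the crucial point, exactly as in Proposition~\ref{prop:ext}, is that the adaptedness of $u$ to the filtration generated by $B$ makes the path-space integral $\int_{0}^{t\wedge\tau}\langle e_{i},G(x(r))dy_{r}\rangle$ coincide with the genuine It\^o integral under $\mathbf P$. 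The identity $\mY(t\wedge\tau)-\mY(0)=\int_{0}^{t\wedge\tau}y(r)\otimes dy(r)$ is immediate from the definition of $P$ and is consistent with Chen's relation \eqref{eq:chen}, and the equation for $Z$ holds because $z_{0}$ solves \eqref{eq:Z0} driven by $B=y$. This would complete the verification.

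The step I expect to be the main obstacle is the identification $\tau_{L}(u,B,\mY,z_{0})=T_{L}$, and in particular its reliance on the $\mathbf P$-a.s. continuity of the rough-path norm $T\mapsto\|\mX^{B}\|_{1-4\delta,[0,T]}$: on the bare path space these quantities are only lower semicontinuous, which is exactly why $\tau_{L}$ is merely a stopping time for the right-continuous filtration $(\widetilde{\mathcal B}_{t})_{t\geq0}$ (cf. Lemma~\ref{time}) and why the enlarged path space carrying $\mY$ (and the auxiliary $Z$) is indispensable here. Once this continuity is available, the remaining measurability and transfer arguments are routine and parallel those of Section~\ref{s:nonuniqueI} and Section~\ref{s:nonuniquII}.
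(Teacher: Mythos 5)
Your proposal is correct and follows essentially the same approach as the paper: identify the canonical processes $(y,\mY,Z)$ under $P$ with $(B,\int B\,dB,z_0)$ so that $\mX$ becomes the It\^o Brownian rough path lift, invoke the $\mathbf P$-a.s.\ continuity of $t\mapsto\|B\|_{C^{1/2-2\delta}_t}$, $t\mapsto\|\mB\|_{1-4\delta,[0,t]}$ and $t\mapsto\|z_0(t)\|_{L^2}$ to run the dichotomy argument of Proposition~\ref{prop:ext} and obtain $\tau_L(u,B,\mY,z_0)=T_L$ $\mathbf P$-a.s., and then transfer (M1)--(M3) using adaptedness and the fact that $(u,B,\int B\,dB)$ solves \eqref{ns g} while $(z_0,B)$ solves \eqref{eq:Z0}. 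The paper's own proof is terse and delegates to Proposition~\ref{prop:ext}, but the steps you supply are precisely the ones being used.
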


\begin{proof}
	The proof is similar to the proof of Proposition \ref{prop:ext} once we note that from the definition of the canonical process it holds
	$$y\left(t,\left(u,B,\int BdB,{z_0} \right)\right)=B(t),\quad \mY\left(t,\left(u,B,\int BdB,{z_0} \right)\right)=\int_0^t B_rdB_r,$$
	$${Z\left(t,\left(u,B,\int BdB,{z_0} \right)\right)=z_0(t)},$$
 $$\mX_{s,t}\left(u,B,\int BdB,{z_0} \right)=\int_s^t  B_{s,r}\otimes dB_r:=\mB_{s,t}\quad \textrm{for } s,t\in [0,T_L] \quad \mathbf{P}\text{-a.s.},$$
 and that by Chen's relation \eqref{eq:chen} and the definition of $Z$
 $$t\mapsto \|B\|_{C_t^{1/2-2\delta}}\quad t\mapsto \|\mB\|_{1-4\delta, [0,t]},\quad {t\mapsto \|z_0(t)\|_{L^2}}$$ are continuous $\mathbf{P}$\text{-a.s.}
	In particular, the property (M2) in Definition \ref{ge weak solution 1} follows since $(u,B, \int BdB )$ satisfies \eqref{ns g} and $(z_0,B)$ satisfies \eqref{induction g'} with $q=0$.
\end{proof}

\begin{prp}\label{prp:ext2 1 ge}
	The probability measure $P\otimes_{\tau_{L}}R$ is a generalized probabilistically weak solution to the Navier--Stokes system \eqref{ns g} on $[0,\infty)$ in the sense of Definition \ref{ge weak solution}.
\end{prp}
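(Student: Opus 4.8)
The plan is to mimic the argument used for Proposition~\ref{prp:ext2 1} in the linear multiplicative case, since the only new ingredient is the presence of the extra variables $\mY$ and $Z$ in the path space. In light of Proposition~\ref{prop:g1 1} and Proposition~\ref{prop:2 1 ge}, it suffices to verify the key condition \eqref{Q1 g1}, i.e. that for $P$-a.e. $\omega$ we have $Q_\omega(\omega'\mapsto \tau_L(\omega')=\tau_L(\omega))=1$. First I would recall that under $P$, the variable $y$ coincides with the Brownian motion $B$, the variable $\mY$ with $\int_0^\cdot B\,dB$ (so that $\mX$ is the It\^o rough path lift $\mB$), and $Z$ coincides with the probabilistically strong solution $z_0$ to \eqref{eq:Z0}, each on $[0,T_L]$. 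In particular, by the definition of $T_L$ in \eqref{stopping time g} together with the analogue of \eqref{eq} and \eqref{eq1} (the identity $\tau_L(u,B,\int B\,dB,z_0)=T_L$ holds $\mathbf P$-a.s., which is exactly Proposition~\ref{prop:ext g1}), it follows that under $P$ the trajectories
$$
t\mapsto \|y\|_{C_t^{1/2-2\delta}},\qquad t\mapsto \|\mX\|_{1-4\delta,[0,t]},\qquad t\mapsto \|Z(t)\|_{L^2}
$$
are continuous up to $\tau_L$. Hence there is a $P$-null set $\mathcal N\subset\widetilde\Omega_{\tau_L}$ such that, for $\omega\in\mathcal N^c$, the stopped trajectory $(y,\mX,Z)(\cdot\wedge\tau_L(\omega))$ enjoys this continuity. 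This plays the role of \eqref{continuity1}.

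Next I would exploit the additivity/Chen-type decomposition of each of the three quantities across the time $\tau_L(\omega)$, exactly as $Z^{\omega'}$ was split in the proof of Proposition~\ref{prp:ext2}. For $y$ this is trivial: $y(t)-y(t\wedge\tau_L(\omega))$ is $\mathcal B^{\tau_L(\omega)}$-measurable. For $\mX$, Chen's relation \eqref{eq:chen} gives
$$
\mX_{r,t}=\mX_{r\wedge\tau_L(\omega),\, t\wedge\tau_L(\omega)} + \delta\mX_{\cdot}+\cdots,
$$
so that the increments of $\mX$ after $\tau_L(\omega)$, namely $t\mapsto \mX_{\tau_L(\omega)\vee\cdot,\ \cdot}$, are $\mathcal B^{\tau_L(\omega)}$-measurable; for $Z$ one uses the mild formulation of \eqref{eq:Z0} and the semigroup decomposition to write $Z(t)-Z(t\wedge\tau_L(\omega))$ as a $\mathcal B^{\tau_L(\omega)}$-measurable piece plus $(e^{(t-t\wedge\tau_L(\omega))\Delta}-I)Z(t\wedge\tau_L(\omega))$, just as for $\mathbb{Z}^{\omega'}_{\tau_L(\omega)}$. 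Then, using \eqref{qomega g1} and \eqref{qomega2 g1} to factor $Q_\omega$ as $\delta_\omega$ (before $\tau_L(\omega)$) times $R_{\tau_L(\omega),\dots}$ (after $\tau_L(\omega)$), the first factor has full measure by the continuity on $\mathcal N^c$, while for the second factor I would invoke that $R_{\tau_L(\omega),\dots}$ is a generalized probabilistically weak solution starting at the deterministic time $\tau_L(\omega)$: under it, $\omega'\mapsto y(\cdot)-y(\cdot\wedge\tau_L(\omega))$ is a Brownian motion, its It\^o lift $\mathbb{X}$ is $(1-4\delta)$-H\"older by standard rough path/Kolmogorov estimates, and $Z$ solves \eqref{eq:Z0} hence lies in $CL^2_\sigma$ by \cite[Theorem 4.2.5]{LR15}. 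This gives the continuity of the post-$\tau_L(\omega)$ increments $R_{\tau_L(\omega),\dots}$-a.s., so $Q_\omega(\omega'\mapsto (y,\mX,Z)(\cdot,\omega')$ continuous$)=1$ for all $\omega\in\mathcal N^c\cap\{x(\tau_L)\in L^2_\sigma,\ Z(\tau_L)\in L^2_\sigma\}$.

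Finally, with this continuity in hand one repeats verbatim the endgame of the proof of Proposition~\ref{prp:ext2}: on $Q_\omega$-a.e. trajectory one may replace the right-continuous stopping time $\tau_L$ by its version $\bar\tau_L$ defined through suprema over rational times of the three norms appearing in $\tau_L^n$, deduce $\tau_L(\omega')=\bar\tau_L(\omega')$ $Q_\omega$-a.s., and then combine with \eqref{qomega g1} (agreement of $\omega'$ and $\omega$ on $[0,\tau_L(\omega)]$) together with the fact that $\{\bar\tau_L=\tau_L(\omega)\}$ is $\widetilde{\mathcal B}^0_{\tau_L(\omega)}$-measurable to conclude $Q_\omega(\omega'\mapsto\tau_L(\omega')=\tau_L(\omega))=1$, i.e. \eqref{Q1 g1}. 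The main obstacle — and the only genuinely new point compared to the previous sections — is handling the rough-path variable: one must ensure that the iterated integral $\mX$ restricted to the post-$\tau_L(\omega)$ interval is again the It\^o lift of a Brownian motion under $R_{\tau_L(\omega),\dots}$ (so that its $(1-4\delta)$-H\"older norm has a.s. continuous trajectory in the upper endpoint), and that Chen's relation is compatible with the concatenation; both follow from the definition of a generalized probabilistically weak solution and \eqref{eq:chen}, but they are what forces the enlarged path space $\widetilde\Omega$ and the notion introduced in Definition~\ref{ge weak solution}. Having verified \eqref{Q1 g1}, Proposition~\ref{prop:2 1 ge} applies and yields that $P\otimes_{\tau_L}R$ is a generalized probabilistically weak solution on $[0,\infty)$, which is the assertion.
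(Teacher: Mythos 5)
Your proposal is correct and follows essentially the same route as the paper: reduce to \eqref{Q1 g1}, use the identifications of $y,\mX,Z$ under $P$ from Proposition~\ref{prop:ext g1}, exploit Chen's relation \eqref{eq:chen} and the factorization of $Q_\omega$ to transfer the a.s.\ continuity of the H\"older/Besov norms past $\tau_L(\omega)$, and then repeat the $\bar\tau_L$ endgame of Proposition~\ref{prp:ext2}. One small over-complication: the semigroup decomposition you invoke for $Z$ is unnecessary here — unlike the additive-noise case, where $Z^{\omega'}$ was a nontrivial functional of the trajectory defined via \eqref{eq:Z}, here $Z$ is itself a canonical coordinate of $\widetilde\Omega$, so $Z(t)-Z(t\wedge\tau_L(\omega))$ is $\widetilde{\mathcal B}^{\tau_L(\omega)}$-measurable by fiat and its $R_{\tau_L(\omega),\dots}$-a.s.\ continuity in $L^2_\sigma$ is part of (M1) of Definition~\ref{ge weak solution}, which is exactly how the paper handles it.
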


\begin{proof}
	In light of Proposition \ref{prop:g1 1} and Proposition \ref{prop:2 1 ge}, it only remains to establish \eqref{Q1 g1}. By Theorem \ref{Main resultsg1} and Proposition \ref{prop:ext g1} we know that
$$P(\omega: y(\cdot\wedge \tau_L)\in C^{1/2-\delta}\mR^m, {Z(\cdot\wedge \tau_L)\in CL^2_\sigma})=1,$$
and by $\mX_{s,t\wedge \tau_L}=\int_s^{t\wedge\tau_L} y_{s,r}\otimes dy_r$ for $ t>s$
$$P(\omega: \|\mX\|_{1-2\delta,[0,\tau_L]}<\infty)=1.$$
In other words,   there exists a $P$-measurable set $\mathcal{N}\subset \widetilde{\Omega}$ such that $P(\mathcal{N})=0$ and for $\omega\in \mathcal{N}^c$
\begin{equation*}
y(\cdot\wedge \tau_L(\omega))\in C^{1/2-\delta}\mR^m,\quad \|\mX(\omega)\|_{1-2\delta,[0,\tau_L(\omega)]}<\infty,\quad {Z(\cdot\wedge \tau_L(\omega))\in CL^2_\sigma}.
\end{equation*}
Moreover, by Chen's relation it holds that for all $\omega\in\mathcal{N}^c\cap \{x(\tau_L)\in L^2_\sigma,{Z(\tau_L)\in L^2_\sigma}\},$
\begin{equation*}\aligned
& Q_\omega\left(\omega': \|\mX\|_{1-2\delta,[0,T]}<\infty, y\in C_T^{1/2-\delta}\mR^m, {Z\in C_TL^2_\sigma}, T>0 \right)
\\&= Q_\omega\left(\omega': \|\mX\|_{1-2\delta,[0,\tau_L(\omega)]}<\infty, \|\mX\|_{1-2\delta,[\tau_L(\omega)\wedge T,T]}<\infty, y\in C_T^{1/2-\delta}\mR^m,  {Z\in C_TL^2_\sigma}, T>0\right)
\\&=\delta_\omega\left(\omega': y(\cdot\wedge \tau_L(\omega))\in  C^{1/2-\delta}\mR^m,  {Z(\cdot\wedge\tau_L(\omega))\in CL^2_\sigma},\|\mX\|_{1-2\delta,[0,\tau_L(\omega)]}<\infty\right) \\
&\qquad\times  R_{\tau_L(\omega),(x,y,\mY{,Z})(\tau_L(\omega),\omega)}\Big(\omega': y-y(\cdot\wedge\tau_L(\omega))\in C^{1/2-\delta}_T\mR^m,
 {Z-Z(\cdot\wedge\tau_L(\omega))\in C_TL^2_\sigma},
\\&\qquad\qquad\qquad\qquad\qquad\qquad\|\mX\|_{1-2\delta,[\tau_L(\omega)\wedge T,T]}<\infty, T>0\Big).
\endaligned
\end{equation*}
Here the first factor on the right hand side equals to $1$ for all  $\omega\in \mathcal{N}^c$.
Since $R_{\tau_L(\omega),(x,y,\mY,Z)(\tau_L(\omega),\omega)}$ is a generalized probabilistically weak solution starting at the deterministic time $\tau_{L}(\omega)$ from the deterministic initial condition $(x,y,\mY,Z)(\tau_{L}(\omega),\omega)$, the process $\omega'\mapsto y-y(\cdot\wedge\tau_L(\omega))$ is a $(\widetilde{\mathcal{B}}_{t})_{t\geq0}$-Wiener process  starting from $\tau_L(\omega)$ and $\mX_{s,t}=\int_s^t y_{s,r}\otimes dy_r$ for $t>s\geq\tau_L(\omega)$   under the measure $R_{\tau_L(\omega),(x,y,\mY,Z)(\tau_L(\omega),\omega)}$.
Thus we deduce that also the second factor equals to $1$.
To summarize, we have proved that  for all  $\omega\in \mathcal{N}^c\cap \{x(\tau_L)\in L^2_\sigma,{Z(\tau_L)\in L^2_\sigma}\}$,
$$
Q_\omega\left(\omega': y\in  C_T^{1/2-\delta}\mR^m, {Z\in C_TL^2_\sigma}, \|\mX\|_{1-2\delta,[0,T]}<\infty,T>0 \right)=1.
$$
Therefore we know that for all  $\omega\in \mathcal{N}^c\cap \{x(\tau_L)\in L^2_\sigma, {Z(\tau_L)\in L^2_\sigma}\}$, $\|y\|_{C_T^{1/2-2\delta}}, \|\mX\|_{1-4\delta,[0,T]}$ is continuous with respect to $T$ and $Z\in CL^2_\sigma$.  The proof is completed by  the same argument as Proposition~\ref{prp:ext2}.
\end{proof}

Now, we are ready  to conclude the proof of our main result Theorem \ref{Main resultsg2}.

\begin{proof}[Proof of Theorem \ref{Main resultsg2}]
	By \eqref{eq:env g} with $K=2$ and a similar argument as in the proof of Theorem~\ref{Main results2}, we deduce that the  generalized probabilistically weak solution $P\otimes_{\tau_L}R$ does not satisfy the energy inequality.
	 The solutions obtained in Theorem \ref{ge convergence 1} satisfy the usual energy inequality by Galerkin approximation.
	 Hence the two generalized probabilistically weak solutions starting from the initial condition $(u(0),0,0,0)$ are distinct, which by Corollary \ref{cor:1} implies   joint non-uniqueness in law, i.e. non-uniqueness of probabilistically weak solutions. In view of Theorem \ref{cherny} we finally deduce that the desired non-uniqueness in law, i.e., non-uniqueness of martingale solutions, holds as well.
\end{proof}

\section{Proof of Theorem~\ref{Main resultsg1}}
\label{s:88}

In this section, we fix an $m$-dimensional Brownian motion $B$ on a probability space  $(\Omega,\mathcal{F},\mathbf{P})$ with its normal filtration $(\mathcal{F}_t)_{t\geq0}$ and assume the coefficient $G$  to satisfy \eqref{eq:nl}. The principal difference between this setting and the setting of additive or linear multiplicative noise is that no transformation of the SPDE \eqref{ns g} into a PDE with random coefficients is available. Therefore we introduce a convex integration scheme which at each step additionally solves a parametrized stochastic Stokes equation with a nonlinear It\^o noise.
To be more precise, let  $v_{-1}\equiv v_0$ be given in Lemma~\ref{lem:v0 g} below.  At each step $q\in\mathbb{N}_{0}$, we construct a triple $(z_q,v_q, \mathring{R}_q)$ solving
\begin{equation}\label{induction g'}
\aligned
dz_q-\Delta z_q dt&=G(v_{q-1}+z_q)dB,
\\
\div z_q&=0,\\
z_q(0)&=0,
\endaligned
\end{equation}
\begin{equation}\label{induction g}
\aligned
\partial_tv_q-\Delta v_q +\div((v_q+z_q)\otimes (v_q+z_q))+\nabla p_q&=\div \mathring{R}_q,
\\
\div v_q&=0.
\endaligned
\end{equation}
In order  to obtain the desired iterative estimates  \eqref{inductionv} from the random PDE \eqref{induction g}, it is necessary to have a pathwise control of each $z_{q}$. This is not possible using stochastic It\^o integration theory. Instead, we make  use of rough path theory which we present in Appendix~\ref{s:D}. As explained in Section~\ref{s:d1}, if $v_{q-1}$ is adapted, then the unique rough path solution $z_{q}$ of \eqref{induction g'}, coincides $\mathbf{P}$-a.s. with the unique stochastic solution coming from the stochastic It\^o integral theory. In particular, $z_{q}$ is an $(\mathcal{F}_{t})_{t\geq 0}$-adapted process. This in turn permits to conclude that the next iteration $v_{q}$ is $(\mathcal{F}_{t})_{t\geq 0}$-adapted as well.

As before, we consider an increasing sequence $\{\lambda_q\}_{q\in\mathbb{N}}\subset \mathbb{N}$ which diverges to $\infty$, and a sequence $\{\delta_q\}_{q\in \mathbb{N}}\subset(0,1)$  which is decreasing to $0$. For $a\in\mathbb{N}$, $b\in\mathbb{N}$,  $\beta\in (0,1)$ to be chosen below we define
$$\lambda_q=a^{(b^q)}, \quad \delta_q=\lambda_q^{-2\beta}.$$
It will be seen that $\beta$ will be chosen sufficiently small and $a$ as well as $b$ will be chosen sufficiently large. Set  $\mB_{s,t}:=\int_s^t(B_r-B_s)\otimes dB_r$ and define for  $L>1$ and $0<\delta<1/12$
\begin{equation}\label{stopping time g}\aligned
T_L&:=T_L^1\wedge T_L^2,\\
T_L^1&:=\inf\left\{t\geq0, \|B\|_{C_t^{\frac12-2\delta}}\geq \ln\ln L\right\}\wedge \inf\left\{t\geq0,\|\mB\|_{1-4\delta,[0,t]}\geq \ln\ln L\right\}\wedge \ln\ln L,
\\
T_L^2&:=\inf\left\{t\geq0, \|z_0(t)\|_{L^2}\geq L\right\}.
\endaligned
\end{equation}
By the definition of Brownian motion and the properties of the solution to the heat equation \eqref{induction g'} for $q=0$, the stopping time $T_{L}$ is $\mathbf{P}$-a.s. strictly positive and it holds that $T_{L}\uparrow\infty$ as $L\rightarrow\infty$ $\mathbf{P}$-a.s.
Moreover, by Theorem \ref{zexistence} for  $t\in[0, T_L]$ and $L$ large enough and $q\in\mathbb{N}$
\begin{equation}\label{z g}\aligned
\| z_q\|_{C_tC^1}\leq L^{1/4}(1+\|v_{q-1}\|_{C^1_{t,x}}), \quad \|z_q\|_{C_t^{\frac{1}{2}-2\delta}L^\infty}\leq L^{1/2}(1+\|v_{q-1}\|_{C^1_{t,x}}),\\
\|z_0\|_{C_tL^2}\leq L, \quad \|z_0\|_{C_tC^1}\leq L^{1/4}(1+\|v_0\|_{C^1_{t,x}}),\quad \|z_0\|_{C_t^{\frac{1}{2}-2\delta}L^\infty}\leq L^{1/2}(1+\|v_0\|_{C^1_{t,x}}).
\endaligned
\end{equation}
Let $M_{0}(t)=L^{4}e^{4Lt}$. We postulate that the  iterative bounds  \eqref{inductionv} hold for  $(v_q,\mathring{R}_q)$.

\begin{lem}\label{lem:v0 g}
	For $L>1$ define
	$$
	v_0(t,x)=\frac{L^2e^{2Lt}}{(2\pi)^{\frac{3}{2}}}\left(\sin(x_3),0,0\right),
	$$
	and let
	$z_0$ solve the  equation \eqref{induction g'} with $v_{-1}\equiv v_0$.
Then the associated Reynolds stress is given by
	\begin{equation*}
	\mathring{R}_0(t,x)=\frac{(2L+1)L^{2}e^{2Lt}}{(2\pi)^{3/2}}\left(
	\begin{array}{ccc} 0 & 0 &-\cos(x_3)
	\\ 0 & 0 &0\\ -\cos(x_3) &0 &0
	\end{array}
	\right)+v_0\mathring{\otimes} z_0+ z_0\mathring{\otimes} v_0+z_0\mathring{\otimes} z_0.
	\end{equation*}
	Moreover,   all the estimates in \eqref{inductionv} on the level $q=0$ for $(v_{0},\mathring{R}_{0})$ as well as \eqref{aaa} are valid  provided \eqref{c:a3} and \eqref{z g} hold.
	In particular, we require that \eqref{c:a2} holds.
	Furthermore, the initial values $v_0(0,x)$ and $\mathring{R}_0(0,x)$ are deterministic.
\end{lem}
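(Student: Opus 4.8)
The statement is the $z_{0}$-dependent analogue of Lemma~\ref{lem:v0}, so the plan is to reuse that argument; the only genuinely new input is that $z_{0}$ now solves the stochastic heat equation \eqref{induction g'} with $v_{-1}\equiv v_{0}$, so it must be controlled through the pathwise bounds \eqref{z g} (which rest on Theorem~\ref{zexistence}) rather than through the Ornstein--Uhlenbeck estimates \eqref{z}. I would also note at the outset that $v_{0}$ is deterministic and $z_{0}$ is the unique probabilistically strong solution of \eqref{induction g'}, hence $(\mathcal{F}_{t})_{t\geq0}$-adapted, so the whole construction at level $0$ is adapted.

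First I would identify $\mathring R_{0}$ by a direct computation in \eqref{induction g} with $q=0$. Since $v_{0}$ is a function of $x_{3}$ only in its first component, it is divergence free, and $\partial_{t}v_{0}=2Lv_{0}$, $\Delta v_{0}=-v_{0}$, so $\partial_{t}v_{0}-\Delta v_{0}=(2L+1)v_{0}$. Using $\sin x_{3}=-\partial_{3}\cos x_{3}$, this equals $\div$ of the symmetric trace-free matrix appearing in the statement, multiplied by $(2L+1)L^{2}e^{2Lt}(2\pi)^{-3/2}$. Moreover $\div(v_{0}\otimes v_{0})=0$, because the only nonzero entry of $v_{0}\otimes v_{0}$, its $(1,1)$-entry, depends on $x_{3}$ alone; hence $\div(v_{0}\mathring\otimes v_{0})=-\frac13\nabla|v_{0}|^{2}$ is a pressure term. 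Writing $\div((v_{0}+z_{0})\otimes(v_{0}+z_{0}))-\div(v_{0}\otimes v_{0})=\div(v_{0}\otimes z_{0}+z_{0}\otimes v_{0}+z_{0}\otimes z_{0})$ and splitting each tensor product into its trace-free part plus a multiple of $\Id$ (the latter absorbed into $p_{0}$) then produces precisely the claimed formula for $\mathring R_{0}$, and the initial-value claims follow because $v_{0}(0,x)=L^{2}(2\pi)^{-3/2}(\sin x_{3},0,0)$ is deterministic while $z_{0}(0)=0$ by \eqref{induction g'}, so all $z_{0}$-terms in $\mathring R_{0}$ vanish at $t=0$.

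Then I would verify the three bounds in \eqref{inductionv} at level $q=0$, parallel to Lemma~\ref{lem:v0}. Using $\|\sin x_{3}\|_{L^{2}(\mathbb{T}^{3})}=(2\pi)^{3/2}/\sqrt2$ gives $\|v_{0}(t)\|_{L^{2}}=L^{2}e^{2Lt}/\sqrt2=M_{0}(t)^{1/2}/\sqrt2\leq M_{0}(t)^{1/2}$, which is the first bound (the sum $\sum_{1\le r\le 0}$ being empty). For the second, $\|v_{0}\|_{C^{1}_{t,x}}\leq 2(1+L)(2\pi)^{-3/2}M_{0}(t)^{1/2}\leq M_{0}(t)^{1/2}a^{4}=M_{0}(t)^{1/2}\lambda_{0}^{4}$, the last step being the right inequality in \eqref{c:a3}, which forces $2(1+L)\leq(2\pi)^{3/2}a^{4}$. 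For the third, I would estimate $\|\mathring R_{0}(t)\|_{L^{1}}$ by the triangle inequality, using $\|\cos x_{3}\|_{L^{1}(\mathbb{T}^{3})}=4(2\pi)^{2}$, $\|v_{0}(t)\|_{L^{2}}\leq M_{0}(t)^{1/2}$, and crucially the bound $\|z_{0}\|_{C_{t}L^{2}}\leq L$ from \eqref{z g} (i.e.\ from $T_{L}^{2}$ in \eqref{stopping time g}), obtaining $\|\mathring R_{0}(t)\|_{L^{1}}\lesssim M_{0}(t)^{1/2}L$; since $M_{0}(t)^{1/2}=L^{2}e^{2Lt}\geq L^{2}$, this is in turn $\lesssim M_{0}(t)/L$, and the left inequality of \eqref{c:a3} (with $c_{R}$ small relative to the universal constant) yields $M_{0}(t)/L\leq c_{R}M_{0}(t)\delta_{1}$. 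The condition \eqref{aaa}, $a^{\beta b}>3$, follows from $45\cdot(2\pi)^{3/2}<5\cdot(2\pi)^{3/2}a^{2\beta b}$ in \eqref{c:a3}, and \eqref{c:a2} is the first link of that chain.

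The main obstacle is bookkeeping rather than ideas: one must check that the pathwise control of $z_{0}$ on $[0,T_{L}]$ supplied by \eqref{z g} is strong enough that the three $z_{0}$-dependent terms in $\mathring R_{0}$ are absorbed into $c_{R}M_{0}(t)\delta_{1}$ under exactly the same compatibility conditions \eqref{c:a2}--\eqref{c:a3} as in the additive setting. This works because $M_{0}(t)^{1/2}\geq L^{2}$ dominates the powers of $L$ coming from the $z_{0}$-bounds; the nonlinear nature of the noise enters the lemma only through having used Theorem~\ref{zexistence} and \eqref{z g} in place of \eqref{z}.
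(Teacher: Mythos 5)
Your proof is correct and follows the same approach as the paper: the paper's own proof of Lemma~\ref{lem:v0 g} is a one-line reference back to Lemma~\ref{lem:v0}, and you correctly identify and implement the only modification needed, namely replacing the Ornstein--Uhlenbeck bounds \eqref{z} for $z_{0}$ by the pathwise bound $\|z_{0}\|_{C_{t}L^{2}}\leq L$ coming from \eqref{z g} and the stopping time $T_{L}^{2}$, and observe that the resulting $z_{0}$-contributions to $\|\mathring R_{0}\|_{C_{t}L^{1}}$ are still absorbed by $c_{R}M_{0}(t)\delta_{1}$ under the same compatibility conditions \eqref{c:a3}, \eqref{c:a2} since $M_{0}(t)^{1/2}\geq L^{2}$.
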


\begin{proof} The proof follows from the same argument as in Lemma \ref{lem:v0}.
\end{proof}

As part of the following result we also control  the difference $z_{q+1}-z_{q}$ on the time interval $[0,t]$, $t\in [0,T_{L}]$, in a pathwise manner in the space of controlled rough paths. We refer the reader to Appendix~\ref{s:D} and in particular to Definition~\ref{controlled1} where the corresponding norm
$\|\cdot\|_{\bar B,2\alpha,\gamma}$, $\alpha\in (\frac13,\frac12)$, $\gamma\in \mR$ was defined. We denote $\|\cdot\|_{\bar B,2\alpha,\gamma,t}$ the norm in $D^{2\alpha}_{\bar{B},\gamma}([0,t])$. In general, $\|\cdot\|_{\bar B,2\alpha,\gamma}$ is the norm for the pair process $(z,z')$, where $z'$ is the Gubinelli derivative of the controlled rough path $z$. In the following we use it for $z_q$ with $z_q'=G(z_q+v_{q-1})$ and we will write $\|z_q\|_{\bar B,2\alpha,\gamma,t}$ instead of $\|z_q,z_q'\|_{\bar B,2\alpha,\gamma,t}$ in this section.

\begin{prp}\label{main iteration g}
	\emph{(Main iteration)}
	Let  $L>1$ satisfying \eqref{c:a2} be given and let $(z_q,v_q,\mathring{R}_q)$ be an $(\mathcal{F}_t)_{t\geq0}$-adapted solution to \eqref{induction g'}, \eqref{induction g} satisfying \eqref{inductionv}. Then there exists a choice of parameters $a,b,\beta$ such that \eqref{c:a3} is fulfilled and  there exist $(\mathcal{F}_t)_{t\geq0}$-adapted processes
	$(z_{q+1}, v_{q+1},\mathring{R}_{q+1})$ which solve \eqref{induction g'}, \eqref{induction g}, obey \eqref{inductionv} at level $q+1$ and  for $t\in[0,T_{L}]$ we have for  $\delta>0$, $q\in\mathbb{N}_{0}$, $\alpha_0=\frac23+\kappa$, $\kappa>0$ small enough,
	\begin{equation}\label{iteration g}
	\|v_{q+1}(t)-v_q(t)\|_{L^2}\leq M_0(t)^{1/2}\delta_{q+1}^{1/2},\quad \|v_{q+1}-v_q\|_{C^{\alpha_0}_{{t}}B^{-5-\delta}_{1,1}}\leq M_0(t)^{1/2}\delta_{q+1}^{1/2},
	\end{equation}
	and
	\begin{align}\label{ite z}
	\|z_{q+1}-z_q\|_{\bar B,\alpha_0,2\alpha_0,{t}}\leq M_0(t)^{1/2}\delta_{q+1}^{1/2}.
	\end{align}
	Furthermore, if $v_q(0), \mathring{R}_q(0)$ are deterministic, so are $v_{q+1}(0), \mathring{R}_{q+1}(0)$.
\end{prp}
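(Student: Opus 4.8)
The proof follows the scheme of Proposition~\ref{main iteration} in the additive case, with the essential new ingredient that the stochastic corrector $w^{(s)}_{q+1}=z_{q+1}-z_\ell$ must now be controlled pathwise via rough path theory, since $z_{q+1}$ solves the nonlinear It\^o equation \eqref{induction g'} rather than a simple truncation. The plan is to carry out the following steps in order. First, I would solve \eqref{induction g'} at level $q+1$: given the $(\mathcal{F}_t)_{t\geq0}$-adapted iterate $v_q$ (which is $C^1_{t,x}$ by the inductive bound \eqref{inductionv}), Theorem~\ref{zexistence} provides a unique rough path solution $z_{q+1}$ with $z'_{q+1}=G(z_{q+1}+v_q)$, which by Section~\ref{s:d1} coincides $\mathbf{P}$-a.s. with the It\^o solution and is therefore $(\mathcal{F}_t)_{t\geq0}$-adapted; moreover, because $v_q(0)$ is deterministic and the equation is driven by $B$, the initial data remains deterministic. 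The bounds \eqref{z g} give the pathwise control $\|z_{q+1}\|_{C_tC^1}\lesssim L^{1/4}(1+\|v_q\|_{C^1_{t,x}})$ on $[0,T_L]$, with the crucial observation that the $C^1$ divergence in $q$ can always be absorbed by a factor coming from $v_q$ (or from the small parameter $\ell$), whereas $z_{q+1}$ converges in $L^2$ and in the controlled-rough-path norm. The estimate \eqref{ite z} for $z_{q+1}-z_q$ follows from the stability of the rough path solution map (Theorem~\ref{zexistence} / Appendix~\ref{s:D}) applied to the difference equation, whose inhomogeneity is $(G(z_{q+1}+v_q)-G(z_q+v_{q-1}))dB$; here one uses that $G\in C^3_b$ so that the Lipschitz-type dependence on $v_q-v_{q-1}$ combined with the already established \eqref{iteration g} at level $q$ and the regularity $v_q-v_{q-1}\in C^{\alpha_0}_tB^{-5-\delta}_{1,1}$ closes the loop. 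Because of the worse time regularity of the building blocks required to match the complementary Young regularity $\alpha_0=2/3+\kappa$ of Brownian motion, one must lower the spatial regularity and use the new bounds for the intermittent jets from Lemma~B.2, leading to convergence of $v_q$ in $C^{\alpha_0}_tB^{-5-\delta}_{1,1}$ — this is the reason for restricting to cylindrical (spatially smoothing) noise.

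\textbf{Construction of the perturbation.} Second, with $z_{q+1}$ in hand, I would mollify $v_q$, $\mathring{R}_q$ and $z_q$ in space and time (preserving adaptedness via mollifiers supported in $\mathbb{R}^+$), obtaining $v_\ell,\mathring{R}_\ell,z_\ell$ satisfying a system analogous to \eqref{mollification} with a commutator stress $R_{\mathrm{com}}$. Third, exactly as in Section~\ref{s:con}, I would define the amplitude functions $a_{(\xi)}$ from $\rho$ and $\mathring{R}_\ell$ using the geometric lemma (Lemma~\ref{geometric}), and set
$$
w_{q+1}=w^{(p)}_{q+1}+w^{(c)}_{q+1}+w^{(t)}_{q+1},\qquad v_{q+1}=v_\ell+w_{q+1},
$$
all $(\mathcal{F}_t)_{t\geq0}$-adapted since the $a_{(\xi)}$ depend only on $\mathring{R}_\ell$ (hence on $v_q$ and $z_q$, both adapted) and the intermittent jets are deterministic. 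The verification of the $L^2$ and $C^1_{t,x}$ bounds in \eqref{inductionv} at level $q+1$, and of the first estimate in \eqref{iteration g}, is identical to Section~\ref{sss:v}; the new second estimate in \eqref{iteration g}, the $C^{\alpha_0}_tB^{-5-\delta}_{1,1}$ bound on $w_{q+1}$, is obtained by combining the improved time regularity of the amplitude functions (they are $C^1_t$ with controlled norms) with the spatial smallness of $w_{q+1}$ in negative Besov spaces, using Lemma~B.2 for the jets.

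\textbf{The Reynolds stress and the main obstacle.} Fourth, I would define $\mathring{R}_{q+1}=R_{\mathrm{lin}}+R_{\mathrm{cor}}+R_{\mathrm{osc}}+R_{\mathrm{com}}+R_{\mathrm{com}1}$ via the inverse divergence operator $\mathcal{R}$, exactly as in Section~\ref{s:def}, where now $R_{\mathrm{com}1}$ collects the terms involving $z_{q+1}-z_\ell$ (the stochastic corrector). The linear, corrector and oscillation errors are estimated word-for-word as in Section~\ref{sss:R}, using the bounds on $a_{(\xi)}$, the jets, and $v_\ell$; the $C^1$-norm of $z_q$ appearing in $R_{\mathrm{lin}}$ and $R_{\mathrm{com}}$ is controlled using \eqref{z g} and absorbed by a small power of $\lambda_{q+1}$ (this is where $z_q$ diverging in $C^1$ but the divergence being subexponential in $q$, namely $\lesssim L^{1/4}\lambda_q^4$, is used). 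The genuinely new term is $R_{\mathrm{com}1}$ involving $z_{q+1}-z_\ell=(z_{q+1}-z_q)+(z_q-z_\ell)$: the first piece is controlled by \eqref{ite z}, the second by a standard mollification estimate in time using \eqref{z g}; both must be shown to be $\ll c_R\delta_{q+2}M_0(t)$ by choosing $a$ large and $\beta$ small, respecting all compatibility conditions including \eqref{c:a3} and the ones from Section~\ref{s:c}. I expect the \textbf{main obstacle} to be the bookkeeping of the rough path estimates for $z_{q+1}-z_q$: one must verify that the constants in Theorem~\ref{zexistence} and in the stability of the controlled-rough-path solution map depend only on $L$ (through $T_L$) and on the $C^3_b$-norm of $G$, and crucially \emph{not} on $q$, so that iterating does not cause a blow-up — this is exactly the difficulty flagged in the Remark after Theorem~\ref{Main resultsg2}, and it is the reason the stopping time $T_L$ (bounding $\|B\|_{C^{1/2-2\delta}}$ and $\|\mathbb{B}\|_{1-4\delta}$) is indispensable. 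Once all five error terms are bounded by $c_R\delta_{q+2}M_0(t)$ on $[0,T_L]$, the inductive estimate \eqref{inductionv} for $\mathring{R}_{q+1}$ follows and the proof is complete; the determinacy of $v_{q+1}(0),\mathring{R}_{q+1}(0)$ is inherited through each step as in the additive case.
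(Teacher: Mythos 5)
Your proposal is correct and follows the same architecture as the paper's proof: solve the linear rough SPDE at level $q+1$ with coefficient $v_q$ to produce an adapted $z_{q+1}$, mollify, build $w_{q+1}$ from the intermittent jets with the modified parameter $r_\perp=\lambda_{q+1}^{-27/28}$ (so that the amplitude--jet products gain the complementary Young time regularity $\alpha_0=\frac23+\kappa$), establish the negative-Besov bound on $v_{q+1}-v_q$ via Lemma~\ref{l:83}, and feed it together with the stability estimate of Theorem~\ref{zexistence1} into \eqref{ite z}. You have also correctly identified that $R_{\mathrm{com1}}$ collects the stochastic-corrector terms $z_{q+1}-z_\ell$, and that the $C^1$-divergence of $z_q$ must be absorbed by small powers of $\lambda_{q+1}$ coming from the mollification scale.

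One place where you are slightly loose and the paper is careful: the constants in the rough-path stability estimate are \emph{not} a priori independent of $q$ --- the quantity $\bar N$ in Theorem~\ref{zexistence1} depends on $\|z_q\|_{\bar{B},\alpha_0,2\alpha_0,t}$ and $\|v_q\|_{C_t^{\alpha_0}B^{-5-\delta}_{1,1}}$. One must first prove these are bounded uniformly in $q$ by $L$-dependent quantities (the paper obtains $\|z_q\|_{\bar B,\alpha_0,2\alpha_0,t}\leq(2L+2)LM_0(t)^{1/2}$ and $\|v_q\|_{C_t^{\alpha_0}B^{-5-\delta}_{1,1}}\leq(2L+2)M_0(t)^{1/2}$), after which the resulting factor is of order $M_0(\ln\ln L)^{M_0(\ln\ln L)^7}$ --- enormous but $q$-independent --- and this tower is then absorbed by the parameter choice \eqref{c:M g}, which in particular forces $b\gtrsim 28\bar M_L$ with $\bar M_L=\lceil L^2 M_0(\ln\ln L)^{M_0(\ln\ln L)^7}\rceil$. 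Spelling out this uniform bound and the resulting constraint on $b$ is the one quantitative step your plan gestures at but does not pin down.
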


Having Proposition \ref{main iteration g} at hand, we may prove Theorem \ref{Main resultsg1}.

\begin{proof}[Proof of Theorem \ref{Main resultsg1}]
	The most part of the proof follows by exactly the same argument as in the proof of Theorem \ref{Main results1}.
	Starting from $(z_0,v_0,\mathring{R}_0)$ given in Lemma \ref{lem:v0 g},   Proposition \ref{main iteration g} gives a sequence $(z_q,v_q, \mathring{R}_q)$ satisfying (\ref{inductionv}) and (\ref{iteration g}). Hence, similarly as in the proof of Theorem \ref{Main results1}
  we obtain a limiting solution $v=\lim_{q\rightarrow\infty}v_q$, which lies in $C([0,T_L],H^{\gamma})$ and \eqref{eq:vvv} holds.
    Since $v_q$ is $(\mathcal{F}_t)_{t\geq0}$-adapted for every $q\geq0$, the limit
	$v$ is $(\mathcal{F}_t)_{t\geq0}$-adapted as well. By \eqref{ite z} we obtain the convergence of $z_q$ in $\bar D_{B, 2\alpha_0}^{\alpha_0}([0,T_{L}])$ introduced in Definition \ref{controlled1}. We denote by $z$ the limit and note that it is also $(\mathcal{F}_{t})_{t\geq 0}$-adapted as a limit of adapted processes.
	
 Hence we could take limit in the equations \eqref{induction g'}, \eqref{induction g}  and conclude that $u=v+z$ satisfies the Navier--Stokes system \eqref{ns g} in the analytically  weak sense before the time $T_L$. In order to pass to the limit in the stochastic integral, we recall that by Section~\ref{s:d1} the rough integral in \eqref{induction g'} on the level $q$ coincides with the It\^o stochastic integral. By the $\mathbf{P}$-a.s. convergence of $v_{q}$ and $z_{q}$,  we may therefore pass to the limit $\lim_{q\rightarrow\infty} \int G(v_q+z_{q+1})d B=\int G(v+z)d B$ in $L^2(\Omega)$ in the It\^o formulation.  
 Moreover, the limit stochastic integral again coincides with the corresponding rough path integral.

	By the same argument as in the proof of Theorem \ref{Main results1} we obtain on $\{T_L\geq T\}$
	\begin{equation}\label{eq:env1 g}
	\|v(T)\|_{L^{2}}> (\|v(0)\|_{L^{2}}+L)e^{LT}.
	\end{equation}
	In other words,  given $T>0$ and the universal constant $c_{R}>0$, we can choose $L=L(T,c_{R})>1$ large enough so that \eqref{c:a2} as well as \eqref{c:7} holds and consequently \eqref{eq:env1 g} is satisfied. Moreover, in view of  the definition of the stopping times \eqref{stopping time g}, we observe that for a given $T>0$ we may possibly increase  $L$ so that the set $\{T_{L}\geq T\}$ satisfies $\mathbf{P}(T_{L}\geq T)>\kappa$.
	
 To verify \eqref{energy} and \eqref{eq:env g}, we use It\^{o}'s formula for $z_q$ and let $q\to \infty$ to have for $p\in \mN$
 $$\mathbf{E}\left[\sup_{t\in [0,T_L]}\|z(t)\|_{L^{2}}^{2p}+\int_0^{T_L}\|z(t)\|_{H^1}^{2}dt\right]\leq C_{t,p},$$
which combined with \eqref{eq:vvv} implies \eqref{energy}. We also have
 $$\mathbf{E}\left[\|z(T)\|_{L^{2}}^{2}\right]\leq C_GT.$$
 We then apply \eqref{eq:env1 g}  on $ \{T_{L}\geq T\}$ together with $\frac12v^2\leq z^2+u^2$ to obtain
	$$
	\mathbf{E}\left[1_{T_L\geq T}\|u(T)\|_{L^{2}}^2\right]\geq \frac12\mathbf{E}\left[1_{T_L\geq T}\|v(T)\|_{L^{2}}^2\right]-\mathbf{E}\left[\|z(T)\|_{L^{2}}^2\right]> \frac12\kappa(\|v(0)\|_{L^{2}}+L)^2e^{2LT}-C_GT.
	$$
	Thus, since $u(0)=v(0)$ we may possibly increase the value of $L$ depending on $K$  and $C_G$ in order to conclude the desired lower bound \eqref{eq:env g}. The initial value $v(0)$ is deterministic by our construction.  Finally, we set $\mathfrak{t}:=T_{L}$ which finishes the proof.
\end{proof}

\subsection{The main iteration -- proof of Proposition~\ref{main iteration g}}
\label{s:8}

\subsubsection{Choice of parameters}\label{s:c g}

Let us  summarize how the parameters need to be chosen  in order to fulfill  all the compatibility conditions in the sequel. First, for a sufficiently small  $\alpha\in (0,1)$ to be chosen below, we define $\ell\in (0,1)$ as in \eqref{ell1}.
The last condition in \eqref{ell}  together with \eqref{c:a3} leads to
$$
45\cdot (2\pi)^{3/2}<5\cdot (2\pi)^{3/2}a^{2\beta b}\leq c_{R}L\leq
c_{R}\frac{a^{4}\cdot (2\pi)^{3/2}-1}{2}.
$$
We remark that the reasoning from the beginning of Section \ref{ss:it} remains valid for this new condition: we may freely increase the value of $a$  provided we make $\beta$ smaller at the same time. In addition, we will require  $\alpha b>32$ and $\alpha>{16}\beta b^2$.

In order to verify the inductive estimates \eqref{inductionv} especially dealing with the terms with rough paths, it will also be necessary to absorb various expressions including  $M_{0}(t)^{M_0(t)^7}$ with $t=\ln\ln L$.  It will be seen that the strongest such requirement is  for $q\in\mathbb{N}$
\begin{equation}\label{c:M g}
\aligned
L^2M_{0}(\ln\ln L)^{M_0(\ln\ln L)^7}\lambda_{q}^{20\alpha-\frac{1}{42}}\leq \frac{c_R\delta_{q+2}}{10},\quad L^2M_{0}(\ln \ln L)^{M_0(\ln \ln L)^7}(\lambda_q^{-\frac{3\alpha}2}\lambda_{q-1}^{-2})^{\frac16}\lambda_{q-1}^4\leq \frac{c_R\delta_{q+2}}{10},
\endaligned
\end{equation}
needed in the estimates of $R_{q+1}$. In other words, for $\bar M_L:=\lceil L^2M_{0}(\ln \ln L)^{M_0(\ln \ln L)^7}\rceil $
$$
\bar{M}_La^{20\alpha-\frac{1}{42}+2b^2\beta}\ll1,\quad \bar{M}_La^{-\frac{b\alpha}4+\frac{11}3+2b^3\beta}\ll1,
$$
and choosing  $b=28\bar M_L\vee (33\cdot 28\cdot 80)$,  (this choice is coming from the fact that with our choice of $\alpha$ below we want to guarantee that $\alpha b>32$ as well as the fact that $b$ is a multiple of $28$ needed for the parameters  in the intermittent jets, cf. Appendix \ref{s:B}) and choosing $a$ large such that $\bar{M}_{L}<b\leq a^{\alpha/2}$ leads to
$$a^{20\alpha+\alpha/2-\frac{1}{42}+2b^2\beta}\ll1,\quad a^{\alpha/2-\frac{b\alpha}{4}+\frac{11}3+2b^3\beta}\ll1.$$
In view of $\alpha>16\beta b^2$, this  can be achieved by choosing $a$ large enough and $\alpha=80^{-1}\cdot 28^{-1}$. This choice also satisfies $\alpha b>32$ required above and the condition $\alpha>16\beta b^2$ can be achieved by choosing $\beta$ small. It is also compatible with all the other requirements needed below.

From now on, the parameters $\alpha$ and $b$ remain fixed and the  free parameters are $a$ and $\beta$ for which we already have a lower, respectively upper, bound. In the sequel, we will possibly increase $a$ and decrease $\beta$ at the same time in order to preserve all the above conditions and to fulfil further conditions appearing below.

\subsubsection{Verification of the inductive estimates for $v_{q+1}$}

Note that $v_\ell$, $z_\ell$, $\mathring{R}_\ell$ are defined the same as in Section \ref{s:p}. We could check that $v_\ell$ satisfies the following equation
\begin{equation*}
	\aligned
	\partial_tv_\ell -\Delta v_\ell+\div((v_\ell+z_\ell)\otimes (v_\ell+z_\ell))+\nabla p_\ell&=\div (\mathring{R}_\ell+R_{\textrm{com}})
	\\\div v_\ell&=0,
	\endaligned
\end{equation*}
with
\begin{equation*}
	R_{\textrm{com}}=(v_\ell+z_\ell)\mathring{\otimes}(v_\ell+z_\ell)-((v_q+z_q)\mathring{\otimes}(v_q+z_q))*_x\phi_\ell*_t\varphi_\ell.
\end{equation*}
Hence, the estimates \eqref{error}-\eqref{eq:vl2} hold in this case.

For the intermittent jets
we choose the following parameters
\begin{equation}\label{parameter g}
\aligned
\lambda&=\lambda_{q+1},
\qquad
r_\|=\lambda_{q+1}^{-4/7},
\qquad r_\perp=\lambda_{q+1}^{-{27}/{28}},
\qquad
\mu=\lambda_{q+1}^{9/7},
\endaligned
\end{equation}
where we note in particular the new value of $r_{\perp}$ as compared to \eqref{parameter}. This is needed in order to obtain the sufficient (Young)  time regularity of $v_{q+1}-v_{q}$ in \eqref{iteration g}, which in turn is employed for the rough path control of $z_{q+1}-z_{q}$ in Section~\ref{s:z} below.

Next, we define $\rho, a_{(\xi)}$, $w_{q+1}^{(p)}, w_{q+1}^{(c)}$ and $w_{q+1}^{(t)}$ as in Section \ref{s:con} with the new parameters given in \eqref{parameter g}. By exactly the same argument as in Section \ref{s:con}, all the estimates and equalities in \eqref{rho}-\eqref{vq} hold. When applying Lemma \ref{Lp} we choose $\zeta=\ell^{-8}$ with $(\ell^{-8})^5<(\lambda_{q+1}r_\perp)=\lambda_{q+1}^{1/{28}}$, where we use $\alpha=\frac1{28\cdot 80}$. Hence, \eqref{estimate wqp}-\eqref{correction est} also hold. The estimation in \eqref{temporal est1} follows the same way except for the last equality, i.e. we have
 \begin{equation*}
 \aligned
 \|w_{q+1}^{(t)}\|_{C_tL^p}\lesssim\delta_{q+1}M_0(t) \ell^{-4}r_\perp^{2/p-1}r_\|^{1/p-2}(\mu^{-1}r_\perp^{-1}r_\|).
 \endaligned
 \end{equation*}
 By the choice of the parameters, we have $M_0(\ln\ln L)^{1/2}\lambda_{q+1}^{4\alpha-1/{28}}<1$, which implies that \eqref{corr temporal} holds in this case. As a result, we obtain for $t\in[0, T_L]$
 \begin{equation*}
 \aligned
 \|w_{q+1}\|_{C_tL^2}&\leq M_0(t)^{1/2}\delta_{q+1}^{1/2}\left(\frac{1}{2}+C\lambda_{q+1}^{24\alpha-{11}/{28}}+CM_0(t)^{1/2}\delta_{q+1}^{1/2}\lambda_{q+1}^{8\alpha-1/{28}}\right)
 \leq \frac{3}{4}M_0(t)^{1/2}\delta_{q+1}^{1/2}.
 \endaligned
 \end{equation*}
Hence the first inequality in  \eqref{iteration g} holds.

\subsubsection{Estimate of $\|z_q-z_{q+1}\|_{{\bar{B},\alpha_{0},2\alpha_{0},t}}$}
\label{s:z}

In the following, we intend to estimate $\|v_{q}-v_{q+1}\|_{C_{t}^{\alpha_0}B^{-\gamma}_{1,1}}$ for some $\alpha_0>\frac23$ and $\gamma>0$. This is  required for the rough path estimate of $\|z_q-z_{q+1}\|_{{\bar{B},\alpha_{0},2\alpha_{0},t}}$ on the time interval $[0,t]$, cf. Theorem~\ref{zexistence1}.

	To this end, we first estimate $\|w_{q+1}\|_{C^{\alpha_0}_{{t}}B^{-\gamma}_{1,1}}$.
The idea is   to gain some negative power of $\lambda_{q+1}$ from $\|W_{(\xi)}\|_{C_t^{\alpha_0} B^{-\gamma}_{1,1}}$. By paraproduct estimates similar to \cite[Lemma~A.7]{GH21} and applying Lemma~\ref{l:83} we deduce for any $\delta>0$ and  $\gamma=5+\delta$
\begin{align*}
\|W_{(\xi)}\|_{C_t^{\alpha_0} B^{-\gamma}_{1,1}}&\lesssim \|\psi_{(\xi)}\|_{C_t^{\alpha_0} H^{\gamma+\delta}}\|\phi_{(\xi)}\|_{ H^{-\gamma}},
\\&\lesssim \mu^{\alpha_0}(\lambda_{q+1}r_{\perp}/r_{\parallel})^{\alpha_0+\gamma+\delta}(\lambda_{q+1})^{-\gamma}r_\perp^{-\delta}\lesssim \lambda^{{(53\alpha_0+44\delta-11\gamma)}/{28}}_{q+1}.
\end{align*}

Using the paraproduct estimates again, we have for $\gamma=5+\delta$
\begin{align*}
\|w_{q+1}^{(p)}\|_{C_t^{\alpha_0} B^{-\gamma}_{1,1}}&\lesssim \|a_{(\xi)}\|_{C_{x,t}^{\alpha_0+\gamma+\delta}}\|W_{(\xi)}\|_{C_t^{\alpha_0} B^{-\gamma}_{1,1}}
\\&\lesssim \delta^{1/2}_{q+1}M_0(t)^{1/2}\ell^{-2-5\lceil\alpha_0+\gamma+\delta\rceil}\lambda^{{(53\alpha_0+{44\delta}-11\gamma)}/{28}}_{q+1},
\end{align*}
where by $\lceil \alpha_{0}+\gamma+\delta\rceil$ we denote the  smallest integer bigger than $ \alpha_{0}+\gamma+\delta$. In view of \eqref{bounds}, we find (applying interpolation similarly to Lemma~\ref{l:83})
\begin{align*}\|w_{q+1}^{(c)}\|_{C_t^{\alpha_0} L^1}&\lesssim \|a_{(\xi)}\|_{C_{x,t}^{\alpha_0}}\|W^{(c)}_{(\xi)}\|_{C_t^{\alpha_0} L^1}+\|a_{(\xi)}\|_{C_{x,t}^{2+\alpha_0}}\|V_{(\xi)}\|_{C_t^{\alpha_0} W^{1,1}}
\\&\lesssim \delta^{1/2}_{q+1}M_0(t)^{1/2}\ell^{-17}
r_\perp r_{\|}^{1/2}(r_\perp r_{\|}^{-1}+\lambda_{q+1}^{-1})\Big(\frac{r_\perp \lambda_{q+1}\mu}{r_{\|}}\Big)^{\alpha_0}
\\&\lesssim \delta^{1/2}_{q+1}M_0(t)^{1/2}\ell^{-17}
\lambda_{q+1}^{{53\alpha_0}/{28}-{23}/{14}}.
\end{align*}
We choose $p=\frac{35}{35-14\alpha}>1$ so that it holds  in particular that $r_\perp^{2/p-2}r_\|^{1/p-1}\leq \lambda_{q+1}^\alpha$. Then we obtain for $\alpha_0=2/3+\kappa$ with $\kappa>0$ small
\begin{align*}
\|w_{q+1}^{(t)}\|_{C_t^{\alpha_0} L^p}&\lesssim \mu^{-1}\|a_{(\xi)}\|_{C_{x,t}^{\alpha_0}}\|a_{(\xi)}\|_{C_{x,t}}\|\phi_{(\xi)}\|_{ L^{2p}}^2\|\psi_{(\xi)}\|_{C_t^{\alpha_0}L^{2p}}\|\psi_{(\xi)}\|_{C_tL^{2p}}
\\&\lesssim \delta_{q+1}M_0(t)\ell^{-9}r_{\perp}^{2/p-2}r_{\|}^{1/p-1}\mu^{-1}\Big(\frac{r_\perp \lambda_{q+1}\mu}{r_{\|}}\Big)^{\alpha_0}
\\&\lesssim\delta_{q+1}M_0(t)\ell^{-9}\lambda_{q+1}^{{53}\alpha_0/{28}+\alpha -9/7}
\end{align*}
and then using $\alpha_{0}=2/3+\kappa$ we get $53\alpha_{0}/28-9/7= -1/42+53\kappa/28$ so finally
\begin{align*}
\|w_{q+1}^{(t)}\|_{C_t^{\alpha_0} L^p}&\lesssim\delta_{q+1}M_0(t)\ell^{-9}\lambda_{q+1}^{-1/42+53\kappa/28+\alpha}.
\end{align*}
We also have
\begin{align*}
\|v_q-v_\ell\|_{C_t^{\alpha_0}L^2}\lesssim \|v_q-v_\ell\|_{C_tL^2}^{1-\alpha_0} \|v_q-v_\ell\|_{C^1_tL^2}^{\alpha_0}\lesssim\ell^{1-\alpha_0}\|v_q\|_{C^1_{t,x}}\leq  \ell^{1-\alpha_0}\lambda_q^4M_0(t)^{1/2}.
\end{align*}
Combining the above estimates we obtain for $\alpha_0=2/3+\kappa$, $\gamma=5+\delta$
\begin{align*}
&\|v_q-v_{q+1}\|_{C_t^{\alpha_0}B^{-\gamma}_{1,1}}\\
&\quad\lesssim M_0(t)^{1/2}(\ell^{1-\alpha_0}\lambda_q^4+\ell^{-17}\lambda_{q+1}^{{53\alpha_0}/{28}-{23}/{14}}+M_0(t)^{1/2}\ell^{-9}\lambda_{q+1}^{-1/{42}+{53\kappa/28+\alpha}}\\
&\qquad\qquad+\ell^{-2-5\lceil\alpha_0+\gamma+\delta\rceil}\lambda_{q+1}^{{(53\alpha_0+44\delta-11\gamma)}/{28}})
\\&\quad\lesssim M_0(t)^{1/2}(\ell^{1-\alpha_0}\lambda_q^4+M_0(t)^{1/2}\ell^{-9}\lambda_{q+1}^{-1/{42}+{53\kappa/28+\alpha}})\leq M_0(t)^{1/2}\delta_{q+1}^{1/2},
\end{align*}
since by the definition of $\ell$ and $1-\alpha_0> 1/6$ we have
$\ell^{1-\alpha_0}\lambda_q^4\leq(\lambda_{q+1}^{-\frac{3\alpha}2}\lambda_q^{-2})^{1/6}\lambda_q^4\leq \lambda_{q+1}^{-\beta b}$, which is satisfied by the second inequality in \eqref{c:M g} by replacing $q$ by $q+1$.
  Then the second inequality in \eqref{iteration g} holds.

By the choice of $v_0$ in Lemma~\ref{lem:v0 g} we obtain
\begin{align*}
\|v_q\|_{C_t^{\alpha_0}B^{-5-\delta}_{1,1}}\lesssim\|v_0\|_{C_t^{\alpha_0}B^{-5-\delta}_{1,1}}+\sum_{k\geq0}\|v_k-v_{k+1}\|_{C_t^{\alpha_0}B^{-5-\delta}_{1,1}}&\leq (2L+2)M_0(t)^{1/2}.
\end{align*}
Moreover, by Theorem \ref{zexistence1} we have
\begin{align}
\label{bd:z2}
\|z_q\|_{C_tL^2}\leq\|z_{q}\|_{\bar{B},\alpha_{0},2\alpha_{0},t} \leq (2L+2)LM_0(t)^{1/2},
\end{align}
We intend to combine the above two  estimates with the  last inequality in Theorem \ref{zexistence1}. To this end, we observe  that $\bar{N}=M_0(\ln\ln L)$ in Theorem \ref{zexistence1} and  the right hand side of the estimate can be controlled by
$\bar{N}^{(6.5)T\bar{N}^{6.5}}$. Then we could choose $L$ large enough to have it be controlled by $\bar{N}^{\bar{N}^{7}}$. As a consequence, we choose $\kappa$ small satisfying $53\kappa/28\leq \alpha/2$ and \eqref{c:M g} to see for $q\in\mathbb{N}$ that 
\begin{align}\label{bd:z1}
\|z_q-z_{q+1}\|_{\bar B,\alpha_0,2\alpha_0,t}&\lesssim M_0(t)^{1/2}(\ell_0^{1-\alpha_0}\lambda_{q-1}^4+\ell_0^{-9}\lambda_{q}^{-1/{42}+{53\kappa/28+\alpha}}M_0(t)^{1/2})M_0(\ln\ln L)^{M_0(\ln\ln L)^7}\\
&\leq M_0(t)^{1/2}\delta_{q+1}^{1/2},\nonumber
\end{align}
with $\ell_0=\lambda_q^{-\frac{3\alpha}{2}}\lambda_{q-1}^{-2}$. For $q=0$ nothing needs to be proven since $z_1=z_0$.
Hence, \eqref{ite z} holds.

\subsubsection{Conclusion}

\begin{proof}[Proof of Proposition \ref{main iteration g}]

Recall that we  changed the parameter $r_\perp$ compared to before and more precisely $r_\perp$ becomes smaller. Hence,  \eqref{principle est2}-\eqref{temporal est2} hold, which implies
for $t\in[0, T_L]$
\begin{equation*}
\aligned
\|v_{q+1}\|_{C^1_{t,x}}&\leq \|v_\ell\|_{C^1_{t,x}}+\|w_{q+1}\|_{C^1_{t,x}}\\
&\leq M_0(t)^{1/2}\left(\lambda_{q+1}^\alpha+C\lambda_{q+1}^{14\alpha+3+{1}/{4}}+C\lambda_{q+1}^{34\alpha+20/7}+CM_0(t)^{1/2}\lambda_{q+1}^{19\alpha+3+3/{28}}\right)
\leq M_0(t)^{1/2}\lambda_{q+1}^4,
\endaligned
\end{equation*}
where we use $CM_0(\ln\ln L)^{1/2}\leq \frac{1}{2}\lambda_{q+1}^{{25}/{28}-19\alpha}$. Thus, the second estimate in \eqref{inductionv} holds true on the level $q+1$.
Moreover, the estimates \eqref{principle est22}-\eqref{corrector est2} hold.

In the following we control $\mathring{R}_{q+1}$.
We choose $p=\frac{35}{35-14\alpha}>1$ so that  $r_\perp^{2/p-2}r_\|^{1/p-1}\leq \lambda_{q+1}^\alpha$.
First, we recall that $r_\perp$ becomes smaller and $p$ is close to $1$. Hence $r_\perp^{2/p-1}$ becomes smaller and we use $r_\perp^{2/p-2}r_\|^{1/p-1}\leq \lambda_{q+1}^\alpha$. As a result, the bounds for   $R_{\textrm{osc}}^{(t)}$ and the terms not involving $z$ in $R_{\textrm{lin}}$ do not change.
For $R_{\textrm{cor}}$ we have
\begin{equation*}
\aligned
&\|R_{\textrm{cor}}\|_{C_tL^p}
\\&\lesssim M_0(t)\left(\ell^{-12}r_\perp^{1/p}r_\|^{1/(2p)-3/2}
+\ell^{-4}M_0(t)^{1/2}r_\perp^{1/p-1}r_\|^{1/(2p)-1/2}\mu^{-1}r_\perp^{-1}r_{\|}^{-1/2}\right)\ell^{-2}r_\perp^{1/p-1}r_\|^{1/(2p)-1/2}
\\
&\lesssim M_0(t)\left(\ell^{-14}r_\perp^{2/p-1}r_\|^{1/p-2}
+\ell^{-6}M_0(t)^{1/2}r_\perp^{2/p-3}r_\|^{1/p-3/2}\mu^{-1}\right)
\\
&\lesssim M_0(t)\left(\lambda_{q+1}^{29\alpha-\frac{11}{28}}+M_0(t)^{1/2}\lambda_{q+1}^{13\alpha-\frac1{28}}\right)
\leq \frac{M_0(t)c_R\delta_{q+2}}{5}.
\endaligned
\end{equation*}

Similarly as before,
we also have
\begin{equation*}
\aligned
\|R_{\textrm{osc}}^{(x)}\|_{C_{t}L^p}
&\lesssim M_0(t)\ell^{-9}r_\perp^{2/p-2}r_\|^{1/p-1}(r_\perp^{-1}\lambda_{q+1}^{-1})\lesssim M_0(t)\ell^{-9}\lambda_{q+1}^\alpha (r_\perp^{-1}\lambda_{q+1}^{-1})\\
&\lesssim M_{0}(t)\lambda_{q+1}^{19\alpha-\frac1{28}}\leq\frac{M_0(t)c_R\delta_{q+2}}{10}.
\endaligned
\end{equation*}

 In the following it suffices to consider the terms containing $z$ in $R_{\textrm{lin}}$, $R_{\textrm{com}}$ and $R_{\textrm{com}1}$. Recall that  we have
$$z_\ell=(z_q*_x\phi_\ell)*_t\varphi_\ell,$$
\begin{equation*}
R_{\textrm{com}}=(v_\ell+z_\ell)\mathring{\otimes}(v_\ell+z_\ell)-((v_q+z_q)\mathring{\otimes}(v_q+z_q))*_x\phi_\ell*_t\varphi_\ell,
\end{equation*}
\begin{equation*}
R_{\textrm{com}1}:=v_{q+1}\mathring{\otimes}z_{q+1}-v_{q+1}\mathring{\otimes}z_\ell+z_{q+1}\mathring{\otimes}v_{q+1}-z_\ell\mathring{\otimes}v_{q+1}
+z_{q+1}\mathring{\otimes}z_{q+1}-z_\ell\mathring{\otimes}z_\ell.
\end{equation*}
In order to estimate the following remaining  term in $R_{\textrm{lin}}$,
\begin{equation*}
R_{\textrm{lin}}^1:=(v_\ell+z_{\ell})\mathring\otimes w_{q+1}+w_{q+1}\mathring\otimes (v_\ell+z_{\ell}),
\end{equation*}
we use \eqref{z g} as well as \eqref{ell} to obtain for $q\in\mathbb{N}$
\begin{equation*}
\aligned
\|R_{\textrm{lin}}^1\|_{C_tL^p}
&\lesssim\|(v_\ell+z_\ell)\mathring{\otimes}w_{q+1}+w_{q+1}\mathring{\otimes}(v_\ell+z_\ell)\|_{C_tL^p}
\lesssim M_0(t)^{1/2}(\lambda_{q}^4+\lambda^4_{q-1}L^{1/4})\|w_{q+1}\|_{C_tL^p}\\
&\lesssim M_0(t)\ell^{-2}r_\perp^{2/p-1}r_\|^{1/p-1/2}(\lambda^4_{q}+\lambda^4_{q-1}L^{1/4})\lesssim M_0(t)\lambda_{q+1}^{6\alpha-5/4}\lesssim\frac{M_0(t)c_R\delta_{q+2}}{10},
\endaligned
\end{equation*}
where we used $\lambda_{q-1}^{4}L^{1/4}\leq \lambda_{q}^{4}$. For $q=0$ we have
\begin{equation*}
\aligned
\|R_{\textrm{lin}}^1\|_{C_tL^p}
&\lesssim M_0(t)\ell^{-2}r_\perp^{2/p-1}r_\|^{1/p-1/2}(\lambda^4_{0}+\lambda^4_{0}L^{1/4})\lesssim M_0(t)\lambda_0^4L^{1/4}\lambda_{1}^{5\alpha-5/4}\leq\frac{M_0(t)c_R\delta_{q+2}}{10},
\endaligned
\end{equation*}
where the last inequality follows from  $\alpha b>32$ and $\alpha>16 \beta b^2$.

In view of the standard mollification estimates we deduce for $t\in[0, T_L]$ and  $q=0$
\begin{align*}
\|R_{\textrm{com}}\|_{C_{t}L^1}\lesssim\ell^{\frac{1}{2}-2\delta}\lambda_0^4L^{1/4}L^2M_0(t)\leq  \frac{M_0(t)c_R\delta_{2}}{5},
\end{align*}
where we used  $\alpha b>32$ and $\alpha>16 \beta b^2$ in the last inequality. For $q\in\mathbb{N}$ it holds
\begin{equation*}
\aligned
\|R_{\textrm{com}}\|_{C_{t}L^1}&\lesssim \ell(\|v_q\|_{C^1_{t,x}}+\|z_q\|_{C_tC^1})(\|v_q\|_{C_tL^2}+\|z_q\|_{C_tL^2})\\
&\qquad+\ell^{\frac{1}{2}-2\delta}(\|z_q\|_{C_t^{\frac{1}{2}-2\delta}L^\infty}+\|v_q\|_{C_{t,x}^1})(\|v_q\|_{C_tL^2}
+\|z_q\|_{C_t{L^2}})\\
&\lesssim\ell  \lambda_q^4L^2M_0(t)+\ell^{{1}/{2}-2\delta}\lambda_q^4L^2M_0(t)\leq \frac{M_0(t)c_R\delta_{q+2}}{5},
\endaligned
\end{equation*}
where we used \eqref{bd:z2} and  $\lambda_{q-1}^{4}L^{1/2}\leq \lambda_{q}^{4}$ and $\delta<\frac{1}{12}$ and  we require    $\ell^{{1}/{2}-2\delta}\lambda_q^4L^2<\frac{c_R\delta_{q+2}}{10}$, i.e.
\begin{align*}
{L^2}\lambda_{q+1}^{-\alpha/2}\lambda_q^{-2/3}\lambda_q^4<\lambda_{q+1}^{-2\beta b},
\end{align*}
with the  choice of $\ell$ in \eqref{ell1} and the exponents were obtained with the choice $\delta=1/12$.  Since we postulated that  $\alpha b>32$, this can indeed be achieved by possibly increasing  $a$ and consequently decreasing $\beta$.

Finally, we use \eqref{z g} and  \eqref{c:M g}  to
obtain for $t\in[0, T_L]$ and $q\in\mathbb{N}$
\begin{align*}
&\|R_{\textrm{com}1}\|_{C_tL^1}\lesssim \|v_{q+1}\|_{C_tL^2}\|z_{q+1}-z_\ell\|_{C_tL^2}+(\|z_{q+1}\|_{C_tL^2}+\|z_{\ell}\|_{C_tL^2})\|z_{q+1}-z_\ell\|_{C_tL^2}
\\&\lesssim M_0(t)(\ell_0^{1-\alpha_0}\lambda_{q-1}^4+M_0(t)^{1/2}\ell_0^{-9}\lambda_{q}^{-1/{42}+53\kappa/28+\alpha}+\ell^{1/2-2\delta}\lambda_{q-1}^4){M_0(\ln\ln L)^{M_0(\ln\ln L)^7}L^2}
\\&\lesssim\frac{M_0(t)c_R\delta_{q+2}}{5},
\end{align*}
with $\ell_0=\lambda_{q}^{-3\alpha/2}\lambda_{q-1}^{-2}$ and where
 we used \eqref{bd:z1} yielding for $q\in\mathbb{N}$
$$
\begin{aligned}
&\|z_{q+1}-z_{\ell}\|_{C_{t}L^{2}}\leq \|z_{q+1}-z_{q}\|_{C_{t}L^{2}}+\|z_{q}-z_{\ell}\|_{C_{t}L^{2}}\\
&\quad\lesssim M_{0}(t)^{1/2}(\ell_{0}^{1-\alpha_{0}}\lambda_{q-1}^{4}+\ell^{-9}\lambda_{q}^{-1/42+53\kappa/28+\alpha}M_0(t)^{1/2}+\ell^{1/2-2\delta}\lambda_{q-1}^4){M_{0}(
\ln\ln L)^{M_{0}(\ln\ln L)^{7}}}.
\end{aligned}
$$
For $q=0$ we use \eqref{z g} to have
\begin{align*}
\|z_1-z_\ell\|_{C_tL^2}\leq \|z_1-z_0\|_{C_tL^2}+\|z_0-z_\ell\|_{C_tL^2}\leq \ell^{1/2-2\delta}L^{1/2}(1+\|v_0\|_{C^1_{t,x}})
\leq\ell^{1/2-2\delta}L^{1/2}\lambda_0^4M_0(t)^{1/2},
\end{align*}
which combined with \eqref{bd:z2} implies for $t\in[0, T_L]$ and $q=0$
\begin{align*}
\|R_{\textrm{com}1}\|_{C_tL^1}&\lesssim M_0(t)\ell^{1/2-2\delta}\lambda_{0}^4L^{5/2}
\lesssim\frac{M_0(t)c_R\delta_{2}}{5}.
\end{align*}
The proof is complete.
\end{proof}

\color{black}

 \appendix
  \renewcommand{\appendixname}{Appendix~\Alph{section}}
  \section{Proof of Theorem \ref{convergence}, Theorem \ref{convergence 1} and Theorem \ref{ge convergence 1}}
  \label{ap:A}

Let us begin with  the following tightness result.

 \begin{lem}\label{tightness}
Let $\{(s_{n},x_{n})\}_{n\in\mathbb{N}}\subset [0,\infty)\times L^{2}_{\sigma}$ such that $(s_{n},x_{n})\to (s,x_{0})$. Let $\{P_n\}_{n\in\mathbb{N}}$ be a family of probability measures on $\Omega_0$ satisfying for all $n\in\mathbb{N}$
 \begin{equation}\label{est:Pn1}
 P_n(x(t)=x_n, 0\leq t\leq s_n)=1
\end{equation}
and  for some $\gamma,\kappa>0$ and any $T>0$
 \begin{equation}\label{est:Pn}
 \sup_{n\in\mathbb{N}} E^{P_n}\left(\sup_{t\in[0,T]}\|x(t)\|_{L^2}+\sup_{r\neq t\in[0,T]}\frac{\|x(t)-x(r)\|_{H^{-3}}}{|t-r|^\kappa}+\int_{s_n}^T\|x(r)\|_{H^\gamma}^2dr\right)<\infty.
 \end{equation}
Then $\{P_n\}_{n\in\mathbb{N}}$ is tight in $\mathbb{S}:=C_{\mathrm{loc}}([0,\infty);H^{-3})\cap L^2_{\mathrm{loc}}([0,\infty);L^2_\sigma)$.
 \end{lem}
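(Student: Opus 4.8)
The goal is a tightness statement for probability measures on the path space $\mathbb{S}=C_{\mathrm{loc}}([0,\infty);H^{-3})\cap L^2_{\mathrm{loc}}([0,\infty);L^2_\sigma)$, so the strategy is to produce, for each $\varepsilon>0$, a compact subset $K_\varepsilon\subset\mathbb{S}$ with $\sup_n P_n(K_\varepsilon^c)<\varepsilon$. Since $\mathbb{S}$ is an intersection of two path spaces equipped with the corresponding (locally convex) topology, it suffices to find sets that are relatively compact in each of the two factors and to intersect them; by a standard diagonal argument over an exhausting sequence of time intervals $[0,T_k]$, $T_k\uparrow\infty$, it is enough to treat a fixed but arbitrary interval $[0,T]$, i.e. to prove tightness in $C([0,T];H^{-3})\cap L^2([0,T];L^2_\sigma)$, and then pass to the locally uniform / local $L^2$ topology.

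\textbf{Key steps.} First I would recall the two compactness criteria I intend to invoke. For the $C([0,T];H^{-3})$ component: the Arzel\`a--Ascoli theorem characterizes relative compactness via uniform boundedness plus equicontinuity in $H^{-3}$; the bound \eqref{est:Pn} controls $\sup_{t\in[0,T]}\|x(t)\|_{L^2}$ and the H\"older seminorm $\sup_{r\neq t}\|x(t)-x(r)\|_{H^{-3}}/|t-r|^\kappa$ in expectation, and the embedding $L^2_\sigma\hookrightarrow H^{-3}$ is compact, so the set
$$
A_R:=\Big\{\omega\in\mathbb{S}:\ \sup_{t\in[0,T]}\|x(t)\|_{L^2}\le R,\ \sup_{r\neq t\in[0,T]}\tfrac{\|x(t)-x(r)\|_{H^{-3}}}{|t-r|^\kappa}\le R\Big\}
$$
is relatively compact in $C([0,T];H^{-3})$ (uniformly bounded, uniformly H\"older-equicontinuous with values in the ball of $L^2_\sigma$, which sits compactly in $H^{-3}$). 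For the $L^2([0,T];L^2_\sigma)$ component I would use the Aubin--Lions--Simon lemma (or the Lions--Aubin compactness criterion): a set bounded in $L^2([s,T];H^\gamma)$ with $\gamma>0$ and with uniformly small $H^{-3}$-time-increments is relatively compact in $L^2([s,T];L^2_\sigma)$, since $H^\gamma\hookrightarrow\hookrightarrow L^2_\sigma\hookrightarrow H^{-3}$. Concretely, enlarging $A_R$ to
$$
B_R:=A_R\cap\Big\{\omega:\ \int_{s}^{T}\|x(r)\|_{H^\gamma}^2\,dr\le R\Big\}
$$
gives, via \eqref{est:Pn1} (the trajectory is constant equal to $x_n$ on $[0,s_n]$, and $s_n\to s$, so only a vanishing boundary piece is lost), a set relatively compact in both $C([0,T];H^{-3})$ and $L^2([0,T];L^2_\sigma)$, hence in their intersection.

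\textbf{Conclusion and the main obstacle.} With $K_R:=\overline{B_R}$ compact in $\mathbb{S}$ restricted to $[0,T]$, Markov's inequality applied to \eqref{est:Pn} gives $\sup_n P_n(B_R^c)\le C/R$ where $C$ is the uniform bound in \eqref{est:Pn}, so choosing $R$ large makes this $<\varepsilon$; a standard diagonalization over $T_k\uparrow\infty$ with $\varepsilon 2^{-k}$ then yields a single compact $K_\varepsilon\subset\mathbb{S}$ with $\sup_n P_n(K_\varepsilon^c)<\varepsilon$, which is the definition of tightness by Prokhorov's theorem. The routine calculations (Markov's inequality, the diagonal argument) are standard; the delicate point is the correct choice and verification of the compactness criterion for the $L^2_tL^2_x$ component, namely making sure the Aubin--Lions-type input is applicable with only an $H^{-3}$-valued time-regularity estimate and an $L^2_tH^\gamma$ bound — in particular handling the fact that the $H^\gamma$ integrability in \eqref{est:Pn} starts at $s_n$ rather than $0$, which is harmless because on $[0,s_n]$ the path is the constant $x_n$ and $x_n\to x_0$ in $L^2_\sigma$, so the contribution of $[0,s_n]$ to the $L^2_tL^2_x$ norm is uniformly controlled and the limiting behaviour near $t=s$ causes no loss of compactness. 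A minor additional point to check is that the topology of $\mathbb{S}$ (locally uniform convergence in $H^{-3}$ together with $L^2_{\mathrm{loc}}$ convergence in $L^2_\sigma$) is metrizable and Polish, so that Prokhorov applies; this is standard since both factors are.
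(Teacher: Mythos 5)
Your overall strategy (Chebyshev/Markov on the uniform bound \eqref{est:Pn} together with an Aubin--Lions type compact set, then a diagonal over $T_k\uparrow\infty$) is the same as the paper's, but there is a genuine gap exactly at the point you flag as ``delicate'' and then wave away.

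The set you propose,
$$
B_R:=A_R\cap\Big\{\omega:\ \int_{s}^{T}\|x(r)\|_{H^\gamma}^2\,dr\le R\Big\},
$$
is not controlled by \eqref{est:Pn}. Under $P_n$ the trajectory satisfies $x(t)=x_n$ for $t\in[0,s_n]$, and $x_n$ is only in $L^2_\sigma$, not in $H^\gamma$. So if $s_n>s$ (which certainly can happen as $(s_n,x_n)\to(s,x_0)$), then $\int_s^T\|x(r)\|_{H^\gamma}^2\,dr\ \geq\ (s_n-s)\|x_n\|_{H^\gamma}^2=\infty$ $P_n$-a.s.\ and $P_n(B_R)=0$ for every $R$; Markov's inequality applied to \eqref{est:Pn} gives nothing about $B_R$ because \eqref{est:Pn} only bounds $\int_{s_n}^T$, not $\int_s^T$. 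The obvious repair is to use $\int_{s_n}^T$ in the defining set, but then the set depends on $n$, and a single compact set has to be extracted from the union over $n$ of these $n$-dependent sets. That union is not one to which the Aubin--Lions lemma applies directly (each member lives in a different $L^2(s_n,T;H^\gamma)$ with no common lower endpoint), so relative compactness of the union is a claim that requires proof, not just the remark that ``the contribution of $[0,s_n]$ is uniformly controlled.''

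This is precisely what the paper's proof supplies. It sets $\Omega_n:=\{x:x(t)=x_n,\,0\le t\le s_n\}$ and takes
$$
K=\bigcup_{n}\bigcap_{k\ge k_0}\Big\{x\in\Omega_n:\ \sup_{[0,k]}\|x\|_{L^2}+\|x\|_{C^\kappa_k H^{-3}}+\int_{s_n}^k\|x\|_{H^\gamma}^2\le R_k\Big\},
$$
for which the $P_n$-probability of the complement is controlled by Chebyshev, and then proves relative compactness of $\overline K$ by hand: given a sequence $x_m\in\Omega_m\cap K$, it first extracts a $C([0,k];H^{-3})$-convergent subsequence (this part of your argument is fine), and then shows the sequence is Cauchy in $L^2(0,k;L^2_\sigma)$ by splitting $\int_0^k\|x_{m_l}-x_{m_n}\|_{L^2}^2$ over the three intervals $[0,s_{m_l}\wedge s_{m_n}]$, $[s_{m_l}\wedge s_{m_n},s_{m_l}\vee s_{m_n}]$, $[s_{m_l}\vee s_{m_n},k]$: the first uses $x_m\to x_0$ in $L^2_\sigma$, the second uses the $L^\infty L^2$ bound plus $|s_{m_l}-s_{m_n}|\to0$, and the third uses interpolation between the $H^\gamma$ bound (valid on $[s_{m_l}\vee s_{m_n},k]$ for both paths) and the $H^{-3}$ convergence already obtained. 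You would need to carry out an argument of this kind to close the proof; as written, your $B_R$ does not do the job and the compactness of the $n$-dependent replacement is not established.
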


 \begin{proof}
In view of the uniform bound \eqref{est:Pn}, the canonical process under the measure $P_{n}$ is bounded in $L^{\infty}_{\mathrm{loc}}([0,\infty);L^{2})\cap C^{\kappa}_{\mathrm{loc}}([0,\infty);H^{-3})\cap L^{2}_{\mathrm{loc}}([s_{n},\infty);H^{\gamma})$ and the bounds are  uniform in $n$. We recall that a set $K\subset \mathbb{S}$ is compact provided
$$
K_{T}:=\{f|_{[0,T]};f\in K\}\subset C([0,T];H^{-3})\cap L^{2}(0,T;L^2_\sigma)
$$
is compact for every $T>0$. In addition, for every $T>0$, the following embedding
$$
L^{\infty}(0,T;L^{2})\cap C^{\kappa}([0,T];H^{-3})\cap L^{2}([0,T];H^{\gamma})\subset C([0,T];H^{-3})\cap L^{2}(0,T;L^{2}_{\sigma})
$$
is compact, see e.g. \cite[Section 1.8.2]{BFH18}. This implies that also the embedding of the local-in-time spaces
$$
L^{\infty}_{\mathrm{loc}}([0,\infty);L^{2})\cap C^{\kappa}_{\mathrm{loc}}([0,\infty);H^{-3})\cap L^{2}_{\mathrm{loc}}([0,\infty);H^{\gamma})
\subset \mathbb{S}
$$
is compact as well.
This result, however, cannot be applied directly in order to prove the claim of the lemma due to the fact that the uniform $H^{\gamma}$ regularity in \eqref{est:Pn} only holds on the respective time intervals $[s_{n},T]$. The idea is instead  to use \eqref{est:Pn1} which says that under each measure $P_{n}$  the canonical process is constant  on $[0,s_{n}]$  and its value equals to $x_{n}$. Together with the fact that $(s_{n},x_{n})\to (s,x_{0})$ in $[0,\infty)\times L^{2}_{\sigma}$,  the desired compactness then follows.

To be more precise,
we fix $\epsilon>0$ and any $k\in\mathbb{N}$, $k\geq k_{0}:=\sup_{n\in\mathbb{N}}s_{n}$, we may choose $R_k>0$ sufficiently large such that
$$
P_n\left(x\in\Omega_0: \sup_{t\in[0,k]}\|x(t)\|_{L^2}+\sup_{r\neq t\in[0,k]}\frac{\|x(t)-x(r)\|_{H^{-3}}}{|t-r|^\kappa}+\int_{s_n}^k\|x(r)\|_{H^\gamma}^2dr>R_k\right)\leq \epsilon/2^k.
$$
Now, we set
 $\Omega_n:=\{x\in \Omega_0; x(t)=x_n, 0\leq t\leq s_n\}$ and define
\begin{equation}\label{eq:K}
K:=\bigcup_{n\in \mathbb{N}}\bigcap_{\substack{k\in\mathbb{N}\\k\geq k_{0}}}\left\{x\in \Omega_n;\sup_{t\in[0,k]}\|x(t)\|_{L^2}+\sup_{r\neq t\in[0,k]}\frac{\|x(t)-x(r)\|_{H^{-3}}}{|t-r|^\kappa}+\int_{s_n}^k\|x(r)\|_{H^\gamma}^2dr\leq R_k  \right\}.
\end{equation}
By Chebyshev's inequality together with \eqref{est:Pn}, it follows that
\begin{equation*}
\sup_{n\in\mathbb{N}} P_n(\overline{K}^c)\leq\sup_{n\in\mathbb{N}} P_n(K^c)\leq \epsilon,
\end{equation*}
so it only remains to show that the set $\overline{K}$ is a compact set in $\mathbb{S}$. As mentioned above, it is sufficient to prove that for every $k\in\mathbb{N}$, $k\geq k_{0}$, the restriction of functions in $K$ to $[0,k]$ is relatively compact in  $\mathbb{S}_k:=C([0,k],H^{-3})\cap L^2(0,k;L^2_{\sigma})$.

To this end, let $\{x_m\}_{m\in \mathbb{N}}$ be a sequence in $K$.
If there exists $N\in\mathbb{N}$ so that for   infinitely many $m$ it holds $x_m\in \Omega_N$, the result can be obtained by a standard argument based on the compact embedding discussed above. If this is not true, we may assume  without loss of generality  that $x_m\in \Omega_m$. The compactness in $C([0,k];H^{-3})$ is a direct consequence of the bound
$$\sup_{t\in[0,k]}\|x_m(t)\|_{L^2}+\sup_{r\neq t\in[0,k]}\frac{\|x_m(t)-x_m(r)\|_{H^{-3}}}{|t-r|^\kappa}\leq R_k$$
and the compact embedding
$$
L^{\infty}(0,k;L^{2})\cap C^{\kappa}([0,k];H^{-3})\subset C([0,k];H^{-3}).
$$
Consequently, we can find a subsequence $x_{m_l}$ such that
\begin{equation}
\label{H-3}
\lim_{l,n\rightarrow\infty}\sup_{t\in[0,k]}\|x_{m_l}(t)-x_{m_n}(t)\|_{H^{-3}}=0.
\end{equation}
With this in hand, we deduce
$$
\aligned\int_0^k\|x_{m_l}(t)-x_{m_n}(t)\|_{L^2}^2dt
&\leq
\int_0^{s_{m_l}\wedge s_{m_n}}\|x_{m_l}(t)-x_{m_n}(t)\|_{L^2}^2dt
\\
&\quad+\int_{s_{m_l}\wedge s_{m_n}}^{s_{m_l}\vee s_{m_n}}\|x_{m_l}(t)-x_{m_n}(t)\|_{L^2}^2dt+\int_{s_{m_l}\vee s_{m_n}}^k\|x_{m_l}(t)-x_{m_n}(t)\|_{L^2}^2dt
\\
&\leq k \|x_{m_l}(0)-x_{m_n}(0)\|_{L^2}^2+4R_k^2(s_{m_l}\vee s_{m_n}-s_{m_l}\wedge s_{n_m})\\
&\quad+\varepsilon \int_{s_{m_l}\vee s_{m_n}}^k\|x_{m_l}(t)-x_{m_n}(t)\|_{H^\gamma}^2dt+C_\varepsilon k\sup_{t\in[0,k]}\|x_{m_l}(t)-x_{m_n}(t)\|_{H^{-3}}^2
\\
&\leq k\|x_{m_l}(0)-x_{m_n}(0)\|_{L^2}^2+4R_k^2(s_{m_l}\vee s_{m_n}-s_{m_l}\wedge s_{m_n})\\
&\quad+4\varepsilon R_k+C_\varepsilon k\sup_{t\in[0,k]}\|x_{m_l}(t)-x_{m_n}(t)\|_{H^{-3}}^2\rightarrow0,
\endaligned
$$
as $m_l, m_n\rightarrow\infty$ and we used interpolation and Young's inequality in the second step and we used (\ref{H-3}) in the last step. Now the proof is complete.
\end{proof}

\begin{proof}[Proof of Theorem \ref{convergence}]  The first result giving existence of a martingale solution can be easily deduced by Galerkin approximation and the same arguments as in \cite{FR08, GRZ09}.
 The second result giving the stability of martingale solutions with respect to the initial time and initial condition will be proved in the sequel based on Lemma \ref{tightness}.

First, we prove that the set $\{P_n\}_{n\in\mathbb{N}}$ is tight in $\mathbb{S}:=C_{\rm{loc}}([0,\infty);H^{-3})\cap L^2_{\textrm{loc}}([0,\infty);L^2_\sigma)$.
  To this end, we denote  $F(x):=-\mathbb{P}\div(x\otimes x)+\Delta x$. Since for every $n\in\mathbb{N}$, the measure $P_{n}$ is a martingale solution to \eqref{1} starting from the initial condition $x_{n}$ at  time $s_{n}$ in the sense of Definition \ref{martingale solution},
 we know that for $t\in[ s_n,\infty)$
\begin{equation*}
x(t)=x_n+\int_{s_n}^tF(x(r))dr+M^{x}_{t,s_n}, \quad P_n\text{-a.s.,}
\end{equation*}
where $t\mapsto M_{t,s_{n}}^{x,i}=\langle M^{x}_{t,s_{n}},e_{i}\rangle$, $x\in\Omega_{0}$, is a continuous square integrable  martingale with respect to $(\mathcal{B}_{t})_{t\geq s_{n}}$ with the quadratic variation process given by $t\mapsto \int_{s_n}^t\|G(x(r))^* e_{i}\|_{U}^{2}dr$.
Moreover, according to (M3) it holds  for every $p>1$
\begin{equation*}
\aligned
 E^{P_n}\left[\sup_{r\neq t\in [s_n,T]}\frac{\|\int_r^tF(x(l))dl\|_{H^{-3}}^p}{|t-r|^{p-1}}\right]
 &\leq
E^{P_n}\left[\int_{s_n}^t\|F(x(r))\|_{H^{-3}}^pdr\right]
\\
&\lesssim \|x_n\|_{L^2}^{2p}+1,
\endaligned
\end{equation*}
where the implicit constant is universal and therefore independent of $n$ since all $P_n$ share the same $C_{t,q}$.
By the condition on $G$ we have for every $p>1$
\begin{align*}
&E^{P_n}\|M_{t,s_n}-M_{r,s_n}\|_{L^2}^{2p}\leq C_pE^{P_n}
\left(\int_r^t\|G(x(l))\|_{L_2(U,L^2_\sigma)}^2dl\right)^p
\\&\leq C_p|t-r|^{p-1}E^{P_n}
\int_r^t\|G(x(l))\|_{L_2(U,L^2_\sigma)}^{2p}dl
\\&\leq C_p|t-r|^{p-1}E^{P_n}
\int_r^t(\|x(l)\|_{L^2}^{2p}+1)dl\leq C_p|t-r|^{p-1}(\|x_n\|_{L^2}^{2p}+1).
\end{align*}
By Kolmogorov's criterion, for any $\alpha\in(0,\frac{p-1}{2p})$ we get
$$
\aligned
E^{P_n}\left[\sup_{r\neq t\in [0,T]}\frac{\|M_{t,s_n}-M_{r,s_n}\|_{L^2}}
{|t-r|^{p\alpha}}\right]\leq C_p(\|x_n\|_{L^2}^{2p}+1).
\endaligned
$$
Combining the above estimates, we conclude for all $\kappa\in (0,1/2)$ that
\begin{equation}
\label{tight}
\aligned
 \sup_{n\in\mathbb{N}}E^{P_n}\left[\sup_{r\neq t\in [0,T]}\frac{\|x(t)-x(r)\|_{H^{-3}}}{|t-r|^{\kappa}}\right]<\infty.
 \endaligned
 \end{equation}
Combining (\ref{tight}), (M3) and using  Lemma \ref{tightness} it follows that the set $\{P_n\}_{n\in\mathbb{N}}$ is tight in $\mathbb{S}$.

Without loss of generality, we may assume that $P_n$  converges weakly to some probability measure $P\in \mathscr{P}(\Omega_{0})$. It remains to prove that $P\in \mathscr{C}(s,x_0, C_{t,q})$.
By Skorohod's representation theorem, there exists a probability space $(\tilde{\Omega},\tilde{\mathcal{F}},\tilde{P})$ and $\mathbb{S}$-valued
random variables $\tilde{x}_n$ and $\tilde{x}$ such that
\begin{enumerate}
\item[(i)] $\tilde{x}_n$ has the law $P_n$ for each $n\in\mathbb{N}$,
\item[(ii)] $\tilde{x}_n\rightarrow \tilde{x}$ in $\mathbb{S}$ $\tilde{P}$-a.s., and $\tilde{x}$ has the law $P$.
\end{enumerate}
Since the initial conditions $x_{n}$ as well as the initial times $s_{n}$ are deterministic, we obtain by (i), (ii), and (M1) applied to  $P_n$ that
$$
\aligned
 P(x(t)=x_0,0\leq t\leq s)&=\tilde{P}(\tilde{x}(t)=x_0,0\leq t\leq s)=\lim_{n\rightarrow\infty}\tilde{P}(\tilde{x}_n(t)=x_n,0\leq t\leq s_n)\\&=\lim_{n\rightarrow\infty}P_{n}(x(t)=x_n,0\leq t\leq s_n)=1.
 \endaligned
 $$
As the next step,  we verify (M2) for $P$.  We know that under $\tilde{P}$ it holds according to the convergence in  (ii) that for every $e_i\in C^\infty(\mathbb{T}^3)$
$$
\langle \tilde{x}_n(t),e_i\rangle\rightarrow  \langle \tilde{x}(t),e_i\rangle,\quad \int_{s_n}^t\langle F(\tilde{x}_n(r)),e_i\rangle dr\rightarrow  \int_s^t\langle F(\tilde{x}(r)),e_i\rangle dr\qquad\tilde{P}\text{-a.s.}
$$
This implies for every $t\in[ s,\infty)$ and every $p>1$
\begin{equation*}
\sup_{n\in\mathbb{N}}E^{\tilde{P}}
\big[|M_{t,s_n}^{\tilde{x}_n,i}|^{2p}\big]
\leq  C\sup_{n\in\mathbb{N}}E^{P_{n}}\left[\left(\int_{s_n}^t\|G( x(r))\|^2_{L_2(U,L_{\sigma}^2)}ds\right)^{p}\right]<\infty,
\end{equation*}
\begin{equation}
\label{martingale1}
\lim_{n\rightarrow\infty} E^{\tilde{P}}\big[|M_{t,s_n}^{\tilde{x}_n,i}-M_{t,s}^{\tilde{x},i}|\big]=0.
\end{equation}
Let $t>r\geq s$ and $g$ be any bounded and real-valued $\mathcal{B}_r$-measurable continuous function on $\mathbb{S}$. Using (\ref{martingale1})
we know
$$
\aligned  E^{P}\big[(M_{t,s}^{x,i}-M_{r,s}^{x,i})g(x)\big]&=E^{\tilde{P}}\big[(M_{t,s}^{\tilde{x},i}-M_{r,s}^{\tilde{x},i})g(\tilde{x})\big]
=\lim_{n\rightarrow\infty}E^{\tilde{P}}\big[(M_{t,s_n}^{\tilde{x}_n,i}-M_{r,s_n}^{\tilde{x}_n,i})g(\tilde{x}_n)\big]
\\&=\lim_{n\rightarrow\infty}E^{{P_n}}\big[(M_{t,s_n}^{{x},i}-M_{r,s_n}^{{x},i})g({x})\big]=0.
\endaligned
$$
Consequently, we deduce
$$
E^P\big[M_{t,s}^{x,i}|\mathcal{B}_r\big]=M_{r,s}^{x,i}
$$
hence $t\mapsto M^{i}_{t,s}$ is a $(\mathcal{B}_{t})_{t\geq s}$-martingale under $P$.
Similarly, we have
\begin{equation*}
\lim_{n\rightarrow\infty} E^{\tilde{P}}\big[|M_{t,s_n}^{\tilde{x}_n,i}-M_{t,s}^{\tilde{x},i}|^2\big]=0,
\end{equation*}
which gives
$$
E^P\big[(M_{t,s}^{x,i})^2-\int_s^t\|G(x(l))^*e_i\|_{U}^2dl|\mathcal{B}_r\big]=(M_{r,s}^{x,i})^2-\int_r^t\|G(x(l))^*e_i\|_{U}^2dl|
$$
and accordingly (M2) follows.

Finally, we verify (M3). Define
$$S(t,s,x):=\sup_{r\in [0,t]}\|x(r)\|_{L^2}^{2q}+\int_s^t\|x(r)\|^2_{H^\gamma}dr,$$
It is easy to see that $x\mapsto S(t,s,x)$ is lower semicontinuous on $\mathbb{S}$. Hence, by Fatou's lemma
$$
\aligned
 E^P[S(t,s,x)]=E^{\tilde{P}}[S(t,s,\tilde{x})]\leq \liminf_{n\rightarrow\infty}E^{\tilde{P}}[S(t,s_n,\tilde{x}_n)]\leq C_{t,q}\liminf_{n\rightarrow\infty}(\|x_n\|_{L^2}^{2q}+1)<\infty.
\endaligned
$$
The proof is complete.
\end{proof}

\begin{proof}[Proof of Theorem \ref{convergence 1}]  The existence of a probabilistically weak solution can be easily deduced from Theorem \ref{convergence} and the martingale representation theorem, see \cite{DPZ92}.
 The stability of weak solutions with respect to the initial time and initial condition will be proved in a similar way as in Theorem \ref{convergence}.
First, we prove that the set $(P_n)_{n\in\mathbb{N}}$ is tight in
$$
\bar{\mathbb{S}}:=C_{\rm{loc}}([0,\infty);H^{-3}\times U_1)\cap L^2_{\textrm{loc}}([0,\infty);L^2_\sigma\times U_1).
$$
To this end,
we denote  $F(x):=-\mathbb{P}\div(x\otimes x)+\Delta x$ and recall that
for every $n\in\mathbb{N}$, the measure $P_{n}$ is a probabilistically weak solution to \eqref{1} starting from the initial condition $x_{n}$ at  time $s_{n}$ in the sense of Definition \ref{weak solution}. Thus, for $t\in[ s_n,\infty)$
\begin{equation*}
x(t)=x_n+\int_{s_n}^tF(x(r))dr+\int_{s_n}^tG(x_r)dy_r, \quad P_n\text{-a.s.}
\end{equation*}
where under $P_{n}$ the process $y$ is a cylindrical Wiener process on $U$ starting from $y_{n}$ at time $s_{n}$. In other words, under  $P_{n}$ the process $t\mapsto y(t +s_{n})-y_{n}$ is a cylindrical Wiener process  on $U$  starting at time $0$ from the initial value $0$. Since the law of the Wiener process is unique and tight, for a given $\epsilon>0$ there exists a compact set $K_{1}\subset C([0,\infty);U_{1})\cap L^{2}_{\rm{loc}}([0,\infty);U_{1})$ such that
\begin{equation*}
\sup_{n\in\mathbb{N}}P_{n}\big(y(\cdot+s_{n})-y_{n}\in K_{1}^{c}\big)\leq \epsilon.
\end{equation*}

Let us now define
\begin{align*}
K_{2}&:=\bigcup_{n\in\mathbb{N}}\big\{y\in C([0,\infty);U_{1});\\
&\qquad\qquad y(t+s_{n})-y_{n}\in K_{1}\text{ for }t\in[0,\infty),\ y(t)=y_{n}\text{ for }t\in[0,s_{n}]\big\}.
\end{align*}
Then
\begin{equation}\label{eq:K2}
\sup_{n\in\mathbb{N}}P_{n}(\overline{K_{2}}^{c})\leq\sup_{n\in\mathbb{N}}P_{n}\big(y(\cdot+s_{n})-y_{n}\in K_{1}^{c}\big)\leq \epsilon
\end{equation}
and we claim that $K_{2}$ is relatively compact in $C([0,\infty);U_{1})\subset L^{2}_{\rm{loc}}([0,\infty);U_{1})$. Indeed, let $\{y^{m}\}_{m\in\mathbb{N}}$ be a sequence in $K_{2}$. Then for every $m\in\mathbb{N}$ there exists $n_{m}\in\mathbb{N}$ and ${y}^{m,n_{m}}\in K_{1}$ so that
$$
y^{m}(t+s_{n_{m}})-y_{n_{m}}=y^{m,n_{m}}(t)\quad\text{for}\ t\in[0,\infty),\qquad\qquad y^{m}(t)=y_{n_{m}}\quad\text{for}\ t\in[0,s_{n_{m}}].
$$
If there exists $N\in\mathbb{N}$ such that $n_{m}=N$ for infinitely many $m\in\mathbb{N}$ then the relative compactness of $\{y^{m}\}_{m\in\mathbb{N}}$ follows directly from the fact that the corresponding sequence $\{y^{m,n_{m}}\}_{m\in\mathbb{N}}$ is relatively compact due to compactness of $K_{1}$. If such an $N$ does not exist, then by passing to a subsequence and relabelling we can assume without loss of generality  that $n_{m}=m$. In addition, it holds for $t\in[s_{m},\infty)$
$$
y^{m}(t)=y^{m,m}(t-s_{m})+y_{m}.
$$
Hence using the relative compactness of
$$
\{y^{m,m}\}_{m\in\mathbb{N}}\subset K_{1}\quad\text{and}\quad\{(s_{m},y_{m})\}_{m\in\mathbb{N}}\subset [0,\infty)\times U_{1},
$$
we finally deduce that the given sequence $\{y^{m}\}_{m\in\mathbb{N}}$ is relatively compact.

Now, we recall that the set $K$ defined  in the course of the proof of Theorem~\ref{convergence} in \eqref{eq:K}  is relatively compact in $C_{\rm{loc}}([0,\infty);H^{-3})\cap L^{2}_{\rm{loc}}([0,\infty);L^{2}_{\sigma})$. Chebyshev's inequality again shows that
\begin{equation}\label{eq:Ka}
\sup_{n\in\mathbb{N}} P_n(\overline{K}^c)\leq\sup_{n\in\mathbb{N}} P_n(K^c)\leq \epsilon.
\end{equation}
Hence  the set $K\times K_{2}$ is relatively compact in  $\bar{\mathbb{S}}$ and  the desired tightness follows from  \eqref{eq:K2}, \eqref{eq:Ka}.

Without loss of generality, we may assume that $P_n$  converges weakly to some probability measure $P$. It remains to prove that $P\in \mathscr{W}(s,x_0,y_0,C_{t,q})$.
By Skorokhod's representation theorem, there exists a probability space $(\tilde{\Omega},\tilde{\mathcal{F}},\tilde{P})$ and $\bar{\mathbb{S}}$-valued
random variables $(\tilde{x}_n,\tilde{y}_n)$ and $(\tilde{x},\tilde{y})$ such that
\begin{enumerate}
\item[(i)] $(\tilde{x}_n,\tilde{y}_n)$ has the law $P_n$ for each $n\in\mathbb{N}$,
\item[(ii)] $(\tilde{x}_n,\tilde{y}_n)\rightarrow (\tilde{x},\tilde{y})$ in $\bar{\mathbb{S}}$ $\tilde{P}$-a.s., and $(\tilde{x},\tilde{y})$ has the law $P$.
\end{enumerate}
Let $(\tilde{\mathcal{F}}_t)_{t\geq0}$ be the $\tilde{P}$-augmented canonical filtration of the process $(\tilde{x},\tilde{y})$. Then it is easy to see that $\tilde{y}$ is a cylindrical Wiener process on $U$ with respect to $(\tilde{\mathcal{F}}_t)_{t\geq0}$. In fact,
 let $t>s$ and $g$ be any bounded and real valued $\bar{\cB}_s$-measurable continuous functions on $\bar\Omega$ we have
\begin{align*}
&E^{P}\Big[(y(t)-y(s))g(x,y)\Big]
=E^{ \tilde P}\Big[\Big(\tilde y(t)-\tilde y(s)\Big)g(\tilde x,\tilde y)\Big]
\\=&\lim_{n\to\infty}E^{\tilde P}\Big[\Big(\tilde y_n(t)-\tilde y_n(s)\Big)g(\tilde x_n,\tilde y_n)\Big]=0,
\end{align*}
and similarly for $y_i=\langle y,l_i \rangle$ with $\{l_i\}$ orthonormal basis in $U$
\begin{align*}
&E^{P}\Big[(y_i(t)y_j(t)-y_i(s)y_j(s)-\delta_{i=j}(t-s))g(x,y)\Big]
=0,
\end{align*}
We then obtain that $y$ is a cylindrical Wiener process on $U$ with respect to $(\bar{\cB}_t)_{t\geq0}$ under $P$.

 The conditions (M1) and (M3) follow similarly as in the proof of Theorem \ref{convergence}.
Finally,  we shall verify (M2) for $P$.  We know that under $\tilde{P}$ it holds according to the convergence in  (ii) that for every $e_i\in C^\infty(\mathbb{T}^3)$
$$
\langle \tilde{x}_n(t),e_i\rangle\rightarrow  \langle \tilde{x}(t),e_i\rangle,\quad \int_{s_n}^t\langle F(\tilde{x}_n(r)),e_i\rangle dr\rightarrow  \int_s^t\langle F(\tilde{x}(r)),e_i\rangle dr\qquad\tilde{P}\text{-a.s.}
$$
Define $$M_{t,s}^{x,i}=\langle x(t)-x(s)-\int_s^tF(x(r))dr,e_i\rangle.$$
Then we have for every $t\in[ s,\infty)$ and every $p\in(1,\infty)$
\begin{equation*}
\sup_{n\in\mathbb{N}}E^{\tilde{P}}\big[|M_{t,s_n}^{\tilde{x}_n,i}|^{2p}\big]
\leq C\sup_{n\in\mathbb{N}}E^{P_{n}}\left[\left(\int_{s_n}^t\|G( x(r))\|^2_{L_2(U,L_{\sigma}^2)}ds\right)^{p}\right]<\infty,
\end{equation*}
\begin{equation}
\label{martingale2}
\lim_{n\rightarrow\infty} E^{\tilde{P}}\big[|M_{t,s_n}^{\tilde{x}_n,i}-M_{t,s}^{\tilde{x},i}|^{2}\big]=0.
\end{equation}
Let $t>r\geq s$ and $g$ be any bounded continuous function on $\bar{\mathbb{S}}$. Using (\ref{martingale2})
we know
$$
\aligned  &E^{P}\big[(M_{t,s}^{x,i}-M_{r,s}^{x,i})g(x|_{[0,r]},y|_{[0,r]})\big]=E^{\tilde{P}}\big[(M_{t,s}^{\tilde{x},i}-M_{r,s}^{\tilde{x},i})g(\tilde{x}|_{[0,r]},\tilde{y}|_{[0,r]})\big]
\\&=\lim_{n\rightarrow\infty}E^{\tilde{P}}\big[(M_{t,s_n}^{\tilde{x}_n,i}-M_{r,s_n}^{\tilde{x}_n,i})g(\tilde{x}_n|_{[0,r]},\tilde{y}_n|_{[0,r]})\big]
=\lim_{n\rightarrow\infty}E^{{P_n}}\big[(M_{t,s_n}^{x,i}-M_{r,s_n}^{x,i})g(x|_{[0,r]},y|_{[0,r]})\big]=0.
\endaligned
$$
Consequently, we deduce that
$t\mapsto M^{i}_{t,s}$ is a $(\bar{\mathcal{B}}_{t})_{t\geq s}$-martingale under $P$.
Similarly, we obtain
$$
E^P\left[(M_{t,s}^{x,i})^2-\int_s^t\|G(x)^*e_i\|_{U}^2dl\Big|\bar{\mathcal{B}}_r\right]=(M_{r,s}^{x,i})^2-\int_s^r\|G(x)^*e_i\|_{U}^2dl,
$$
which identifies the quadratic variation of $t\mapsto M^{i}_{t,s}$. It remains to identify the cross variation of this process with the cylindrical Wiener process $y$ under $P$. To this end, we let $\{l_j\}_{j\in\mathbb{N}}$ be an orthonormal basis of $U$ and define $y_j=\langle y,l_j\rangle_U$. Then we deduce that
$$
E^P\left[M_{t,s}^{x,i}(y_j(t)-y_j(s))-\int_s^t\langle G^*(x)e_i,l_j\rangle_U dl\Big|\bar{\mathcal{B}}_r\right]=M_{r,s}^{x,i}(y_j(r)-y_j(s))-\int_s^r\langle G^*(x)e_i,l_j\rangle_Udl.
$$
Thus, the quadratic variation process of $ M_{t,s}^{x,i}-\int_s^t \langle e_i, G(x)dy\rangle$ is $0$
which implies (M2). The proof is complete.
\end{proof}

\begin{proof}[Proof of Theorem \ref{ge convergence 1}]
	The existence of a generalized probabilistically weak solution follows from a similar argument as in the proof of Theorem \ref{convergence 1} and defining $\mY(t)=\mY_0+\int_s^ty(r)\otimes dy(r)$. By \cite[Theorem 4.2.5]{LR15} we have
		$P(Z\in CL^2_\sigma)=1$. 
 The stability of solutions with respect to the initial time and initial condition follows in a similar way as in Theorem \ref{convergence 1}.
	First, it follows by a similar argument as in Theorem \ref{convergence 1} that the set $(P_n)_{n\in\mathbb{N}}$ is tight in
	$$
	\widetilde{\mathbb{S}}:=C_{\rm{loc}}([0,\infty);H^{-3}\times \mR^m\times \mR^{m\times m}\times H^{-3})\cap L^2_{\textrm{loc}}([0,\infty);L^2_\sigma\times \mR^m\times \mR^{m\times m}\times L^2_\sigma).
	$$
	Without loss of generality, we may assume that $P_n$  converges weakly to some probability measure $P$. It remains to prove that $P\in \mathscr{GW}(s,x_0,y_0,\mY_0,Z_0,C_{t,q})$.
	By Skorokhod's representation theorem, there exists a probability space $({\Omega'},{\mathcal{F}'},{P'})$ and $\widetilde{\mathbb{S}}$-valued
	random variables $(\tilde{x}_n,\tilde{y}_n,\tilde{\mY}_n,\tilde{Z}_n)$ and $(\tilde{x},\tilde{y},\tilde{\mY},{\tilde{Z}})$ such that
	\begin{enumerate}
		\item[(i)] $(\tilde{x}_n,\tilde{y}_n,\tilde{\mY}_n,{\tilde{Z}_n})$ has the law $P_n$ for each $n\in\mathbb{N}$,
		\item[(ii)] $(\tilde{x}_n,\tilde{y}_n,\tilde{\mY}_n,{\tilde{Z}_n})\rightarrow (\tilde{x},\tilde{y},\tilde{\mY},{\tilde{Z}})$ in $\widetilde{\mathbb{S}}$ ${P'}$-a.s., and $(\tilde{x},\tilde{y},\tilde{\mY},{\tilde{Z}})$ has the law $P$.
	\end{enumerate}
In the following we verify (M1)-(M3) for $P$.

	 For (M1), using the  convergence in (i) above, we have
\begin{align*}
	&P(x(t)=x_0,y(t)=y_0,\mY(t)=\mY_0,{Z(t)=Z_0}, 0\leq t\leq s)
	\\&\qquad= P'(\tilde x(t)=x_0,\tilde y(t)=y_0,\tilde \mY(t)=\mY_0, \tilde Z(t)=Z_0, 0\leq t\leq s)=1.
\end{align*}
	By the condition on $G$ \eqref{eq:nl}, for every $T>0$ $P(\int_0^T\|G(x(r))\|_{L_2(\mR^m;L^2_\sigma)}^{2}<\infty)=1$. (M3) follows by similar arguments as in the proof of Theorem  \ref{convergence}. Then by \cite[Theorem 4.2.5]{LR15} we have
		$P(Z\in CL^2_\sigma)=1$. 

	For (M2), we write  $ \mY=( \mY_{ij})$, $y=(y_i)$ and $\tilde \mY=(\tilde \mY_{ij})$, $\tilde y=(\tilde y_i)$ and $\tilde \mY_n=(\tilde \mY_{n,ij})$, $\tilde y_n=(\tilde y_{n,i})$.
	Similarly as in the proof of Theorem \ref{convergence 1} we  obtain that $y$ is $(\widetilde \cB_t)_{t\geq s}$- $\mR^m$-valued Brownian motion.
	
	Then, we shall  prove
	\begin{equation}\label{eq:bm}P\Big(\mY(t)-\mY(s)=\int_s^ty_r\otimes dy_r\Big)=1.\end{equation}
	To this end, we need to verify
	\begin{itemize}
		\item The quadratic variation process of $\mY_{ij}$ is given by $\int_s^t y_i^2(r) dr $.
		\item The cross variation of $\mY_{ij}$ with  $y_k$ is given by $\int_s^ty_i(r)dr\delta_{j=k}$.
	\end{itemize}
	Let $t>r\geq s$ and $g$ be any bounded and real valued $\widetilde{\cB}_r$-measurable continuous functions on $\widetilde\Omega$. It holds
	\begin{align*}
	&E^{P}\Big[\Big(\mY_{ij}^2(t)-\mY_{ij}^2(r)-\int_r^t y_i^2(l)dl\Big)g(x,y,\mY)\Big]
	\\&=E^{P'}\Big[\Big(\tilde \mY_{ij}^2(t)-\tilde \mY_{ij}^2(r)-\int_r^t \tilde y_i^2(l)dl\Big)g(\tilde x,\tilde y,\tilde \mY)\Big]
	\\&=\lim_{n\to\infty}E^{P'}\Big[\Big(\tilde \mY_{n,ij}^2(t)-\tilde \mY_{n,ij}^2(r)-\int_{r}^t \tilde y_{n,i}^2(r)dr\Big)g(\tilde x_n,\tilde y_n,\tilde \mY_n)\Big]=0,
	\end{align*}
	and
	\begin{align*}
	&E^{P}\Big[\Big(\mY_{ij}(t) y_k(t)-\mY_{ij}(r)y_k(r)-\delta_{k=j}\int_r^t y_i(l)dl\Big)g(x,y,\mY)\Big]
	\\&=E^{P'}\Big[\Big(\tilde \mY_{ij}(t)\tilde y_k(t)-\tilde \mY_{ij}(r)\tilde y_k(r)-\delta_{k=j}\int_r^t \tilde y_i(l)dl\Big)g(\tilde x,\tilde y,\tilde \mY)\Big]
	\\&=\lim_{n\to\infty}E^{P'}\Big[\Big(\tilde \mY_{n,ij}(t)\tilde y_{n,k}(t)-\tilde \mY_{n,ij}(r)\tilde y_{n,k}(r)-\delta_{k=j}\int_{r}^t \tilde y_{n,i}(l)dl\Big)g(\tilde x_n,\tilde y_n,\tilde \mY_n)\Big]=0.
	\end{align*}
	Hence the quadratic variation process of $\mY(t)-\mY(s)-\int_s^ty_r\otimes dy_r$ is zero and \eqref{eq:bm} follows. Similarly, 
	$$P\Big(Z(t)-Z(s)-\int_s^t \Delta Z(r)dr=\int_s^t G(v_0+Z(r))dy_r\Big)=1.$$
	The rest follows by the same argument as in the proof of Theorem \ref{convergence 1}.
	\end{proof}

\color{black}

  \section{Intermittent jets}
  \label{s:B}

In this  part we recall the construction of intermittent jets from \cite[Section 7.4]{BV19} \rmb{and derive a new estimate in Lemma~\ref{l:83}.}
We point out that the construction is entirely deterministic, that is, none of the functions below depends on $\omega$.
Let us begin with  the following geometric lemma which can be found in  \cite[Lemma 6.6]{BV19}.

\begin{lem}\label{geometric}
Denote by $\overline{B_{1/2}}(\mathrm{Id})$ the closed ball of radius $1/2$ around the identity matrix $\mathrm{Id}$, in the space of $3\times 3$ symmetric matrices. There
exists $\Lambda\subset \mathbb{S}^2\cap \mathbb{Q}^3$ such that for each $\xi\in \Lambda$ there exists a  $C^\infty$-function $\gamma_\xi:\overline{B_{1/2}}(\mathrm{Id})\rightarrow\mathbb{R}$ such that
\begin{equation*}
R=\sum_{\xi\in\Lambda}\gamma_\xi^2(R)(\xi\otimes \xi)
\end{equation*}
for every symmetric matrix satisfying $|R-\mathrm{Id}|\leq 1/2$.
For $C_\Lambda=8|\Lambda|(1+8\pi^3)^{1/2}$, where $|\Lambda|$ is the cardinality of the set $\Lambda$, we define
the constant
\begin{equation*}
M=C_\Lambda\sup_{\xi\in \Lambda}(\|\gamma_\xi\|_{C^0}+\sum_{|j|\leq N}\|D^j\gamma_\xi\|_{C^0}).
\end{equation*}
 For each $\xi\in \Lambda$ let us define $A_\xi\in \mathbb{S}^2\cap \mathbb{Q}^3$ to be an orthogonal vector to $\xi$. Then for each $\xi\in\Lambda$ we have that $\{\xi, A_\xi, \xi\times A_\xi\}\subset \mathbb{S}^2\cap \mathbb{Q}^3$ form an orthonormal basis for $\mathbb{R}^3$.
We label by $n_*$ the smallest natural such that
\begin{equation*}\{n_*\xi, n_*A_\xi, n_*\xi\times A_\xi\}\subset \mathbb{Z}^3\end{equation*}
for every $\xi\in \Lambda$.
\end{lem}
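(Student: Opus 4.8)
\textbf{Plan for Lemma \ref{geometric}.}

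The statement here is the standard geometric decomposition lemma of De Lellis--Sz\'ekelyhidi type, in the formulation used by Buckmaster and Vicol. I would not re-prove it from scratch; the cleanest approach is to invoke the partition-of-unity argument combined with a classical algebraic identity, then carefully trace through the rationality and integrality claims. Concretely, the plan is as follows.

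First I would recall the elementary identity: for any $\xi\in\mathbb{S}^2$,
\[
\xi\otimes\xi=\frac{1}{2}\big(\mathrm{Id}-(\xi\times\cdot)^2\big)\quad\text{is not quite what is needed,}
\]
so instead I would use the fact that the symmetric matrices $\{\xi\otimes\xi:\xi\in\mathbb{S}^2\}$ positively span the cone of positive semidefinite symmetric $3\times3$ matrices, and in particular $\mathrm{Id}$ lies in the interior of their positive hull. Hence there is a finite set $\{\xi_1,\dots,\xi_m\}\subset\mathbb{S}^2$ and strictly positive coefficients $c_1,\dots,c_m>0$ with $\mathrm{Id}=\sum_k c_k\,\xi_k\otimes\xi_k$, and by a density/openness argument this finite set may be chosen inside $\mathbb{S}^2\cap\mathbb{Q}^3$ (rational directions are dense in $\mathbb{S}^2$, and the condition ``$\mathrm{Id}$ in the interior of the positive hull'' is open, so small rational perturbations of the $\xi_k$ preserve it; the coefficients depending continuously on the directions remain positive). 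Call this rational set $\Lambda$. Then for $R$ in a small ball $\overline{B_{1/2}}(\mathrm{Id})$, since $R$ still lies in the interior of the positive hull of $\{\xi\otimes\xi:\xi\in\Lambda\}$, one solves the (over-determined but consistent) linear system $R=\sum_{\xi\in\Lambda}a_\xi(R)\,\xi\otimes\xi$ with $a_\xi(R)>0$; choosing $a_\xi$ via a smooth selection (e.g.\ a fixed linear right inverse composed with a smooth cutoff keeping positivity, valid on the ball of radius $1/2$ by openness and the $R=\mathrm{Id}$ base point) gives $C^\infty$ functions, and we set $\gamma_\xi:=\sqrt{a_\xi}$, which is smooth because $a_\xi$ is bounded below by a positive constant on $\overline{B_{1/2}}(\mathrm{Id})$. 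This yields the displayed decomposition.

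The constant $M$ is then simply defined as written; no claim beyond finiteness is made, and finiteness is immediate since $\Lambda$ is finite and each $\gamma_\xi\in C^\infty(\overline{B_{1/2}}(\mathrm{Id}))$ so all its derivatives up to order $N$ are bounded on the compact ball. For the orthonormal-frame claim: for each $\xi\in\Lambda\subset\mathbb{Q}^3$ pick any nonzero rational vector $\tilde A_\xi$ orthogonal to $\xi$ (this exists: the orthogonal complement of a rational line is a rational plane, hence contains rational points), and normalize $A_\xi:=\tilde A_\xi/|\tilde A_\xi|$; since $|\tilde A_\xi|^2\in\mathbb{Q}$ one has $A_\xi\in\mathbb{S}^2\cap\mathbb{Q}^3$ only if $|\tilde A_\xi|$ is rational, so here one must be slightly careful --- either one arranges $\tilde A_\xi$ to have rational length (always possible by a further rational rescaling since the set of rational points on a rational circle in the plane is dense once nonempty, and nonempty because e.g.\ Pythagorean triples exist), or, following \cite{BV19}, one simply works with $A_\xi\in\mathbb{S}^2\cap\mathbb{Q}^3$ whose existence is part of the cited lemma. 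Then $\xi\times A_\xi$ is a cross product of rational vectors, hence rational, and has norm $1$, so $\{\xi,A_\xi,\xi\times A_\xi\}\subset\mathbb{S}^2\cap\mathbb{Q}^3$ is an orthonormal basis of $\mathbb{R}^3$. Finally, since each of these vectors is rational, it has a least positive integer multiple lying in $\mathbb{Z}^3$; taking the least common multiple over the finite family $\Lambda$ of these finitely many denominators produces the integer $n_*$ with $\{n_*\xi,n_*A_\xi,n_*\xi\times A_\xi\}\subset\mathbb{Z}^3$ for all $\xi\in\Lambda$.

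The only genuinely delicate point --- and the one I would spend the most care on --- is the simultaneous rationality of the whole orthonormal frame $\{\xi,A_\xi,\xi\times A_\xi\}$: keeping $\xi$ rational is easy, keeping $A_\xi$ rational \emph{and} of rational norm (so that its normalization stays rational) requires invoking density of rational points on rational conics, which is where one uses that $\mathbb{S}^2$ has a rational point and stereographic projection gives a rational parametrization. Everything else (the positive-hull argument, smoothness of $\gamma_\xi$, finiteness of $M$, the definition of $n_*$) is routine. Since this is precisely \cite[Lemma~6.6]{BV19}, in the write-up I would state it and refer to that reference for the frame-rationality bookkeeping, reproducing only the positive-hull decomposition for completeness.
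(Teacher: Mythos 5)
Your plan is correct, but note that the paper itself gives no proof of this lemma: it simply states it and cites \cite[Lemma~6.6]{BV19}, which is exactly what you say you would do in the write-up. Your sketch of the underlying argument (positive-hull openness, density of rational directions, smooth selection with uniform positive lower bound allowing a smooth square root, rationality of the orthonormal frame via rational parametrization of the sphere, and the lcm construction of $n_*$) is a faithful outline of the standard proof behind that reference, so the approach agrees with the paper's.
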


\rmb{Let $\chi:\mathbb{R}^2\rightarrow\mathbb{R}$ be a smooth function with support in a ball of radius $1$. We define $\Phi:= -\Delta\chi$ and $\phi:=-\Delta\Phi =(-\Delta)^{2}\chi$ and we normalize so that
\begin{equation}\label{eq:phi}
\frac{1}{4\pi^2}\int_{\mathbb{R}^2}\phi^2(x_1,x_2)dx_1dx_2=1.
\end{equation}
This particular form of $\phi$ given through the function $\chi$ did not appear in \cite{BV19}. It is needed below in Lemma~\ref{l:83} which we apply in Section~\ref{s:8} with $\gamma=5+\delta$, i.e. $l=2$.}
By definition we know $\int_{\mathbb{R}^2}\phi dx=0$. Define $\psi:\mathbb{R}\rightarrow\mathbb{R}$ to be a smooth, mean zero function with support in the ball of radius $1$ satisfying
\begin{equation}\label{eq:psi}
\frac{1}{2\pi}\int_{\mathbb{R}}\psi^2(x_3)dx_3=1.
\end{equation}
For parameters $r_\perp, r_\|>0$ such that
\begin{equation*}r_\perp\ll r_\|\ll1,\end{equation*}
we define the rescaled cut-off functions
\begin{equation*}\phi_{r_\perp}(x_1,x_2)=\frac{1}{r_\perp}\phi\left(\frac{x_1}{r_\perp},\frac{x_2}{r_\perp}\right),\quad
\Phi_{r_\perp}(x_1,x_2)=\frac{1}{r_\perp}\Phi\left(\frac{x_1}{r_\perp},\frac{x_2}{r_\perp}\right),\quad \psi_{r_\|}(x_3)=\frac{1}{r_\|^{1/2}}\psi\left(\frac{x_3}{r_\|}\right).\end{equation*}
We periodize $\phi_{r_\perp}, \Phi_{r_\perp}$ and $\psi_{r_\|}$ so that they are viewed as periodic functions on $\mathbb{T}^2, \mathbb{T}^2$ and $\mathbb{T}$ respectively.

Consider a large real number $\lambda$ such that $\lambda r_\perp\in\mathbb{N}$, and a large time oscillation parameter $\mu>0$. For every $\xi\in \Lambda$ we introduce
\begin{equation*}\aligned
\psi_{(\xi)}(t,x)&:=\psi_{\xi,r_\perp,r_\|,\lambda,\mu}(t,x):=\psi_{r_{\|}}(n_*r_\perp\lambda(x\cdot \xi+\mu t))
\\ \Phi_{(\xi)}(x)&:=\Phi_{\xi,r_\perp,\lambda}(x):=\Phi_{r_{\perp}}(n_*r_\perp\lambda(x-\alpha_\xi)\cdot A_\xi, n_*r_\perp\lambda(x-\alpha_\xi)\cdot(\xi\times A_\xi))\\
\phi_{(\xi)}(x)&:=\phi_{\xi,r_\perp,\lambda}(x):=\phi_{r_{\perp}}(n_*r_\perp\lambda(x-\alpha_\xi)\cdot A_\xi, n_*r_\perp\lambda(x-\alpha_\xi)\cdot(\xi\times A_\xi)),
\endaligned\end{equation*}
 where $\alpha_\xi\in\mathbb{R}^3$ are shifts to ensure that $\{\Phi_{(\xi)}\}_{\xi\in\Lambda}$ have mutually disjoint support.

The intermittent jets $W_{(\xi)}:\mathbb{T}^3\times \mathbb{R}\rightarrow\mathbb{R}^3$ are defined as in \cite[Section 7.4]{BV19}.
\begin{equation}\label{intermittent}W_{(\xi)}(t,x):=W_{\xi,r_\perp,r_\|,\lambda,\mu}(t,x):=\xi\psi_{(\xi)}(t,x)\phi_{(\xi)}(x).\end{equation}
By the choice of $\alpha_\xi$ we have that
\begin{equation}\label{Wxi}
W_{(\xi)}\otimes W_{(\xi')}\equiv0, \textrm{ for } \xi\neq \xi'\in\Lambda,
\end{equation}
and by the normalizations \eqref{eq:phi} and \eqref{eq:psi} we obtain
$$
\strokedint_{\mathbb{T}^3}W_{(\xi)}(t,x)\otimes W_{(\xi)}(t,x)dx=\xi\otimes\xi.
$$
These facts combined with Lemma \ref{geometric} imply that
\begin{equation}\label{geometric equality}
\sum_{\xi\in\Lambda}\gamma_\xi^2(R)\strokedint_{\mathbb{T}^3}W_{(\xi)}(t,x)\otimes W_{(\xi)}(t,x)dx=R,
\end{equation}
for every symmetric matrix $R$ satisfying $|R-\textrm{Id}|\leq 1/2$. Since $W_{(\xi)}$ are not divergence free, we  introduce the corrector term
\begin{equation}\label{corrector}
W_{(\xi)}^{(c)}:=\frac{1}{n_*^2\lambda^2}\nabla \psi_{(\xi)}\times \textrm{curl}(\Phi_{(\xi)}\xi)
=\textrm{curl\,curl\,} V_{(\xi)}-W_{(\xi)}.
\end{equation}
with
\begin{equation*}
V_{(\xi)}(t,x):=\frac{1}{n_*^2\lambda^2}\xi\psi_{(\xi)}(t,x)\Phi_{(\xi)}(x).
\end{equation*}
Thus we have
\begin{equation*}
\div\left(W_{(\xi)}+W_{(\xi)}^{(c)}\right)\equiv0.
\end{equation*}

Next, we  recall the key   bounds from \cite[Section 7.4]{BV19}. For $N, M\geq0$ and $p\in [1,\infty]$ the following holds provided $r_{\|}^{-1}\ll r_{\perp}^{-1}\ll \lambda$
\begin{equation}\label{bounds}\aligned
\|\nabla^N\partial_t^M\psi_{(\xi)}\|_{C_tL^p}&\lesssim r^{1/p-1/2}_\|\left(\frac{r_\perp\lambda}{r_\|}\right)^N
\left(\frac{r_\perp\lambda \mu}{r_\|}\right)^M,\\
\|\nabla^N\phi_{(\xi)}\|_{L^p}+\|\nabla^N\Phi_{(\xi)}\|_{L^p}&\lesssim r^{2/p-1}_\perp\lambda^N,\\
\|\nabla^N\partial_t^MW_{(\xi)}\|_{C_tL^p}+\frac{r_\|}{r_\perp}\|\nabla^N\partial_t^MW_{(\xi)}^{(c)}\|_{C_tL^p}+\lambda^2\|\nabla^N\partial_t^MV_{(\xi)}
\|_{C_tL^p}&\lesssim r^{2/p-1}_\perp r^{1/p-1/2}_\|\lambda^N\left(\frac{r_\perp\lambda\mu}{r_\|}\right)^M,
\endaligned\end{equation}
where the implicit constants may depend on $p, N$ and $M$, but are independent of $\lambda, r_\perp, r_\|, \mu$.

Finally, we establish two additional estimates employed in Section~\ref{s:8}.

\begin{lem} \label{l:83}
Let $\alpha_{0}\in [0,1]$, $\gamma>0$, $\delta> 0$. Suppose that for $l=\frac{\gamma-1-\delta}2\in \mN$ and $\phi=(-\Delta)^l \chi$ for a smooth  function $\chi$ with support in a ball of radius $1$. Then it holds true
$$
\| \psi_{(\xi)} \|_{C^{\alpha_0}_T H^{\gamma + \delta}} \lesssim \mu^{\alpha_0} \left( \frac{r_{\perp} \lambda}{r_{\|}}
   \right)^{\alpha_0 + \gamma + \delta},
$$
$$
\|\phi_{(\xi)}\|_{H^{-\gamma}}\lesssim \lambda^{-\gamma}r_\perp^{-\delta}.
$$
\end{lem}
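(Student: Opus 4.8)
\textbf{Proof plan for Lemma~\ref{l:83}.} The strategy is to reduce both estimates to the elementary scaling behaviour of the one-dimensional profile $\psi$ and the two-dimensional profile $\phi$ under the rescalings $\psi_{r_\|}$, $\phi_{r_\perp}$, and the composition with the linear maps $x\mapsto n_*r_\perp\lambda(x\cdot\xi+\mu t)$, resp. $x\mapsto n_*r_\perp\lambda(x-\alpha_\xi)\cdot A_\xi$, etc. For the first bound, I would first observe that $\psi_{(\xi)}$ is, up to the periodization, a function of the single real variable $s=n_*r_\perp\lambda(x\cdot\xi+\mu t)$ obtained by rescaling the fixed profile $\psi$; hence each spatial derivative produces a factor $r_\perp\lambda/r_\|$ and each time derivative a factor $r_\perp\lambda\mu/r_\|$, exactly as recorded in \eqref{bounds}. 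The point is to interpolate: writing $H^{\gamma+\delta}$ between $L^2$ and $H^{N}$ for an integer $N>\gamma+\delta$, and $C^{\alpha_0}_T$ between $C_T$ and $C^1_T$, one gets
\begin{equation*}
\|\psi_{(\xi)}\|_{C^{\alpha_0}_TH^{\gamma+\delta}}\lesssim \|\psi_{(\xi)}\|_{C_TL^2}^{1-\alpha_0}\|\partial_t\psi_{(\xi)}\|_{C_TL^2}^{\alpha_0}\cdot\big(\text{spatial interpolation factor}\big),
\end{equation*}
and plugging in the bounds from \eqref{bounds} with $p=2$ (so $r_\|^{1/p-1/2}=1$) yields precisely $\mu^{\alpha_0}(r_\perp\lambda/r_\|)^{\alpha_0+\gamma+\delta}$. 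Care is needed that the periodization does not spoil the Sobolev norm: since $r_\|^{-1}\ll r_\perp^{-1}\ll\lambda$ and $\lambda r_\perp\in\mathbb N$, the support condition guarantees the rescaled bumps tile $\mathbb T^3$ without overlap, so the periodic $H^s$ norm is comparable to the $\mathbb R^3$ norm of a single bump.

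For the second bound, the idea is that $\phi_{(\xi)}$ being of the form $(-\Delta)^l\chi$ rescaled gives it extra vanishing moments and hence decay of its Fourier transform at low frequencies, which is exactly what a \emph{negative}-order Sobolev norm sees. Concretely, since $\phi=(-\Delta)^l\chi$ with $l=(\gamma-1-\delta)/2$, after rescaling to $\phi_{r_\perp}$ and composing with the linear map we have $\phi_{(\xi)}=(n_*r_\perp\lambda)^{-2l}\big((-\Delta)^l[\,\text{rescaled }\chi\,]\big)$ up to the geometric change of variables; equivalently $|\nabla|^{-\gamma}\phi_{(\xi)}$ can be written as $|\nabla|^{-\gamma}(-\Delta)^l$ applied to a function that is $\chi$ rescaled at frequency scale $r_\perp^{-1}\lambda$. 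Estimating in $L^2$ and using the support/normalization of $\chi$ together with the fact that $2l=\gamma-1-\delta<\gamma$, one gains a net factor $\lambda^{-\gamma}$ from the $|\nabla|^{-\gamma}$ and is left with a residual power of $r_\perp$; tracking the $L^2$ normalization of $\phi_{r_\perp}$ (which carries $r_\perp^{2/2-1}=1$) the surplus is $r_\perp^{2l-\gamma}=r_\perp^{-1-\delta}$ times a compensating $r_\perp$ from the rescaling Jacobian, giving $r_\perp^{-\delta}$. I would carry this out cleanly on the Fourier side: $\widehat{\phi_{(\xi)}}$ is supported near frequencies of size $r_\perp^{-1}\lambda$ and, because $\phi$ vanishes to order $2l$, $|\widehat{\phi_{(\xi)}}(k)|\lesssim (|k|/(r_\perp^{-1}\lambda))^{2l}$ for small $|k|$, so $\||\nabla|^{-\gamma}\phi_{(\xi)}\|_{L^2}^2\simeq\sum_k|k|^{-2\gamma}|\widehat{\phi_{(\xi)}}(k)|^2$ converges (this is where $2l<\gamma$, i.e. $\delta>0$, is used) and scales like $\lambda^{-2\gamma}r_\perp^{-2\delta}$ after the change of variables.

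The main obstacle I anticipate is bookkeeping rather than conceptual: making the two changes of variables (the anisotropic rescaling by $r_\perp$ in the $A_\xi,\xi\times A_\xi$ directions and by $r_\|$ along $\xi$, followed by the rigid rotation sending $\{\xi,A_\xi,\xi\times A_\xi\}$ to the standard basis) and keeping precise track of all Jacobian factors and of the $n_*$ constants, while simultaneously justifying that periodization is harmless. A secondary subtlety is that in the second estimate one must genuinely exploit the specific structure $\phi=(-\Delta)^l\chi$ — an arbitrary mean-zero bump would only give $r_\perp^{-1}\lambda^{-1}$ from one vanishing moment — so the proof must invoke $l=(\gamma-1-\delta)/2\in\mathbb N$ and the smoothness and compact support of $\chi$ to extract the full order-$2l$ cancellation. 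Once these are handled, the interpolation step for the first bound and the Fourier-side summation for the second are routine.
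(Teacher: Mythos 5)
Your plan matches the paper's proof essentially step for step: interpolation in space (between $L^2$ and $H^{\lceil\gamma+\delta\rceil}$) and then in time (between $C_T$ and $C^1_T$) using \eqref{bounds} for the $\psi_{(\xi)}$ estimate, and a Fourier-side computation for the $\phi_{(\xi)}$ estimate that exploits $\phi=(-\Delta)^l\chi$, the support of $\widehat{\phi_{(\xi)}}$ on the sparse lattice $2\pi n_*\lambda r_\perp\mathbb{Z}^2$, and the resulting low-frequency decay $|\hat\phi_{r_\perp}(k)|\lesssim r_\perp\|\chi\|_{L^1}(|k|r_\perp)^{2l}$. One small correction: the condition ensuring convergence of the two-dimensional lattice sum $\sum_{k\neq 0}|k|^{4l-2\gamma}$ is $2\gamma-4l>2$, which with $2l=\gamma-1-\delta$ is exactly $\delta>0$, rather than ``$2l<\gamma$'' as you wrote (the latter is automatic from $\delta>-1$ and is not the relevant inequality).
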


\begin{proof}
The first bound is a consequence of \eqref{bounds} and interpolation
$$ \| \psi_{(\xi)} \|_{H^{\gamma + \delta}} \lesssim \| \psi_{(\xi)} \|^{1 - (\gamma + \delta)
   / {\lceil \gamma + \delta \rceil}}_{L^2} \| \psi_{(\xi)} \|_{H^{{\lceil \gamma + \delta \rceil}}}^{(\gamma + \delta) /
   {\lceil \gamma + \delta \rceil}}\lesssim \left( \frac{r_{\perp} \lambda}{r_{\|}} \right)^{\gamma
   	+ \delta}, $$
   hence using interpolation again leads to
$$ \| \psi_{(\xi)} \|_{C^{\alpha_0}_T H^{\gamma + \delta}} \lesssim \| \psi_{(\xi)} \|^{1 -
   \alpha_0}_{C_T H^{\gamma + \delta}} \| \psi_{(\xi)} \|^{\alpha_0}_{C^1_T H^{\gamma
   + \delta}} 
 \lesssim \mu^{\alpha_0} \left( \frac{r_{\perp} \lambda}{r_{\|}}
   \right)^{\alpha_0 + \gamma + \delta} . $$

Let us now show the estimate for the  $H^{-\gamma}$-norm of $\phi_{(\xi)}$.
We view $\phi_{(\xi)}$ as periodic function on $\mR^3$ and have
    \begin{align*}
    \int_{\mathbb{R}^3}\phi_{(\xi)}(x)e^{-i k\cdot x}dx&=e^{-i k\cdot \alpha_\xi}\int_{\mathbb{R}^3}\phi_{r_\perp}(n_*\lambda r_\perp u_1,n_*\lambda r_\perp u_2)e^{-i Ak\cdot u}du\nonumber
    \\&=\delta_{0}((Ak)_3)e^{-i k\cdot \alpha_\xi}\int_{\mathbb{R}^2}\phi_{r_\perp}(n_*\lambda r_\perp u_1,n_*\lambda r_\perp u_2)e^{-i [(Ak)_1u_1+(Ak)_2u_2]}du_1du_2\nonumber
    \\&=\delta_{0}((Ak)_3)e^{-i k\cdot \alpha_\xi}(n_*\lambda r_\perp)^{-2}\int_{\mathbb{R}^2}\phi_{r_\perp}( u_1, u_2)e^{-i [\frac{(Ak)_1}{n_*\lambda r_\perp}u_1+\frac{(Ak)_2}{n_*\lambda r_\perp}u_2]}du_1du_2\nonumber
      \\&=\delta_{0}((Ak)_3)e^{-i k\cdot \alpha_\xi}\sum_{m\in 2\pi n_*\lambda r_\perp\mathbb{Z}^2}\hat{\phi}_{r_\perp}\left(\frac{(Ak)_1}{n_*\lambda r_\perp},\frac{(Ak)_2}{n_*\lambda r_\perp} \right)\delta_{0}(((Ak)_1,(Ak)_2)+m),
    \end{align*}
    where in the first equality we used $u=(u_1,u_2,u_3)=(x\cdot A_\xi, x\cdot(\xi\times A_\xi),x\cdot \xi):=Ax$ with $A$ being an orthonormal matrix and $\hat{\phi}_{r_\perp}(k)=\int_{\mathbb{T}^2}\phi_{r_\perp}(x)e^{-i k\cdot x}dx$. Then
    \begin{align*}(1-\Delta)^{-\gamma/2} \phi_{(\xi)}(x)=\sum_{m\in 2\pi n_*\lambda r_\perp\mathbb{Z}^2}e^{-i A^*(m,0)\cdot \alpha_\xi}(1+|m|^2)^{-\gamma/2}\hat{\phi}_{r_\perp}\left(\frac{m}{n_*\lambda r_\perp}\right)e^{iA^*(m,0)\cdot x}.
    \end{align*}
 Thus, we have
    \begin{align*}
    \|\phi_{(\xi)}\|_{H^{-\gamma}}^2&\lesssim \sum_{m\in 2\pi n_*\lambda r_\perp\mathbb{Z}^2}(1+|m|^2)^{-\gamma}\left|\hat{\phi}_{r_\perp}\left(\frac{m}{n_*\lambda r_\perp}\right)\right|^2
   \\ &=\sum_{m\in 2\pi n_*\lambda r_\perp\mathbb{Z}^2\setminus \{0\}}(1+|m|^2)^{-\gamma}\left|\hat{\phi}_{r_\perp}\left(\frac{m}{n_*\lambda r_\perp}\right)\right|^2
   \\&\lesssim(\lambda r_\perp)^{-2\gamma}\sum_{k\in 2\pi\mathbb{Z}^2\setminus \{0\}}|k|^{-2\gamma}\left|\hat{\phi}_{r_\perp}(k)\right|^2.
    \end{align*}
Here, in the equality we used the fact that $\phi_{r_\perp}$ has zero mean. Moreover, we have for $l=\frac{\gamma-1-\delta}2\in \mN$ and $\phi=(-\Delta)^l \chi$ for a smooth compactly supported function $\chi$ that
$$
\hat{\phi}_{r_\perp}(k)=r_\perp\int_{\mathbb{R}^2} \phi(x)e^{-i kr_\perp\cdot x} dx=r_\perp \int_{\mathbb{R}^2}  \chi(-\Delta)^le^{-ikr_\perp\cdot x}dx,
$$
which implies  for $l=\frac{\gamma-1-\delta}2$, $\delta>0$
\begin{align*}\|\phi_{(\xi)}\|_{H^{-\gamma}}^2&\lesssim(\lambda r_\perp)^{-2\gamma}r_\perp^2\sum_{k\in 2\pi\mathbb{Z}^2\setminus \{0\}}|k|^{-2\gamma}(\|\chi\|_{L^1}(|k|r_\perp)^{2l})^2
\\&\lesssim(\lambda)^{-2\gamma}(r_\perp)^{-2\delta}\sum_{k\in 2\pi\mathbb{Z}^2\setminus \{0\}}|k|^{-2-2\delta}\|\chi\|_{L^1}^2
\\&\lesssim(\lambda)^{-2\gamma}(r_\perp)^{-2\delta}.
    \end{align*}
  \end{proof}

\color{black}

\section{Uniqueness in law implies joint uniqueness in law}
  \label{s:C}

In this part we will extend the result of Cherny \cite{C03} to a general infinite dimensional setting. A generalization to a semigroup framework in Banach spaces was proved by Ondrej\'at in \cite{On04}. Let $U,U_{1},H$ and $H_1$ be  separable Hilbert spaces and suppose that the embedding $U\subset U_{1}$ is Hilbert--Schmidt and the embedding $H\subset H_1$ is continuous.
Consider the SPDE of the form
\begin{equation}
\label{spde}
dX=F(X)dt+G(X)dB,\qquad X(0)=x\in H,
\end{equation}
where   $F:H\rightarrow H_1$ is $\mathcal{B}(H)/\mathcal{B}(H_1)$ measurable and $B$ is a cylindrical Wiener process on the Hilbert space $U$ which is defined on  a stochastic basis $(\Omega,\mathcal{F},(\mathcal{F}_t)_{t\geq0},P)$. In other words, $B$ can be viewed as a continuous process taking values in $U_1$ and we assume that for $x\in H$, $G(x)$ is an Hilbert--Schmidt operator from $U$ to $H$. Solutions to \eqref{spde} are then understood in the following sense.

\begin{defn}
A pair $(X,B)$ is a solution to \eqref{spde} provided there exists  a stochastic basis $(\Omega,\mathcal{F},(\mathcal{F}_t)_{t\geq0},P)$ such that\\
\no(H1) $B$ is a cylindrical $(\mathcal{F}_t)_{t\geq0}$-Wiener process on  $U$;\\
\no(H2) $X$ is an $(\mathcal{F}_t)_{t\geq0}$-adapted process in $C([0,\infty);H_{1})$ $P$-a.s.;\\
\no(H3) $F(X)\in L^{1}_{\rm{loc}}([0,\infty);H_{1})$ and $G(X)\in L^{2}_{\rm{loc}}([0,\infty);L_{2}(U,H))$ $P$-a.s.;\\
\no(H4) $P$-a.s. it holds for all $t\in[0,\infty)$
$$
X_{t}=x+\int_{0}^{t}F(X_{s})ds+\int_{0}^{t}G(X_{s})dB_{s}.
$$
\end{defn}

Let us now recall the definition of uniqueness in law as well as joint uniqueness in law.

\begin{defn}
We say that uniqueness in law holds for \eqref{spde} if for any two solutions $(X,B)$ and $(\tilde X,\tilde
B)$ starting from the same initial distribution, one has $\mathrm{Law}(X)=\mathrm{Law}(\tilde X)$.
We say that joint uniqueness in law holds for \eqref{spde} if for any two solutions $(X,B)$ and $(\tilde X,\tilde
B)$ starting from the same initial distribution, one has $\mathrm{Law}(X,B)=\mathrm{Law}(\tilde X,\tilde B)$.
\end{defn}

Clearly, joint uniqueness in law implies uniqueness in law. The following result shows that the two notions are in fact equivalent for  SPDEs of the form \eqref{spde}.

\begin{thm} \label{cherny}
Suppose that uniqueness in law holds for \eqref{spde}. Then joint uniqueness in law holds for \eqref{spde}.
\end{thm}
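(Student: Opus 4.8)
The strategy is the classical one going back to Cherny, adapted to the infinite-dimensional setting: one encodes a pair of solutions driven by the \emph{same} Wiener process into a single auxiliary SPDE, applies the hypothesis ``uniqueness in law'' to that auxiliary system, and then unwinds the resulting equality of laws using the martingale representation theorem to recover joint uniqueness. Let me describe the steps.

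First I would take two solutions $(X,B)$ and $(\tilde X,\tilde B)$ of \eqref{spde}, defined on stochastic bases $(\Omega,\mathcal F,(\mathcal F_t),P)$ and $(\tilde\Omega,\tilde{\mathcal F},(\tilde{\mathcal F}_t),\tilde P)$ respectively, both starting from the same initial distribution $\mu$ on $H$. The key device is to consider the ``doubled'' system on the state space $H\times U_1$ (with the natural continuous embedding into $H_1\times U_1$): namely the process $(X,B)$ solves, on the first stochastic basis, the SPDE
\begin{equation*}
d\begin{pmatrix} Y \\ W\end{pmatrix} = \begin{pmatrix} F(Y) \\ 0\end{pmatrix} dt + \begin{pmatrix} G(Y) \\ \mathrm{Id}\end{pmatrix} dB, \qquad \begin{pmatrix} Y(0) \\ W(0)\end{pmatrix}=\begin{pmatrix} x \\ 0\end{pmatrix},
\end{equation*}
driven by the cylindrical Wiener process $B$ on $U$, and similarly $(\tilde X,\tilde B)$ solves the analogous equation driven by $\tilde B$. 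This is again an SPDE of the form \eqref{spde}, with enlarged state space and a coefficient $\widehat G$ whose second block is the (injective Hilbert--Schmidt, after composing with $U\subset U_1$) identity embedding; the enlarged drift $\widehat F$ is measurable. Crucially, uniqueness in law for this \emph{enlarged} equation is \emph{equivalent} to uniqueness in law for the original equation \eqref{spde}: given a solution $(Y,W)$ of the enlarged system, $W$ is forced by its own equation to equal the driving Brownian motion (in $U_1$), so the pair is determined in law by the law of $Y$ alone, and $Y$ is a solution of \eqref{spde}. Hence the hypothesis of the theorem applies to the enlarged system and yields $\mathrm{Law}(Y,W)=\mathrm{Law}(\tilde Y,\tilde W)$, i.e.
\begin{equation*}
\mathrm{Law}(X,B)\big|_{\text{path space}} = \mathrm{Law}(\tilde X,\tilde B)\big|_{\text{path space}}
\end{equation*}
\emph{as laws on } $C([0,\infty);H_1\times U_1)$. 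But this is precisely joint uniqueness in law — provided one is careful about the topology in which $B$ lives.

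The main obstacle, and where the argument needs genuine care as opposed to bookkeeping, is exactly that last point: a priori a solution $(X,B)$ only records $B$ as a continuous $U_1$-valued process, so ``$\mathrm{Law}(X,B)$'' naturally lives on $C([0,\infty);H_1\times U_1)$, and one must check that this is enough to pin down all the finite-dimensional stochastic-integral functionals, i.e. that the map ``driving noise $\mapsto$ stochastic integral $\int_0^\cdot G(X_s)dB_s$'' is measurable with respect to the joint law and not just the law of $B$ in the weak $U_1$ topology. This is handled exactly as in Cherny's and Ondrej\'at's treatments: one uses that the stochastic integral can be realized as a measurable functional of the trajectory of $(X,B)$ (via $L^2$-limits of Riemann-type sums along a fixed sequence of partitions, which converge along the trajectory a.s. up to a $P$-null set, and this null set and the limit are measurable in the path), so that ``being a solution of \eqref{spde}'' is a property of $\mathrm{Law}(X,B)$. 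Once this measurability is secured, $\mathrm{Law}(X,B)=\mathrm{Law}(\tilde X,\tilde B)$ follows, and I would also note the converse observation needed for the equivalence in the previous paragraph: if $(Y,W)$ solves the enlarged equation then $W$ (viewed in $U_1$) is a $Q$-Wiener-type process whose It\^o lift reconstructs a cylindrical Wiener process on $U$, by the martingale representation theorem, so $Y$ together with this reconstructed noise is a bona fide solution of \eqref{spde}; this is where hypotheses (H1)--(H4) and the Hilbert--Schmidt embedding $U\subset U_1$ are used. Assembling these pieces — enlargement of the system, reduction of its uniqueness in law to that of \eqref{spde}, and measurability of the solution property with respect to the joint path law — completes the proof.

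\textbf{Remark on dependence on earlier material.} The only external inputs are the standard martingale representation theorem in Hilbert spaces (as in \cite{DPZ92}) and the measurable-functional construction of the stochastic integral; both are classical, and the structural hypotheses on $F,G,U,U_1,H,H_1$ stated before the theorem are exactly what make the enlarged equation again of the admissible form \eqref{spde}.
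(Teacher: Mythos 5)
There is a genuine gap, and it sits at the very center of your argument. You claim that ``uniqueness in law for the enlarged equation is equivalent to uniqueness in law for the original equation \eqref{spde},'' justifying the nontrivial direction by saying that a solution $(Y,W)$ of the enlarged system has $W$ equal to the driving Brownian motion and hence ``the pair is determined in law by the law of $Y$ alone.'' This is exactly the assertion the theorem is asking you to prove, not a fact you may invoke. Knowing $\mathrm{Law}(Y)$ does not determine $\mathrm{Law}(Y,W)=\mathrm{Law}(X,B)$: for a weak solution $X$ need not be a measurable functional of $B$, and, more to the point, even if it were, the marginal law of $X$ alone carries no information about that functional. So the implication ``uniqueness in law of \eqref{spde} $\Rightarrow$ uniqueness in law of the enlarged system'' is precisely joint uniqueness in law, i.e.\ the conclusion. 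The enlargement trick you describe is genuinely useful in the reverse direction — for the enlarged system the diffusion coefficient $\widehat G=(G,\mathrm{Id})$ has trivial kernel, so there uniqueness in law would trivially give joint uniqueness — but you never get uniqueness in law for the enlarged system from the hypothesis, and the argument is circular as written. The measurability issues you flag (stochastic integrals as measurable functionals of paths, topology on $U_1$) are real but peripheral; fixing them would not close the central gap.

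The paper's proof takes a different and essentially unavoidable route, following Cherny. One splits the noise into the part carried by $(\ker G(X_s))^\perp$, which is recoverable from $X$ via the pseudo-inverse of $G$, and the part carried by $\ker G(X_s)$, which does not interact with $X$. One replaces the latter by an independent auxiliary Wiener process, builds a new driving noise $V$ together with an ``orthogonal complement'' noise $\bar V$, and then uses a conditioning argument (the regular conditional distribution of $(X,V)$ given the $\sigma$-algebra generated by $\bar V$, together with Lemma~\ref{condition}) plus the uniqueness-in-law hypothesis to show that $X$ is independent of $\bar V$. Finally $B$ is reconstructed as a measurable functional of $X$ plus a $\psi$-projected integral against the independent process $\bar V$, which pins down $\mathrm{Law}(X,B)$. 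None of this machinery — the orthogonal splitting, the auxiliary independent Wiener processes, the conditioning/independence step, the pseudo-inverse reconstruction — appears in your proposal, and some version of it is what actually carries the load that your ``equivalence'' step was silently assuming.
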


Set $E=L_2(U,H)$. Since $E$ is separable, it follows that $C([0,t],E)$ is dense in $L^2([0,t],E)$. By the same argument as  Lemma 3.2 in \cite{C03}, we can prove the following result.

\begin{lem}\label{con}
Let $t>0$ and $f\in L^2([0,t],E)$. For $k\in \mathbb{N}$, set
$$
f^{(k)}(s)=\begin{cases}
0, & \text{if }s\in [0,\frac{t}{k}],\\
\frac{k}{t}\int_{(i-1)t/k}^{it/k}f(r)dr,& \text{if } s\in (\frac{it}{k},\frac{(i+1)t}{k}], \qquad (i=1,\dots,k-1).
\end{cases}
$$
Then $f^{(k)}\rightarrow f$ in $L^2([0,t],E)$.
\end{lem}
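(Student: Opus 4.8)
The plan is to establish the convergence first for continuous integrands, and then to pass to a general $f\in L^2([0,t],E)$ by density together with a uniform bound on the averaging maps. Accordingly, write $A_kf:=f^{(k)}$; this defines a linear operator on $L^2([0,t],E)$, and the first step is to check that $A_k$ is a contraction. Indeed, since $f^{(k)}$ vanishes on $[0,t/k]$ and is constant on each interval $(it/k,(i+1)t/k]$, Jensen's inequality applied to the normalized measure $\tfrac{k}{t}\mathbf 1_{((i-1)t/k,it/k]}\,dr$ gives
\begin{align*}
\|A_kf\|_{L^2([0,t],E)}^2
&=\sum_{i=1}^{k-1}\frac{t}{k}\Big\|\frac{k}{t}\int_{(i-1)t/k}^{it/k}f(r)\,dr\Big\|_E^2
\leq\sum_{i=1}^{k-1}\int_{(i-1)t/k}^{it/k}\|f(r)\|_E^2\,dr\leq\|f\|_{L^2([0,t],E)}^2 .
\end{align*}

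Next I would treat $f\in C([0,t],E)$, using the density of $C([0,t],E)$ in $L^2([0,t],E)$ recalled above. Such an $f$ is uniformly continuous and bounded on $[0,t]$. Given $\varepsilon>0$, there is $k_0$ such that $\|f(r)-f(s)\|_E\leq\varepsilon$ whenever $|r-s|\leq 2t/k$ and $k\geq k_0$; since for $s\in(it/k,(i+1)t/k]$ and $r\in((i-1)t/k,it/k]$ one has $|r-s|\leq 2t/k$, it follows that $\|f^{(k)}(s)-f(s)\|_E\leq\varepsilon$ for all such $s$ and all $k\geq k_0$. Hence
\begin{align*}
\|f^{(k)}-f\|_{L^2([0,t],E)}^2\leq\int_0^{t/k}\|f(s)\|_E^2\,ds+\varepsilon^2 t ,
\end{align*}
and, letting $k\to\infty$ and then $\varepsilon\to 0$, we obtain $f^{(k)}\to f$ in $L^2([0,t],E)$, because $\int_0^{t/k}\|f(s)\|_E^2\,ds\leq (t/k)\sup_{[0,t]}\|f\|_E^2\to 0$.

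Finally, for an arbitrary $f\in L^2([0,t],E)$ I would run the standard three-term estimate: fix $\varepsilon>0$, choose $g\in C([0,t],E)$ with $\|f-g\|_{L^2([0,t],E)}\leq\varepsilon$, and bound
\begin{align*}
\|f^{(k)}-f\|_{L^2([0,t],E)}\leq\|A_k(f-g)\|_{L^2([0,t],E)}+\|g^{(k)}-g\|_{L^2([0,t],E)}+\|g-f\|_{L^2([0,t],E)} .
\end{align*}
The first term is at most $\|f-g\|_{L^2([0,t],E)}\leq\varepsilon$ by the contraction property, the last term is $\leq\varepsilon$, and the middle term tends to $0$ by the continuous case, so that $\limsup_{k\to\infty}\|f^{(k)}-f\|_{L^2([0,t],E)}\leq 2\varepsilon$; since $\varepsilon>0$ is arbitrary, the conclusion follows. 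There is no genuine difficulty here; the only points deserving care are the Bochner-valued Jensen inequality behind the contraction bound (which also absorbs the one-step shift at no cost) and the vanishing of the contribution of the initial slice $[0,t/k]$ on which $f^{(k)}$ is set equal to $0$.
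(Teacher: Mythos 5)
Your proof is correct and follows exactly the route the paper has in mind: it appeals to the density of $C([0,t],E)$ in $L^2([0,t],E)$ (recalled in the paper immediately before the lemma) after establishing the uniform contraction bound via Jensen and pointwise convergence for continuous integrands, which is the same argument as Cherny's Lemma~3.2 that the paper cites.
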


By Lemma \ref{con} and the same argument as in Lemma 3.3 in \cite{C03}, we obtain  the following.

\begin{lem}\label{condition}
Let $(X,B)$ be a  solution to \eqref{spde} defined on a stochastic basis $(\Omega,\mathcal{F},(\mathcal{F}_t)_{t\geq0},P)$. Let $(Q_\omega)_{\omega\in\Omega}$ be a conditional probability distribution of $(X,B)$ given $\mathcal{F}_0$\footnote{Here, we consider $(X,B)$ as a $C([0,\infty);H_{1}\times U_{1})$-valued process.}. Let $Y$ be the coordinate process with values in $H_{1}$ and let $Z$ be the coordinate process with values in $U_{1}$. Let $(\mathcal{H}_t)_{t\geq0}$ be the canonical filtration on $C([0,\infty), H_{1}\times U_{1})$ and denote $\mathcal{H}=\bigvee_{{t\geq0}} \mathcal{H}_t$. Then for $P$-a.e. $\omega\in \Omega$ the pair $(Y,Z)$ is a solution to \eqref{spde}  on the stochastic basis $(C([0,\infty);H_{1}\times U_{1}),\mathcal{H},(\mathcal{H}_t)_{t\geq0},Q_\omega)$.
\end{lem}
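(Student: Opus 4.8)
The statement to prove is Lemma~\ref{condition}: if $(X,B)$ solves \eqref{spde} on $(\Omega,\mathcal{F},(\mathcal{F}_t),P)$ and $(Q_\omega)$ is a r.c.p.d.\ of the law of $(X,B)$ given $\mathcal{F}_0$, then for $P$-a.e.\ $\omega$ the coordinate process $(Y,Z)$ on $C([0,\infty);H_1\times U_1)$ is a solution to \eqref{spde} under $Q_\omega$. The strategy is the standard disintegration/localisation argument of Stroock--Varadhan, carried out at the level of martingale characterisations rather than the SDE itself, exactly as in Cherny \cite{C03}; the only new ingredient is that we are in an infinite-dimensional Hilbert setting, so one must be slightly careful about (i) separability, which guarantees existence of the r.c.p.d.\ on the Polish space $C([0,\infty);H_1\times U_1)$, and (ii) the approximation of stochastic integrands by piecewise-constant ones, which is exactly what Lemma~\ref{con} and Lemma~\ref{condition}'s predecessor Lemma~\ref{condition}... (i.e. Lemma~\ref{con}) supply.

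First I would reformulate ``$(X,B)$ is a solution'' in a way that only refers to the joint law and to martingale properties, so that it can be conditioned. Fix a countable dense set $\{e_i\}\subset H^{*}$ (or an orthonormal basis of a separable Hilbert space into which $H_1$ embeds). Being a solution is equivalent to: $B$ is a cylindrical $(\mathcal{F}_t)$-Wiener process on $U$, $F(X)\in L^1_{\mathrm{loc}}$, $G(X)\in L^2_{\mathrm{loc}}$, and for each $i$ and each $l$ in a countable dense subset of $U$,
\begin{equation*}
N^i_t:=\langle X_t-x,e_i\rangle-\int_0^t\langle F(X_s),e_i\rangle\,ds,\qquad \langle B_t,l\rangle_U,
\end{equation*}
are continuous local martingales with the prescribed (co)variations $\int_0^t\langle G(X_s)^*e_i,G(X_s)^*e_j\rangle_U ds$, $t\|l\|_U^2$, and cross-variation $\int_0^t\langle G(X_s)^*e_i,l\rangle_U ds$. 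To make the conditioning work cleanly one localises: introduce stopping times $\sigma_n=\inf\{t:\int_0^t(\|F(X_s)\|_{H_1}+\|G(X_s)\|_{E}^2)ds\ge n\}$ and replace ``local martingale'' by genuinely integrable stopped martingales $N^i_{t\wedge\sigma_n}$, etc. The crucial point then is: for a fixed bounded time $t$, a fixed pair $s<t$, a fixed bounded $\mathcal{H}_s$-measurable continuous functional $g$ on the path space, and fixed $i,n$, the map
\begin{equation*}
\omega\mapsto E^{Q_\omega}\big[(N^i_{t\wedge\sigma_n}-N^i_{s\wedge\sigma_n})\,g\big]
\end{equation*}
is a version of $E^P[\,\cdot\mid\mathcal{F}_0]$ applied to the corresponding $\mathcal{F}_s$-measurable random variable built from $(X,B)$; since $N^i_{t\wedge\sigma_n}$ is an $(\mathcal{F}_t)$-martingale under $P$, that conditional expectation is $0$ for $P$-a.e.\ $\omega$. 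Ranging over a countable collection of $(t,s,g,i,n)$ and over a countable dense set of $l\in U$, and doing the analogous computation for the quadratic-variation martingales $(N^i_{t\wedge\sigma_n})^2-\int_0^{t\wedge\sigma_n}\|G(X_s)^*e_i\|_U^2 ds$ and the cross terms, we get a single $P$-null set off which, simultaneously for all the countably many test objects, $(Y,Z)$ has under $Q_\omega$ all the martingale/variation properties. The adaptedness (H2), the measurability/integrability (H3), and $Z$ being a $U_1$-valued continuous path (H1) are inherited from the path space structure and hold $Q_\omega$-a.s.\ because they hold $P$-a.s.\ for $(X,B)$; that $Z$ is genuinely a cylindrical Wiener process on $U$ (not merely a $U_1$-valued martingale with the right covariance) follows from Lévy's characterisation once the covariance of $\langle Z_\cdot,l\rangle_U$ is identified as $t\|l\|_U^2$ with independent increments, again encoded by the martingale property tested against $\mathcal{H}_s$-functionals.

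The one genuinely non-routine step — and where Lemma~\ref{con} enters — is verifying that the stochastic-integral identity (H4) holds under $Q_\omega$, i.e.\ that $N^i$ really is $\int_0^{\cdot}\langle e_i,G(Y_s)\,dZ_s\rangle$ and not just \emph{some} martingale with the same bracket. The martingale-problem data above pin down the joint law of $(N^i,Z)$ but a priori this only identifies $N^i$ as a stochastic integral against $Z$ \emph{up to} the Wiener-integral structure; to recover it as an honest It\^o integral against $G(Y)$ one approximates $G(Y)$ in $L^2_{\mathrm{loc}}([0,t];E)$ by the piecewise-constant processes $G(Y)^{(k)}$ of Lemma~\ref{con}, for which $\int_0^t G(Y)^{(k)}_s dZ_s$ is an explicit finite sum of increments of $Z$ weighted by $\mathcal{H}_{s_i}$-measurable coefficients; this finite sum is a continuous functional of the path and the identity $N^i_t=\lim_k \int_0^t\langle e_i,G(Y)^{(k)}_s dZ_s\rangle$ (in $L^2$) transfers from $P$ to $Q_\omega$ because both sides are. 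Passing $k\to\infty$ and using the It\^o isometry under $Q_\omega$ (valid because $Z$ is a $Q_\omega$-Wiener process, already established) gives (H4). This is precisely the infinite-dimensional analogue of Lemma~3.3 of \cite{C03}, and the separability of $E=L_2(U,H)$, which makes $C([0,t];E)$ dense in $L^2([0,t];E)$, is exactly what makes the approximation legitimate. I expect this measurability-of-the-approximation-plus-$L^2$-limit step to be the main obstacle; everything else is bookkeeping with countable families of test functions and null sets. Finally, Theorem~\ref{cherny} follows from Lemma~\ref{condition} by the Yamada--Watanabe-type argument of \cite{C03}: given two solutions $(X,B)$, $(\tilde X,\tilde B)$ with the same initial law, disintegrate both over $\mathcal{F}_0$, use Lemma~\ref{condition} to see that for a.e.\ $\omega$ both conditional laws are solutions started from a deterministic point, apply uniqueness in law of the \emph{marginal} $X$ at that frozen initial condition together with the fact that, conditionally on $X$, the driving noise is reconstructed from $X$ via the same stochastic-integral relation, and integrate back over the common initial distribution to conclude $\mathrm{Law}(X,B)=\mathrm{Law}(\tilde X,\tilde B)$.
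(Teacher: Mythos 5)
Your proposal is correct and follows essentially the same route as the paper, which gives no written proof for this lemma but simply cites Lemma~\ref{con} together with ``the same argument as in Lemma 3.3 in \cite{C03}''; you have accurately reconstructed what that argument entails (countable test families for the martingale characterisation and L\'evy's theorem to recognise $Z$ as a $Q_\omega$-Wiener process, then piecewise-constant approximation via Lemma~\ref{con} to transfer the stochastic-integral identity (H4) to the conditional measures). The only part slightly off target is the closing sketch of Theorem~\ref{cherny}'s deduction, where the noise is not reconstructible from $X$ alone and the auxiliary Brownian motions $V,\bar V$ are genuinely needed — but that is outside the statement under review.
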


\begin{proof}[Proof of Theorem \ref{cherny}]
Let $(X,B)$ be a solution to \eqref{spde} on  a stochastic basis $(\Omega,\mathcal{F},(\mathcal{F}_t)_{t\geq0},P)$. Let $\{\beta^k\}_{k\in\mathbb{N}}$ and $\{\bar{\beta}^k\}_{k\in\mathbb{N}}$ be two families of  independent real-valued Wiener processes defined on another stochastic basis $(\Omega',\mathcal{F}',(\mathcal{F}'_t)_{t\geq0},P')$ and set
$$(\tilde{\Omega},\tilde{\mathcal{F}},(\tilde{\mathcal{F}_t})_{t\geq0},\tilde{P})=\big(\Omega\times\Omega',\mathcal{F}\otimes\mathcal{F}',(\mathcal{F}_t\otimes \mathcal{F}'_t)_{t\geq0},P\otimes P'\big).$$
All the processes $X,B,\beta^k,\bar{\beta}^k$, $k\in\mathbb{N}$, can be defined on $\tilde{\Omega}$ in an obvious way. Assume that the cylindrical Wiener process $B$ admits the decomposition $B=\sum_{k=1}^{\infty}\alpha^kl_k$, where $\{\alpha^k\}_{k\in\mathbb{N}}$ is a family of independent real-valued Wiener processes and $\{l_k\}_{k\in\mathbb{N}}$ is an orthonormal basis in  $U$. Let $\varphi(x)$ be the orthogonal projection from $U$ to $(\ker G(x))^\bot$ and $\psi(x)$ be the orthogonal projection from $U$ to $\ker G(x)$. Then set
$$\varphi_s:=\varphi(X_s),\quad \psi_s:=\psi(X_s),$$
$$V_t=\sum_{k=1}^{\infty}\left[\int_0^t\varphi_sd\alpha_s^k\,l_k+\int_0^t\psi_sd\beta^k_s\,l_k\right],\qquad\bar{V}_t=\sum_{k=1}^{\infty}\left[\int_0^t\varphi_sd\bar{\beta}^k_s\,l_k+\int_0^t\psi_sd\alpha^k_s\,l_k\right].$$
In the following, $\langle\!\!\langle \cdot,\cdot\rangle\!\!\rangle_{t}$ denotes the cross-variation process at time $t$. We obtain
$$
\aligned\langle\!\!\langle \langle V,l_i\rangle_U,\langle V,l_j\rangle_U\rangle\!\!\rangle_t
&=\sum_{k=1}^{\infty}\left[\int_0^t\langle\varphi_sl_k,l_i\rangle_U \langle\varphi_sl_k,l_j\rangle_U ds+\int_0^t\langle\psi_sl_k,l_i\rangle_U \langle\psi_sl_k,l_j\rangle_U ds\right]
\\&=\int_0^t\left[\langle \varphi_sl_i,\varphi_sl_j\rangle_U+\langle \psi_sl_i,\psi_sl_j\rangle_U\right]ds
=\int_0^t\langle (\varphi_s+\psi_s)l_i,(\varphi_s+\psi_s)l_j\rangle_Uds
\\&=\int_0^t\langle l_i,l_j\rangle_Uds=\delta_{ij}t.
\endaligned
$$
Similarly, we obtain
$$
\langle\!\!\langle \langle V,l_i\rangle_U,\langle \bar{V},l_j\rangle_U\rangle\!\!\rangle_t=0,\qquad\langle\!\!\langle \langle {\bar V},l_i\rangle_U,\langle {\bar V},l_j\rangle_U\rangle\!\!\rangle_t=\delta_{ij}t.
$$
As a consequence,  under $\tilde P$ the process $(V,\bar{V})$ is an $(\tilde{\mathcal{F}_t})_{t\geq0}$-cylindrical Wiener process on $U\times U$.
Moreover, for any $t\geq0$, we have
$$\int_0^tG(X_s)dB_s=\int_0^tG(X_s)\varphi_sdB_s=\int_0^tG(X_s)dV_s.$$
Hence $(X,V)$ is a solution to \eqref{spde} on $(\tilde{\Omega},\tilde{\mathcal{F}},(\tilde{\mathcal{F}_t})_{t\geq0},\tilde{P})$.

Consider now the filtration
$$\mathcal{G}_s=\tilde{\mathcal{F}}_s\vee \sigma\big(\bar{V}_t;t\geq0\big)=\tilde{\mathcal{F}}_s\vee \sigma\big(\bar{V}_t-\bar{V}_s;t\geq s\big),\quad s\geq0.$$
Since $\tilde{\mathcal{F}}_s$ and $\sigma(V_t-V_s;t\geq s)\vee \sigma(\bar{V}_t-\bar{V}_s;t\geq s)$ are independent, the process $V$ is a cylindrical $(\mathcal{G}_t)_{t\geq0}$-Wiener process on $U$ under $\tilde{P}$. Thus $(X,V)$ is a solution to \eqref{spde} on $(\tilde{\Omega},\tilde{\mathcal{F}},(\tilde{\mathcal{G}_t})_{t\geq0},\tilde{P})$.

Let $(Q_{\tilde{\omega}})_{\tilde{\omega}\in \tilde{\Omega}}$ be a conditional probability distribution of $(X,V)$ given $\mathcal{G}_0$. By Lemma~\ref{condition},  for $\tilde{P}$-a.e. $\tilde\omega\in\tilde\Omega$, the pair $(Y,Z)$ is a solution to \eqref{spde} on $(C([0,\infty);H\times U),\mathcal{H},(\mathcal{H}_t)_{t\geq0},Q_{\tilde{\omega}})$. As the uniqueness in law holds for \eqref{spde}, the probability law induced by $Y$ on each of these stochastic bases, i.e. $Q_{\tilde\omega}\circ Y^{-1}$,  is the same for $\tilde{P}$-a.e. $\tilde{\omega}\in\tilde\Omega$. Since this is the conditional probability distribution of $X$ given $\mathcal{G}_{0}$, it follows that  the process $X$ is independent of $\mathcal{G}_0$. In particular, we deduce that $X$ and $\bar{V}$ are independent. Let $\chi(x)$ be the pseudo-inverse  of $G(x)$ (see e.g. \cite[Appendix C]{LR15} for more details), then  $\chi(x)G(x)=\varphi(x)$. Set $\chi_s:=\chi(X_s)$. Thus,
$$
\int_0^t\varphi_sdB_s=\int_0^t\chi_sG(X_s)dB_s=\int_0^t\chi_sdM_s,
$$
where
$$
M_t=\int_0^tG(X_s)dB_s=X_t-x-\int_0^tF(X_s)ds.
$$
Accordingly, we obtain
$$
B_t=\int_0^t\varphi_sdB_s+\int_0^t\psi_sdB_s=\int_0^t\chi_sdM_s+\int_0^t\psi_sd\bar{V}_s.
$$
The process $M$ is a measurable functional of $X$ while $\bar{V}$ is independent of $X$. Thus the distribution  $\mathrm{Law}(X,B)$ is unique.
\end{proof}

\section{Analysis of  rough partial differential equations}
\label{s:D}

In this section, we employ the theory of rough paths to derive estimates for the following rough partial differential equation. Assume that $v\in C^{1}_{t,x}$ and $z$ solves the system
\begin{align}\label{eq:new}
\begin{aligned}
 dz&=\Delta z dt +G(v+z) dB,\\
\textrm{div} z&=0,\\
z(0)&=z_0,
\end{aligned}
\end{align}
with $\div z_0=0$.
Then we have
$$
z(t)=P_{t}z_{0}+\int_0^tP_{t-s} G(v+z)dB_s,
$$
where
 $P_t=e^{t\Delta}$ is the heat semigroup.
The nonlinearity $G$ in \eqref{eq:new} is defined through
\begin{equation}\label{G}
G(u)=\Big(g_{ij}\big(\cdot, \langle u,\varphi^{ij}_1\rangle,..., \langle u,\varphi^{ij}_{k_{ij}}\rangle\big)\Big)
\end{equation}
with $g_{ij}\in C_b^3(\mT^3\times \mR^{k_{ij}}),$ $\varphi^{ij}_{\ell}\in {C}^{\infty}(\mT^{3})$, $i=1,\dots,3,$ $ j=1,\dots,m$, $\ell=1,\dots,k_{ij}$, i.e. the functions $g_{i j}$ as well as their derivatives up to order $3$ are bounded and $g_{\cdot j}$ is divergence free with respect to the  spatial variable in $\mT^3$.

The driving process $B$ is an $m$-dimensional Brownian motion and we view it as a rough path. To this end, fix $\alpha\in(\frac13,\frac12)$. We use $\rho_\alpha(B)$ to denote its $\alpha$-H\"older rough path seminorm which is given by
$$\rho_\alpha(B)=\sup_{0\leq s<t\leq T}\frac{|B_{t}-B_{s}|}{|t-s|^\alpha}+\sup_{0\leq s<t\leq T}\frac{|\int_s^t(B_{r}-B_{s})\otimes dB_r|}{|t-s|^{2\alpha}}.$$
The first component of the rough path is denoted by $B_{s,t}:=B_{t}-B_{s}$ and we understand the iterated integral $\mathbb{B}_{s,t}:=\int_s^t(B_r-B_{s})\otimes dB_r$ in the It\^{o} sense. However, the results of this section apply mutatis mutandis to other rough paths lifts of the Brownian motion  as well as general rough paths.

Let $C^\beta$ denote  the  closure of smooth functions from  with respect to the usual H\"{o}lder norm.
	We also use the H\"older-Besov space $\mathcal{C}^\beta, \beta\in \mR,$  defined by the closure of  smooth functions with respect to the $B^{\beta}_{\infty,\infty}$-norm
	$$
	\|f\|_{\cC^\beta}:=\|f\|_{B^{\beta}_{\infty,\infty}}=\sup_{j\in\mN_{0}\cup\{-1\}} 2^{\beta j}\|\Delta_j f\|_{L^\infty}
	$$
	with $\Delta_j,$ $j\in\mN_{0}\cup\{-1\}$, being the usual Littlewood-Paley blocks.  For any $0<\beta\notin\mN$  it is well known that
	(see  \cite[page 99]{BCD11})
	$
	\|f\|_{C^\beta}\asymp \|f\|_{\cC^\beta}.
	$
 For a path $h$ defined on $[0,T]$, we denote its increment $h_{t}-h_{s}$ by $h_{s,t}$.

 We also recall the following smoothing effect  from heat semigroup (see e.g. \cite[Lemma~2.8]{ZZZ20}), which is used in the following proof.
\begin{lem}\label{lem:2.8}
Let  $T>0$.
\begin{enumerate}[(i)]
	\item For any $\theta>0$ and $\alpha\in\mR$, it holds that for $t\in [0,T]$
	\begin{align}\label{E1}
	\|P_t f\|_{\cC^{\theta+\alpha}}\lesssim t^{-\theta/2}\|f\|_{\cC^\alpha},\qquad \|P_t f\|_{H^{\theta+\alpha}}\lesssim t^{-\theta/2}\|f\|_{H^\alpha},
	\end{align}
		with the proportional constant independent of $f$.
	\item For any $0<\theta<2$ and $t\in [0,T]$, it holds that
	\begin{align}\label{E2}
	\|P_t f-f\|_{L^\infty}\lesssim t^{\theta/2}\|f\|_{\cC^{\theta}},\qquad 	\|P_t f-f\|_{L^2}\lesssim t^{\theta/2}\|f\|_{H^{\theta}},
	\end{align}
	with the proportional constant independent of $f$.
\end{enumerate}
\end{lem}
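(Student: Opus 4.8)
The plan is to prove both parts by Fourier analysis of the heat multiplier $e^{-t|\xi|^{2}}$ of $P_{t}=e^{t\Delta}$, using two elementary scalar inequalities: for $\theta>0$ one has $\sup_{x\geq 0}x^{\theta/2}e^{-x}<\infty$, and for $0<\theta\leq 2$ one has $|e^{-x}-1|\leq\min(x,1)\leq x^{\theta/2}$ for every $x\geq 0$; all implicit constants below are allowed to depend on $T$ and $\theta$. For the $H^{s}$-bounds I would argue directly by Plancherel. For (i), $\|P_{t}f\|_{H^{\theta+\alpha}}^{2}=\int(1+|\xi|^{2})^{\theta+\alpha}e^{-2t|\xi|^{2}}|\hat f(\xi)|^{2}\,d\xi$, and one checks $(1+|\xi|^{2})^{\theta}e^{-2t|\xi|^{2}}\lesssim t^{-\theta}$ uniformly for $t\in(0,T]$: writing $w=1+|\xi|^{2}\geq 1$, distinguish $2tw\geq\theta$, where $w^{\theta}e^{-2tw}=(2t)^{-\theta}(2tw)^{\theta}e^{-2tw}\lesssim t^{-\theta}$ by the first scalar inequality (absorbing $e^{2t}\le e^{2T}$), from $2tw<\theta$, where $w^{\theta}\lesssim t^{-\theta}$ directly. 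Integrating the remaining weight $(1+|\xi|^{2})^{\alpha}$ against $|\hat f|^{2}$ gives $\|P_{t}f\|_{H^{\theta+\alpha}}\lesssim t^{-\theta/2}\|f\|_{H^{\alpha}}$. For the difference bound in (ii), $\|P_{t}f-f\|_{L^{2}}^{2}=\int|e^{-t|\xi|^{2}}-1|^{2}|\hat f(\xi)|^{2}\,d\xi$, and the second scalar inequality gives $|e^{-t|\xi|^{2}}-1|\leq(t|\xi|^{2})^{\theta/2}\leq t^{\theta/2}(1+|\xi|^{2})^{\theta/2}$, hence $\|P_{t}f-f\|_{L^{2}}\lesssim t^{\theta/2}\|f\|_{H^{\theta}}$.

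The Besov bounds ($\cC^{\beta}=B^{\beta}_{\infty,\infty}$) require a Littlewood--Paley decomposition. The main ingredient is the block estimate $\|\Delta_{j}P_{t}f\|_{L^{\infty}}\lesssim e^{-ct2^{2j}}\|\Delta_{j}f\|_{L^{\infty}}$ for $j\geq 0$ and some $c>0$: writing $\Delta_{j}P_{t}=\Delta_{j}P_{t}\widetilde\Delta_{j}$ with $\widetilde\Delta_{j}$ a fattened block, the convolution kernel of $\Delta_{j}P_{t}\widetilde\Delta_{j}$ equals $2^{3j}K(2^{j}x,t2^{2j})$, where $K(\cdot,s)$ has symbol supported in a fixed dyadic annulus and therefore, with all its derivatives, decays like $e^{-cs}$; hence its $L^{1}$-norm is $\lesssim e^{-ct2^{2j}}$, and Young's inequality gives the claim (using $\|\widetilde\Delta_{j}f\|_{L^{\infty}}\lesssim\sum_{|k-j|\leq 1}\|\Delta_{k}f\|_{L^{\infty}}$), while for $j=-1$ one has trivially $\|\Delta_{-1}P_{t}f\|_{L^{\infty}}\lesssim\|\Delta_{-1}f\|_{L^{\infty}}$. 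For (i), multiply by $2^{(\theta+\alpha)j}$, use $2^{\theta j}e^{-ct2^{2j}}=(ct)^{-\theta/2}(ct2^{2j})^{\theta/2}e^{-ct2^{2j}}\lesssim t^{-\theta/2}$, and take the supremum over $j$ (the $j=-1$ term is absorbed since $t^{-\theta/2}\geq T^{-\theta/2}$ on $(0,T]$). For (ii), combine the two block bounds $\|\Delta_{j}(P_{t}f-f)\|_{L^{\infty}}\lesssim 2^{-\theta j}\|f\|_{\cC^{\theta}}$ (trivial) and $\|\Delta_{j}(P_{t}f-f)\|_{L^{\infty}}\lesssim t2^{(2-\theta)j}\|f\|_{\cC^{\theta}}$ (from $|e^{-t|\xi|^{2}}-1|\lesssim t|\xi|^{2}$ on $\mathrm{supp}\,\Delta_{j}$), and split $\sum_{j}\|\Delta_{j}(P_{t}f-f)\|_{L^{\infty}}$ at $2^{2j}\sim t^{-1}$: since $2-\theta>0$ the low-frequency part is a geometric sum dominated by its top term $\sim t\cdot t^{-(2-\theta)/2}=t^{\theta/2}$, and since $\theta>0$ the high-frequency part is dominated by its bottom term $\sim t^{\theta/2}$; the $j=-1$ contribution is $O(t)=O(t^{\theta/2})$ on $(0,T]$. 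Summing gives $\|P_{t}f-f\|_{L^{\infty}}\lesssim t^{\theta/2}\|f\|_{\cC^{\theta}}$.

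Since the assertion is a standard heat-smoothing estimate, I do not expect a genuine obstacle; the only points needing care are making the multiplier/kernel estimates precise enough to extract the \emph{exponential} decay $e^{-ct2^{2j}}$ rather than mere boundedness, and keeping all constants uniform for $t\in[0,T]$ — which is exactly why $T$ enters the statement, namely through the low-frequency ($\Delta_{-1}$) blocks.
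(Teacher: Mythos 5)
Your proof is correct. The paper does not actually prove this lemma; it only cites it as \cite[Lemma~2.8]{ZZZ20}, so there is no in-paper argument to compare against, but the Littlewood--Paley block estimate $\|\Delta_{j}P_{t}f\|_{L^{\infty}}\lesssim e^{-ct2^{2j}}\|\Delta_{j}f\|_{L^{\infty}}$ combined with the elementary scalar bounds, and Plancherel for the $H^{s}$ side, is precisely the standard route. Two cosmetic remarks: since everything here lives on $\mathbb{T}^{3}$, the $d\xi$ integrals in your Sobolev computation should be sums over $\mathbb{Z}^{3}$ (the argument is otherwise unchanged); and in part (i) of the Sobolev bound the case split $2tw\gtrless\theta$ is unnecessary, since $w^{\theta}e^{-2tw}=(2t)^{-\theta}(2tw)^{\theta}e^{-2tw}\lesssim_{\theta}t^{-\theta}$ holds uniformly for all $w\geq 0$ by $\sup_{x\geq0}x^{\theta}e^{-x}<\infty$.
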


 Now, we introduce the definition of controlled rough path adapted to our purposes (see also \cite{Gub04}).

\begin{defn}\label{controlled}
Let $\gamma\in\R$.  We call a pair $(y,y')$ a controlled rough path in the $\cC^{\gamma}$-scale provided
	$(y,y')\in C_T\cC^{\gamma}\times (C_T\cC^{\gamma-2\alpha}\cap C^\alpha_TL^\infty)$ and the remainder
\begin{equation}\label{eq:R}
	(s,t)\mapsto R^y_{s,t}:=y_{s,t}-y'_sB_{s,t}
	\end{equation}
belongs to $C_{2,T}^{2\alpha}L^{\infty}$, the space of $2$-index maps on $[0,T]^{2}$ with values in $L^{\infty}$ so that
$$\|R^y\|_{2\alpha,L^\infty}=\sup_{0\leq s<t\leq T}\frac{\|R^y_{s,t}\|_{L^\infty}}{|t-s|^{2\alpha}}<\infty.$$
\end{defn}

The space of controlled rough paths in the $\cC^{{\gamma}}$-scale is denoted by $D^{2\alpha}_{B,\gamma}$ and endowed with the norm
$$
\|y,y'\|_{B,2\alpha,\gamma}=\|y\|_{C_T\cC^{\gamma}}+\|y'\|_{C_T\cC^{\gamma-2\alpha}}+\|y'\|_{C^\alpha_TL^\infty}+\|R^y\|_{2\alpha,L^\infty}.$$

We also present the corresponding definition with ${\cC^{\gamma}}$ replaced by $H^{{\gamma}}$.

\begin{defn}\label{controlled1}
Let $\gamma\in\R$. We call a pair $(y,y')$ a controlled rough path in the $H^{\gamma}$-scale provided
	$(y,y')\in C_TH^{\gamma}\times (C_TH^{\gamma-2\alpha}\cap C^\alpha_TL^2)$ and the remainder
\begin{equation}\label{eq:R1}
	(s,t)\mapsto R^y_{s,t}:=y_{s,t}-y'_sB_{s,t}
	\end{equation}
	belongs to $C_{2,T}^{2\alpha}L^2$, the space of $2$-index maps on $[0,T]^{2}$ with values in $L^{2}$ so that
	$$\|R^y\|_{2\alpha,L^2}=\sup_{0\leq s<t\leq T}\frac{\|R^y_{s,t}\|_{L^2}}{|t-s|^{2\alpha}}<\infty.$$
\end{defn}

The space of controlled rough paths in the $H^{{\gamma}}$-scale is denoted by $\bar{D}^{2\alpha}_{B,\gamma}$ and endowed with the norm
$$\|y,y'\|_{\bar{B},2\alpha,\gamma}=\|y\|_{C_TH^{\gamma}}+\|y'\|_{C_TH^{\gamma-2\alpha}}+\|y'\|_{C^\alpha_TL^2}+\|R^y\|_{2\alpha,L^2}.$$

The following integration lemma is a version of \cite[Theorem 4.5]{GHN19} adapted to our setting.

\begin{lem}\label{l:int}
Let  $\sigma\in[0,\alpha)$ and $(y,y')\in D^{2\alpha}_{B,4\alpha-2\sigma}$. Then the integral
$$ \int_0^t P (t - r) y_r d B_r := \lim_{| \pi | \rightarrow 0} \sum_{[s,
   r] \in \pi} P (t - s) (y_s B_{s, r} + y'_s \mathbb{B}_{s, r})
   $$
exists as an element of $\cC^{-2\kappa}$ for $\kappa >1-3\alpha+\sigma$ where the limit is taken over
partitions $\pi$ of $[0, t]$ with vanishing mesh size. Moreover, for every  $0\leq\theta<1$ it holds
\begin{align*}
\begin{aligned}
&\left\|\int_{s}^{t}P(t-r)y_{r}dB_{r}-P(t-s)y_{s}B_{s,t}-P(t-s)y'_{s}\mathbb{B}_{s,t}\right\|_{{\cC}^{4\alpha-2\theta}}\\
&\qquad\lesssim \|y,y'\|_{B,2\alpha,4\alpha-2\sigma}
|t-s|^{(\alpha-\sigma+\theta)\wedge (3\alpha)}\rho_\alpha(B).
\end{aligned}\end{align*}
Here the implicit constant is independent of $y, \rho_\alpha(B)$.
\end{lem}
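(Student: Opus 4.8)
The plan is to follow the classical sewing-lemma strategy for rough integrals, adapted so that the heat semigroup $P$ is kept explicitly inside the Riemann–Stieltjes sums. First I would define, for a partition interval $[s,t]$, the local germ
$$
\Xi_{s,t} := P(t-s)\big(y_s B_{s,t} + y'_s \mathbb{B}_{s,t}\big),
$$
and compute the discrete defect $\delta\Xi_{s,\theta,t} = \Xi_{s,t} - \Xi_{s,\theta} - \Xi_{\theta,t}$ for $s\leq\theta\leq t$. Using $P(t-s)=P(t-\theta)P(\theta-s)$ together with Chen's relation $\mathbb{B}_{s,t} = \mathbb{B}_{s,\theta}+\mathbb{B}_{\theta,t}+B_{s,\theta}\otimes B_{\theta,t}$ and the definition \eqref{eq:R} of the remainder $R^y$, one sees that $\delta\Xi_{s,\theta,t}$ splits into two types of terms: (i) terms where the semigroup increment $P(t-\theta)-\mathrm{Id}$ (equivalently $P(t-s)-P(\theta-s)$) hits $y$ or $y'$, and (ii) a genuine "rough" term $P(t-\theta)\big(R^y_{s,\theta}B_{\theta,t} + y'_{s,\theta}\otimes \mathbb{B}_{\theta,t}\big)$ of the usual Gubinelli type. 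For type (i) I would use the smoothing estimate \eqref{E2} of Lemma~D.6, $\|(P(t-\theta)-\mathrm{Id})f\|_{\cC^{\mu-\eta}}\lesssim |t-\theta|^{\eta/2}\|f\|_{\cC^{\mu}}$, to trade time regularity for a loss of $\eta$ spatial derivatives; for type (ii) I would use \eqref{E1}, $\|P(t-\theta)f\|_{\cC^{\mu+\eta}}\lesssim |t-\theta|^{-\eta/2}\|f\|_{\cC^{\mu}}$, to gain derivatives at the cost of a mild singularity, and bound $\|R^y\|_{2\alpha,L^\infty}$, $\|y'\|_{C^\alpha_T L^\infty}$, $\rho_\alpha(B)$. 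Tracking exponents, $\delta\Xi$ lives in $\cC^{4\alpha-2\theta}$ (for a suitable choice of the auxiliary smoothing loss parameter, calibrated to the target regularity $\theta$) with time exponent $(\alpha-\sigma+\theta)\wedge(3\alpha) > 1$ in the relevant regime — this is where the hypotheses $\sigma\in[0,\alpha)$, $\kappa>1-3\alpha+\sigma$, and $(y,y')\in D^{2\alpha}_{B,4\alpha-2\sigma}$ are used, ensuring the time exponent strictly exceeds $1$ so the sewing lemma applies.

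Next I would invoke a sewing lemma in the scale of Besov–Hölder spaces (a semigroup-weighted version, as in \cite[Theorem 4.5]{GHN19} or the abstract sewing in a Banach scale): since $\delta\Xi$ has Hölder-in-time exponent $>1$ with values in a fixed space $\cC^{4\alpha-2\theta}$, there is a unique additive map $t\mapsto \int_0^t P(t-r)y_r\,dB_r$ with $\int_s^t - \int_0^t|_{\text{restricted}}$ close to $\Xi_{s,t}$, and the standard sewing bound gives
$$
\Big\|\int_s^t P(t-r)y_r\,dB_r - \Xi_{s,t}\Big\|_{\cC^{4\alpha-2\theta}} \lesssim \|y,y'\|_{B,2\alpha,4\alpha-2\sigma}\,|t-s|^{(\alpha-\sigma+\theta)\wedge(3\alpha)}\,\rho_\alpha(B),
$$
which is exactly the asserted estimate. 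Taking $\theta$ close enough to $0$ and reading off the regularity from the germ $\Xi$ (whose leading term $P(t-s)y_sB_{s,t}$ lies in $\cC^{-2\kappa}$ by \eqref{E1} with loss $\sim 2\kappa$, provided $\kappa>1-3\alpha+\sigma$) shows the integral is a well-defined element of $\cC^{-2\kappa}$, and the approximating Riemann sums over partitions with vanishing mesh converge to it, again by the sewing construction.

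The main obstacle I anticipate is the careful bookkeeping of the exponent trade-offs in the two families of terms in $\delta\Xi$: one must choose the smoothing-loss parameters (call them $\eta_1,\eta_2$ for the $P-\mathrm{Id}$ and the $P$-gain terms respectively) so that \emph{simultaneously} (a) the target spatial regularity of $\delta\Xi$ is at least $4\alpha-2\theta$, (b) the time Hölder exponent is $>1$, and (c) no negative power of the time gap becomes non-integrable — this is a genuinely constrained optimization, and it is the reason for the precise appearance of $(\alpha-\sigma+\theta)\wedge(3\alpha)$ and of the constraint $\kappa>1-3\alpha+\sigma$. A secondary, more routine point is verifying that the sewing lemma is available in the Besov scale with the semigroup weight; I would cite \cite[Theorem~4.5]{GHN19} for this and only indicate the modification needed to land in $\cC^{4\alpha-2\theta}$ rather than $L^\infty$. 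Everything else — the algebraic identity for $\delta\Xi$ via Chen's relation and \eqref{eq:R}, and the elementary semigroup bounds — is standard and I would state it without detailed computation.
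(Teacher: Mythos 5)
Your proposal is correct and follows essentially the same approach as the paper's proof: both define a semigroup-weighted germ, use Chen's relation together with the controlled-path relation \eqref{eq:R} to split the coboundary into a Gubinelli-type defect and a semigroup-commutator term, and then apply \eqref{E1} to gain and \eqref{E2} to pay spatial regularity against time increments. The only cosmetic difference is that the paper keeps $P_{t-u}$ outside the germ $\xi_{s,t}=y_s B_{s,t}+y'_s\mathbb{B}_{s,t}$ and runs the dyadic-partition telescoping sum by hand, whereas you fold the semigroup into $\Xi_{s,t}$ and invoke the abstract semigroup sewing of \cite[Theorem~4.5]{GHN19}; both routes produce the same four families of terms and identical exponent bookkeeping.
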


\begin{proof}
The proof follows the ideas of  the usual sewing lemma which has already appeared in many variants, see e.g. \cite[Lemma 4.2]{FH14}, \cite[Theorem 2.4]{GH19}. The key computation is the following.

 Let
$$
 \xi_{s, t} := y_s B_{s, t} + y'_s \mathbb{B}_{s, t}=:\xi_{s,t}^1+ \xi_{s,t}^2,
 $$
which gives
$$ \delta \xi_{s, u, t} := \xi_{s,t}-\xi_{s,u} -\xi_{u,t} = -R^y_{s, u} B_{u, t} - y'_{s,
   u} \mathbb{B}_{u, t}=:h_{s,u,t}^1+ h_{s,u,t}^2,
   $$
where the first equality is the definition of increment  of a two-index map $\xi$.
Consider dyadic partitions $\pi_k=\{s=t_0<t_1<...<t_{2^k}=t\},$ with $t_i=s+2^{-k}i(t-s)$,  let
$$
I_{k}:=\sum_{[u,v]\in\pi_{k}}P_{t-u}\xi_{u,v},
$$
and denote $m=(u+v)/2$.
Then we have
\begin{align*}
I_k-I_{k+1}&=\sum_{[u,v]\in \pi_k}P_{t-u}\delta \xi_{u,m, v}+P_{t-m}(P_{m-u}-I)\xi_{m,v}
\\&=\sum_{[u,v]\in \pi_k}P_{t-u}h^1_{u,m, v}+\sum_{[u,v]\in \pi_k}P_{t-u}h^2_{u,m, v}
\\&\quad+\sum_{[u,v]\in \pi_k}P_{t-m}(P_{m-u}-I)\xi^1_{m,v}+\sum_{[u,v]\in \pi_k}P_{t-m}(P_{m-u}-I)\xi^2_{m,v}
\\&=\sum_{i=1}^4J_i.
\end{align*}
We have by \eqref{E1} and \eqref{E2} for $2\alpha-1<\beta<3\alpha-\sigma-1$ and $\beta\leq 2\alpha-\theta$
$$\begin{aligned}
\|J_4\|_{{\cC}^{4\alpha-2\theta}}&\lesssim \sum_{[u,v]\in \pi_k} (t-m)^{\beta-2\alpha+\theta}\|(P_{m-u}-I)\xi^2_{m,v}\|_{C^{2\beta}}
\\&\lesssim \sum_{[u,v]\in \pi_k} (t-m)^{\beta-2\alpha+\theta}(m-u)^{\alpha-\sigma-\beta}\|y'\|_{C_T\cC^{2\alpha-2\sigma}}(v-m)^{2\alpha}\rho_\alpha(B)
\\&\lesssim \|y'\|_{C_T\cC^{2\alpha-2\sigma}}\rho_\alpha(B)2^{-k(3\alpha-\beta-1-\sigma)}|t-s|^{3\alpha-\beta-1-\sigma}\sum_{[u,v]\in \pi_k} (t-m)^{\beta-2\alpha+\theta}(m-u)
\\&\lesssim \|y'\|_{C_T\cC^{2\alpha-2\sigma}}\rho_\alpha(B)2^{-k(3\alpha-\beta-1-\sigma)}|t-s|^{\alpha-\sigma+\theta},
\end{aligned}
$$
where in the last inequality above, in view of  the condition $\beta-2\alpha+\theta> -1$, we estimated the Riemann sum by the corresponding integral (using convexity of the integrand) and integrated.
Similarly we have for $2\alpha-1<\beta<3\alpha-\sigma-1$ and $\beta\leq 2\alpha-\theta$
$$\begin{aligned}
\|J_3\|_{{\cC}^{4\alpha-2\theta}}&\lesssim \sum_{[u,v]\in \pi_k} (t-m)^{\beta-2\alpha+\theta}(m-u)^{2\alpha-\sigma-\beta}\|y\|_{C_T\cC^{4\alpha-2\sigma}}\rho_\alpha(B)(v-m)^{\alpha}
\\&\lesssim \|y\|_{C_T\cC^{4\alpha-2\sigma}}\rho_\alpha(B)2^{-k(3\alpha-\beta-1-\sigma)}|t-s|^{3\alpha-\beta-1-\sigma}\sum_{[u,v]\in \pi_k} (t-m)^{\beta-2\alpha+\theta}(m-u)
\\&\lesssim \|y\|_{C_T\cC^{4\alpha-2\sigma}}\rho_\alpha(B)2^{-k(3\alpha-\beta-1-\sigma)}|t-s|^{\alpha-\sigma+\theta}.
\end{aligned}
$$
Moreover, we have  by \eqref{E1} and \eqref{E2}
$$\begin{aligned}
\|J_1\|_{{\cC}^{4\alpha-2\theta}}&\lesssim \sum_{[u,v]\in \pi_k} (t-{u})^{{(-2\alpha+\theta)\wedge 0}}\|h^1_{u,m,v}\|_{L^\infty}
\\&\lesssim \sum_{[u,v]\in \pi_k} (t-u)^{(-2\alpha+\theta)\wedge 0}(m-u)^{2\alpha}\|R^y\|_{2\alpha,L^\infty}(v-m)^{\alpha}\rho_\alpha(B)
\\&\lesssim \|R^y\|_{2\alpha,L^\infty}2^{-k(3\alpha-1)}|t-s|^{3\alpha-1}\sum_{[u,v]\in \pi_k} (t-m)^{(-2\alpha+\theta)\wedge 0}(m-u)\rho_\alpha(B)
\\&\lesssim \|R^y\|_{2\alpha,L^\infty}2^{-k(3\alpha-1)}|t-s|^{{(\alpha+\theta)\wedge (3\alpha)}}\rho_\alpha(B).
\end{aligned}
$$
Similarly, we have
$$\begin{aligned}\|J_2\|_{{\cC}^{4\alpha-2\theta}}&\lesssim \sum_{[u,v]\in \pi_k} (t-{u})^{(-2\alpha+\theta)\wedge 0}\|h^2_{u,m,v}\|_{L^\infty}
\\&\lesssim \sum_{[u,v]\in \pi_k} (t-u)^{(-2\alpha+\theta)\wedge 0}(m-u)^{\alpha}\|y'\|_{C^\alpha_TL^\infty}\rho_\alpha(B)(v-m)^{2\alpha}
\\&\lesssim \|y'\|_{C^\alpha_TL^\infty}\rho_\alpha(B)2^{-k(3\alpha-1)}|t-s|^{(\alpha+\theta)\wedge (3\alpha)}.\end{aligned}
$$

Thus the result then follows by summing over $k$ and taking limit. In particular the lower bound for $\kappa$ from the statement of the lemma is coming from the requirement that $\alpha-\sigma+\theta>1$.
\end{proof}

As the corresponding semigroup estimates remain the same in the $H^{{\gamma}}$-scale, we obtain also the following result.

\begin{lem}\label{l:int1}
Let $\sigma\in[0,\alpha)$ and  $(y,y')\in \bar{D}^{2\alpha}_{B,4\alpha-2\sigma}$. Then the integral
$$ \int_0^t P (t - r) y_r d B_r := \lim_{| \pi | \rightarrow 0} \sum_{[s,
   r] \in \pi} P (t - s) (y_s B_{s, r} + y'_s \mathbb{B}_{s, r})
   $$
exists as an element of  $H^{-2\kappa}$ for $\kappa>1-3\alpha-\sigma$,  where the limit is taken over
partitions $\pi$ of $[0, t]$ with vanishing mesh size. Moreover, for every $0\leq\theta<1$ it holds
\begin{equation*}
\begin{aligned}
&\left\|\int_{s}^{t}P(t-r)y_{r}dB_{r}-P(t-s)y_{s}B_{s,t}-P(t-s)y'_{s}\mathbb{B}_{s,t}\right\|_{H^{4\alpha-2\theta}}\\
&\qquad\lesssim \|y,y'\|_{\bar{B},2\alpha,4\alpha-2\sigma}
|t-s|^{(\alpha-\sigma+\theta)\wedge (3\alpha)}\rho_\alpha(B).
\end{aligned}
\end{equation*}
Here the implicit constant is independent of $y, \rho_\alpha(B)$.
\end{lem}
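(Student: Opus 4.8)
The statement to be proved is Lemma~\ref{l:int1}, which is the $H^{\gamma}$-scale analogue of Lemma~\ref{l:int} (whose proof is given just above). The plan is to follow the very same sewing-type argument, replacing everywhere the H\"older--Besov norms $\|\cdot\|_{\cC^{\beta}}$ by the Sobolev norms $\|\cdot\|_{H^{\beta}}$ and the space $C^{2\alpha}_{2,T}L^{\infty}$ by $C^{2\alpha}_{2,T}L^{2}$, and invoking the $L^{2}$-version of the heat semigroup bounds \eqref{E1}, \eqref{E2} from Lemma~\ref{lem:2.8}, which hold with identical exponents. First I would set
$$
\xi_{s,t}:=y_{s}B_{s,t}+y'_{s}\mathbb{B}_{s,t}=:\xi^{1}_{s,t}+\xi^{2}_{s,t},
$$
compute the defect
$$
\delta\xi_{s,u,t}=\xi_{s,t}-\xi_{s,u}-\xi_{u,t}=-R^{y}_{s,u}B_{u,t}-y'_{s,u}\mathbb{B}_{u,t}=:h^{1}_{s,u,t}+h^{2}_{s,u,t},
$$
and introduce the dyadic Riemann sums $I_{k}=\sum_{[u,v]\in\pi_{k}}P_{t-u}\xi_{u,v}$ over the dyadic partition $\pi_{k}$ of $[s,t]$. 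The telescoping identity $I_{k}-I_{k+1}=\sum_{i=1}^{4}J_{i}$ with $m=(u+v)/2$ decomposes the increment into four groups, exactly as in the proof of Lemma~\ref{l:int}: the two terms $J_{1},J_{2}$ coming from $P_{t-u}h^{1}_{u,m,v}$ and $P_{t-u}h^{2}_{u,m,v}$, and the two terms $J_{3},J_{4}$ coming from the commutator $P_{t-m}(P_{m-u}-I)\xi^{1}_{m,v}$ and $P_{t-m}(P_{m-u}-I)\xi^{2}_{m,v}$.

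The key estimates are the four bounds on $\|J_{i}\|_{H^{4\alpha-2\theta}}$. For $J_{3},J_{4}$ one uses the smoothing $\|P_{t-m}f\|_{H^{4\alpha-2\theta}}\lesssim (t-m)^{\beta-2\alpha+\theta}\|f\|_{H^{2\beta}}$ (valid from \eqref{E1} when $\beta-2\alpha+\theta\geq 0$) together with $\|(P_{m-u}-I)g\|_{H^{2\beta}}\lesssim (m-u)^{(\text{regularity of }g)-\beta}\|g\|_{H^{\text{its index}}}$ from \eqref{E2}, plugging in $\|\xi^{1}_{m,v}\|_{H^{4\alpha-2\sigma}}\lesssim \|y\|_{C_{T}H^{4\alpha-2\sigma}}\rho_{\alpha}(B)(v-m)^{\alpha}$ and $\|\xi^{2}_{m,v}\|_{H^{2\alpha-2\sigma}}\lesssim \|y'\|_{C_{T}H^{2\alpha-2\sigma}}\rho_{\alpha}(B)(v-m)^{2\alpha}$. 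For $J_{1},J_{2}$ one uses instead $\|P_{t-u}f\|_{H^{4\alpha-2\theta}}\lesssim (t-u)^{(-2\alpha+\theta)\wedge 0}\|f\|_{L^{2}}$ together with $\|h^{1}_{u,m,v}\|_{L^{2}}\lesssim \|R^{y}\|_{2\alpha,L^{2}}(m-u)^{2\alpha}\rho_{\alpha}(B)(v-m)^{\alpha}$ and $\|h^{2}_{u,m,v}\|_{L^{2}}\lesssim \|y'\|_{C^{\alpha}_{T}L^{2}}(m-u)^{\alpha}\rho_{\alpha}(B)(v-m)^{2\alpha}$. In each of the four cases the resulting Riemann sum over $[u,v]\in\pi_{k}$ is dominated (using convexity of the integrand and the constraint $\beta-2\alpha+\theta>-1$, resp. $-2\alpha+\theta>-1$) by the corresponding integral, giving a geometric factor $2^{-k\epsilon}$ for some $\epsilon>0$ and a power $|t-s|^{(\alpha-\sigma+\theta)\wedge(3\alpha)}$; this is where the lower bound $\kappa>1-3\alpha-\sigma$ enters, precisely from the requirement $\alpha-\sigma+\theta>1$ that makes the limiting integral defining the sewing converge at $\theta=1^{-}$. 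Summing over $k$ and passing to the limit $|\pi|\to 0$ yields existence of $\int_{0}^{t}P(t-r)y_{r}\,dB_{r}$ in $H^{-2\kappa}$ and the claimed local estimate with $\|y,y'\|_{\bar{B},2\alpha,4\alpha-2\sigma}$ replacing $\|y,y'\|_{B,2\alpha,4\alpha-2\sigma}$.

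Since the heat semigroup enjoys on the $H^{\beta}$-scale exactly the same bounds \eqref{E1}, \eqref{E2} as on the $\cC^{\beta}$-scale, with identical exponents, no step of the argument changes structurally; the only genuine bookkeeping is to verify that every norm appearing in the proof of Lemma~\ref{l:int} has a well-defined $L^{2}$/Sobolev counterpart and that the admissible range of the auxiliary exponent $\beta$ (namely $2\alpha-1<\beta<3\alpha-\sigma-1$ and $\beta\leq 2\alpha-\theta$) is nonempty, which is guaranteed by $\alpha\in(\tfrac13,\tfrac12)$ and $\sigma\in[0,\alpha)$ for $\theta$ close to $1$. I do not expect any real obstacle here: the main (minor) point to be careful about is keeping track of the $\theta$-dependence so that the final exponent is $(\alpha-\sigma+\theta)\wedge(3\alpha)$ and that the constants are independent of $y$ and of $\rho_{\alpha}(B)$ (linearity in $\rho_{\alpha}(B)$ follows because each of $B_{s,t}$, $\mathbb{B}_{s,t}$ contributes exactly one factor). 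Thus the proof reduces to repeating the computation above with the obvious substitutions, and I would simply write ``the proof is identical to that of Lemma~\ref{l:int}, replacing $L^{\infty}$- and $\cC^{\beta}$-norms by $L^{2}$- and $H^{\beta}$-norms and using the corresponding bounds in Lemma~\ref{lem:2.8}.''
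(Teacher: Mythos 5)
Your proposal matches the paper's approach exactly: the paper does not write out a separate proof for this lemma but simply observes, immediately before its statement, that ``the corresponding semigroup estimates remain the same in the $H^{\gamma}$-scale, we obtain also the following result,'' which is precisely your argument of transplanting the sewing proof of Lemma~\ref{l:int} with $L^{\infty}/\cC^{\beta}$ replaced by $L^{2}/H^{\beta}$ and using the second halves of \eqref{E1}, \eqref{E2}. One small arithmetic slip worth noting: with $\theta=2\alpha+\kappa$, the constraint $\alpha-\sigma+\theta>1$ yields $\kappa>1-3\alpha+\sigma$, not $1-3\alpha-\sigma$; since $\sigma\ge 0$ the bound stated in the lemma is weaker, so this does not affect correctness, but your claim that the stated threshold ``enters precisely from'' that inequality is not quite right as written.
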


By a similar argument as \cite[Lemma 3.5]{HN21} we obtain the following result:

\begin{lem}\label{lem:G2}
Let $T\in(0,1]$. Let $\sigma\in[0,\alpha)$ and $(y,y')\in D^{2\alpha}_{B,4\alpha-2\sigma}.$  Then
	$$(z,z')=\left(\int_0^\cdot P(\cdot-s)y_sdB_s,y\right)\in D^{2\alpha}_{B,4\alpha}$$
	and
	$$\|z,z'\|_{B,2\alpha,4\alpha}\lesssim (1+\rho_\alpha(B))(\|y_0\|_{\cC^{{4\alpha-2\sigma}}}+\|y'_0\|_{\cC^{{2\alpha-2\sigma}}}+T^{\alpha(\alpha-\sigma)/(2\alpha-\sigma)}\|y,y'\|_{B,2\alpha,4\alpha-2\sigma}).
	$$
Here the implicit constant is independent of $y, \rho_\alpha(B)$.	
\end{lem}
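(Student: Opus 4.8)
The statement to prove is Lemma~\ref{lem:G2}: given a controlled rough path $(y,y')\in D^{2\alpha}_{B,4\alpha-2\sigma}$ with $\sigma\in[0,\alpha)$, the pair $(z,z')=(\int_0^\cdot P(\cdot-s)y_s\,dB_s,\,y)$ again lies in $D^{2\alpha}_{B,4\alpha}$ with the displayed quantitative bound. The overall strategy is to verify directly each of the four components of the norm $\|z,z'\|_{B,2\alpha,4\alpha}=\|z\|_{C_T\cC^{4\alpha}}+\|z'\|_{C_T\cC^{2\alpha}}+\|z'\|_{C^\alpha_TL^\infty}+\|R^z\|_{2\alpha,L^\infty}$, using Lemma~\ref{l:int} as the black box for the rough-integral estimate and Lemma~\ref{lem:2.8} for the smoothing of the heat semigroup. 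The parametrised-time gains $T^{\alpha(\alpha-\sigma)/(2\alpha-\sigma)}$ will come from a standard interpolation argument: the crude bounds on the integral carry no time power, the refined Taylor-type expansions carry a positive power of $t-s$, and interpolating between a spatial regularity $4\alpha-2\sigma$ and a higher one gives a fractional power of $T$.

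\textbf{Step 1: the Gubinelli derivative $z'$.} Here $z'=y$, so $\|z'\|_{C_T\cC^{2\alpha}}=\|y\|_{C_T\cC^{2\alpha}}$ and $\|z'\|_{C^\alpha_TL^\infty}=\|y\|_{C^\alpha_TL^\infty}$. Since $y\in C_T\cC^{4\alpha-2\sigma}$ and $y'\in C^\alpha_TL^\infty$ with remainder $R^y\in C^{2\alpha}_{2,T}L^\infty$, we write $y_{s,t}=y'_sB_{s,t}+R^y_{s,t}$ to get $\|y\|_{C^\alpha_TL^\infty}\lesssim\|y'\|_{C_TL^\infty}\rho_\alpha(B)+\|R^y\|_{2\alpha,L^\infty}T^\alpha\lesssim (1+\rho_\alpha(B))\|y,y'\|_{B,2\alpha,4\alpha-2\sigma}$, and $\|y\|_{C_T\cC^{2\alpha}}\lesssim \|y_0\|_{\cC^{2\alpha}}+T^{\cdots}\|y\|_{C^\alpha_T\cC^{2\alpha}}$, which is controlled by $4\alpha-2\sigma\ge 2\alpha$ when $\sigma\le\alpha$ together with an interpolation between $\cC^{4\alpha-2\sigma}$ and $\cC^{2\alpha-2\sigma}$; care is needed since $\cC^{2\alpha}$ sits between $\cC^{2\alpha-2\sigma}$ and $\cC^{4\alpha-2\sigma}$ only via interpolation, producing the exponent $\alpha(\alpha-\sigma)/(2\alpha-\sigma)$. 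The term $\|y_0\|_{\cC^{4\alpha-2\sigma}}+\|y_0'\|_{\cC^{2\alpha-2\sigma}}$ from the statement appears precisely from bounding the values at time zero.

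\textbf{Step 2: $z$ itself and its remainder $R^z$.} For the remainder, by definition $R^z_{s,t}=z_{s,t}-z'_sB_{s,t}=z_{s,t}-y_sB_{s,t}$. Decompose $z_{s,t}=\big(\int_s^tP(t-r)y_r\,dB_r\big)+(P(t-s)-\mathrm{Id})z_s$. Apply Lemma~\ref{l:int} with a suitable choice of $\theta$ (e.g. $\theta$ close to $1-\alpha+\sigma$ so that $4\alpha-2\theta\ge 0$ and the time exponent is positive) to replace $\int_s^tP(t-r)y_r\,dB_r$ by $P(t-s)y_sB_{s,t}+P(t-s)y'_s\mathbb{B}_{s,t}$ up to an error that is $O(|t-s|^{(\alpha-\sigma+\theta)\wedge 3\alpha})$ in $\cC^{4\alpha-2\theta}\hookrightarrow L^\infty$; then $P(t-s)y_sB_{s,t}-y_sB_{s,t}=O(|t-s|^{2\alpha})$ in $L^\infty$ by \eqref{E2} using $y_s\in\cC^{4\alpha}$ (here one first needs $z\in C_T\cC^{4\alpha}$, which is where the iteration between the four bounds is organised so that the estimate closes), and $P(t-s)y'_s\mathbb{B}_{s,t}=O(|t-s|^{2\alpha})$; and $(P(t-s)-\mathrm{Id})z_s=O(|t-s|^{2\alpha})$ in $L^\infty$ using $z_s\in\cC^{4\alpha}$. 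Summing gives $\|R^z\|_{2\alpha,L^\infty}\lesssim(1+\rho_\alpha(B))(\cdots)$. For $\|z\|_{C_T\cC^{4\alpha}}$ one uses Lemma~\ref{l:int} again with $\theta=0$ (or $\theta$ small), exploiting that the integral lands in $\cC^{4\alpha}$ with a time power when one pays a little regularity, and absorbing the initial term $\|z_0\|_{\cC^{4\alpha}}$ — but $z_0=0$, so actually $\|z\|_{C_T\cC^{4\alpha}}\lesssim T^{\cdots}\big(\|y_0\|_{\cC^{4\alpha-2\sigma}}+\|y_0'\|_{\cC^{2\alpha-2\sigma}}+T^{\cdots}\|y,y'\|_{B,2\alpha,4\alpha-2\sigma}\big)$ after the interpolation.

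\textbf{Main obstacle.} The delicate point is the bookkeeping of exponents: one must interpolate between the available spatial regularities $\cC^{4\alpha-2\sigma}$ and $\cC^{2\alpha-2\sigma}$ so that the final output is genuinely in $\cC^{4\alpha}$ (a gain of $2\sigma$ in regularity, paid for by a fractional power of $T$), and simultaneously make sure that the time exponent $(\alpha-\sigma+\theta)\wedge 3\alpha$ in Lemma~\ref{l:int} stays strictly positive and that $4\alpha-2\theta$ remains $\ge 0$ so that the embedding $\cC^{4\alpha-2\theta}\hookrightarrow L^\infty$ (or $\hookrightarrow\cC^{0}$) is available. Getting the precise exponent $\alpha(\alpha-\sigma)/(2\alpha-\sigma)$ requires choosing the interpolation parameter optimally; all other steps are routine applications of the sewing/integration lemma and the heat-semigroup estimates. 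I would set up the four norm bounds as a coupled system and verify that each is controlled by the right-hand side plus a small-$T$-times-itself term, then close by taking $T$ small (or by the standard observation that such a priori estimates on a fixed interval follow once one knows the controlled-rough-path structure is preserved).
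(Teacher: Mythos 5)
Your overall strategy — verify each component of the $D^{2\alpha}_{B,4\alpha}$-norm in turn, with Lemma~\ref{l:int} handling the rough-integral remainder and Lemma~\ref{lem:2.8} supplying the heat-semigroup smoothing, the remainder $R^z_{s,t}$ decomposed as sewing error plus $(P(t-s)-\mathrm{Id})$-corrections plus the $\mathbb{B}$-term — is exactly what the paper does, and the observation that the initial-data terms appear because there is no time regularity available at $t=0$ is correct. But three of the details you give are off in ways that would actually block you.

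First, the interpolation producing the exponent $\alpha(\alpha-\sigma)/(2\alpha-\sigma)$ is \emph{not} a purely spatial interpolation between $\cC^{4\alpha-2\sigma}$ and $\cC^{2\alpha-2\sigma}$: a spatial interpolation produces no power of $T$ at all. The paper interpolates in space \emph{and time} simultaneously, between $C_T\cC^{4\alpha-2\sigma}$ (full spatial regularity, no time regularity) and $C^\alpha_T L^\infty$ (no spatial regularity, $\alpha$-time regularity), which yields
\begin{equation*}
\|y\|_{C^{\alpha(\alpha-\sigma)/(2\alpha-\sigma)}_T\cC^{2\alpha}}\lesssim \|y\|^{\alpha/(2\alpha-\sigma)}_{C_T\cC^{4\alpha-2\sigma}}\|y\|^{(\alpha-\sigma)/(2\alpha-\sigma)}_{C^\alpha_T L^\infty},
\end{equation*}
and the $T^{\alpha(\alpha-\sigma)/(2\alpha-\sigma)}$-factor then comes from $\|y\|_{C_T\cC^{2\alpha}}\le\|y_0\|_{\cC^{2\alpha}}+T^{\theta}\|y\|_{C^{\theta}_T\cC^{2\alpha}}$. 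Second, for $I_2=(P(t-s)-\mathrm{Id})y_sB_{s,t}$ you invoke ``$y_s\in\cC^{4\alpha}$''; you have, and need, only $y_s\in\cC^{4\alpha-2\sigma}$, which already gives $(P(t-s)-\mathrm{Id})y_s=O(|t-s|^{2\alpha-\sigma})$ by \eqref{E2}, hence $\|I_2\|_{L^\infty}\lesssim|t-s|^{3\alpha-\sigma}\le T^{\alpha-\sigma}|t-s|^{2\alpha}$. The $\cC^{4\alpha}$-regularity of $z$ (not of $y$) enters only in the separate term $(P(t-s)-\mathrm{Id})z_s$.

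Third, and most substantively: there is no ``coupled system'' to close and no smallness of $T$ to invoke. Lemma~\ref{lem:G2} is an a priori estimate for the \emph{linear} map $(y,y')\mapsto(z,z')$, not a fixed-point statement, and the bound holds for every $T\in(0,1]$. The paper first establishes $\|z\|_{C_T\cC^{4\alpha}}$ directly by applying Lemma~\ref{l:int} with $\theta=0$ after peeling off $P(t)y_0B_{0,t}+P(t)y'_0\mathbb{B}_{0,t}$ — this is precisely where the data terms $\|y_0\|_{\cC^{4\alpha-2\sigma}},\,\|y'_0\|_{\cC^{2\alpha-2\sigma}}$ arise, via $\|P(t)y_0B_{0,t}\|_{\cC^{4\alpha}}\lesssim t^{-\sigma}\|y_0\|_{\cC^{4\alpha-2\sigma}}\,t^\alpha\rho_\alpha(B)$, which is bounded since $\sigma<\alpha$ — and only afterwards uses this bound for the $(P(t-s)-\mathrm{Id})z_s$ piece of $R^z$. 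The dependencies are strictly sequential, so no contraction argument is required; the small factor $T^{\alpha(\alpha-\sigma)/(2\alpha-\sigma)}$ plays its contractive role only later, in the application to the well-posedness Theorem~\ref{zexistence}.
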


\begin{proof}
By \eqref{eq:R} we first have
\begin{align*}
\|z'\|_{C^{\alpha}_TL^\infty}&=\|y\|_{C^{\alpha}_TL^\infty}\lesssim (\|y'\|_{C_TL^\infty}+\|y_0\|_{L^\infty})\rho_\alpha(B)+\|R^y\|_{\alpha,L^\infty}
\\&\lesssim(1+\rho_\alpha(B))( \|y'_0\|_{L^\infty}+\|y_0\|_{L^\infty}+T^\alpha\|y,y'\|_{B,2\alpha,4\alpha-2\sigma})
\end{align*}
The desired bound for the Gubinelli derivative $z'=y$ in $C_{T}\cC^{2\alpha}$ follows from
$$
\|z'\|_{C^{\alpha(\alpha-\sigma)/(2\alpha-\sigma)}_T\cC^{2\alpha}}\lesssim \|y\|^{\alpha/(2\alpha-\sigma)}_{C_T\cC^{4\alpha-2\sigma}}\|y\|^{(\alpha-\sigma)/(2\alpha-\sigma)}_{C^{\alpha}_TL^\infty}.
$$
In order to bound $z$ in $C_T\cC^{4\alpha}$, we write
\begin{align*}
\begin{aligned}
z_{t}=\bigg(\int_{0}^{t}P(t-s)y_{s}dB_{s}-P(t)y_{0}B_{0,t}-P(t)y'_{0}\mathbb{B}_{0,t}\bigg)+P(t)y_{0}B_{0,t}+P(t)y'_{0}\mathbb{B}_{0,t}.
\end{aligned}
\end{align*}
We  apply Lemma~\ref{l:int} with $\theta=0$ to control the first term {for $0\leq t\leq T$}
$$
\bigg\|\int_{0}^{t}P(t-s)y_{s}dB_{s}-P(t)y_{0}B_{0,t}-P(t)y'_{0}\mathbb{B}_{0,t}\bigg\|_{\cC^{4\alpha}}\lesssim  T^{\alpha-\sigma}\| y, y' \|_{B, 2 \alpha, 4 \alpha-2\sigma}\rho_\alpha(B),
$$
and for the remaining two we estimate as follows by \eqref{E1}
$$
\|P_{t}y'_{0}\mathbb{B}_{0,t}\|_{{\cC}^{4\alpha}}\lesssim t^{-\alpha-\sigma+2\alpha}\|y'_{0}\|_{\cC^{2\alpha-2\sigma}}\rho_\alpha(B)\lesssim \|y'_{0}\|_{\cC^{2\alpha-2\sigma}}\rho_\alpha(B),
$$
$$
\|P_{t}y_{0}B_{0,t}\|_{{\cC}^{4\alpha}}\lesssim t^{-\sigma+\alpha}\|y_{0}\|_{\cC^{4\alpha-2\sigma}}\rho_\alpha(B)\lesssim \|y_{0}\|_{\cC^{4\alpha-2\sigma}}\rho_\alpha(B).
$$

 It remains to control the $2\alpha$-H\"older norm of $R^{z}$ in $L^{\infty}$. It holds
$$
R^{z}_{s,t}=\left(\int_s^t P (t - r) y_r d B_r - P (t - s) y_s B_{s, t} - P (t - s) y'_s
\mathbb{B}_{s, t}\right)
$$
$$ + (P (t - s) - \mathrm{Id}) y_s B_{s, t} + (P (t - s) - \mathrm{Id}) \int_0^s P
   (s - r) y_r d B_r + P (t - s) y'_s \mathbb{B}_{s, t} = I_1 + \cdots + I_4.
   $$
Applying Lemma \ref{l:int} with $\theta=2\alpha-\kappa$, $\kappa>0$ small enough  we obtain {for $0\leq t\leq T$}
$$ \| I_1 \|_{L^\infty}\lesssim \| I_1 \|_{\cC^{2\kappa}}\lesssim
   \| y, y' \|_{B, 2 \alpha, 4 \alpha-2\sigma} | t - s |^{3\alpha-\sigma-\kappa}\rho_\alpha(B)\lesssim
   T^{\alpha-\sigma-\kappa}\| y, y' \|_{B, 2 \alpha, 4 \alpha-2\sigma} | t - s |^{2\alpha}\rho_\alpha(B),
   $$
   whereas the remaining terms are estimated as follows by \eqref{E1} and \eqref{E2}
$$ \| I_2 \|_{L^\infty}\lesssim | t - s |^{3 \alpha-\sigma} \| y_s \|_{\mathcal C^{4
   \alpha-2\sigma}}\rho_\alpha(B)\lesssim
   T^{\alpha-\sigma}\| y, y' \|_{B, 2 \alpha, 4 \alpha-2\sigma} | t - s |^{2\alpha}\rho_\alpha(B),
   $$

$$ \| I_3 \|_{L^\infty} \lesssim | t - s |^{2 \alpha} \left\| \int_0^s P (s
   - r) y_r d B_r \right\|_{\mathcal C^{4 \alpha}} =\|z_{s}\|_{\mathcal{C}^{4\alpha}}|t-s|^{2\alpha},
   $$
   which combined with the above estimate for $z\in C_{T}C^{4\alpha}$ yields the desired bound for $I_{3}$, and finally
$$ \| I_4 \|_{L^\infty} \lesssim | t - s |^{2 \alpha} \| y'_s
   \|_{L^\infty} \rho_\alpha(B)\lesssim (T^{\alpha}\| y, y' \|_{B, 2 \alpha, 4 \alpha-2\sigma}+\|y_0'\|_{L^\infty}) | t - s |^{2\alpha}.$$
   The claim follows.
\end{proof}

By the same arguments, we deduce the $H^{{\gamma}}$-counterpart of Lemma \ref{lem:G2}.

\begin{lem}
Let $T\in (0,1]$. Let $ \sigma \in[0,\alpha)$ and $(y,y')\in \bar{D}^{2\alpha}_{B,4\alpha-2\sigma}.$  Then
	$$(z,z')=\left(\int_0^\cdot P(\cdot-s)y_sdB_s,y\right)\in \bar{D}^{2\alpha}_{B,4\alpha}$$
	and
	$$\|z,z'\|_{\bar{B},2\alpha,4\alpha}\lesssim (1+\rho_\alpha(B))(\|y_0\|_{H^{4\alpha-2\sigma}}+\|y'_0\|_{H^{2\alpha-2\sigma}}+T^{\alpha(\alpha-\sigma)/(2\alpha-\sigma)}\|y,y'\|_{\bar{B},2\alpha,4\alpha-2\sigma}).
	$$
	Here the implicit constant is independent of $y, \rho_\alpha(B)$.
\end{lem}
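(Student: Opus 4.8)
The statement is the $H^{\gamma}$-scale analogue of the preceding Lemma~\ref{lem:G2}, so the plan is to mirror its proof verbatim, replacing the H\"older--Besov estimates from Lemma~\ref{l:int} by the corresponding $L^{2}$-based estimates from Lemma~\ref{l:int1} and the Schauder estimate \eqref{E1} in the $\cC^{\alpha}$-scale by the $H^{\alpha}$-scale one, together with \eqref{E2} in $L^{2}$. The point is that every ingredient used in Lemma~\ref{lem:G2} — the sewing-type bound for the rough convolution, the heat-semigroup smoothing $\|P_{t}f\|_{H^{\theta+\alpha}}\lesssim t^{-\theta/2}\|f\|_{H^{\alpha}}$, and the near-identity bound $\|(P_{t}-\mathrm{Id})f\|_{L^{2}}\lesssim t^{\theta/2}\|f\|_{H^{\theta}}$ — holds identically with $\cC^{\beta}$ replaced by $H^{\beta}$ and $L^{\infty}$ replaced by $L^{2}$. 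Hence no new analytic input is required.

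First I would set $(z,z')=\bigl(\int_{0}^{\cdot}P(\cdot-s)y_{s}dB_{s},\,y\bigr)$ and verify the four constituents of the norm $\|z,z'\|_{\bar B,2\alpha,4\alpha}$ from Definition~\ref{controlled1}. For the Gubinelli derivative $z'=y$: the bound $\|y\|_{C^{\alpha}_{T}L^{2}}\lesssim(1+\rho_{\alpha}(B))(\|y_{0}\|_{L^{2}}+\|y'_{0}\|_{L^{2}}+T^{\alpha}\|y,y'\|_{\bar B,2\alpha,4\alpha-2\sigma})$ comes directly from \eqref{eq:R1}, and then interpolation $\|y\|_{C^{\alpha(\alpha-\sigma)/(2\alpha-\sigma)}_{T}H^{2\alpha}}\lesssim\|y\|^{\alpha/(2\alpha-\sigma)}_{C_{T}H^{4\alpha-2\sigma}}\|y\|^{(\alpha-\sigma)/(2\alpha-\sigma)}_{C^{\alpha}_{T}L^{2}}$ gives the $C_{T}H^{2\alpha}$ part. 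For $z\in C_{T}H^{4\alpha}$, I would decompose
$$
z_{t}=\Bigl(\int_{0}^{t}P(t-s)y_{s}dB_{s}-P(t)y_{0}B_{0,t}-P(t)y'_{0}\mathbb{B}_{0,t}\Bigr)+P(t)y_{0}B_{0,t}+P(t)y'_{0}\mathbb{B}_{0,t},
$$
bound the bracket by Lemma~\ref{l:int1} with $\theta=0$ (producing $T^{\alpha-\sigma}\|y,y'\|_{\bar B,2\alpha,4\alpha-2\sigma}\rho_{\alpha}(B)$), and bound the last two terms by \eqref{E1}, namely $\|P_{t}y'_{0}\mathbb{B}_{0,t}\|_{H^{4\alpha}}\lesssim t^{\alpha-\sigma}\|y'_{0}\|_{H^{2\alpha-2\sigma}}\rho_{\alpha}(B)$ and $\|P_{t}y_{0}B_{0,t}\|_{H^{4\alpha}}\lesssim t^{\alpha-\sigma/2}\|y_{0}\|_{H^{4\alpha-2\sigma}}\rho_{\alpha}(B)$, all of which are controlled on $[0,T]$, $T\le 1$.

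Finally, for the remainder $R^{z}_{s,t}=z_{s,t}-z'_{s}B_{s,t}=z_{s,t}-y_{s}B_{s,t}$ I would use the identical four-term splitting as in Lemma~\ref{lem:G2}:
$$
R^{z}_{s,t}=\Bigl(\int_{s}^{t}P(t-r)y_{r}dB_{r}-P(t-s)y_{s}B_{s,t}-P(t-s)y'_{s}\mathbb{B}_{s,t}\Bigr)+(P(t-s)-\mathrm{Id})y_{s}B_{s,t}+(P(t-s)-\mathrm{Id})z_{s}+P(t-s)y'_{s}\mathbb{B}_{s,t},
$$
estimating the first term by Lemma~\ref{l:int1} with $\theta=2\alpha-\kappa$ (giving order $|t-s|^{3\alpha-\sigma-\kappa}$, absorbed into $|t-s|^{2\alpha}$ since $\alpha-\sigma-\kappa>0$ for $\kappa$ small), the second and fourth by \eqref{E2} and \eqref{E1} respectively ($\|(P(t-s)-\mathrm{Id})y_{s}B_{s,t}\|_{L^{2}}\lesssim|t-s|^{2\alpha+\alpha}\|y_{s}\|_{H^{4\alpha-2\sigma}}\rho_{\alpha}(B)$, $\|P(t-s)y'_{s}\mathbb{B}_{s,t}\|_{L^{2}}\lesssim|t-s|^{2\alpha}\|y'_{s}\|_{L^{2}}\rho_{\alpha}(B)$), and the third by $\|(P(t-s)-\mathrm{Id})z_{s}\|_{L^{2}}\lesssim|t-s|^{2\alpha}\|z_{s}\|_{H^{4\alpha}}$, which feeds back the $C_{T}H^{4\alpha}$ bound just established. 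Collecting the five bounds and taking suprema gives the claimed inequality. There is no genuine obstacle here — the only point requiring a little care is bookkeeping the exponents of $T$ and $|t-s|$ so that all powers are nonnegative, which holds because $T\le 1$, $\sigma<\alpha<\tfrac12$, and $\kappa$ is taken sufficiently small; this is exactly the same bookkeeping as in Lemma~\ref{lem:G2}.
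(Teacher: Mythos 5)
Your proposal matches the paper's intended proof exactly: the paper simply states "By the same arguments, we deduce the $H^{\gamma}$-counterpart of Lemma \ref{lem:G2}," and you have spelled out precisely those arguments with $\cC^{\beta}$ replaced by $H^{\beta}$ and $L^{\infty}$ by $L^{2}$, using Lemma~\ref{l:int1} in place of Lemma~\ref{l:int}. The only blemishes are two arithmetic slips in exponents that do not affect the conclusion: $\|P_{t}y_{0}B_{0,t}\|_{H^{4\alpha}}\lesssim t^{\alpha-\sigma}\|y_{0}\|_{H^{4\alpha-2\sigma}}\rho_{\alpha}(B)$ (not $t^{\alpha-\sigma/2}$, since the gain of $2\sigma$ derivatives costs $t^{-\sigma}$), and $\|(P(t-s)-\mathrm{Id})y_{s}B_{s,t}\|_{L^{2}}\lesssim|t-s|^{3\alpha-\sigma}\|y_{s}\|_{H^{4\alpha-2\sigma}}\rho_{\alpha}(B)$ (not $|t-s|^{3\alpha}$, since \eqref{E2} with $\theta=4\alpha-2\sigma$ gives $|t-s|^{2\alpha-\sigma}$ before multiplying by $|B_{s,t}|$); both exponents remain $\ge 2\alpha$ because $\sigma<\alpha$, so the bookkeeping closes as you claimed.
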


By a similar argument as \cite[Lemma 3.6]{HN21} we obtain the following result.

\begin{lem} \label{l:D5}
Let $G$ satisfy assumption \eqref{G} and $(y,  G(v+y))\in D^{2\alpha}_{B,4\alpha}$. Then for $\sigma\in [0,\alpha)$ it holds $( G(v+y),  DG(v+y) G(v+y))\in D^{2\alpha}_{B,4\alpha-2\sigma}$
	and
	$$\| G(v+y),  DG(v+y) G(v+y)\|_{B,2\alpha,4\alpha-2\sigma}\lesssim (1+\|y,  G(v+y)\|_{B,2\alpha,4\alpha}+\|v\|_{C^1_{T,x}})(1+\rho_\alpha(B))^2.$$
Moreover, if
$(\tilde{y},  G(v+\tilde{y}))\in D^{2\alpha}_{B,4\alpha}$ then
	\begin{align*}
	&\|G(v+y)- G(v+\tilde{y}),  DG(v+y) G(v+y)- DG(v+\tilde{y}) G(v+\tilde{y})\|_{B,2\alpha,4\alpha-2\sigma}
\\& \lesssim (1+\|y,  G(v+y)\|_{B,2\alpha,4\alpha}+\|\tilde{y},  G(v+\tilde{y})\|_{B,2\alpha,4\alpha}+\|v\|_{C^{1}})(1+\rho_\alpha(B))^2
\\&\qquad\times (\|y-\tilde{y}, G(v+y)- G(v+\tilde{y})\|_{B,2\alpha,4\alpha}).
\end{align*}
\end{lem}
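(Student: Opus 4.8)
\textbf{Proof plan for Lemma~\ref{l:D5}.} The strategy is to exploit the concrete algebraic structure of $G$ in \eqref{G}: each component of $G(u)$ is $g_{ij}$ evaluated at finitely many smooth linear functionals $\langle u,\varphi^{ij}_\ell\rangle$, and $g_{ij}\in C_b^3$. Since the driving rough path appears only through $(y, y'=G(v+y))$, I would first observe that any finite linear functional $\ell\mapsto\langle\cdot,\varphi^{ij}_\ell\rangle$ maps a controlled rough path to a (scalar) controlled rough path with norms controlled linearly, because the functionals are smooth and hence bounded on every $\cC^{\gamma}$-scale. Then $g_{ij}$ composed with such a controlled path is again controlled by the standard composition-with-smooth-functions result for controlled rough paths (the scalar Gubinelli lemma, see e.g.\ \cite[Lemma~7.3]{FH14}): the new Gubinelli derivative is $D g_{ij}(\cdots)$ applied to the derivative of the inner path, i.e.\ precisely $DG(v+y)\,G(v+y)$ after reassembling components. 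This is where the extra $(1+\rho_\alpha(B))$ factors and the product of norms enter, and where the loss of regularity from $4\alpha$ to $4\alpha-2\sigma$ is comfortably absorbed since composition with a $C^3_b$ function does not decrease H\"older-Besov regularity below that of the input.

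Concretely, I would proceed in the following steps. First, fix $i,j$ and write $F_\ell:=\langle v+y, \varphi^{ij}_\ell\rangle\in\mR$ with Gubinelli derivative $F'_\ell=\langle G(v+y),\varphi^{ij}_\ell\rangle\in\mR^m$; using that $\varphi^{ij}_\ell\in C^\infty(\mT^3)$ pairs continuously against $\cC^\gamma$ (and against $L^\infty$, and against two-index maps valued in $L^\infty$), bound $\|F_\ell,F'_\ell\|_{B,2\alpha,\gamma'}$ for the relevant exponents by $C(1+\|v\|_{C^1_{T,x}}+\|y,G(v+y)\|_{B,2\alpha,4\alpha})$. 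Second, apply the composition lemma for $C^3_b$ functions of a controlled rough path (noting $g_{ij}$ also depends on the space variable, which only contributes a harmless $C^3_b$-in-$x$ factor since the $x$-dependence is smooth and $T$-independent) to get that $g_{ij}(\cdot, F_1,\dots,F_{k_{ij}})$ is controlled with Gubinelli derivative $\sum_\ell \partial_\ell g_{ij}(\cdot,F)\,F'_\ell$; the constant in this lemma is polynomial in $\|F,F'\|$ and the $C^3_b$-norm of $g_{ij}$, producing the claimed $(1+\rho_\alpha(B))^2$ and the product of the two norm factors (one $(1+\rho_\alpha(B))$ from controlling $\|F'_\ell\|_{C^\alpha_TL^\infty}$ via the remainder, one more from the composition sewing argument). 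Third, reassemble over $i,j$: the matrix $G(v+y)$ has entries given by these scalar compositions and its Gubinelli derivative $DG(v+y)G(v+y)$ is the corresponding tensor contraction, so the norm bound passes through since there are only finitely many entries. Fourth, for the difference estimate, run the same argument on $g_{ij}(\cdot,F)-g_{ij}(\cdot,\tilde F)$ using the mean-value form $g_{ij}(\cdot,F)-g_{ij}(\cdot,\tilde F)=\int_0^1 \nabla g_{ij}(\cdot,\tilde F+\theta(F-\tilde F))\,d\theta\cdot(F-\tilde F)$ and its controlled-rough-path analogue (the Lipschitz-in-input version of the composition lemma, e.g.\ \cite[Thm.~7.6]{FH14}), which gives a bound of the form (smooth function of the norms of $y$ and $\tilde y$) times $\|y-\tilde y, G(v+y)-G(v+\tilde y)\|_{B,2\alpha,4\alpha}$; since $g_{ij}\in C^3_b$ the map $y\mapsto G(v+y)$ is locally Lipschitz from $\bar D^{2\alpha}_{B,4\alpha}$ (resp.\ $D^{2\alpha}_{B,4\alpha}$) to $D^{2\alpha}_{B,4\alpha-2\sigma}$ with the stated modulus.

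I would present the scalar composition and Lipschitz estimates as consequences of standard results once the linear functionals $\langle\cdot,\varphi^{ij}_\ell\rangle$ have been shown to act boundedly on the controlled-path norms, and only spell out the bookkeeping of which regularity exponent goes where. The estimate $\|F'_\ell\|_{C^\alpha_TL^\infty}\lesssim(1+\rho_\alpha(B))(\|\cdot\|_{B,2\alpha,4\alpha})$ is the one point needing care, since $F'_\ell=y'$-paired-with-$\varphi$ and its time regularity is read off from $F'_{\ell,s,t}=\langle R^y_{s,t},\varphi\rangle+\langle y'_{s} B_{s,t},\varphi\rangle$ — exactly as in the proof of Lemma~\ref{lem:G2}.

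\textbf{Main obstacle.} The genuinely delicate point is tracking the regularity indices through the composition so that the output lands in $D^{2\alpha}_{B,4\alpha-2\sigma}$ with the loss $2\sigma$ used later (in Lemma~\ref{lem:G2} the input to the integral must be in $D^{2\alpha}_{B,4\alpha-2\sigma}$), while simultaneously keeping the dependence on $\rho_\alpha(B)$ and on $\|v\|_{C^1_{T,x}}$ exactly of the stated polynomial form; the $x$-dependence of $g_{ij}$ and the fact that the remainder of $G(v+y)$ must be measured in $L^\infty$ rather than a better space (because composition with a merely $C^3_b$ function does not upgrade $L^\infty$ bounds) together force the use of the $C^3$ (not $C^2$) hypothesis and a careful application of the Lipschitz-composition lemma for the difference estimate.
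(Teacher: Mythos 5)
Your proposal follows essentially the same route as the paper's proof; the paper simply spells out the Taylor--expansion computations that are packaged inside \cite[Lemma 7.3, Thm.\ 7.6]{FH14} rather than citing them. In particular, you correctly identify the two structural points that make everything work: (i) the cylindrical form \eqref{G} means the scalar paths $F_\ell=\langle v+y,\varphi^{ij}_\ell\rangle$ carry all the time regularity while the $x$-dependence of $G(v+y)$ comes entirely from the explicit $x$-dependence of $g_{ij}$ (this is exactly the content of Remark~\ref{r:d7}, and is why the output lands in $D^{2\alpha}_{B,4\alpha-2\sigma}$ regardless of the spatial regularity of $y$); and (ii) the Gubinelli derivative of the composed path is $DG(v+y)G(v+y)$, and the $C^\alpha_TL^\infty$ bound on it needs the $C^\alpha_TL^\infty$ bound on $y$ read off from $y_{s,t}=y'_sB_{s,t}+R^y_{s,t}$, which is precisely the first display in the paper's proof. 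Two small cautions. First, there is a notational slip in your last paragraph: the formula $\langle R^y_{s,t},\varphi\rangle+\langle y'_sB_{s,t},\varphi\rangle$ is the increment of the \emph{path} $F_\ell$ (minus the $v$-contribution), not of its Gubinelli derivative $F'_\ell=\langle y',\varphi\rangle$; the paper uses exactly this increment decomposition to get the $C^\alpha_TL^\infty$ bound on $y$. Second, if you really want to cite the FH14 composition and Lipschitz-composition lemmas verbatim, you have to account for the fact that those results are stated for finite-dimensional controlled paths and finite-dimensional smooth functions, whereas here the output is parametrized by $x\in\mT^3$ and its controlled-path norms are measured in $L^\infty$ and $\cC^\gamma$ over $\mT^3$. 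The paper sidesteps this by writing out the expansion \eqref{eq:Rst} directly, which makes uniformity in $x$ manifest because $g_{ij}\in C^3_b(\mT^3\times\mR^{k_{ij}})$ and the scalar $F_\ell$ is $x$-independent; your plan would need one or two lines making this uniformity explicit before the citation is legitimate.
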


\begin{rem}\label{r:d7}
It will be seen in the proof below that due to the definition of the coefficient $G$ in \eqref{G}, the spatial regularity of the controlled rough path $(G(v+y),  DG(v+y) G(v+y))$ actually only depends on the spatial regularity of the functions $g_{ij}$ and not on the spatial regularity of $v$, $y$. Consequently, the claimed space regularity of order $4\alpha-2\sigma$ was only taken for convenience in order to follow more easily the arguments of \cite{HN21}.
\end{rem}

\begin{proof}[Proof of Lemma \ref{l:D5}]
 For simplicity we only concentrate on the case  $G(y)=g(\cdot,\langle y,\varphi\rangle)$ with $\varphi$ smooth and $g\in C^{3}_{b}$. First, we observe that as a consequence of \eqref{eq:R} with $y'= G(v+y)$ it holds
 \begin{equation*}
 \|y\|_{C^{\alpha}_{T}L^\infty}\lesssim (\|y'\|_{C_T\cC^{2\alpha}}+\|y_0\|_{L^\infty}+
\|R^{y}\|_{\alpha,L^\infty})(1+\rho_\alpha(B)).
\end{equation*}
Then, since the spatial dependence of $G(v+y)$ only depends on the spatial dependence of $g$, we get
$$
\| G(v+y)\|_{C_T\cC^{4\alpha-2\sigma}}\lesssim 1.
$$
	and since $ DG(v+y) G(v+y)= \partial g(\cdot,\langle v+y,\varphi\rangle)\langle g(\cdot,\langle v+y,\varphi\rangle),\varphi\rangle$ where $\partial$ denotes the derivative with respect to the variable in place of the inner product
	$$
	\| DG(v+y) G(v+y)\|_{C^\alpha_TL^\infty}\lesssim 1+\|y\|_{C^\alpha_TL^\infty}+\|v\|_{C^1_{T,x}}\lesssim (1+\|y, G(v+y)\|_{B,2\alpha,4\alpha}+\|v\|_{C^1_{T,x}})(1+\rho_\alpha(B)),
	$$
and
$$
\|DG(v+y) G(v+y)\|_{C_T\cC^{2\alpha-2\sigma}}\lesssim 1.
$$
	Moreover,
	\begin{align}
	\label{eq:Rst}
	\begin{aligned}
	R^{G}_{s,t}&= [g(\langle v_t+y_t,\varphi\rangle)-g(\langle v_s+y_s,\varphi\rangle)- \partial g(\langle v_s+y_s,\varphi\rangle)\langle  g(\langle v_s+y_s,\varphi\rangle)B_{s,t},\varphi\rangle]
	\\&=\int_0^1   [\partial g(\langle v_s+y_s+r(v_{s,t}+y_{s,t}),\varphi\rangle)[\langle v_{s,t}+y_{s,t},\varphi\rangle]dr \\
	&\qquad- \partial g(\langle v_s+y_s,\varphi\rangle)]\langle  g(\langle v_s+y_s,\varphi\rangle)B_{s,t},\varphi\rangle
	\\&=\int_0^1   \partial g(\langle v_s+y_s+r(v_{s,t}+y_{s,t}),\varphi\rangle)\langle v_{s,t},\varphi \rangle dr \\
	&\qquad+\int_0^1  [ \partial g(\langle v_s+y_s+r(v_{s,t}+y_{s,t}),\varphi\rangle)- \partial g(\langle v_s+y_s,\varphi\rangle)]dr\langle  g(\langle v_s+y_s,\varphi\rangle)B_{s,t},\varphi\rangle
	\\
	&\qquad+\int_0^1   \partial g(\langle v_s+y_s+r(v_{s,t}+y_{s,t}),\varphi\rangle)\langle R^y_{s,t},\varphi\rangle dr.
	\end{aligned}
	\end{align}
	Consequently,  we deduce
	$$
	\begin{aligned}
	\|R^G_{s,t}\|_{2\alpha, L^\infty}&\lesssim (\|v\|_{C^1_{T,x}}+\|y\|_{C^\alpha_TL^\infty})(1+\rho_\alpha(B))+\|R^y\|_{2\alpha, L^\infty}\\
	&\lesssim (1+\|v\|_{C^1_{T,x}}+\|y,  G(v+y)\|_{B,2\alpha, 4\alpha})(1+\rho_\alpha(B))^2 .
	\end{aligned}
	$$
Thus the proof of the first result is complete. The second one is a simpler version of the argument in the proof of Lemma \ref{lem:G1} below, so we leave it to the reader.
\end{proof}

\begin{lem}\label{lem:G1}
Let $G$ satisfy assumption \eqref{G} and $(y,  G(v+y))\in \bar{D}^{2\alpha}_{B,4\alpha}$. Then for $\sigma\in[0,\alpha)$, $( G(v+y),  DG(v+y) G(v+y))\in \bar{D}^{2\alpha}_{B,4\alpha-2\sigma}$
	and for $\gamma>0$
	$$\| G(v+y),  DG(v+y) G(v+y)\|_{\bar{B},2\alpha,4\alpha-2\sigma}\lesssim (1+\|y,  G(v+y)\|_{\bar{B},2\alpha,4\alpha}+\|v\|_{C^{2\alpha}_{T}H^{-\gamma}})(1+\rho_\alpha(B))^2.$$
Moreover, if
$(\tilde{y},  G(\tilde{v}+\tilde{y}))\in \bar{D}^{2\alpha}_{B,4\alpha}$. Then
	 for $\gamma>0$
	\begin{align*}
	&\| G(v+y)- G(\tilde{v}+\tilde{y}),  DG(v+y) G(v+y)- DG(\tilde{v}+\tilde{y}) G(\tilde{v}+\tilde{y})\|_{\bar{B},2\alpha,4\alpha-2\sigma}
\\&\lesssim (1+\|y,  G(v+y)\|_{\bar{B},2\alpha,4\alpha}+\|\tilde{y},  G(\tilde{v}+\tilde{y})\|_{\bar{B},2\alpha,4\alpha}+\|v\|_{C^{2\alpha}_{T}H^{-\gamma}}+\|\tilde{v}\|_{C^{2\alpha}_{T}H^{-\gamma}})(1+\rho_\alpha(B))^2
\\&\qquad\times (\|y-\tilde{y}, G(v+y)- G(\tilde{v}+\tilde{y})\|_{\bar{B},2\alpha,4\alpha}+\|v-\tilde{v}\|_{C^{2\alpha}_{T}H^{-\gamma}}).
\end{align*}
\end{lem}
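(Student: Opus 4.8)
The plan is to prove Lemma~\ref{lem:G1} by essentially repeating the argument of Lemma~\ref{l:D5}, but carefully tracking the Besov/Sobolev exponents in the scale $H^{\gamma}$ rather than $\mathcal C^{\gamma}$, and exploiting the key structural feature of the coefficient $G$ from \eqref{G}: the spatial dependence of $g_{ij}(\cdot,\langle u,\varphi\rangle)$ comes only through the smooth function $g_{ij}$, while the dependence on the unknown enters only through the pairings $\langle u,\varphi_{\ell}^{ij}\rangle$, which are \emph{scalar} quantities of low temporal regularity. First I would reduce, as in the proof of Lemma~\ref{l:D5}, to the scalar model case $G(u)=g(\cdot,\langle u,\varphi\rangle)$ with $g\in C_{b}^{3}(\mathbb T^{3}\times\mathbb R)$ and $\varphi\in C^{\infty}(\mathbb T^{3})$, the general case following by summing over the finitely many indices $i,j,\ell$. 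The pairings against smooth test functions allow one to estimate $\langle v_{s,t},\varphi\rangle$, $\langle v_{s,t}+y_{s,t},\varphi\rangle$ etc.\ using the weak norm $\|v\|_{C^{2\alpha}_{T}H^{-\gamma}}$ together with $\|\varphi\|_{H^{\gamma}}<\infty$; this is precisely why $v$ only needs the weak regularity $C^{2\alpha}_{T}H^{-\gamma}$ (cf.\ Remark~\ref{r:d7}).

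The key steps, in order, are the following. (1) Use \eqref{eq:R1} with $y'=G(v+y)$ to bound $\|y\|_{C^{\alpha}_{T}L^{2}}$ by $(\|G(v+y)\|_{C_{T}H^{2\alpha}}+\|y_{0}\|_{L^{2}}+\|R^{y}\|_{2\alpha,L^{2}})(1+\rho_{\alpha}(B))$, i.e.\ by $\|y,G(v+y)\|_{\bar B,2\alpha,4\alpha}(1+\rho_\alpha(B))$. (2) Bound $G(v+y)$ in $C_{T}H^{4\alpha-2\sigma}$ and $DG(v+y)G(v+y)$ in $C_{T}H^{2\alpha-2\sigma}$; since $4\alpha-2\sigma<1$ and the spatial dependence is that of a fixed smooth function $g$, these are $\lesssim 1$ (here one uses that $g\in C^3_b$ and that composition with bounded smooth maps is harmless in the low-regularity Besov/Sobolev range, noting that the pairing $\langle v+y,\varphi\rangle$ is a bounded continuous scalar under the given norms). (3) Bound the Gubinelli derivative $DG(v+y)G(v+y)=\partial g(\cdot,\langle v+y,\varphi\rangle)\langle g(\cdot,\langle v+y,\varphi\rangle),\varphi\rangle$ in $C^{\alpha}_{T}L^{2}$, which produces a factor $1+\|y\|_{C^{\alpha}_{T}L^{2}}+\|v\|_{C^{2\alpha}_{T}H^{-\gamma}}$ after pairing the increment of $v+y$ against $\varphi$. (4) Write the remainder $R^{G}_{s,t}$ exactly as in \eqref{eq:Rst} via the fundamental theorem of calculus, split it into the three integral terms there, and estimate each in $L^{2}$: the first by $\|v\|_{C^{2\alpha}_{T}H^{-\gamma}}$ (through $\langle v_{s,t},\varphi\rangle$), the second by $(\|v\|_{C^{2\alpha}_{T}H^{-\gamma}}+\|y\|_{C^{\alpha}_{T}L^{2}})\rho_{\alpha}(B)$ (mean-value estimate on $\partial g$ plus the $B_{s,t}$ factor), the third by $\|R^{y}\|_{2\alpha,L^{2}}$; collecting gives $\|R^{G}\|_{2\alpha,L^{2}}\lesssim(1+\|v\|_{C^{2\alpha}_{T}H^{-\gamma}}+\|y,G(v+y)\|_{\bar B,2\alpha,4\alpha})(1+\rho_{\alpha}(B))^{2}$. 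Assembling (2)--(4) yields the first asserted bound. For the stability (Lipschitz) estimate I would differentiate each of these quantities in the pair $(v,y)$ versus $(\tilde v,\tilde y)$: writing $g(\cdot,a)-g(\cdot,\tilde a)=\int_{0}^{1}\partial g(\cdot,\tilde a+r(a-\tilde a))\,dr\,(a-\tilde a)$ and its analogues for $\partial g$, every difference term is controlled by a product of a ``size'' factor (the sum of the two norms plus $\rho_{\alpha}(B)$ to the appropriate power) and a ``distance'' factor $\|y-\tilde y,G(v+y)-G(\tilde v+\tilde y)\|_{\bar B,2\alpha,4\alpha}+\|v-\tilde v\|_{C^{2\alpha}_{T}H^{-\gamma}}$; the bookkeeping is tedious but entirely parallel to the first part and to \cite[Lemma~3.6]{HN21}.

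The main obstacle I anticipate is purely technical: making the composition estimates in step~(2) rigorous in the Sobolev scale $H^{4\alpha-2\sigma}$ with $4\alpha-2\sigma\in(0,1)$, i.e.\ controlling $\|g(\cdot,\langle v+y,\varphi\rangle)\|_{H^{4\alpha-2\sigma}}$ uniformly. The cleanest route is to note that the argument $\langle v+y,\varphi\rangle$ is a scalar function of $(t,x)$ whose spatial regularity is irrelevant because $\varphi$ is integrated out — more precisely, $x\mapsto g(x,c(t,x))$ where $c(t,x)=\langle v(t)+y(t),\varphi\rangle$ is constant in $x$ for each fixed $t$, so $g(\cdot,c(t,\cdot))=g(\cdot,c(t))$ is simply the fixed smooth function $x\mapsto g(x,c(t))$ evaluated at the scalar parameter $c(t)$, and its $H^{4\alpha-2\sigma}$-norm is bounded by $\|g\|_{C^{1}_{b}}$ uniformly in the parameter. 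This observation (already implicit in Remark~\ref{r:d7}) collapses what looks like a delicate paracomposition estimate into a trivial one, and the remaining steps are routine rough-path algebra. Finally I would record that, as in Lemma~\ref{l:D5}, one has $y'=G(v+y)$ throughout, so the pair written in the statement is indeed the correct controlled-rough-path pair and all the seminorms on the left-hand sides are the ones in Definition~\ref{controlled1}.
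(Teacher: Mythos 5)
Your proposal follows essentially the same approach as the paper: reduce to the scalar model $G(u)=g(\cdot,\langle u,\varphi\rangle)$, exploit that the spatial dependence enters only through the fixed smooth function $g$ so the composition estimates in $H^{4\alpha-2\sigma}$ are trivial (Remark~\ref{r:d7}), bound the four seminorms defining $\bar D^{2\alpha}_{B,4\alpha-2\sigma}$ componentwise, use the expansion \eqref{eq:Rst} for the remainder, and linearize differences via the fundamental theorem of calculus for the Lipschitz estimate. One small slip: $4\alpha-2\sigma$ need not be $<1$ (with $\alpha\in(\tfrac13,\tfrac12)$ and $\sigma\in[0,\alpha)$ it lies in $(2\alpha,2)$), so the claim that $\|g(\cdot,c)\|_{H^{4\alpha-2\sigma}}$ is controlled by $\|g\|_{C^{1}_{b}}$ is not quite right, but since $g\in C^{3}_{b}$ this costs nothing and the argument is unaffected.
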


\begin{proof}
 For notational simplicity we again focus only on the  case  $G(y)=g(\cdot,\langle y,\varphi\rangle)$ with $\varphi$ smooth and {$g\in C^{3}_{b}$}. The general case follows the same argument. The first estimate is similar as in Lemma \ref{l:D5}. Now we prove the second one. First, we observe that as a consequence of \eqref{eq:R1} with $y'= G(v+y)$ it holds
 \begin{equation*}
 \|y-\tilde{y}\|_{C^{\alpha}_{T}L^{2}}\lesssim \|y-\tilde{y},  G(v+y)- G(\tilde{v}+\tilde{y})\|_{\bar{B},2\alpha,4\alpha}(1+\rho_\alpha(B)).
\end{equation*}
Then, we have (as in Remark \ref{r:d7}, also here the spatial regularity of $v,y$ does not influence the estimate)
	$$\| G(y+v)- G(\tilde{y}+\tilde{v})\|_{C_TH^{4\alpha-2\sigma}}\lesssim \|y-\tilde{y}\|_{C_TH^{4\alpha-2\sigma}}+\|v-\tilde{v}\|_{C_{T}H^{-\gamma}},$$
and
	$$
	\| DG(v+y) G(v+y)- DG(\tilde{v}+\tilde{y}) G(\tilde{v}+\tilde{y})\|_{C_TH^{2\alpha-2\sigma}}\lesssim \|y-\tilde{y}\|_{C_TH^{2\alpha-2\sigma}}+\|v-\tilde{v}\|_{C_{T}H^{-\gamma}}.
	$$
Moreover, we have
\begin{align*}
\begin{aligned}
&( DG(v+y) G(v+y)- DG(\tilde{v}+\tilde{y}) G(\tilde{v}+\tilde{y}))_{s,t}
\\&=\int_0^1 (\partial^2g(\langle y_s+v_s+r(y+v)_{s,t},\varphi\rangle)-\partial^2g(\langle \tilde{y}_s+\tilde{v}_s+r(\tilde{y}+\tilde{v})_{s,t},\varphi\rangle))dr\\
&\qquad\qquad\times\langle (y+v)_{s,t},\varphi\rangle \langle  G_t(v+y),\varphi\rangle
\\&\quad+\int_0^1 \partial^2g(\langle \tilde{y}_s+\tilde{v}_s+r(\tilde{y}+\tilde{v})_{s,t},\varphi\rangle)dr\langle [(y+v)_{s,t}-(\tilde{y}+\tilde{v})_{s,t}],\varphi\rangle \langle  G_t(v+y),\varphi\rangle
\\&\quad+\int_0^1 \partial^2g(\langle \tilde{y}_s+\tilde{v}_s+r(\tilde{y}+\tilde{v})_{s,t},\varphi\rangle)dr\langle (\tilde{y}+\tilde{v})_{s,t},\varphi\rangle \langle  G_t(v+y)-G_t(\tilde{v}+\tilde{y}),\varphi\rangle
\\&\quad+ (\partial g(\langle y_s+v_s,\varphi\rangle)-\partial g(\langle \tilde{y}_s+\tilde{v}_s,\varphi\rangle)) \langle G(v+y)_{s,t},\varphi\rangle\\
&\quad+ \partial g(\langle \tilde{y}_s+\tilde{v}_s,\varphi\rangle) \langle  G(v+y)_{s,t}-G(\tilde{v}+\tilde{y})_{s,t},\varphi\rangle
\end{aligned}
\end{align*}
which implies
\begin{align*}
	&\| DG(v+y) G(v+y)- DG(\tilde{v}+\tilde{y}) G(\tilde{v}+\tilde{y})\|_{C^\alpha_TL^2}
\\&\lesssim(1+\|y,  G(v+y)\|_{\bar{B},2\alpha,4\alpha}+\|\tilde{y},  G(\tilde{v}+\tilde{y})\|_{\bar{B},2\alpha,4\alpha}+\|v\|_{C^{2\alpha}_{T}H^{-\gamma}}+\|\tilde{v}\|_{C^{2\alpha}_{T}H^{-\gamma}})(1+\rho_\alpha(B))^2
\\&\quad\quad\times (\|y-\tilde{y},  G(v+y)- G(\tilde{v}+\tilde{y})\|_{\bar{B},2\alpha,4\alpha}+\|v-\tilde{v}\|_{C^{2\alpha}_{T}H^{-\gamma}}).
	\end{align*}
		
	Furthermore, for $ R^{G}_{s,t}$ in \eqref{eq:Rst} we have $R^{G}_{s,t}-\tilde{R}^{G}_{s,t}=I_1+I_2+I_3,$ with $I_i$ corresponding to the difference of last three lines:
	\begin{align*} I_1+I_3&=\int_0^1 ( \partial g(\langle v_s+y_s+r(v_{s,t}+y_{s,t}),\varphi\rangle)-\partial g(\langle \tilde{v}_s+\tilde{y}_s+r(\tilde{v}_{s,t}+\tilde{y}_{s,t}),\varphi\rangle))\langle v_{s,t}+R^y_{s,t},\varphi \rangle dr
\\&\quad+\int_0^1 \partial g(\langle \tilde{v}_s+\tilde{y}_s+r(\tilde{v}_{s,t}+\tilde{y}_{s,t}),\varphi\rangle)\langle v_{s,t}-\tilde{v}_{s,t}+R^y_{s,t}-R^{\tilde{y}}_{s,t},\varphi \rangle dr
\end{align*}

\begin{align*}
I_2&=\int_0^1\int_0^1  \partial^2 g(\langle v_s+y_s+r\theta(v_{s,t}+y_{s,t}),\varphi\rangle)d\theta dr\langle (v_{s,t}+y_{s,t}),\varphi\rangle\langle  g(\langle v_s+y_s,\varphi\rangle)B_{s,t},\varphi\rangle
\\&\quad-\int_0^1\int_0^1  \partial^2 g(\langle \tilde{v}_s+\tilde{y}_s+r\theta(\tilde{v}_{s,t}+\tilde{y}_{s,t}),\varphi\rangle)d\theta dr\langle (\tilde{v}_{s,t}+\tilde{y}_{s,t}),\varphi\rangle\langle  g(\langle \tilde{v}_s+\tilde{y}_s,\varphi\rangle)B_{s,t},\varphi\rangle.
	\end{align*}
Then we have
	\begin{align*}
	\|R^G-\tilde{R}^G\|_{2\alpha, L^2}&\lesssim(1+\|y,  G(v+y)\|_{\bar{B},2\alpha,4\alpha}+\|\tilde{y},  G(\tilde{v}+\tilde{y})\|_{\bar{B},2\alpha,4\alpha}+\|v\|_{C^{2\alpha}_{T}H^{-\gamma}}+\|\tilde{v}\|_{C^{2\alpha}_{T}H^{-\gamma}})
\\&\qquad\times (1+\rho_\alpha(B))^2(\|y-\tilde{y},  G(v+y)- G(\tilde{v}+\tilde{y})\|_{\bar{B},2\alpha,4\alpha}+\|v-\tilde{v}\|_{C^{2\alpha}_{T}H^{-\gamma}}).\end{align*}
Thus, the proof is complete.
\end{proof}

Thus, combining Lemma \ref{lem:G2}, Lemma~\ref{l:D5}   and a similar argument as in \cite[Lemma 3.8, Theorem 3.9]{HN21} we obtain the following result.

\begin{thm}\label{zexistence} Let $T>0$ and $G$ satisfy the assumption \eqref{G}. Then there exists a unique global solution $(z, G(z+v))\in D^{2\alpha}_{B,4\alpha}([0,T])$ to \eqref{eq:new}.
	Moreover, the solution satisfies for $\sigma\in [0,\alpha)$
	$$\|z\|_{C_T\cC^{4\alpha}}\leq (\|z_0\|_{\cC^{4\alpha}}+1+\|v\|_{C^1_{T,x}})N^{TN^{\frac{2\alpha-\sigma}{\alpha(\alpha-\sigma)}}},$$
and%
	$$\|z\|_{C^\alpha_TL^\infty}\leq (\|z_0\|_{C^{4\alpha}}+1+\|v\|_{C^1_{T,x}})TN^{N^{\frac{2\alpha-\sigma}{\alpha(\alpha-\sigma)}}T+\frac{2\alpha-\sigma}{\alpha(\alpha-\sigma)}}.$$
	where $N=C(1+\rho_\alpha(B))^{{3}}$ for some constant $C$ independent of $B$, $z$, $v$.
\end{thm}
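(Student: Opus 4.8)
The plan is to run a Banach fixed point argument in the space of controlled rough paths $D^{2\alpha}_{B,4\alpha}$, following \cite[Lemma 3.8, Theorem 3.9]{HN21}, and then to propagate the local solution to $[0,T]$ by concatenation; because the nonlinearity $G$ in \eqref{G} enters affinely in the relevant norm, the same scheme delivers the quantitative bounds.

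\emph{Local well-posedness.} For $(z,G(v+z))\in D^{2\alpha}_{B,4\alpha}([0,T_0])$ with $z_0\in\cC^{4\alpha}$ fixed, I would set
$$\Gamma(z):=\Big(P_\cdot z_0+\int_0^\cdot P(\cdot-s)G(v_s+z_s)\,dB_s,\ G(v+z)\Big).$$
By Lemma \ref{l:D5}, $(G(v+z),DG(v+z)G(v+z))\in D^{2\alpha}_{B,4\alpha-2\sigma}$ with norm $\lesssim(1+\|z,G(v+z)\|_{B,2\alpha,4\alpha}+\|v\|_{C^1_{T,x}})(1+\rho_\alpha(B))^2$, and feeding this into Lemma \ref{lem:G2} (which requires $\sigma\in[0,\alpha)$) gives $\Gamma(z)\in D^{2\alpha}_{B,4\alpha}$ with
$$\|\Gamma(z)\|_{B,2\alpha,4\alpha,T_0}\lesssim N_0\big(\|z_0\|_{\cC^{4\alpha}}+1+\|v\|_{C^1_{T,x}}\big)+N\,T_0^{\theta_0}\big(1+\|z,G(v+z)\|_{B,2\alpha,4\alpha,T_0}+\|v\|_{C^1_{T,x}}\big),$$
where $\theta_0=\tfrac{\alpha(\alpha-\sigma)}{2\alpha-\sigma}$, $N_0=C(1+\rho_\alpha(B))$, $N=C(1+\rho_\alpha(B))^3$, and I used that $\|G(v_0+z_0)\|_{\cC^{2\alpha}}$ is bounded independently of $v_0,z_0$ since the spatial dependence of $G$ is fixed (cf. Remark \ref{r:d7}). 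The difference estimates in the same two lemmas yield a matching Lipschitz bound. Choosing $T_0\simeq N^{-1/\theta_0}$ so that $CN\,T_0^{\theta_0}\le\tfrac12$ makes $\Gamma$ a contraction on a closed ball of $D^{2\alpha}_{B,4\alpha}([0,T_0])$, producing a unique local solution with $\|z\|_{B,2\alpha,4\alpha,T_0}\le CN(\|z_0\|_{\cC^{4\alpha}}+1+\|v\|_{C^1_{T,x}})$. Divergence-freeness is preserved because $\div z_0=0$, $\div G(v+z)=0$ by the hypothesis on $g_{\cdot j}$, and $\Delta$, $P_t$ commute with $\div$.

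\emph{Globalization and bounds.} Since $T_0$ depends only on $N$, the scheme can be restarted on $[jT_0,(j+1)T_0]$ with datum $z(jT_0)\in\cC^{4\alpha}$; patching yields the unique global $(z,G(z+v))\in D^{2\alpha}_{B,4\alpha}([0,T])$, with uniqueness on $[0,T]$ obtained by covering $[0,T]$ by such subintervals and invoking local uniqueness. Iterating the local bound gives $\|z(jT_0)\|_{\cC^{4\alpha}}\le(CN)^j(\|z_0\|_{\cC^{4\alpha}}+1+\|v\|_{C^1_{T,x}})$ (the additive term only inflates the geometric sum by a constant), so after $k=\lceil T/T_0\rceil\simeq TN^{1/\theta_0}$ steps one gets $\|z\|_{C_T\cC^{4\alpha}}\le(CN)^k(\|z_0\|_{\cC^{4\alpha}}+1+\|v\|_{C^1_{T,x}})$; absorbing $C$ into $N$ produces exactly $\|z\|_{C_T\cC^{4\alpha}}\le(\|z_0\|_{\cC^{4\alpha}}+1+\|v\|_{C^1_{T,x}})N^{TN^{(2\alpha-\sigma)/(\alpha(\alpha-\sigma))}}$. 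For the $C^\alpha_TL^\infty$ bound I would use $\|z\|_{C^\alpha_TL^\infty}\lesssim\max_j\|z\|_{C^\alpha_{[jT_0,(j+1)T_0]}L^\infty}+T_0^{-\alpha}\max_j\|z(jT_0)\|_{L^\infty}$, estimate each $\|z\|_{C^\alpha_{[jT_0,(j+1)T_0]}L^\infty}$ by the controlled-rough-path norm on that subinterval (via $\|z_{s,t}\|_{L^\infty}\lesssim(\|R^z\|_{2\alpha,L^\infty}+\rho_\alpha(B))|t-s|^\alpha$), hence by $(CN)^{j+1}(\cdots)$, and bound $T_0^{-\alpha}\le N^{1/\theta_0}$ and the number of subintervals by $TN^{1/\theta_0}$; after crude estimates this gives the extra factor $T$ and the additional $N^{1/\theta_0}$ in the exponent, i.e. $\|z\|_{C^\alpha_TL^\infty}\le(\|z_0\|_{\cC^{4\alpha}}+1+\|v\|_{C^1_{T,x}})TN^{N^{(2\alpha-\sigma)/(\alpha(\alpha-\sigma))}T+(2\alpha-\sigma)/(\alpha(\alpha-\sigma))}$.

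\emph{Main obstacle.} The crux is the bookkeeping of constants: at every invocation of Lemmas \ref{l:int}, \ref{lem:G2} and \ref{l:D5} one must check that the dependence on $B$ enters only through $(1+\rho_\alpha(B))$ and only up to the third power, so that the single parameter $N=C(1+\rho_\alpha(B))^3$ simultaneously governs the step length $T_0\simeq N^{-1/\theta_0}$ and the per-step amplification $\sim N$, whence $T_0^{\theta_0}\simeq N^{-1}$ and the precise exponent $(2\alpha-\sigma)/(\alpha(\alpha-\sigma))=1/\theta_0$ emerges. The one genuinely analytic ingredient is that the smallness factor $T_0^{\theta_0}$ multiplying the $z$-dependent part of $\Gamma(z)$ comes from the $T^{\alpha(\alpha-\sigma)/(2\alpha-\sigma)}$ in Lemma \ref{lem:G2}, which in turn relies on $G(v+z)$ being a controlled rough path of fixed spatial regularity (Remark \ref{r:d7}); the remainder is the standard rough-path fixed point argument.
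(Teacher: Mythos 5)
Your proof is correct and takes essentially the same route as the paper: a Banach fixed point in $D^{2\alpha}_{B,4\alpha}$ built from Lemmas~\ref{l:D5} and~\ref{lem:G2} (following \cite{HN21}, exactly as the paper indicates), with step length $T_0\simeq N^{-1/\theta_0}$ and iteration over $\simeq TN^{1/\theta_0}$ subintervals to assemble the global bounds. The only cosmetic difference is that you handle the $C^\alpha_T L^\infty$ bound by explicit subinterval patching, whereas the paper bounds the full controlled-rough-path norm on $[0,T]$ and then extracts $\|z\|_{C^\alpha_T L^\infty}$ from $z_{s,t}=z'_s B_{s,t}+R^z_{s,t}$ at the cost of one extra factor of $(1+\rho_\alpha(B))$ absorbed into $N$ -- both are fine.
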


\begin{proof} 
By Lemma \ref{lem:G2}, Lemma~\ref{l:D5}, we have for $\theta=\frac{\alpha(\alpha-\sigma)}{2\alpha-\sigma}$ and $z'=G(z+v)$
$$
\|z,z'\|_{B,2\alpha,4\alpha}\leq C(1+\rho_\alpha(B))^3(\|z_0\|_{\mathcal{C}^{4\alpha}}+1+T^\theta \| z,z'\|_{B,2\alpha,4\alpha}+T^\theta \|v\|_{C^1_{T,x}}).$$
Then we can choose $\bar T$ such that $1/4<C(1+\rho_\alpha(B))^3\bar T^{\theta} \leq 1/2$. Thus we have
$$
\|z,z'\|_{B,2\alpha,4\alpha,[0,\bar T]}\leq 2C(1+\rho_\alpha(B))^3(\|z_0\|_{\mathcal{C}^{4\alpha}}+1)+ \|v\|_{C^1_{T,x}}.$$
Here $\|z,z'\|_{B,2\alpha,4\alpha,[s,t]}$ is the norm for the space $D^{2\alpha}_{B,4\alpha}$ on  time interval $[s,t]$.
Starting from $\bar T$ we obtain
$$\|z,z'\|_{B,2\alpha,4\alpha,[\bar T,2\bar T]}\leq N(N(\|z_0\|_{\mathcal{C}^{4\alpha}}+1)+ \|v\|_{C^1_{T,x}}+1)+\|v\|_{C^1_{T,x}}.$$
Then we know we have at most $l<(4N)^{1/\theta} T$ times  and by iteration we obtain
$$\|z\|_{C_T\cC^{4\alpha}}\lesssim (\|z_0\|_{\cC^{4\alpha}}+1+\|v\|_{C^1_{T,x}})N^{TN^{1/\theta}},$$
and $$\|z,z'\|_{B,2\alpha,4\alpha,[0,T]} \leq (\|z_0\|_{\cC^{4\alpha}}+1+\|v\|_{C^1_{T,x}})TN^{N^{1/\theta}T+1/\theta}.$$
Here we may increase the constant in $N$.
\end{proof}

Analogously,  we obtain the result in the $H^{{\gamma}}$-scale.

\begin{thm}\label{zexistence1}
Let $T>0$ and $G$ satisfy the assumption \eqref{G}. Then there exists a unique global solution $(z, G(z+v))\in \bar{D}^{2\alpha}_{B,4\alpha}([0,T])$ to \eqref{eq:new}.
	Moreover, the solution satisfies for $\sigma\in [0,\alpha)$
	$$\|z\|_{C_TH^{4\alpha}}\leq (\|z_0\|_{H^{4\alpha}}+1+\|v\|_{C^{2\alpha}_{T}H^{-\gamma}})N^{TN^{\frac{2\alpha-\sigma}{\alpha(\alpha-\sigma)}}},$$
	and
	$$\|z,z'\|_{\bar{B},2\alpha,4\alpha}\leq (\|z_0\|_{H^{4\alpha}}+1+\|v\|_{C^{2\alpha}_{T}H^{-\gamma}})TN^{N^{\frac{2\alpha-\sigma}{\alpha(\alpha-\sigma)}}T+\frac{2\alpha-\sigma}{\alpha(\alpha-\sigma)}}.$$
	where $z'=G(z+v)$, $N=C(1+\rho_\alpha(B))^3$ for some constant $C$ independent of $B$, $z$, $v$. Furthermore, let $z,\tilde{z}$ be two solutions corresponding to $v,\tilde{v}$ respectively. Then we have
	$$\|z-\tilde{z}\|_{C_TH^{4\alpha}}\lesssim \|v-\tilde v\|_{C^{2\alpha}_{T}H^{-\gamma}}\bar{N}^{\bar{N}^{\frac{2\alpha-\sigma}{\alpha(\alpha-\sigma)}}T},$$
	and
	$$\|z-\tilde{z},z'-\tilde{z}'\|_{\bar B,2\alpha,4\alpha}\lesssim \|v-\tilde v\|_{C^{2\alpha}_{T}H^{-\gamma}}T\bar{N}^{T\bar{N}^{\frac{2\alpha-\sigma}{\alpha(\alpha-\sigma)}}+\frac{2\alpha-\sigma}{\alpha(\alpha-\sigma)}}.$$
Here $\bar N=C(1+\|z, z'\|_{\bar{B},2\alpha,4\alpha}+\|\tilde{z}, \tilde z'\|_{\bar{B},2\alpha,4\alpha}+\|v\|_{C^{2\alpha}_{T}H^{-\gamma}}+\|\tilde{v}\|_{C^{2\alpha}_{T}H^{-\gamma}})(1+\rho_\alpha(B))^3$.
\end{thm}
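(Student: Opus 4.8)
\textbf{Proof plan for Theorem \ref{zexistence1}.}

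The plan is to follow the same fixed-point/iteration scheme used for Theorem \ref{zexistence}, but working consistently in the $H^{\gamma}$-based spaces of controlled rough paths rather than the H\"older-Besov ones. First I would rewrite the mild formulation of \eqref{eq:new} as the fixed-point problem $z = \mathcal{M}(z)$, where $\mathcal{M}(z)(t) = P_t z_0 + \int_0^t P(t-r)\, G(v+z_r)\, dB_r$, and the pair $(z,z')$ with $z' = G(v+z)$ is sought in $\bar D^{2\alpha}_{B,4\alpha}([0,T])$. The key structural input is the chain: if $(z, G(v+z))\in \bar D^{2\alpha}_{B,4\alpha}$, then by Lemma \ref{lem:G1} the pair $(G(v+z), DG(v+z)G(v+z))$ lies in $\bar D^{2\alpha}_{B,4\alpha-2\sigma}$ for $\sigma\in[0,\alpha)$ with norm controlled by $(1+\|z,G(v+z)\|_{\bar B,2\alpha,4\alpha}+\|v\|_{C^{2\alpha}_T H^{-\gamma}})(1+\rho_\alpha(B))^2$; then the $H^\gamma$-scale integration lemma (the analogue of Lemma \ref{lem:G2}) promotes this back to $\bar D^{2\alpha}_{B,4\alpha}$ with an extra factor $T^{\alpha(\alpha-\sigma)/(2\alpha-\sigma)}$ multiplying the part of the norm depending on $(z,G(v+z))$. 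Setting $\theta := \alpha(\alpha-\sigma)/(2\alpha-\sigma)$ and $N := C(1+\rho_\alpha(B))^3$, this gives an estimate of the form
\begin{equation*}
\|\mathcal M(z), \mathcal M(z)'\|_{\bar B,2\alpha,4\alpha,[0,\bar T]} \le N\big(\|z_0\|_{H^{4\alpha}}+1 + \bar T^{\theta}\|z,G(v+z)\|_{\bar B,2\alpha,4\alpha,[0,\bar T]} + \bar T^{\theta}\|v\|_{C^{2\alpha}_T H^{-\gamma}}\big),
\end{equation*}
and a matching Lipschitz bound for the difference $\mathcal M(z)-\mathcal M(\tilde z)$ using the second (difference) estimate in Lemma \ref{lem:G1}. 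Choosing $\bar T$ so that $N\bar T^{\theta}\le 1/2$ makes $\mathcal M$ a contraction on a ball in $\bar D^{2\alpha}_{B,4\alpha}([0,\bar T])$, yielding a unique local solution.

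Next I would patch the local solutions together. Since $\bar T$ depends only on $\rho_\alpha(B)$ (through $N$) and not on the size of the initial datum, one can restart the iteration at $\bar T, 2\bar T, \dots$, covering $[0,T]$ in at most $l \lesssim (4N)^{1/\theta}T$ steps; at each step the norm of the solution on the new subinterval is bounded by $N$ times (norm at the left endpoint $+1+ \|v\|_{C^{2\alpha}_T H^{-\gamma}}$), since $\|z,z'\|_{\bar B,2\alpha,4\alpha}$ controls $\|z(t)\|_{H^{4\alpha}}$. Iterating this recursion over the $l$ subintervals produces the claimed bounds
\begin{equation*}
\|z\|_{C_T H^{4\alpha}} \le (\|z_0\|_{H^{4\alpha}}+1+\|v\|_{C^{2\alpha}_T H^{-\gamma}})\, N^{T N^{1/\theta}}, \qquad \|z,z'\|_{\bar B,2\alpha,4\alpha} \le (\|z_0\|_{H^{4\alpha}}+1+\|v\|_{C^{2\alpha}_T H^{-\gamma}})\, T N^{T N^{1/\theta}+1/\theta},
\end{equation*}
after possibly enlarging the constant hidden in $N$; here $1/\theta = (2\alpha-\sigma)/(\alpha(\alpha-\sigma))$. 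Uniqueness on $[0,T]$ follows from the local uniqueness plus this patching.

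For the stability statement, I would let $z,\tilde z$ solve \eqref{eq:new} with drifts $v,\tilde v$. Subtracting the mild formulations and using the difference estimate of Lemma \ref{lem:G1} together with the $H^\gamma$-integration lemma, one obtains on a short interval a bound of the form $\|z-\tilde z, z'-\tilde z'\|_{\bar B,2\alpha,4\alpha} \lesssim \bar N(\bar T^{\theta}\|z-\tilde z,z'-\tilde z'\|_{\bar B,2\alpha,4\alpha} + \bar T^{\theta}\|v-\tilde v\|_{C^{2\alpha}_T H^{-\gamma}} + \|z_0-\tilde z_0\|_{H^{4\alpha}})$ where $\bar N = C(1+\|z,z'\|_{\bar B,2\alpha,4\alpha}+\|\tilde z,\tilde z'\|_{\bar B,2\alpha,4\alpha}+\|v\|_{C^{2\alpha}_T H^{-\gamma}}+\|\tilde v\|_{C^{2\alpha}_T H^{-\gamma}})(1+\rho_\alpha(B))^3$; absorbing the first term for $\bar T$ small (with $\bar T$ now depending on $\bar N$) and iterating over $\lesssim \bar N^{1/\theta}T$ subintervals — noting that at each restart the difference of the solutions at the left endpoint feeds into the next step — gives, for $z_0=\tilde z_0$, $\|z-\tilde z\|_{C_T H^{4\alpha}}\lesssim \|v-\tilde v\|_{C^{2\alpha}_T H^{-\gamma}}\bar N^{\bar N^{1/\theta}T}$ and the companion estimate for the controlled-rough-path norm. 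The main obstacle, as in the H\"older case of \cite{HN21}, is bookkeeping the composition estimates of Lemma \ref{lem:G1} carefully enough that the gain $T^\theta$ only multiplies the "dangerous" part of the norm (the one linear in $(z,z')$) and not the data terms — this is what makes the restart argument, and hence the iterated exponential bound, go through; everything else is a routine adaptation of the $\mathcal{C}^\beta$-scale argument replacing $L^\infty$ by $L^2$ in the sewing and semigroup estimates (Lemma \ref{lem:2.8} applies verbatim to the $H^{\cdot}$ norms).
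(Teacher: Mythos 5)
Your proposal is correct and follows essentially the same route as the paper: the paper proves Theorem~\ref{zexistence} by an a~priori estimate combining the composition lemma (Lemma~\ref{l:D5}) with the integration Lemma~\ref{lem:G2}, absorbing the $T^\theta$ term for $\bar{T}$ small, patching over $\lesssim(4N)^{1/\theta}T$ subintervals, and then states that Theorem~\ref{zexistence1} is obtained ``analogously'' by replacing Lemma~\ref{l:D5} with Lemma~\ref{lem:G1} and working in the $H^\gamma$-scale — precisely the adaptation you carry out, including the use of the difference estimate in Lemma~\ref{lem:G1} for the stability bound. The only cosmetic difference is that you phrase existence explicitly as a contraction, whereas the paper only writes the a~priori bound; this does not change the substance.
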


\subsection{Back to the It\^o stochastic integration}\label{s:d1}
If $v$ was $(\mathcal{F}_{t})_{t\geq 0}$-adapted, then the equation \eqref{eq:new} can be solved in the It\^o sense as well. As expected, the solutions $z$ obtained from these two approaches then coincide $\mathbf{P}$-a.s., which can be seen as follows. The equation \eqref{eq:new} possesses a unique stochastic solution $z^{\rm{sto}}$ adapted to the filtration $(\mathcal{F}_{t})_{t\geq 0}$. In view of \cite[Proposition 5.1]{FH14}, $z^{\rm{sto}}$ solves \eqref{eq:new} also in the rough path sense $\mathbf{P}$-a.s. On the other hand, for  $\omega$ from the set of full probability where the rough path lift $(B,\mB)$ is constructed, we obtain a rough path solution $z(\omega)$. By uniqueness for the rough path formulation of \eqref{eq:new}, $z(\omega)=z^{\rm{sto}}(\omega)$ on a set of full probability. As a consequence, the rough path solution may be regarded as an adapted stochastic process.

\color{black}


\begin{thebibliography}{MMMM9}

	\bibitem[BCD11]{BCD11} H. Bahouri, J.-Y. Chemin, R. Danchin,  Fourier analysis and nonlinear
	partial differential equations, vol. 343 of Grundlehren der Mathematischen
	Wissenschaften [Fundamental Principles of Mathematical Sciences]. Springer, Heidelberg,
	2011.


\bibitem[BFH17]{BFH17}
D.~Breit, E.~Feireisl, and M.~Hofmanov{\'a}.
\newblock On solvability and ill-posedness of the compressible {E}uler system
  subject to stochastic forces.
\newblock {\em Analysis and PDE, to appear}, \textbf{13}(2), (2020), 371-402.
\bibitem[BFH18] {BFH18} D. Breit, E. Feireisl, M. Hofmanov\'a, Stochastically forced compressible fluid flows, De Gruyter Series in Applied and Numerical Mathematics. De Gruyter, Berlin/Munich/Boston, 2018.
\bibitem[BCV18] {BCV18} T. Buckmaster, M. Colombo and V. Vicol, Wild solutions of the Navier--Stokes equations whose singular sets in time have hausdorff dimension strictly less than $1$, arXiv:1809.00600, 2018.
 \bibitem[BDLIS16] {BDLIS16} T. Buckmaster, C. De Lellis, P. Isett, and L. Sz\'{e}kelyhidi, Jr., Anomalous dissipation for $1/5$-H\"older Euler flows, \emph{ Annals of Mathematics} \textbf{182} (1), 127-172, 2015.
\bibitem[BR17] {BR17} V. Barbu and M. R\"{o}ckner, Global solutions to random 3D vorticity equations for small initial
 data, \emph{Journal of Differential Equations}, \textbf{263}, 5395-5411, 2017.
 \bibitem[BV19a] {BV19} T. Buckmaster, V. Vicol, Convex integration and phenomenologies in turbulence, arXiv:1901.09023, 2019.
\bibitem[BV19b] {BV19a} T. Buckmaster, V. Vicol, Nonuniqueness of weak solutions to the Navier--Stokes equation. \emph{Ann. of Math.} \textbf{189} (1), 101-144, 2019.

\bibitem[C03] {C03}A. S. Cherny. On the uniqueness in law and the pathwise uniqueness for stochastic differential equations. \emph{Theory Probab. Appl.} \textbf{ 46} (3), 406-419, 2003.
\bibitem[CFF19]{CFF19} E. Chiodaroli, E. Feireisl,  F. Flandoli, Ill-posedness for the full Euler system driven by multiplicative white noise, arXiv:1904.07977, 2019.
\bibitem[CKMS19]{ChKrMaSwI}
E.~Chiodaroli, O.~Kreml, V.~M{\'a}cha, and S.~Schwarzacher.
\newblock Non--uniqueness of admissible weak solutions to the compressible
  {E}uler equations with smooth initial data.
\newblock arXiv:1812.09917, 2019.
  \bibitem[De13]{Del13}A. Debussche, Ergodicity Results for the Stochastic Navier--Stokes Equations: An Introduction, Topics in Mathematical Fluid Mechanics
Lecture Notes in Mathematics, 23-108, 2013.
\bibitem[DLS09]{DelSze2}
C.~De~Lellis and L.~Sz{\'e}kelyhidi, Jr.
\newblock The {E}uler equations as a differential inclusion.
\newblock {\em Ann. of Math. (2)} \textbf{170} (3), 1417-1436, 2009.

\bibitem[DLS10]{DelSze3}
C.~De~Lellis and L.~Sz{\'e}kelyhidi, Jr.
\newblock On admissibility criteria for weak solutions of the {E}uler
  equations.
\newblock {\em Arch. Ration. Mech. Anal.} \textbf{195} (1), 225-260, 2010.

\bibitem[DLS13]{DelSze13}
C.~De~Lellis and L.~Sz{\'e}kelyhidi, Jr.
\newblock Dissipative continuous {E}uler flows.
\newblock {\em Invent. Math.} \textbf{193} (2), 377-407, 2013.
 \bibitem[DPD02]{DD02}G. Da Prato, A. Debussche,  2D Navier--Stokes equations driven by
a space-time white noise. \emph{J. Funct. Anal.} \textbf{196} (1),  180-210, 2002.
 \bibitem[DPD03]{DD03}   G. Da Prato, A. Debussche, Ergodicity for the 3D stochastic Navier--Stokes
equations, \emph{J. Math. Pures Appl.} (9) \textbf{82} (8), 877-947, 2003.
\bibitem[DPZ92]{DPZ92} G. Da Prato, J. Zabczyk, Stochastic Equations in Infinite Dimensions, Encyclopedia of Mathematics and its Applications, vol. 44, Cambridge University Press, Cambridge, 1992.
\bibitem[FG95] {FG95} F. Flandoli, D. Gatarek, Martingale and stationary solutions for stochastic Navier--Stokes equations, \emph{Probab. Theory Relat. Fields} \textbf{102}, 367-391, 1995.
\bibitem[FGP10]{FlaGubPri}
F.~Flandoli, M.~Gubinelli, and E.~Priola.
\newblock Well-posedness of the transport equation by stochastic perturbation.
\newblock {\em Invent. Math.} \textbf{180} (1), 1-53, 2010.
\bibitem[FL19]{FL19}
F.~Flandoli, D.~Luo.
\newblock  High mode transport noise improves vorticity blow-up control in 3D Navier-Stokes equations
, arXiv:1910.05742,   2019.
\bibitem[FR08] {FR08} F. Flandoli, M. Romito, Markov selections for the 3D stochastic Navier--Stokes equations, \emph{Probab. Theory Relat. Fields} \textbf{140}, 407-458, 2008.

\bibitem[FH14]{FH14}  P. K. Friz, M. Hairer. A course on rough paths. With an introduction to regularity structures. Universitext. Springer, Cham, 2014.

\bibitem[GH19]{GH19}A. Gerasimovi\v{c}s and M. Hairer. H\"ormander's theorem for semilinear SPDEs. Electronic Journal of Probability, volume 24, 2019.


  \bibitem[GHN19]{GHN19} A. Gerasimovi\v{c}s, A. Hocquet and T. Nilssen, Non-autonomous rough semilinear PDEs and the multiplicative Sewing Lemma, arXiv:1907.13398, 2019



    \bibitem[GRZ09]{GRZ09} B. Goldys, M. R\"{o}ckner and X. C. Zhang, Martingale solutions and Markov selections for stochastic partial differential equations, \emph{Stochastic Processes Appl.} \textbf{119}, 1725-1764, 2009.

    \bibitem[G04]{Gub04} M. Gubinelli, Controlling rough paths, \emph{J. Funct. Anal.} \textbf{216} (1), 86?140, 2004.

    \bibitem[GH21]{GH21} M. Gubinelli and M. Hofmanov\'a, A PDE construction of the Euclidean $\Phi^{4}_{3}$ quantum field theory. \emph{Comm. Math. Phys.} \textbf{384} (1), 1-75, 2021.

\bibitem[HM06]{HM06} M. Hairer, J. C. Mattingly, Ergodicity of the 2D Navier--Stokes equations with degenerate stochastic forcing  \emph{Annals of Math.} \textbf{164}, 993-1032, 2006.

\bibitem[HN21]{HN21} R. Hesse, A. Neamtu, Global solutions for semilinear rough partial differential equations, arXiv:2107.13342, 2021.

\bibitem[HLN19a]{HLN19}
M.~Hofmanov{\'a}, J.-M. Leahy, and T.~Nilssen.
\newblock {On a rough perturbation of the Navier-Stokes system and its
  vorticity formulation}.
\newblock  arXiv:1902.09348, 2019.

\bibitem[HLN19b]{HLN19b}
M.~Hofmanov{\'a}, J.-M. Leahy, and T.~Nilssen.
\newblock On the {N}avier-{S}tokes equation perturbed by rough transport noise.
\newblock {\em J. Evol. Equ.} \textbf{19} (1), 203-247, 2019.

\bibitem[HZZ20]{HZZ20}
M. Hofmanov{\'a}, R. Zhu, and X. Zhu.
\newblock On ill- and well-posedness of dissipative martingale solutions to
  stochastic 3{D} {E}uler equations.
\newblock {\em to appear in Communications on Pure and Applied Mathematics,
  arXiv:2009.09552}, 2020.

  \bibitem[HZZ21]{HZZ21}
M. Hofmanov{\'a}, R. Zhu, and X. Zhu.
  Global-in-time probabilistically strong and Markov solutions to stochastic 3D Navier--Stokes equations: existence and non-uniqueness.
  {\em arXiv:2104.09889}, 2021.

             \bibitem[K07]{K07}T. G. Kurtz, The Yamada--Watanabe--Engelbert theorem for general stochastic equations and
inequalities, \emph{Electron. J. Probab.} \textbf{12}, 951-965, 2007.


            \bibitem[LR15]{LR15} W. Liu, M. R\"{o}ckner, Stochastic Partial Differential Equations: An Introduction, Universitext, Springer, 2015.

           \bibitem[MR19]{MR19} I. Munteanu,  M. R\"ockner, Global solutions for random vorticity equations perturbed by gradient dependent noise, in two and three dimensions,  arXiv:1905.02437, 2019.
            \bibitem[On04]{On04} M. Ondrej\'{a}t, Uniqueness for stochastic evolution equations in Banach spaces, \emph{Dissertationes Mathematicae} \textbf{426}, 2004.

            \bibitem[RS21]{RS21}
            M. Rehmeier, A. Schenke,
            Nonuniqueness in law for stochastic hypodissipative Navier--Stokes equations
{\em arXiv:2104.10798}, 2021


\bibitem[R08]{R08}M. Romito, 2008. Analysis of equilibrium states of Markov solutions to the 3D Navier-Stokes equations driven by additive noise. Journal of Statistical Physics, \textbf{131}(3), 415-444, 2008.
%
             \bibitem[RZZ14]{RZZ14}M. R\"{o}ckner, R. Zhu, X. Zhu, Local existence and non-explosion of solutions for  stochastic fractional partial differential equations driven by multiplicative noise, \emph{Stochastic Processes and their Applications} \textbf{124}, 1974-2002,  2014.
              \bibitem[RZZ19]{RZZ19}M. R\"{o}ckner, R. Zhu, X. Zhu, A remark on global solutions to   random 3D vorticity equations for small initial data, \emph{ Discrete  Continuous Dynamical Systems - B} 2019 24(8): 4021-4030 , 2019
    \bibitem[SV79]{SV79}D. W. Stroock, S. R. S. Varadhan, Multidimensional Diffusion Processes, Springer-Verlag, Berlin,
1979.

\bibitem[Y20a]{Ya20a}
K. Yamazaki,
\newblock Non-uniqueness in law for two-dimensional Navier-Stokes equations
  with diffusion weaker than a full laplacian.
\newblock {\em arXiv:2008.04760}, 2020.

\bibitem[Y20b]{Ya20b}
K. Yamazaki,
\newblock Remarks on the non-uniqueness in law of the Navier-Stokes equations
  up to the j.-l. lions' exponent.
\newblock {\em arXiv:2006.11861}, 2020.

\bibitem[Y21a]{Ya21}
K. Yamazaki,
\newblock Non-uniqueness in law for Boussinesq system forced by random noise.
\newblock {\em arXiv:2101.05411}, 2021.

\bibitem[Y21b]{Ya21b}
K. Yamazaki,
Non-uniqueness in law of three-dimensional magnetohydrodynamics system forced by random noise,
{\em arXiv:2109.07015}, 2021.


\bibitem[Y21c]{Ya21c}
K. Yamazaki,
Non-uniqueness in law of three-dimensional Navier-Stokes equations diffused via a fractional Laplacian with power less than one half,
{\em arXiv:2104.10294}, 2021.

\bibitem[ZZ15]{ZZ15}R. Zhu, X. Zhu,  Three-dimensional Navier--Stokes equations driven by space-time white noise,\emph{ Journal of Differential Equations} \textbf{259} (9), 4443-4508, 2015.


\bibitem[ZZZ20]{ZZZ20} X. Zhang, R. Zhu, X. Zhu, Singular HJB equations with applications to KPZ on the real line, arXiv:2007.06783, 2020.

\end{thebibliography}
\end{document}